\definecolor{lightgray}{rgb}{0.8, 0.8, 0.8}
\definecolor{darkgray}{rgb}{0.7, 0.7, 0.7}
\newcounter{todocounter}
\newcommand{\minisec}[1]{\noindent{\sc #1.}}
\theoremstyle{plain}
\newtheorem{theorem}{Theorem}[section]
\newtheorem{proposition}[theorem]{Proposition}
\newtheorem{corollary}[theorem]{Corollary}
\newtheorem{conjecture}[theorem]{Conjecture}
\newtheorem{problem}[theorem]{Problem}
\newtheorem{observation}[theorem]{Observation}
\newtheorem*{esthm*}{The Erd\H{o}s--Szekeres Theorem~\cite{erdos:a-combinatorial:}}
\newtheorem*{thm-xi-main*}{Theorem~\ref{thm-xi-main}}
\newtheorem*{egf*}{Exponential Growth Formula}
\newtheorem*{higmans-lemma*}{Higman's Lemma~\cite{higman:ordering-by-div:}}
\newfont{\footsc}{cmcsc10 at 8truept}
\newfont{\footbf}{cmbx10 at 8truept}
\newfont{\footrm}{cmr10 at 10truept}
\renewenvironment{abstract}{
	\begin{list}{}%
	{\setlength{\rightmargin}{1in}%
	\setlength{\leftmargin}{1in}}%
	\item[]\ignorespaces\begin{small}}%
	{\end{small}\unskip\end{list}%
}
\newcommand{\Av}{\operatorname{Av}}
\newcommand{\Grid}{\operatorname{Grid}}
\newcommand{\Geom}{\operatorname{Geom}}
\newcommand{\Sub}{\operatorname{Sub}}
\newcommand{\PropSub}{\Sub^{<}}
\newcommand{\A}{\mathcal{A}}
\newcommand{\C}{\mathcal{C}}
\newcommand{\D}{\mathcal{D}}
\newcommand{\E}{\mathcal{E}}
\newcommand{\G}{\mathcal{G}}
\newcommand{\K}{\mathcal{K}}
\newcommand{\LL}{\mathcal{L}}
\newcommand{\M}{\mathcal{M}}
\renewcommand{\O}{\mathcal{O}}
\newcommand{\gr}{\mathrm{gr}}
\newcommand{\lgr}{\underline{\gr}}
\newcommand{\ugr}{\overline{\gr}}
\newcommand{\zpm}{0/\mathord{\pm} 1}
\newcommand{\st}{\::\:}
\newcommand{\bij}{\varphi}
\newcommand{\gridded}{\sharp}
\newcommand{\emptyword}{\varepsilon}
\renewcommand{\l}{\mathsf{l}}
\renewcommand{\r}{\mathsf{r}}
\newcommand{\emptygray}{\text{\textcolor{darkgray}{$\emptyset$}}}
\newcommand{\verteq}{\rotatebox{90}{$=\,$}}
\newcommand{\fnmatrix}[2]{\text{\begin{footnotesize}$\left(\begin{array}{#1}#2\end{array}\right)$\end{footnotesize}}}
\newcommand\mybullet{\raisebox{-5pt}{\normalsize \ensuremath{\bullet}}}
\newcommand\mycirc{\raisebox{-5pt}{\normalsize \ensuremath{\circ}}}
\def\absdot{\@ifnextchar[{\@absdotlabel}{\@absdotnolabel}}
	\def\@absdotlabel[#1]#2{%
		\node at #2 {\normalsize \mybullet};
		\node at #2 [below=2pt] {\ensuremath{#1}};
	}
	\def\@absdotnolabel#1{%
		\node at #1 {\normalsize \mybullet};
	}
\def\absdothollow{\@ifnextchar[{\@absdothollowlabel}{\@absdothollownolabel}}
	\def\@absdothollowlabel[#1]#2{%
		\node at #2 {\normalsize \textcolor{white}{\mybullet}};
		\node at #2 {\normalsize \mycirc};
		\node at #2 [below=2pt] {\ensuremath{#1}};
	}
	\def\@absdothollownolabel#1{%
		\node at #1 {\normalsize \textcolor{white}{\mybullet}};
		\node at #1 {\normalsize \mycirc};
	}
\newcommand{\plotperm}[1]{
	\foreach \j [count=\i] in {#1} {
		\absdot{(\i,\j)};
	};
}
\newcommand{\plotpartialperm}[1]{
	\foreach \i/\j in {#1} {
		\absdot{(\i,\j)};
	};
}
\newcommand{\plotpermbox}[4]{
	\draw [darkgray, very thick, rounded corners=0.01, line cap=round]
		({#1-0.5}, {#2-0.5}) rectangle ({#3+0.5}, {#4+0.5});
}
\newcommand{\plotpermgraph}[1]{
	\foreach \j [count=\i] in {#1} {
		\foreach \b [count=\a] in {#1} {
			\ifthenelse{\a<\i \AND \b>\j}{\draw (\a,\b)--(\i,\j);}{}
		};
	};
	\plotperm{#1};
}
\newcommand{\plotpermdyckpath}[1]{
	\draw [ultra thick, rounded corners=0.01, line cap=round] (0.5,0.5)
	\foreach \step in {#1} {
		\ifnum\step=1
			-- ++(0,1)
		\else
			-- ++(1,0)
		\fi
	};
}
\newcommand{\matrixpermwithzeros}[2]{
	\foreach \y [count=\x] in {#2} {
		\foreach \j in {1, 2, ..., #1} {
			\ifthenelse{\j=\y}{
				\node at (\x,\j) {\tiny $\mathbf{1}$};
			}{
				\node at (\x,\j) {\textcolor{darkgray}{\tiny $0$}};
			}
		}
	}
}
\newcommand{\plotdyckpath}[1]{
	\draw[ultra thick, line cap=round] (0.5,0)
	\foreach \step in {#1} {
		\ifnum\step=1
			-- ++(1,1)
		\else
			-- ++(1,-1)
		\fi
	};
}
\newcommand{\arcskinnyplain}[2]{
	\draw (#1,0) arc (180:0:{(#2-#1)/2});
}
\newcommand{\matchsmall}[1]{
	\begin{tikzpicture}[scale=.1, anchor=base]
		\def\h{0};
		\def\maxh{0};
		\foreach \i/\j in {#1} {
			\pgfmathparse{\j-\i};
			\let\h\pgfmathresult;
			\pgfmathifthenelse{\h>\maxh}{\h}{\maxh};
			\global\let\maxh\pgfmathresult;
		};
		\pgftransformyscale{{4.5/\maxh}};
		\foreach \i/\j in {#1} {
			\arcskinnyplain{\i}{\j};
		};
	\end{tikzpicture}
}
\newcommand{\matchpermsmall}[1]{
	\begin{tikzpicture}[scale=.1, anchor=base]
		\foreach \j [count=\n] in {#1} {};
		\def\h{0};
		\def\maxh{0};
		\foreach \j [count=\i] in {#1} {
			\pgfmathparse{2*\n+1-\j-\i};
			\let\h\pgfmathresult;
			\pgfmathifthenelse{\h>\maxh}{\h}{\maxh};
			\global\let\maxh\pgfmathresult;
		};
		\pgftransformyscale{{4.5/\maxh}};
		\foreach \j [count=\i] in {#1} {
			\arcskinnyplain{\i}{{2*\n+1-\j}};
		};
	\end{tikzpicture}
}
\newcommand{\plotpinsequence}[1]{
	\absdot{(0,0)}{};
	\edef\n{0}
	\edef\s{0}
	\edef\e{0}
	\edef\w{0}
	\edef\x{0}
	\edef\y{0}
	\foreach \pin [remember=\pin as \oldpin (initially 1), count=\i] in {#1} {
		\ifthenelse{\pin=1 \OR \pin=2}{
			\ifthenelse{\oldpin=3}{
				\xdef\x{\number\numexpr\e-1}
			}{
				\xdef\x{\number\numexpr\w+1}
			}
			\ifnum\i=1 
				\pgfmathparse{\e+1}
 				\xdef\e{\pgfmathresult}
			\fi	
		}{ 
			\ifthenelse{\oldpin=1}{
				\xdef\y{\number\numexpr\n-1}
			}{
				\xdef\y{\number\numexpr\s+1}
			}
			\ifnum\i=1 
				\pgfmathparse{\s-1}
 				\xdef\s{\pgfmathresult}
			\fi	
		}
		\ifnum\pin=1 
			\pgfmathparse{\n+2}
 			\xdef\n{\pgfmathresult}		
			\absdot{(\x,\n)}{};
			\ifnum\i>1
				\draw (\x,\n) -- (\x,\y-0.5);
			\else
			\fi
		\fi
		\ifnum\pin=2 
			\pgfmathparse{\s-2}
 			\xdef\s{\pgfmathresult}
			\absdot{(\x,\s)}{};
			\ifnum\i>1
				\draw (\x,\s) -- (\x,\y+0.5);
			\else
			\fi
		\fi
		\ifnum\pin=3 
			\pgfmathparse{\e+2}
 			\xdef\e{\pgfmathresult}
			\absdot{(\e,\y)}{};
			\ifnum\i>1
				\draw (\e,\y) -- (\x-0.5,\y);
			\else
			\fi
		\fi
		\ifnum\pin=4 
			\pgfmathparse{\w-2}
 			\xdef\w{\pgfmathresult}
			\absdot{(\w,\y)}{};
			\ifnum\i>1
				\draw (\w,\y) -- (\x+0.5,\y);
			\else
			\fi
		\fi		
	};
}
\newcommand{\gridsmallhoriz}[1]{
	\begin{tikzpicture}[scale=1, anchor=base]
    \def\gridheight{1};
    \foreach \dir [count=\gridwidth] in {#1} {
    };
	  \pgftransformxscale{{0.225/\gridwidth}};
		\pgftransformyscale{{0.225/\gridheight}};
    \foreach \dir [count=\i] in {#1} {
      \ifthenelse{\dir>0}{
		    \draw [semithick, line cap=round] ({\i-1}, 0)--(\i, 1);
      }{
        \draw [semithick, line cap=round] ({\i-1}, 1)--(\i, 0);
      };
    };
  \end{tikzpicture}
}
\newcommand{\gridhoriz}[1]{
	\begin{tikzpicture}[scale=1, anchor=base]
	  \pgftransformxscale{0.225};
		\pgftransformyscale{0.225};
    \foreach \dir [count=\i] in {#1} {
      \ifthenelse{\dir>0}{
		    \draw [semithick, line cap=round] ({\i-1}, 0)--(\i, 1);
      }{
        \draw [semithick, line cap=round] ({\i-1}, 1)--(\i, 0);
      };
    };
  \end{tikzpicture}
}
\newcommand{\gridsmallvert}[1]{
	\begin{tikzpicture}[scale=1, anchor=base]
    \def\gridwidth{1};
    \foreach \dir [count=\gridheight] in {#1} {
    };
	  \pgftransformxscale{{0.225/\gridwidth}};
		\pgftransformyscale{{0.225/\gridheight}};
    \foreach \dir [count=\i] in {#1} {
      \ifthenelse{\dir>0}{
		    \draw [semithick, line cap=round] (0, {\i-1})--(1, \i);
      }{
        \draw [semithick, line cap=round] (0, \i)--(1, {\i-1});
      };
    };
  \end{tikzpicture}
}
\newcommand{\gridverysmallvert}[1]{
	\begin{tikzpicture}[scale=1, anchor=base]
    \def\gridwidth{1};
    \foreach \dir [count=\gridheight] in {#1} {
    };
	  \pgftransformxscale{{0.175/\gridwidth}};
		\pgftransformyscale{{0.175/\gridheight}};
    \foreach \dir [count=\i] in {#1} {
      \ifthenelse{\dir>0}{
		    \draw [semithick, line cap=round] (0, {\i-1})--(1, \i);
      }{
        \draw [semithick, line cap=round] (0, \i)--(1, {\i-1});
      };
    };
  \end{tikzpicture}
}
\title{\sc Growth Rates of Permutation Classes:\\ From Countable to Uncountable}
\author{%
Vincent Vatter%
\footnote{The author was partially supported by the National Science Foundation under Grant Number DMS-1301692.}\\[-0.25ex]
\small Department of Mathematics\\[-0.5ex]
\small University of Florida\\[-0.5ex]
\small Gainesville, Florida USA\\[-1.5ex]
}
\titleformat{\section}{\large\sc}{\thesection.}{1em}{}
\date{}
\begin{document}
\maketitle

\addtocounter{footnote}{1}

\pagestyle{main}

\begin{abstract}	
We establish that there is an algebraic number $\xi\approx 2.30522$ such that while there are uncountably many growth rates of permutation classes arbitrarily close to $\xi$, there are only countably many less than $\xi$. Central to the proof are various structural notions regarding generalized grid classes and a new property of permutation classes called concentration. The classification of growth rates up to $\xi$ is completed in a subsequent paper.
\end{abstract}

\section{Introduction}

Scheinerman and Zito~\cite{scheinerman:on-the-size-of-:} established a seminal result on the enumeration of hereditary properties of graphs (sets of graphs closed under isomorphism and induced subgraphs) in 1994. They showed that the enumeration of every hereditary property of labelled graphs follows one of five patterns: eventually constant, eventually polynomial, exponential, factorial, or superfactorial. In the decades since, this result has been refined and extended to numerous other contexts. In particular, Balogh, Bollob\'as, and Morris~\cite{balogh:hereditary-prop:ordgraphs,balogh:hereditary-prop:part,balogh:hereditary-prop:posets,balogh:hereditary-prop:} have established analogues of Scheinerman and Zito's results for ordered graphs and hypergraphs, oriented graphs, posets, set partitions, and tournaments. We refer to Bollob\'as' survey~\cite{bollobas:hereditary-and-:BCC:} for a detailed history of these results. This work is primarily concerned with permutations, although advances in this context also inform the closely-related context of ordered graphs.

A \emph{permutation class}\footnote{``Permutation class'' is the analogue of ``hereditary property'' in this context. Where the terms used in the permutation community differ from those used in the graph theory community we have tended to adopt the permutation-specific term.} is a set of finite permutations closed downward under the \emph{containment order}, in which we say that $\sigma$ is contained in $\pi$ if $\pi$ contains a (not necessarily consecutive) subsequence which is \emph{order isomorphic} to $\sigma$ (i.e., has the same relative comparisons), and in this case we call the subsequence a \emph{copy} of $\sigma$. We say that the permutation $\pi$ \emph{avoids} $\sigma$ it if does not contain it. For example, the permutation $41523$ contains $231$ because of its subsequence $452$ but avoids $321$ because it does not contain three entries in decreasing order. Given a permutation class $\C$ we let $\C_n$ denote the set of permutations of length $n$ in $\C$, and call the function $n\mapsto |\C_n|$ the \emph{speed} of $\C$.

The resolution of the Stanley--Wilf Conjecture by Marcus and Tardos~\cite{marcus:excluded-permut:} (using a result of Klazar~\cite{klazar:the-furedi-hajn:}) shows that speeds of proper permutation classes grow at most exponentially. Our interest here is in determining the possible exponential bases of these speeds. To this end, we define the \emph{upper} and \emph{lower growth rates} of $\C$ by
\[
	\ugr(\C) = \limsup_{n\rightarrow\infty}\sqrt[n]{|\C_n|}
	\quad\mbox{and}\quad
	\lgr(\C) = \liminf_{n\rightarrow\infty}\sqrt[n]{|\C_n|}.
\]
It is conjectured these limits are always equal, i.e., that every permutation class has a \emph{growth rate} (also called a \emph{Stanley--Wilf limit}). When we are dealing with a class for which $\ugr(\C)=\lgr(\C)$ we denote this quantity by $\gr(\C)$.

The most common way to define a permutation class is by avoidance, that is, as the set
\[
	\Av(B)=\{\pi\st\pi\mbox{ avoids all $\beta\in B$}\}
\]
of permutations that avoid every member of set $B$. We may here assume that $B$ is an \emph{antichain}, meaning that no element of $B$ contains any others; in this case $B$ is unique and is called the \emph{basis} of the class.

There has been considerable interest in growth rates of \emph{principal} classes, which are those with singleton bases. In particular, the class $\Av(4231)$ has received significant attention, though only rough bounds on its growth rate have been established. The best of these bounds to-date---due to Bevan, Brignall, Elvey Price, and Pantone~\cite{bevan:a-structural-ch:} (which improved on an earlier lower bound by Bevan~\cite{bevan:a-large-set-of-:} and an earlier upper bound by B\'ona~\cite{bona:a-new-record-fo:})---are still far from the estimates of Conway and Guttmann~\cite{conway:on-the-growth-r:} and Conway, Guttmann, and Zinn-Justin~\cite{conway:1324-avoiding-p:}, based on differential approximants, and Madras and Liu~\cite{madras:random-pattern-:}, based on Monte Carlo approximation. In a more general setting, Fox~\cite{Fox:Stanley--Wilf-l:} has shown that the growth rates of principal classes are typically much larger than had been thought. To be clear: we are not interested in growth rates of specific classes here, but rather with the set of growth rates of all permutation classes, principal \emph{and otherwise}.

There are several other ways to specify permutations classes. Indeed, the field dates its origins to Knuth~\cite{knuth:the-art-of-comp:1}, who specified permutation classes using simple sorting machines. For the purposes of this work, two alternative specifications are important. First, given any set $X$ of permutations, one way to obtain a permutation class is to take the \emph{downward closure} of $X$,
\[
	\Sub(X)
	=
	\{\pi\st\pi\le\tau\mbox{ for some $\tau\in X$}\}.
\]
In fact, given any injection $f\st A\rightarrow B$ between linearly ordered sets $A$ and $B$, we can define $\Sub(f\st A\rightarrow B)$ as the set of all finite permutations which are order isomorphic to the restriction of $f$ to a finite set.

The final type of specification we use is geometric. We say that a \emph{figure} is a subset $\Phi$ of the plane. We further say that the permutation $\pi$ of length $n$ can be \emph{drawn} on $\Phi$ if there are $n$ points of $\Phi$, no two on a common horizontal or vertical line, so that if we label the points $1$ to $n$ reading from bottom to top and then record their labels reading left to right we obtain $\pi$. (In this sense, the permutation $\pi$ is essentially acting as the modulus in a moduli space.) The class of all permutations which can be drawn on the figure $\Phi$ is denoted $\Sub(\Phi)$. This viewpoint is used most significantly in Section~\ref{sec-concentration}.

The first general result on permutation classes was established over 80 years ago:

\begin{esthm*}
Every permutation of length at least $(k-1)^2+1$ contains either $12\cdots k$ or $k\cdots 21$.
\end{esthm*}

The first general result on growth rates of permutation classes was proved much more recently by Kaiser and Klazar~\cite{kaiser:on-growth-rates:} in 2003. For a positive integer $k$, the \emph{$k$-generalized Fibonacci numbers} are defined by $F_{n,k}=0$ for negative $n$, $F_{0,k}=1$, and
\[
	F_{n,k}=F_{n-1,k}+F_{n-2,k}+\cdots+F_{n-k,k}
\]
for $n\ge 1$. (One obtains the ordinary Fibonacci numbers by setting $k=2$.) Kaiser and Klazar's result establishes that speeds of permutation classes of growth rate less than $2$ are severely constrained.

\begin{theorem}[Kaiser and Klazar~\cite{kaiser:on-growth-rates:}]
\label{thm-kk-fib}
For every permutation class $\C$, one of the following holds.
\begin{enumerate}
\item[(1)] The speed of $\C$ is bounded.
\item[(2)] The speed of $\C$ is eventually polynomial (specifically, there is an integer $N$ and a polynomial $p(n)$ such that $|\C_n|=p(n)$ for all $n\ge N$).
\item[(3)] There are integers $k\ge 2$ and $\ell\ge 1$ such that $F_{n,k}\le |\C_n|\le n^\ell F_{n,k}$ for every $n$.
\item[(4)] For every $n$, $|\C_n|\ge 2^{n-1}$.
\end{enumerate}
\end{theorem}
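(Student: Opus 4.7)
The plan is to reduce to classes containing all increasing permutations, then perform a dichotomy based on the richness of layered structure inside $\C$.

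If $\C$ is finite, case~(1) holds; so I may assume $\C$ is infinite. The Erd\H{o}s--Szekeres theorem forces $\C$ to contain either $12\cdots n$ for all $n$ or $n\cdots 21$ for all $n$: otherwise $\C$ would lie in $\Av(12\cdots k, k\cdots 21)$ for some fixed $k$, a finite set. Composing with the reverse involution $\pi_1\cdots\pi_n\mapsto\pi_n\cdots\pi_1$ if needed, I assume $\C$ contains $12\cdots n$ for every~$n$.

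Next I examine the \emph{layered} permutations in $\C$, those of the form $\delta_1\oplus\delta_2\oplus\cdots\oplus\delta_m$ with each $\delta_i$ decreasing. There are $2^{n-1}$ layered permutations of length $n$ (one per composition of $n$), and exactly $F_{n,k}$ having all layers of size at most $k$. Let $k^\star$ be the supremum of layer sizes occurring in any layered permutation of $\C$. If $k^\star=\infty$, then since $\C$ contains arbitrarily long decreasing permutations and all identities, a pumping argument using downward closure produces every layered permutation inside $\C$, giving $|\C_n|\ge 2^{n-1}$ and case~(4). If $k^\star$ is finite and at least $2$, the analogous pumping argument, restricted to layers of size at most $k^\star$, yields $|\C_n|\ge F_{n,k^\star}$ and sets up the lower bound for case~(3).

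The bulk of the work is the matching upper bound. For case~(3), the goal is a parameterization of each $\pi\in\C$ by a layered skeleton (a composition of $n$ with parts $\le k^\star$) together with polynomially many perturbations per skeleton, giving $|\C_n|\le n^\ell F_{n,k^\star}$. For case~(2), corresponding to $k^\star=1$, the layered content of $\C$ collapses to identities only; one then shows, via a structural argument tailored to classes containing only identities as layered elements, that permutations of $\C$ deviate from a monotone skeleton in only polynomially bounded ways, forcing the speed to be eventually polynomial.

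The main obstacle is this last step: the upper bound reconstruction. One must demonstrate that every $\pi\in\C$ can be encoded by a layered skeleton plus polynomial overhead, which requires ruling out all non-layered sources of exponential growth, such as inflations of the skeleton or nested interleaving patterns. This step is essentially a weak structure theorem for $\C$, to be proved purely from the constraint that $\C$ has bounded layered content, and is significantly more delicate than the lower bounds, which follow directly from closure under subpermutation.
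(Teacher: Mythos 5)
The paper does not prove Theorem~\ref{thm-kk-fib}; it cites it from Kaiser and Klazar, so there is no internal proof to compare against. Evaluating your argument on its own terms, it has a decisive gap.

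Your claim that ``if $k^\star=\infty$, then \dots a pumping argument using downward closure produces every layered permutation inside $\C$'' is false, and the reason is fundamental: a permutation class is closed \emph{downward} under containment, so you can never manufacture larger patterns from smaller ones. Consider the class of monotone permutations $\C=\Av(12)\cup\Av(21)$. It contains every decreasing permutation $\delta_k$, so every layer size occurs and $k^\star=\infty$; it also contains all identities, satisfying your reduction. Your argument then asserts $\C$ contains every layered permutation, hence $|\C_n|\ge 2^{n-1}$. But $\C$ does not even contain $2143=\delta_2\oplus\delta_2$, and in fact $|\C_n|=2$ for $n\ge 2$, which is case~(1), not case~(4). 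The same objection applies, in weaker form, to your lower bound in case~(3): from the existence of a single layered permutation with a layer of size $k^\star$ you cannot conclude that $\C$ contains all layered permutations with layer sizes $\le k^\star$, because $\C$ need not be sum closed. The correct version of the dichotomy requires finding, inside $\C$, an entire infinite family of witnesses (e.g.\ a grid class or a sum closed class), which is precisely what Kaiser and Klazar's structural analysis --- and the later grid-class reproof of Huczynska and Vatter cited in the paper --- provides. Separately, your cases~(2) and~(3) upper bounds are not proved but only described as a goal; you acknowledge this is ``the bulk of the work,'' and indeed it is, but as written the proposal contains no argument for them, only the aspiration.
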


In particular, this result implies what is known as the \emph{Fibonacci Dichotomy}: if $|\C_n|$ is less than the $n$th Fibonacci number for any value of $n$ then the speed of $\C$ is eventually polynomial. This implication of Kaiser and Klazar's Theorem~\ref{thm-kk-fib} was later reproved by Huczynska and Vatter~\cite{huczynska:grid-classes-an:} using precursors of the techniques employed here.

Theorem~\ref{thm-kk-fib} has since been generalized to the context of ordered graphs. An \emph{ordered graph} is a graph together with a linear order on its vertices. By convention, we always take the vertices of an $n$-vertex ordered graph to be $[n]=\{1,2,\dots,n\}$ with the standard integer order. The ordered graph $H$ on the vertices $[k]$ is contained as an \emph{ordered induced subgraph} in the ordered graph $G$ on $[n]$ if there is an increasing injection $f\st[k]\rightarrow[n]$ such that $f(i)\sim f(j)$ if and only if $i\sim j$ (we denote the edge relation by $\sim$). To match our permutation notation, we refer to downsets of ordered graphs under this order as \emph{ordered graph classes}.

Given a permutation $\pi$ on $[n]$, its \emph{inversion} (or \emph{permutation}) \emph{graph} $G_\pi$ is the graph on the vertices $[n]$ in which $i\sim j$ if and only if the entries $\pi(i)$ and $\pi(j)$ form an inversion in $\pi$, i.e., if and only if $i<j$ and $\pi(i)>\pi(j)$. Note that this correspondence is not injective; in particular, the inversion graphs $G_\pi$ and $G_{\pi^{-1}}$ are isomorphic for every permutation $\pi$. The \emph{ordered inversion graph} of $\pi$ consists of $G_\pi$ equipped with the order $1<2<\cdots<n$ on its vertices. The correspondence between permutations and ordered inversion graphs is an order-preserving injection, so every permutation class is isomorphic (as a partially ordered set) to its corresponding ordered graph class. However, there are ordered graphs which do not arise as ordered inversion graphs, so there are ordered graph classes which do not correspond to permutation classes. Nevertheless, Balogh, Bollob\'as, and Morris showed in \cite{balogh:hereditary-prop:ordgraphs} that ordered graph classes of growth rate at most $2$ also fall into the categorization provided by Theorem~\ref{thm-kk-fib}.

Kaiser and Klazar's Theorem~\ref{thm-kk-fib} shows that the only possible growth rates (these are indeed proper growth rates) of permutation classes (and thus also of ordered graph classes) less than $2$ are $0$, $1$, and the unique real roots of the polynomials $x^k-x^{k-1}-\cdots-x-1$ for $k\ge 2$ (equivalently, these are the greatest real roots of the polynomials $x^{k+1}-2x^k+1$). These growth rates accumulate at $2$, making it the least accumulation point of growth rates.

Kaiser and Klazar's result was extended in Vatter~\cite{vatter:small-permutati:}, which characterized all possible growth rates (again, these are necessarily proper) up to
\[
	\kappa=\text{the unique positive root of $x^3-2x^2-1$}\approx 2.20557.
\]
Classes with these growth rates (i.e., less than $\kappa$) are called \emph{small permutation classes} and their growth rates have more complex structure. In particular, the set of growth rates up to $\kappa$ has infinitely many accumulation points, which themselves accumulate at $\kappa$, making it the least \emph{second-order accumulation point} of growth rates.

\begin{theorem}[Vatter~\cite{vatter:small-permutati:}]
\label{thm-spc-gr}
The growth rates of small permutation classes consist precisely of $0$, $1$, $2$, and roots of the following families of polynomials (for all nonnegative $k$ and $\ell$):
\begin{enumerate}
\item[(1)] $x^{k+1}-2x^k+1$ (the sub-$2$ growth rates of Theorem~\ref{thm-kk-fib}),
\item[(2)] $(x^3-2x^2-1)x^{k+\ell}+x^\ell+1$,
\item[(3)] $(x^3-2x^2-1)x^k+1$ (accumulation points of growth rates of type (2) which themselves accumulate at $\kappa$), and
\item[(4)] $x^4-x^3-x^2-2x-3$, $x^5-x^4-x^3-2x^2-3x-1$, $x^3-x^2-x-3$, and $x^4-x^3-x^2-3x-1$ .
\end{enumerate}
\end{theorem}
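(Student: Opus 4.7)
The natural approach is to mirror the structure of the Kaiser--Klazar Theorem~\ref{thm-kk-fib} and proceed by a structural dichotomy: show that every permutation class with growth rate less than $\kappa$ either falls into a tightly-controlled parametric family, or its growth rate already reaches $\kappa$. I would split by growth rate range. For $\ugr(\C)<2$, Theorem~\ref{thm-kk-fib} already delivers the growth rates in family~(1), so the new content concerns $\ugr(\C)\in[2,\kappa)$.

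For the range $[2,\kappa)$, I would prove a grid decomposition theorem: any such class $\C$ embeds into a monotone grid class $\Grid(M)$ for some $\zpm$ matrix $M$ of bounded size whose nonzero cells form a very simple graph (essentially a path or a path with a short branch). The forcing argument would proceed by showing that more complex configurations---two ``heavy'' cells in the same row or column, branching in the cell-adjacency graph, or a single cell carrying unbounded nonmonotone behaviour---produce enough permutations to push the growth rate above $\kappa$. The Erd\H os--Szekeres theorem is invoked inside each cell to force monotonicity, and direct comparisons against canonical $\kappa$-saturating subclasses of $\Grid(M)$ supply the remainder of the structural dichotomy.

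Once the embedding is secured, the enumeration collapses to counting ways to distribute $n$ points among a bounded number of monotone cells subject to linear constraints coming from the avoidance set of the class. Setting up a transfer matrix indexed by the local state along the grid path, the dominant eigenvalue produces the growth rate as a root of a characteristic polynomial whose degree reflects the cell structure. The two-parameter family~(2) should correspond to two independent lengths in the grid shape (such as the length of a peripheral ``tail'' and of a central ``block''), family~(3) to the $\ell\to\infty$ limit of family~(2), and family~(4) to the finitely many exceptional grid shapes that appear at degenerate boundary cases where the parametric description breaks down.

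The main obstacle, I expect, will be completeness: verifying that no growth rate in $[2,\kappa)$ escapes the listed families. This requires fine control over the transfer matrix spectrum as the grid shape varies, ruling out hybrid structures that are ``almost'' in one parametric family but not quite, and correctly identifying the sporadic family~(4). That $\kappa$ is a second-order accumulation point should then drop out: family~(3) accumulates at $\kappa$ and is itself a set of accumulation points of family~(2). Controlling this two-scale limit behaviour---as both parameters tend to infinity---while ensuring that no slim sub-$\kappa$ growth rate is overlooked strikes me as the most delicate part of the argument.
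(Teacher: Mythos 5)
There is a genuine and fundamental gap in your approach: you propose to show that every small permutation class embeds in a \emph{monotone} grid class $\Grid(M)$, forcing monotonicity in each cell via Erd\H{o}s--Szekeres, and then to read off the growth rate as the dominant eigenvalue of a transfer matrix over the path-shaped cell graph. But this reduction cannot produce the families (2), (3), (4). As established in the present paper (Corollary~\ref{cor-mono-grid-grs}), the only growth rates of subclasses of monotone grid classes below $1+\varphi\approx 2.62$ are $0$, $1$, and $2$; the transfer-matrix/spectral-radius framework for monotone grid classes (later made precise by Bevan) simply does not generate the polynomials listed in the theorem. The nontrivial growth rates in $[2,\kappa)$ come from classes containing arbitrarily long \emph{oscillations}, which are sum indecomposable and not contained in any monotone grid class. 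So the statement that ``a single cell carrying unbounded nonmonotone behaviour produces enough permutations to push the growth rate above $\kappa$'' is false: the cell class $\G_\kappa$ itself carries unbounded nonmonotone behaviour (it contains $\O_I$ and $\O_D$) while remaining below $\kappa$.

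The actual argument in \cite{vatter:small-permutati:} grids by the \emph{nonmonotone} cell class $\G_\kappa$ (which has finitely many simple permutations and bounded substitution depth), slices so that nonmonotone cells are isolated, and then shows via the atomic/grid-irreducible machinery (Propositions~\ref{prop-grid-irreduce-wqo}, \ref{prop-wqo-atomic-gr}, \ref{prop-atomic-grid-irreduce} here) that the upper growth rate is achieved by a sum or skew closed subclass of $\G_\kappa$. The polynomials in families (1)--(4) then arise from the identity $\gr(\bigoplus\D)=1/\rho$ where $\rho$ is the positive root of $g(\rho)=1$ and $g$ is the generating function for nonempty sum indecomposable members of $\D$; for instance $x^4-x^3-x^2-2x-3$ in family (4) corresponds to the sum-indecomposable count sequence $1,1,2,3$, and family (1) factors as $(x-1)(x^k-x^{k-1}-\cdots-1)$, corresponding to the count sequence $1,1,\dots,1$. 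Your ``two independent lengths in the grid shape'' intuition for family (2) is replaced, in the real proof, by two parameters governing where the count sequence of sum indecomposables of a subclass of $\G_\kappa$ transitions between values, which is a different combinatorial object than a grid shape. Until your decomposition accommodates nonmonotone cells carrying oscillations, the mechanism producing the stated polynomial families is not available.
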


In addition to $\kappa$ being the least second-order accumulation point, several other notable phase transitions occur there. It is the point at which infinite antichains first occur and consequently, the point where there are uncountably many permutation classes. As shown by Albert, Ru\v{s}kuc, and Vatter~\cite{albert:inflations-of-g:}, $\kappa$ is also the point at which permutation classes may first begin to have nonrational generating functions.

In the ordered graph context, no characterization of growth rates greater than $2$ is known. However, it is known that there is an ordered graph class of growth rate approximately $2.03166$, while there is no permutation class of that growth rate\footnote{An ordered graph class with this growth rate is constructed in the conclusion of \cite{vatter:small-permutati:}. Conjecture 8.7 of Balogh, Bollob\'as, Morris~\cite{balogh:hereditary-prop:ordgraphs} asserts that it is the least growth rate of an ordered graph class above $2$.}, so the ordered graph context truly becomes more general for growth rates above $2$.

On the large side of the growth rate spectrum, Albert and Linton~\cite{albert:growing-at-a-pe:} disproved Balogh, Bollob\'as, and Morris' conjecture from \cite{balogh:hereditary-prop:ordgraphs} that the set of growth rates has no accumulation points from above\footnote{Klazar~\cite{klazar:overview-of-som:} suggested a possible way to salvage their conjecture, but this is disproved in our subsequent paper, Pantone and Vatter~\cite{pantone:growth-rates-of:}.}, and showed that this set is uncountable. In Vatter~\cite{vatter:permutation-cla:lambda:}, a refinement of Albert and Linton's approach was used to show that every real number greater than $2.48188$ is the growth rate of a permutation class. Bevan later related these constructions to the famous base $\beta$ representations of numbers studied by R\'enyi~\cite{renyi:representations:} and established the following.

\begin{theorem}[Bevan~\cite{bevan:intervals-of-pe:}]
\label{thm-lambda-B}
The set of growth rates of permutation classes contains every real number at least the unique positive root of $x^8-2x^7-x^5-x^4-2x^3-2x^2-x-1$, $\lambda_B\approx 2.35698$.
\end{theorem}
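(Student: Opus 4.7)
The plan is to realize each real $\gamma \geq \lambda_B$ as the growth rate of some permutation class, parameterized by R\'enyi's greedy $\beta$-expansion of $1$ in base $\beta = \gamma$. The backbone of the construction is a ``universal'' permutation class $\mathcal{U}$ --- most naturally of the form $\Sub(\Phi)$ for a carefully chosen figure $\Phi$ built from monotone line segments --- whose members admit a recursive structural decomposition by ``building blocks.'' If $s_n$ denotes the number of length-$n$ blocks used by a sub-downset, one arranges the decomposition so that the growth rate of that sub-downset equals $1/\rho$, where $\rho$ is the smallest positive root of $S(\rho) = 1$ for $S(x) = \sum_n s_n x^n$. The figure $\Phi$ is then engineered so that the full supply of blocks produces $S_{\mathcal{U}}(x) = 2x + x^3 + x^4 + 2x^5 + 2x^6 + x^7 + x^8$, whose unique positive solution of $S_{\mathcal{U}}(\rho) = 1$ is $\rho = 1/\lambda_B$; the coefficient vector $(2,0,1,1,2,2,1,1)$ reads directly off the octic in the statement.

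The first step is to construct $\mathcal{U}$ with three properties: (i) it is downward-closed under sub-permutation, (ii) its length-$n$ blocks are totally ordered so that every initial segment is itself closed under taking sub-blocks, and (iii) its supply sequence $(a_n) := (2,0,1,1,2,2,1,1,\ldots)$ matches $S_{\mathcal{U}}$. The second step invokes R\'enyi's theorem: for any real $\beta > 1$, every $x \in [0,1)$ has a greedy expansion $x = \sum_{i \geq 1} c_i \beta^{-i}$ with digits $c_i \in \{0,1,\ldots,\lfloor \beta \rfloor\}$. Applied with $\beta = \gamma$ to the appropriate target (so that $\sum_n c_n \gamma^{-n} = 1$), this produces a digit sequence $(c_n)$. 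The third step assembles the class: take the initial $c_n$ blocks of each length under the order from (ii), and let $\C_\gamma$ be the downset they generate; by (i) and (ii) this is a legitimate permutation class, and by construction $S_{\C_\gamma}(1/\gamma) = 1$, so $\gr(\C_\gamma) = \gamma$.

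The main obstacle is ensuring the digit-supply compatibility $c_n \leq a_n$ for every $n$. For $\gamma$ well above $\lambda_B$ this is immediate, but as $\gamma$ decreases toward $\lambda_B$ the greedy digits saturate near $\lfloor \gamma \rfloor = 2$ and the supply sequence must remain above them at every length simultaneously. Parry's theorem --- a sequence $(c_n)$ is a valid greedy $\beta$-expansion if and only if every tail is lexicographically dominated by the greedy $\beta$-expansion of $1$ itself --- converts the infinitely-many checks $c_n \leq a_n$ into a finite comparison between the greedy expansion of $1$ in base $\gamma$ and the fixed supply $(a_n)$. The octic polynomial in the statement is precisely the threshold at which this comparison tightens: $\lambda_B$ is the least $\gamma$ for which the greedy expansion of $1$ in base $\gamma$ fits inside $(2,0,1,1,2,2,1,1,\ldots)$ position by position. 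Engineering the figure $\Phi$ so that its supply $(a_n)$ realizes this minimal sequence while still admitting the downward-closed linear order required in (ii) is the technical heart of the argument; once these geometric and combinatorial ingredients are in place, the R\'enyi--Parry machinery delivers every $\gamma \geq \lambda_B$ as a growth rate.
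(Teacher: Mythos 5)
This is a result the paper \emph{cites} from Bevan~\cite{bevan:intervals-of-pe:} rather than proves, so there is no in-paper proof to compare against; I can only assess the proposal on its own terms. Your high-level framing --- sum-closed classes assembled from a supply of building blocks, growth rate $\gamma$ read off from $g(1/\gamma)=1$ where $g$ counts blocks, and R\'enyi $\beta$-expansions of $1$ to produce the digit sequence --- is the right shape and is consistent with what the paper says about Bevan's approach.

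However, the central quantitative claim does not hold. You identify the vector $(2,0,1,1,2,2,1,1)$ read from the octic with the greedy $\beta$-expansion of $1$ in base $\lambda_B$ and propose taking the supply $(a_n)$ equal to it. But the greedy expansion of $1$ in base $\lambda_B \approx 2.35698$ begins $(2,0,1,2,\dots)$, not $(2,0,1,1,\dots)$: after $d_1{=}2$, $d_2{=}0$, $d_3{=}1$ the remainder $r_3 \approx 0.983$ satisfies $\lambda_B\, r_3 \approx 2.317$, so $d_4 = 2 > a_4 = 1$. The finite string $(2,0,1,1,2,2,1,1,0^\infty)$ is a \emph{non-greedy} representation of $1$ in base $\lambda_B$, so $\lambda_B$ is not a simple Parry number and the octic is not a Parry polynomial; the Parry lexicographic criterion therefore does not ``convert the infinitely many checks into a finite comparison'' in the way you describe. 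Worse, for generic $\gamma$ in $[\lambda_B,3)$ the second greedy digit is already $1$ (e.g.\ at $\gamma = 2.5$ the expansion begins $(2,1,\dots)$), clashing immediately with your claimed $a_2 = 0$. The actual supply in Bevan's argument must therefore be substantially larger than the polynomial coefficients, and the threshold $\lambda_B$ arises from a subtler interaction between the downset constraint on blocks and the admissible digit sequences, not from termwise matching the greedy expansion against $(2,0,1,1,2,2,1,1)$. Finally, the construction of the block family --- the linear order on same-length blocks under which every initial segment is a downset, and the verification that the sum-indecomposables of the resulting class are exactly the chosen blocks --- is precisely the content you defer as ``the technical heart,'' so even setting the digit issue aside the proposal is a strategy sketch rather than a proof.
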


In addition, Bevan exhibited an infinite sequence of nontrivial intervals of growth rates of permutation classes whose infimum is approximately $2.35526$.

\begin{figure}
\begin{footnotesize}
\begin{center}
	\input{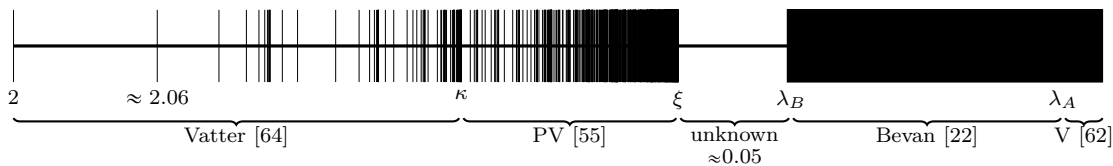}
\end{center}
\end{footnotesize}
\caption[The set of all growth rates of permutation classes between $2$ and $2.5$, as it stands after the characterization in Pantone and Vatter.]{The set of all growth rates of permutation classes between $2$ and $2.5$, as it stands after the characterization in Pantone and Vatter~\cite{pantone:growth-rates-of:}.}
\label{fig-set-of-growth-rates}
\end{figure}

In this paper we explore the gap between $\kappa$ and $\lambda_B$. As we show, a fundamental qualitative change occurs at the growth rate
\[
	\xi=\text{the unique positive root of $x^5-2x^4-x^2-x-1$}\approx 2.30522.
\]
In \cite{vatter:permutation-cla:lambda:}\footnote{The approximation presented in \cite{vatter:permutation-cla:lambda:} contains a typo, which is why this correct approximation differs slightly from what appears there.} it was established that there are uncountably many growth rates in every open neighborhood of $\xi$. The main result of this paper shows that this behavior occurs precisely at $\xi$. Indeed, it says more than that, as it shows that every upper growth rate less than $\xi$ is both a proper growth rate (i.e., the limit exists) and that it is the growth rate of a sum closed class, a concept defined in the following section.

\begin{thm-xi-main*}
There are only countably many growth rates of permutation classes below $\xi$ but uncountably many growth rates in every open neighborhood of it. Moreover, every growth rate of permutation classes below $\xi$ is achieved by a sum closed permutation class.
\end{thm-xi-main*}

The proof of Theorem~\ref{thm-xi-main} is completed in Section~\ref{sec-phase-transition}, after we have built up a collection of tools to analyze the classes of interest. In \cite{pantone:growth-rates-of:} we establish the complete list of growth rates between $\kappa$ and $\xi$, and in doing so establish that each of these growth rates is achieved by a finitely based class, whereas there are growth rates arbitrarily close to $\xi$ which cannot be achieved by finitely based classes (because there are uncountably many such growth rates). This characterization and Bevan's Theorem~\ref{thm-lambda-B} leave us tantalizingly close to the complete determination of the set of possible growth rates of permutation classes, as shown in Figure~\ref{fig-set-of-growth-rates}.

\section{Background}
\label{sec-background}

In this section we provide a brief review of some basic concepts of permutation classes that arise in our arguments. For a thorough introduction to permutation classes, including all of the notions reviewed in this section, we refer the reader to the author's survey~\cite{vatter:permutation-cla:} in the \emph{CRC Handbook of Enumerative Combinatorics}.

Much of the research on permutation classes has focused on their exact and asymptotic enumeration. Given a class $\C$ of permutations, we denote by $\C_n$ the set of permutations in $\C$ of length $n$. The \emph{generating function} of $\C$ is then
\[
	\sum_{\pi\in \C} x^{|\pi|}=\sum_{n\ge 0}|\C_n|x^n,
\]
where $|\pi|$ denotes the length of $\pi$. The Exponential Growth Formula, presented in specialized form below, connects exact and asymptotic enumeration.

\begin{egf*}[see Flajolet and Sedgewick~{\cite[Section IV.3.2]{flajolet:analytic-combin:}}]
The upper growth rate of a permutation class is equal to the reciprocal of the least positive singularity of its generating function.
\end{egf*}

We say that the permutation $\tau$ is an \emph{$p$-point extension} of $\pi$ if $\tau$ contains $\pi$ and has length at most $p$ greater than the length of $\pi$, i.e., if $\tau$ can be formed by inserting $p$ or fewer new entries into $\pi$. Given a class $\C$, the $p$-point extension of $\C$, denoted by $\C^{+p}$,  is the class consisting of all $p$-point extensions of members of $\C$. Clearly $|\C_n^{+1}|\le (n+1)^2|\C_n|$, from which we obtain the following.

\begin{proposition}
\label{prop-extension-gr}
For every permutation class $\C$ and every positive integer $p$ we have $\ugr(\C^{+p})=\ugr(\C)$ and $\lgr(\C^{+p})=\lgr(\C)$.
\end{proposition}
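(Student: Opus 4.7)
The plan is to split each of the two equalities into the easy direction (lower bound) coming from containment and the hard direction (upper bound) coming from the counting estimate flagged in the excerpt. Since every $\pi\in\C$ is a trivial $p$-point extension of itself, the inclusion $\C\subseteq\C^{+p}$ gives $|\C_n|\le|\C_n^{+p}|$, and so $\ugr(\C)\le\ugr(\C^{+p})$ and $\lgr(\C)\le\lgr(\C^{+p})$ follow directly from the definitions of the upper and lower growth rates.

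For the reverse inequalities, I would first reduce to the case $p=1$ by a routine check that $\C^{+(p+1)}=(\C^{+p})^{+1}$: any $\tau$ containing some $\pi\in\C$ with $|\tau|-|\pi|\le p+1$ admits a deletion of a single non-$\pi$ entry leaving a permutation that still contains $\pi$ and has length at most $|\pi|+p$, hence lying in $\C^{+p}$; the reverse inclusion is immediate. Iterating the $p=1$ bound then handles general $p$.

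For the $p=1$ case I would use the stated inequality $|\C_n^{+1}|\le(n+1)^2|\C_n|$. It holds because every $\tau\in\C_n^{+1}\setminus\C_n$ is obtained by inserting a new entry into some $\sigma\in\C_{n-1}$, with at most $n^2$ possible insertions, and because $|\C_{n-1}|\le|\C_n|$ for infinite permutation classes (finite classes have growth rate $0$ and require no argument). Taking $n$-th roots yields $|\C_n^{+1}|^{1/n}\le(n+1)^{2/n}\,|\C_n|^{1/n}$.

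The final step is to observe that $(n+1)^{2/n}\to 1$ while staying at least $1$, so multiplication by this factor preserves both the $\limsup$ and the $\liminf$ of $|\C_n|^{1/n}$: for any $\epsilon>0$ the factor is eventually bounded above by $1+\epsilon$, while it is trivially bounded below by $1$. This delivers $\ugr(\C^{+1})\le\ugr(\C)$ and, verbatim with $\liminf$ in place of $\limsup$, $\lgr(\C^{+1})\le\lgr(\C)$. The only subtle point is the $\liminf$ case, since one cannot in general commute a $\liminf$ through a product — but here the convergence of $(n+1)^{2/n}$ to $1$ reduces this to an $\epsilon$-squeeze argument, which is the main obstacle to be careful about.
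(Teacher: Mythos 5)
Your overall strategy --- an easy inclusion direction, reduction to $p=1$, and an $\epsilon$-squeeze on a polynomial counting bound --- is exactly what the paper has in mind (the paper asserts $|\C_n^{+1}|\le(n+1)^2|\C_n|$ as ``clear'' and says no more). The inclusion direction and the reduction to $p=1$ are fine. The gap is in your justification of the counting bound: the claim that $|\C_{n-1}|\le|\C_n|$ for every infinite permutation class is false. Let $\C$ consist of all permutations of length at most $4$ together with all increasing permutations; equivalently $\C=\Av(B)$ where $B$ is the set of the $119$ permutations of length $5$ other than $12345$. This class is infinite, yet $|\C_4|=24$ while $|\C_5|=1$. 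Worse, since $\C_4$ is the full set of permutations of length $4$, every permutation of length $5$ is a $1$-point extension of a member of $\C_4$, so $|\C_5^{+1}|=120$ while $(5+1)^2|\C_5|=36$: the inequality you borrowed from the paper can itself fail at individual values of $n$.

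The repair stays entirely within your framework: bound $|\C_n^{+1}|$ in terms of $|\C_{n-1}|$ rather than $|\C_n|$. Every $\tau\in\C_n^{+1}$ has a one-entry deletion landing in $\C_{n-1}$ (delete the inserted entry, or, when $\tau\in\C_n$, delete any fixed entry), so recording the deleted entry's position and value gives $|\C_n^{+1}|\le(n^2+n)|\C_{n-1}|\le 2n^2|\C_{n-1}|$ for every permutation class, with no monotonicity hypothesis whatsoever. (If $\C_{n-1}=\emptyset$ then $\C$ is finite and everything is trivial.) Taking $n$th roots, $|\C_n^{+1}|^{1/n}\le(2n^2)^{1/n}\,|\C_{n-1}|^{1/n}$; the prefactor tends to $1$, while $|\C_{n-1}|^{1/n}=\bigl(|\C_{n-1}|^{1/(n-1)}\bigr)^{(n-1)/n}\le|\C_{n-1}|^{1/(n-1)}$ once $|\C_{n-1}|\ge 1$, since the exponent is at most $1$. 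Your $\epsilon$-squeeze now yields $\ugr(\C^{+1})\le\limsup_n|\C_{n-1}|^{1/(n-1)}=\ugr(\C)$ and, in the $\liminf$ version verbatim, $\lgr(\C^{+1})\le\lgr(\C)$, which together with the easy direction gives the proposition.
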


We now review the \emph{substitution decomposition}%
\footnote{This decomposition---which dates back to a 1953 talk of Fra{\"{\i}}ss{\'e}~\cite{fraisse:on-a-decomposit:} and found a notable application in Gallai's 1967 paper~\cite{gallai:transitiv-orien:,gallai:a-translation-o:}---has been studied for a many different types of combinatorial object, but we restrict our discussion to the permutation case here.}%
. An \emph{interval} in the permutation $\pi$ is a set of contiguous indices $I=[a,b]$ such that the set of values $\pi(I)=\{\pi(i) : i\in I\}$ is also contiguous. Every permutation $\pi$ of length $n$ has intervals of length $0$, $1$, and $n$ which are called \emph{trivial}, and a permutation is \emph{simple} if it has no other intervals.

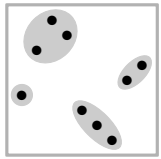
\begin{figure}
\begin{center}
	\begin{tikzpicture}[scale=0.2]
		\draw [lightgray, fill] (1,4) circle (20pt);
		\draw[lightgray, fill, rotate around={-45:(2.9,7.9)}] (2.9,7.9) ellipse (45pt and 55pt);
		\draw[lightgray, fill, rotate around={45:(6,2)}] (6,2) ellipse (25pt and 60pt);
		\draw[lightgray, fill, rotate around={-45:(8.5,5.5)}] (8.5,5.5) ellipse (20pt and 40pt);
		\plotpermbox{0.5}{0.5}{9.5}{9.5};
		\plotperm{4,7,9,8,3,2,1,5,6};
	\end{tikzpicture}
\end{center}
\caption{The plot of $479832156$, an inflation of $2413$.}
\label{fig-479832156}
\end{figure}

Given a permutation $\sigma$ of length $m$ and nonempty permutations $\alpha_1,\dots,\alpha_m$, the \emph{inflation} of $\sigma$ by $\alpha_1,\dots,\alpha_m$ --- denoted $\sigma[\alpha_1,\dots,\alpha_m]$ --- is the permutation obtained by replacing each entry $\sigma(i)$ by an interval that is order isomorphic to $\alpha_i$. For example, $2413[1,132,321,12]=4\ 798\ 321\ 56$ (see Figure~\ref{fig-479832156}). The following uniqueness result was one of the first in Albert and Atkinson's seminal investigation of the substitution decomposition of permutations in \cite{albert:simple-permutat:}.

\begin{proposition}[Albert and Atkinson~\cite{albert:simple-permutat:}]\label{simple-decomp-1}
Every permutation except $1$ is the inflation of a unique simple permutation of length at least $2$. Furthermore, if $\pi$ can be expressed as $\sigma[\alpha_1,\dots,\alpha_m]$ where $\sigma$ is a nonmonotone simple permutation then each interval $\alpha_i$ is unique.
\end{proposition}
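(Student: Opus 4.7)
My plan follows the classical modular-decomposition argument specialized to permutations. The foundational tool will be a closure lemma for intervals: if $I$ and $J$ are intervals of $\pi$ with $I\cap J\neq\emptyset$, then both $I\cup J$ and $I\cap J$ are intervals of $\pi$; moreover, if $I$ and $J$ overlap properly (neither contains the other), then $I\setminus J$ and $J\setminus I$ are also intervals. I would verify this directly from the definition by tracking positions and values in each piece.

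For existence, given $\pi$ of length $n\ge 2$, if $\pi$ is itself simple I take $\sigma=\pi$ with all $\alpha_i=1$. Otherwise, I consider the family $\mathcal{M}$ of maximal proper intervals of $\pi$ (intervals properly contained in $[1,n]$ that are maximal with respect to this property), and split into cases. If no two members of $\mathcal{M}$ overlap properly, the closure lemma makes them pairwise disjoint; since every singleton $\{i\}$ lies inside some member of $\mathcal{M}$, these intervals partition $[1,n]$ into $m\ge 2$ blocks, and the induced quotient $\sigma$ is the required simple permutation because any nontrivial interval of $\sigma$ would lift to a proper interval of $\pi$ strictly containing some block, contradicting maximality. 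If instead some $I,J\in\mathcal{M}$ overlap properly, then by maximality $I\cup J=[1,n]$, and analyzing the three pieces $I\setminus J$, $I\cap J$, $J\setminus I$ forces $\pi$ to be sum decomposable or skew decomposable, so $\pi=\sigma[\alpha_1,\alpha_2]$ with $\sigma\in\{12,21\}$.

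For uniqueness of the simple quotient, I would suppose $\pi=\sigma[\alpha_1,\ldots,\alpha_m]=\tau[\beta_1,\ldots,\beta_k]$ with both $\sigma,\tau$ simple of length at least $2$. The position blocks of each decomposition are intervals of $\pi$, and intersecting them via the closure lemma gives a common refinement; simplicity of $\sigma$ and $\tau$ then forces the blockings to coincide (so $\sigma=\tau$), except possibly when both $\sigma,\tau\in\{12,21\}$, which is ruled out because $\pi$ cannot be simultaneously sum and skew decomposable. For the final statement, when $\sigma$ is nonmonotone simple no union of a proper nonempty subset of its entries forms an interval of $\sigma$; pulling this back shows the $\alpha_i$-blocks are the unique intervals yielding the decomposition, whereas in the monotone case adjacent blocks can always be regrouped into larger intervals, which is precisely why nonmonotonicity is required.

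The main technical obstacle will be the overlap case of the existence argument: making rigorous the claim that properly overlapping maximal intervals force monotone decomposability requires careful tracking of the values of $\pi$ across the three pieces $I\setminus J$, $I\cap J$, $J\setminus I$, and a small iteration to pass from a decomposition into two blocks to a decomposition into sum- or skew-indecomposable components. The remaining uniqueness arguments are largely bookkeeping once the closure lemma and the maximality reasoning are in place.
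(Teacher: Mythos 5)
The paper does not prove this proposition itself; it is quoted from Albert and Atkinson, so there is no internal proof to compare against. Your outline follows the standard modular-decomposition argument, which is the route Albert and Atkinson take, and the existence half (closure lemma for overlapping intervals, maximal proper intervals, case split on whether any two properly overlap) is correct as described.

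The uniqueness half, though, rests on a step that is too vague to carry the weight you place on it. ``Intersecting them via the closure lemma gives a common refinement; simplicity of $\sigma$ and $\tau$ then forces the blockings to coincide'' does not explain how simplicity of $\sigma$ and $\tau$ acts on the refinement, and it is not clear what structure the refinement has to exploit. What is actually needed is a \emph{projection} lemma complementing your closure lemma: if $\pi=\sigma[\alpha_1,\dots,\alpha_m]$ with position blocks $A_1,\dots,A_m$ and $K$ is any interval of $\pi$, then $\bar K=\{i : K\cap A_i\neq\emptyset\}$ is an interval of $\sigma$. When $\sigma$ is simple of length at least $4$, $\bar K$ must therefore be a singleton or all of $[m]$; the latter forces $K=[1,n]$ (this needs a short extra argument using $\sigma(1),\sigma(m)\notin\{1,m\}$, which holds for simple $\sigma$ of length at least $4$). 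From this it follows that the $A_i$ are exactly the maximal proper intervals of $\pi$, giving the uniqueness of $\sigma$ and, in the nonmonotone case, of the $\alpha_i$ simultaneously. For $\sigma\in\{12,21\}$ the step ``$\bar K=[m]$ implies $K=[1,n]$'' fails, which is the genuine source of non-uniqueness of the $\alpha_i$ there; it is not really a matter of ``regrouping adjacent blocks'' (there are only two), but of the two blocks themselves not being canonically determined when one of them is further sum- or skew-decomposable. One further nit: your phrase ``no union of a proper nonempty subset of its entries forms an interval of $\sigma$'' should read \emph{nontrivial}, and even so it holds equally for $12$ and $21$, so on its own it does not isolate the nonmonotone case.
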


Given two classes $\C$ and $\D$, the \emph{inflation} of $\C$ by $\D$ is defined as
\[
\C[\D]=\{\sigma[\alpha_1,\dots,\alpha_m]\st\mbox{$\sigma\in\C_m$ and $\alpha_1,\dots,\alpha_m\in\D$}\}.
\]

The class $\C$ of permutations is \emph{substitution closed} if $\sigma[\alpha_1,\dots,\alpha_m]\in\C$ for all $\sigma\in\C_m$ and $\alpha_1,\dots,\alpha_m\in\C$. The \emph{substitution closure} of $\C$, denoted $\langle\C\rangle$, is defined as the smallest substitution closed class containing $C$. Thus $\C$ and $\langle\C\rangle$ contain the same simple permutations, and one can also define $\langle\C\rangle$ as the largest permutation class containing the same simple permutations as $\C$. Finally, one can define the substitution closure of $\C$ constructively as
\[
	\langle\C\rangle
	=
	\C\cup\C[\C]\cup\C[\C[\C]]\cup\cdots.
\]

Two inflations in particular play prominent roles in our work: the \emph{sum} $\pi\oplus\sigma=12[\pi,\sigma]$ and the \emph{skew sum} $\pi\ominus\sigma=21[\pi,\sigma]$. A class $\C$ is said to be \emph{sum closed} if $\pi\oplus\sigma\in\C$ for all $\pi,\sigma\in\C$ and \emph{skew closed} if $\pi\ominus\sigma\in\C$ for all $\pi,\sigma\in\C$. Furthermore, a permutation is \emph{sum indecomposable} if it cannot be expressed as the sum of two shorter permutations and \emph{skew sum indecomposable} (or simply \emph{skew indecomposable}) if it cannot be expressed as the skew sum of two shorter permutations. Note that the inversion graph $G_{\pi\oplus\sigma}$ consists of the disjoint union of the inversion graphs $G_\pi$ and $G_\sigma$. Indeed, it follows readily that $\pi$ is sum indecomposable if and only if its inversion graph is connected.

Given a permutation class $\C$, its \emph{sum closure}, denoted by $\bigoplus\C$, is the smallest sum closed class containing $\C$. Equivalently,
\[
	\bigoplus\C
	=
	\{\pi_1\oplus\pi_2\oplus\cdots\oplus\pi_k : \mbox{$\pi_i\in\C$ for all $i$}\}.
\]
We analogously define the \emph{skew closure of $\C$} to be the smallest skew closed class containing $\C$ and denote this class by $\bigominus\C$. Counting sum (equivalently, skew) closed classes is routine, given the sequence of sum indecomposables:

\begin{proposition}
\label{prop-enum-oplus-closure}
The generating function for a sum closed class is $1/(1-g)$ where $g$ is the generating function for nonempty sum indecomposable permutations in the class.
\end{proposition}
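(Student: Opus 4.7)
The plan is to establish a length-preserving bijection between $\C$ and the set of all finite (possibly empty) sequences of nonempty sum indecomposable permutations in $\C$; from this bijection the identity $1/(1-g)$ follows from the standard sequence construction, $\sum_{k\ge 0}g^k = 1/(1-g)$, since the length of a sum is the sum of the lengths of the summands and generating functions therefore multiply.

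First I would verify the key structural fact: every nonempty permutation $\pi$ of length $n$ admits a unique decomposition $\pi = \pi_1 \oplus \pi_2 \oplus \cdots \oplus \pi_k$ in which each $\pi_i$ is sum indecomposable. Existence is immediate by repeatedly splitting any sum decomposable summand. For uniqueness, I would observe that the first summand $\pi_1$ must correspond to the prefix indexed by the smallest $j \ge 1$ such that $\{\pi(1),\ldots,\pi(j)\} = \{1,\ldots,j\}$: any such $j$ produces a valid top-level $\oplus$, and sum indecomposability of $\pi_1$ forces $j$ to be minimal. Removing this prefix and inducting on length delivers uniqueness. (This can also be viewed as a direct consequence of the interval structure underlying Proposition~\ref{simple-decomp-1}.)

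Once this is in hand, the two closure hypotheses on $\C$ combine to show that the bijection restricts correctly. Because $\C$ is a permutation class, it is closed downward under containment, so every sum indecomposable summand $\pi_i$ of a permutation $\pi \in \C$ is itself in $\C$. Conversely, because $\C$ is sum closed, every sum $\pi_1 \oplus \cdots \oplus \pi_k$ of sum indecomposable elements of $\C$ lies in $\C$. Hence the nonempty members of $\C$ are in length-preserving bijection with nonempty sequences of nonempty sum indecomposable members of $\C$, enumerated by $g + g^2 + \cdots = g/(1-g)$; adding the empty permutation gives generating function $1 + g/(1-g) = 1/(1-g)$. There is no genuine obstacle here, as the proposition is essentially a packaging of the unique sum decomposition with the sequence construction; the only point requiring (brief) argument is uniqueness of the sum indecomposable decomposition.
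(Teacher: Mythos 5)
The paper states this proposition without proof, dismissing it as routine; your argument is the standard one it implicitly has in mind (unique sum-indecomposable decomposition plus the sequence construction), and it is correct and complete.
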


If $\C$ is a sum closed class, then the injection $(\pi,\sigma)\mapsto\pi\oplus\sigma$ shows that the sequence $\{|\C_n|\}$ is \emph{supermultiplicative}, meaning that $|\C_m||\C_n|\le |\C_{m+n}|$ for all $m$ and $n$. As Arratia~\cite{arratia:on-the-stanley-:} was the first to observe, Fekete's Lemma implies that if the sequence $\{a_n\}$ is supermultiplicative then $\lim\sqrt[n]{a_n}$ exists (and equals $\sup\sqrt[n]{a_n}$). Thus we obtain the following.

\begin{proposition}[Arratia~\cite{arratia:on-the-stanley-:}]
\label{prop-arratia-gr}
Every sum or skew closed permutation class has a (possibly infinite) growth rate.
\end{proposition}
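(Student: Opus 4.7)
The plan is to deduce the statement directly from the multiplicative form of Fekete's Lemma, as the paragraph preceding the proposition already foreshadows. Two ingredients are required: that $\{|\C_n|\}$ is supermultiplicative, and that a supermultiplicative sequence of nonnegative reals admits a limit of $n$th roots in $[0,\infty]$.

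First I would verify supermultiplicativity. Assume $\C$ is sum closed. The map $(\pi,\sigma) \mapsto \pi\oplus\sigma$ sends $\C_m \times \C_n$ into $\C_{m+n}$ because $\C$ is closed under $\oplus$ and $|\pi\oplus\sigma|=m+n$. It is injective because, in $\pi\oplus\sigma = 12[\pi,\sigma]$, the first $m$ positions form a copy of $\pi$ and the last $n$ positions form a copy of $\sigma$, so the pair is recovered by pattern extraction. Hence $|\C_m||\C_n| \le |\C_{m+n}|$. For the skew closed case, the identical argument with $\ominus$ (i.e., $21[\pi,\sigma]$) in place of $\oplus$ produces the same inequality.

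Second, I would apply Fekete's Lemma in its multiplicative form: for any sequence $a_n \ge 0$ satisfying $a_m a_n \le a_{m+n}$, one has $\lim_{n\to\infty}\sqrt[n]{a_n} = \sup_{n\ge 1}\sqrt[n]{a_n} \in [0,\infty]$. Setting $a_n = |\C_n|$, the limit $\gr(\C) = \lim\sqrt[n]{|\C_n|}$ exists, possibly equal to $\infty$ (Marcus--Tardos rules out the infinite case for proper classes, but the statement does not require this refinement).

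There is no genuine obstacle; the only point requiring a moment's care is that Fekete's Lemma must be invoked without any a priori finiteness hypothesis on $\sup\sqrt[n]{a_n}$. Should one prefer a self-contained argument, the supremum version is proved in one line: letting $L = \sup_{k\ge 1}\sqrt[k]{a_k}$, fix any $M<L$ and choose $k$ with $\sqrt[k]{a_k}>M$; writing $n=qk+r$ with $0\le r<k$ and iterating supermultiplicativity gives $a_n \ge a_k^q$, so $\sqrt[n]{a_n}\ge a_k^{q/n}$, which tends to $a_k^{1/k}>M$ as $n\to\infty$. Thus $\liminf\sqrt[n]{a_n}\ge L$, matching the trivial upper bound.
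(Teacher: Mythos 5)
Your proof is correct and matches the paper's argument exactly: the paper likewise observes that $(\pi,\sigma)\mapsto\pi\oplus\sigma$ yields supermultiplicativity of $\{|\C_n|\}$ and then cites Fekete's Lemma (following Arratia) to conclude that $\lim\sqrt[n]{|\C_n|}$ exists. The only difference is that you spell out the injectivity check and a self-contained proof of the multiplicative Fekete Lemma, which the paper leaves implicit.
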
%

In Section~\ref{sec-phase-transition} we show that all growth rates below $\xi$ are  growth rates of sum closed classes. Thus our characterization of these growth rates in \cite{pantone:growth-rates-of:} reduces to a characterization of the possible sequences of sum indecomposable classes, though that effort is far from trivial. It is not a coincidence that $\xi$ is the growth rate of a sum closed class whose sum indecomposable members have the enumeration $1,1,2,3,4^\infty$ (where $4^\infty$ denotes an infinite sequence of $4$s), though it is perhaps a coincidence that the same growth rate is achieved by a sum closed class whose sum indecomposable members have the enumeration $1,1,2,4,3,3,2,1$.

It is frequently helpful to note that permutation containment is preserved by the symmetries of the square, i.e., the dihedral group with $8$ elements. This group is generated by the three symmetries reverse, complement, and inverse; given a permutation $\pi$ of length $n$ the \emph{reverse} of $\pi$ is the permutation $\pi^{\textrm r}$ defined by $\pi^{\textrm r}(i) = \pi(n+1-i)$, the \emph{complement} of $\pi$ is the permutation $\pi^{\textrm c}$ defined by $\pi^{\textrm c}(i) = n + 1 - \pi(i)$, and the (group-theoretic) \emph{inverse} of $\pi$ is the permutation $\pi^{-1}$ defined by $\pi^{-1}(\pi(i)) = \pi(\pi^{-1}(i)) = i$.

\begin{figure}
\begin{center}
	\begin{tikzpicture}[scale=0.2]
		\plotperm{5, 10, 3, 8, 2, 6, 1, 9, 4, 7};
		\plotpermbox{1}{1}{10}{5};
		\plotpermbox{1}{6}{10}{10};
	\end{tikzpicture}
\quad\quad
	\begin{tikzpicture}[scale=0.2]
		\plotperm{1, 10, 2, 9, 3, 8, 4, 7, 5, 6};
		\plotpermbox{1}{1}{10}{5};
		\plotpermbox{1}{6}{10}{10};
	\end{tikzpicture}
\quad\quad
	\begin{tikzpicture}[scale=0.2]
		\plotperm{1, 3, 5, 7, 9, 2, 4, 6, 8, 10};
		\plotpermbox{1}{1}{5}{10};
		\plotpermbox{6}{1}{10}{10};
	\end{tikzpicture}
\quad\quad
	\begin{tikzpicture}[scale=0.2]
		\plotperm{2,4,1,6,3,8,5,10,7,9};
		\plotpermbox{1}{1}{10}{10};
	\end{tikzpicture}
\quad\quad
	\begin{tikzpicture}[scale=0.2]
		\plotperm{8, 10, 6, 9, 4, 7, 2, 5, 1, 3};
		\plotpermbox{1}{1}{10}{10};
	\end{tikzpicture}
\end{center}
\caption{From left to right, an arbitrary vertical alternation, a vertical wedge alternation, a horizontal parallel alternation, an increasing oscillation, and a decreasing oscillation.}
\label{fig-alternation-oscillation}
\end{figure}
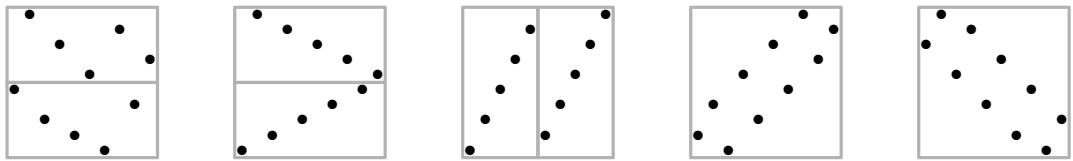

A \emph{vertical alternation} is a permutation in which every entry of odd index lies above every entry of even index, or the complement of such a permutation. For example, the permutations shown in the left and center of Figure~\ref{fig-alternation-oscillation} are vertical alternations. A \emph{horizontal alternation} is the inverse of a vertical alternation, i.e., a permutation in which every entry of odd value lies to the left of every entry of even value, or the reverse of such a permutation. The rightmost permutation drawn in Figure~\ref{fig-alternation-oscillation} is a horizontal alternation. We use the term \emph{alternation} refers to a permutation that is either a vertical or horizontal alternation.

Equivalently, an alternation is a permutation whose plot can be divided into two parts, by a single horizontal or vertical line, so that for every pair of entries from the same part there is an entry from the other part which \emph{separates} them, i.e., there is an entry from the other part which lies either horizontally or vertically between them. A \emph{parallel alternation} is an alternation in which these two sets of entries form monotone subsequences, either both increasing or both decreasing, while a \emph{wedge alternation} is one in which the two sets of entries form monotone subsequences pointing in opposite directions. Our next result follows routinely from the Erd\H{o}s--Szekeres Theorem.

\begin{proposition}
\label{prop-alt-par-wedge}
If a permutation class contains arbitrarily long alternations then it contains arbitrarily long parallel or wedge alternations.
\end{proposition}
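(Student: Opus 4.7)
The plan is to reduce to vertical alternations via the inverse symmetry and then extract a large parallel or wedge sub-alternation by two applications of the Erdős--Szekeres Theorem. Since permutation containment is preserved by the symmetries of the square and the inverse operation carries horizontal alternations to vertical ones (while preserving parallel versus wedge structure, which is about whether both monotone parts go the same way or opposite ways), it suffices to prove: if $\C$ contains arbitrarily long vertical alternations, then it contains arbitrarily long parallel or wedge alternations.

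Given a vertical alternation $\pi\in\C$ of length $2n$, after (if necessary) complementing we may assume its entries at odd positions all lie above those at even positions; label these $T_1,\dots,T_n$ and $B_1,\dots,B_n$ from left to right, so that $B_i$ lies in position between $T_i$ and $T_{i+1}$. Apply Erdős--Szekeres to the sequence of top values to obtain a monotone subsequence $T_{i_1},\dots,T_{i_m}$ with $m\ge\lceil\sqrt{n}\,\rceil$. Next, consider the bottom entries $B_{i_1},\dots,B_{i_{m-1}}$ (each $B_{i_j}$ lies in position between the chosen tops $T_{i_j}$ and $T_{i_{j+1}}$ since $i_j+1\le i_{j+1}$) and apply Erdős--Szekeres again to extract a monotone subsequence $B_{i_{j_1}},\dots,B_{i_{j_\ell}}$ of length $\ell\ge\lceil\sqrt{m-1}\,\rceil$. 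Now form the subpermutation of $\pi$ consisting, in position-order, of
\[
T_{i_{j_1}},\; B_{i_{j_1}},\; T_{i_{j_1}+1},\; \dots,\; T_{i_{j_k}},\; B_{i_{j_k}},\; T_{i_{j_k}+1},\; \dots,\; T_{i_{j_\ell}},\; B_{i_{j_\ell}},\; T_{i_{j_\ell}+1}.
\]
Each $T_{i_{j_k}+1}$ precedes $T_{i_{j_{k+1}}}$ (or equals it) in position, so after deleting duplicates the chosen entries strictly alternate top--bottom--top--$\cdots$--top, giving a vertical alternation of length $\ge 2\ell+1$ in $\C$. The tops in this selection form a subsequence of our originally chosen monotone tops and are therefore monotone, while the bottoms are monotone by the second application of Erdős--Szekeres; hence the resulting alternation is either parallel or wedge. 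Letting $n\to\infty$ forces $\ell\to\infty$, which produces arbitrarily long parallel or wedge alternations in $\C$.

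The only real obstacle is bookkeeping: ensuring that after two applications of Erdős--Szekeres the retained tops and bottoms still strictly interleave in position so that the subpermutation really is a vertical alternation in the formal sense (odd-indexed positions all above even-indexed positions in the relabelled word), and not merely a permutation whose entries happen to split into two monotone bands. Once one chooses the flanking tops $T_{i_{j_k}}$ and $T_{i_{j_k}+1}$ around each selected bottom $B_{i_{j_k}}$, this is automatic, and no further ideas are needed beyond the classical Erdős--Szekeres bound.
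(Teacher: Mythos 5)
Your overall strategy is exactly the routine two-pass Erd\H{o}s--Szekeres argument the paper has in mind when it says the proposition ``follows routinely from the Erd\H{o}s--Szekeres Theorem,'' and the symmetry reduction to vertical alternations is fine. However, the explicit selection you form has a genuine flaw, caused entirely by the extra entries $T_{i_{j_k}+1}$.

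Those entries are both unnecessary and harmful. First, they need not belong to the monotone subsequence produced by the first Erd\H{o}s--Szekeres application: if the chosen tops are $T_{i_1},\dots,T_{i_m}$, there is no reason for $T_{i_{j_k}+1}$ to be one of them, so your claim that ``the tops in this selection form a subsequence of our originally chosen monotone tops and are therefore monotone'' is false. Second, the resulting subpermutation does not alternate: whenever $i_{j_k}+1<i_{j_{k+1}}$, the tops $T_{i_{j_k}+1}$ and $T_{i_{j_{k+1}}}$ are distinct and are consecutive in your selection with no bottom between them, so the pattern looks like $T,B,T,T,B,\dots$; ``after deleting duplicates'' does nothing here because there is no duplicate, and two consecutive tops with no separating bottom means the subpermutation is not a vertical alternation at all.

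The repair is to drop the $T_{i_{j_k}+1}$'s entirely. Taking just
\[
T_{i_{j_1}},\; B_{i_{j_1}},\; T_{i_{j_2}},\; B_{i_{j_2}},\;\dots,\; T_{i_{j_\ell}},\; B_{i_{j_\ell}}
\]
already strictly interleaves in position: $B_{i_{j_k}}$ sits at position $2i_{j_k}$ while $T_{i_{j_{k+1}}}$ sits at position $2i_{j_{k+1}}-1\ge 2i_{j_k}+1$. All tops lie above all bottoms, the tops are monotone by the first Erd\H{o}s--Szekeres pass, and the bottoms are monotone by the second, yielding a parallel or wedge vertical alternation of length $2\ell$. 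With that correction the argument is complete and matches the intended proof.
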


We need another special family of permutations, those whose inversion graphs are paths. If the inversion graph $G_\pi$ is a path, we call $\pi$ an \emph{increasing oscillation}. This term dates back to Murphy's thesis~\cite{murphy:restricted-perm:}, though note that under our definition, the permutations $1$, $21$, $231$, and $312$ are increasing oscillations while in other works they are not. By direct construction, the increasing oscillations can be seen to be precisely the sum indecomposable permutations which are order isomorphic to subsequences of the \emph{increasing oscillating sequence},
\[
	2,4,1,6,3,8,5,\dots,2k,2k-3,\dots.
\]
Symmetrically, there are two \emph{decreasing oscillations} of each length; these are the reverses of increasing oscillations. Examples are shown on the right of Figure~\ref{fig-alternation-oscillation}.

We let $\O_I$ denote the downward closure of the set of increasing oscillations, $\O_D$ the downward closure of the set of decreasing oscillations, and $\O=\O_I\cup\O_D$ the downward closure of all oscillations. Note that all oscillations of length at least $4$ are simple permutations. Indeed, together with alternations they constitute unavoidable subpermutations of simple permutations:

\begin{theorem}[Brignall, Ru\v{s}kuc, and Vatter~{\cite[Theorem 13]{brignall:simple-permutat:decide:}}]
\label{thm-alt-or-osc}
If a permutation class contains arbitrarily long simple permutations then it contains arbitrarily long alternations or oscillations\footnote{The graph theoretical analogue of this result was later established by Chudnovsky, Kim, Oum, and Seymour~\cite{chudnovsky:unavoidable-ind:}.}.
\end{theorem}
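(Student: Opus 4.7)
The plan is to show that every sufficiently long simple permutation must contain either a long alternation or a long oscillation as a pattern; iterating this observation across a sequence of longer and longer simple permutations in $\C$ then immediately yields the theorem. To begin, I would fix a simple $\pi\in\C$ of large length $N$ and apply the Erd\H{o}s--Szekeres Theorem to extract a monotone subsequence whose length tends to infinity with $N$. After passing to a symmetry of the square we may assume this subsequence is increasing, with entries $m_1<\cdots<m_k$ at positions $p_1<\cdots<p_k$.

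Next, I would exploit simplicity to introduce \emph{splitters}. For each $j<k$ the two-element set $\{p_j,p_{j+1}\}$ cannot be an interval of $\pi$, since that would be a nontrivial interval, so there is an entry $s_j$ of $\pi$ whose presence prevents this: either $s_j$ has position in $(p_j,p_{j+1})$ with value outside $[m_j,m_{j+1}]$, or $s_j$ has value in $(m_j,m_{j+1})$ with position outside $[p_j,p_{j+1}]$. Each $s_j$ falls into one of a bounded number of types according to its position and value relative to the pair's bounding box, so a pigeonhole pass over the $k-1$ splitters lets us thin $M$ to a long sub-subsequence on which every splitter is of one fixed type.

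The remainder of the argument is a case analysis on this type. Splitters lying uniformly above (or below) all of $M$ interleave with $M$ to form a parallel alternation. Splitters lying in the horizontal strip between $p_j$ and $p_{j+1}$ with extreme values, or symmetrically in the vertical strip with extreme positions, likewise yield parallel or wedge alternations once a further pigeonhole fixes the direction of their monotonicity; Proposition~\ref{prop-alt-par-wedge} is available if only the looser conclusion ``alternation'' is extracted at first. The hardest case is when every splitter has value strictly between those of the pair it splits while its position lies outside the pair's horizontal span (or the symmetric vertical version): here consecutive splitters attach on alternating sides, and a second Ramsey-style extraction, applied to pairs of consecutive splitters, is required to force the rigid zig-zag defining the increasing oscillating sequence $2,4,1,6,3,8,5,\ldots$, so that the extracted subpermutation is genuinely an oscillation. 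I expect the main obstacle to lie here: a single pigeonhole is not enough, and one must verify that after this second extraction every surviving configuration either collapses into an alternation case already handled or produces a bona fide oscillation whose length grows with $N$.
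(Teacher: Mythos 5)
This theorem is not proved in the paper at all; it is cited verbatim from Brignall, Ru\v{s}kuc, and Vatter~\cite{brignall:simple-permutat:decide:}, so there is no ``paper's own proof'' to compare against. For what it is worth, the proof in that reference proceeds via \emph{pin sequences} (a growth process that repeatedly escapes and separates the bounding box of the entries chosen so far), together with the fact that every sufficiently long simple permutation admits a long proper pin sequence and that long proper pin sequences force long alternations or oscillations. Your route---Erd\H{o}s--Szekeres to extract a long monotone base, then ``splitters'' witnessing simplicity for consecutive pairs, then Ramsey-type refinement---is genuinely different, and attractive in that it only invokes tools already present in this paper (the Erd\H{o}s--Szekeres Theorem and repeated pigeonholing), but two of its steps have real gaps as written.

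First, the claim that ``the two-element set $\{p_j,p_{j+1}\}$ cannot be an interval'' is false on its face: if $p_{j+1}>p_j+1$ the set is not a contiguous block of positions and so is vacuously not an interval, which yields no splitter. The correct version of the lemma needs the base sequence to be a \emph{maximal} increasing subsequence (extending the Erd\H{o}s--Szekeres one), so that the box $[p_j,p_{j+1}]\times[m_j,m_{j+1}]$ contains no entries besides its two corners; one then argues that the absence of both a vertical and a horizontal splitter forces $p_{j+1}=p_j+1$ and $m_{j+1}=m_j+1$ simultaneously, producing a nontrivial interval of size two and contradicting simplicity. This is fixable, but it is a different argument than the one you state.

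Second, and more seriously, the oscillation case is not reduced to a clean Ramsey extraction. After pigeonholing all splitters to a fixed side, say horizontal and to the right, the position of $s_j$ need only exceed $p_{j+1}$; there is no fixed ``shift'' $c$ for which $s_j$ lands between $p_{j+c}$ and $p_{j+c+1}$, and the splitters could scatter across the base positions in many ways, or all escape past $p_k$ (in which case you fall back to an alternation, not an oscillation). Even when the interleaving is regular, the permutation you read off is typically an oscillation with an extra stray entry (e.g.\ $1\oplus(\text{oscillation})$), so you must explain how a genuine oscillation of comparable length is extracted as a pattern. You flag this yourself as ``the main obstacle,'' and I agree: making the alternating-side/regular-interleave dichotomy precise, and showing that it exhausts all cases once a further Erd\H{o}s--Szekeres pass on the splitter positions and one more pigeonhole are applied, is the bulk of the work and is not yet done. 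Until that case analysis is carried out, the proposal is an outline rather than a proof.
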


Combining Theorem~\ref{thm-alt-or-osc} with Proposition~\ref{prop-alt-par-wedge} we see that if a class contains arbitrarily long simple permutations then it contains arbitrarily long oscillations, parallel alternations, or wedge alternations. The converse does not hold because wedge alternations are not simple. Nevertheless a procedure to decide if a class (specified by a finite basis) contains infinitely many simple permutations was outlined in \cite{brignall:simple-permutat:decide:}, and a more efficient procedure was later described by Bassino, Bouvel, Pierrot, and Rossin~\cite{bassino:an-algorithm-fo:}.

We make use of two more results concerning simple permutations themselves. The first is quite elementary.

\begin{proposition}[Vatter~\cite{vatter:small-configura:}]
\label{prop-simple-four}
Every entry in a simple permutation $\pi$ of length at least $4$ is contained in a copy of $2413$ or $3142$ or participates as the \emph{`$3$'} in a copy of $25314$ or $41352$.
\end{proposition}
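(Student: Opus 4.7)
The plan is to argue by contrapositive. Fix an entry $x$ in the simple permutation $\pi$ with $|\pi|\ge 4$ and assume that $x$ lies in no copy of $2413$ or $3142$; I will produce a copy of $25314$ or $41352$ in which $x$ plays the role of the ``$3$''. Throughout, the eight-element dihedral symmetry of the square (reverse, complement, inverse) will be used to collapse equivalent cases, and the fact that $2413$ and $3142$ are precisely the simple permutations of length $4$ will be the driving structural input.

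First, partition the entries of $\pi\setminus\{x\}$ into the four open quadrants NE, NW, SE, SW determined by $x$. The hypothesis that $x$ avoids every $2413$ and $3142$ imposes strong monotonicity and separation constraints: by checking each of the four roles $x$ could play in each forbidden pattern and invoking the dihedral symmetry, I expect to show that the entries of $\pi$ around $x$ are forced into a highly restricted configuration, in which each quadrant supports only a monotone subsequence and the monotonicities of opposite quadrants are coordinated so that the overall picture resembles a bounded pair of parallel or wedge alternations about $x$. Under these constraints, $\{x\}$ together with its extremal neighbours in the appropriate strands is on the verge of forming a nontrivial interval of $\pi$.

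Now I appeal to simplicity. Because $\pi$ is simple and $|\pi|\ge 4$, no such nontrivial interval can exist, so there must be a ``separator'' entry that destroys the would-be interval identified above. The closing step is to check where such a separator can sit without itself creating a copy of $2413$ or $3142$ that contains $x$; this pins it to exactly the configurations in which $x$ is the ``$3$'' of a $25314$ or $41352$, these two patterns being exchanged by the reverse symmetry so that one case handles both. The main obstacle is the case analysis of the first step, where the four possible roles of $x$ in each of $2413$ and $3142$ interact with the eight symmetries of the square and one must verify that a configuration excluded from placing $x$ in one role is also excluded from the others. Once the quadrant structure has been rendered sufficiently rigid, the separator argument of the final step is essentially forced by simplicity.
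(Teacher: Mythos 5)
Your overall framework — fix an entry $x$ avoiding $2413$ and $3142$, partition $\pi\setminus\{x\}$ into the four quadrants of $x$, and eventually appeal to simplicity — is a plausible direction, but as written the argument has a genuine gap at the first step, and the logical order of the ingredients is off.

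You assert that the hypothesis ``$x$ lies in no copy of $2413$ or $3142$'' \emph{by itself} forces each quadrant to be monotone, and you postpone the appeal to simplicity to a later stage. That derivation does not go through. Take the NE quadrant for concreteness. A $21$ in NE, say with $a$ up-left and $b$ down-right, combines with $x$ to give a pattern $132$, and this extends to a forbidden pattern through $x$ in exactly two ways: to $2413$ with $x$ as the `$2$' if some SE entry sits positionally between $a$ and $b$, and to $3142$ with $x$ as the `$1$' if some NW entry has value strictly between $\mathrm{val}(b)$ and $\mathrm{val}(a)$. When neither of these auxiliary entries exists, the pattern-avoidance hypothesis simply has nothing to say about the $21$ in NE — it is not itself forbidden. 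Killing it requires simplicity, and not in a one-line way: one has to pick a $21$ in NE whose open bounding box contains no NE point (so the box is actually empty of $\pi$-entries), note that any separator provided by simplicity must itself lie in NE (SE position-between and NW value-between are the forbidden cases, and SW entries cannot separate a pair that is strictly northeast of $x$), observe that this separator then creates a fresh $21$ in NE, and supply a well-chosen minimality or well-ordering argument to show this cannot recur indefinitely without producing a nontrivial interval. That last piece is not a routine symmetry chase; none of the obvious measures (bounding-box containment, side lengths, area) is monotone under the separator move, so you would need to find the right one. In short, the monotone-quadrant claim is an output of the simplicity hypothesis, not an input to it, and your proof outline has the dependency reversed.

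The closing step suffers from a similar problem of under-specification. You describe the separator analysis — ``check where such a separator can sit without itself creating a $2413$ or $3142$'' — as ``essentially forced,'' but this is precisely where the real case analysis lives. You never identify which near-interval you are forming (which representative from each quadrant, chosen by what extremal criterion), and a $25314$ or $41352$ with $x$ as the `$3$' requires five entries — one in each of the four quadrants plus $x$ — with a quite specific interleaving, so a single separator added to a three-element ``near-interval'' does not yet give the target configuration. There is more than one application of simplicity hiding in the final step, and none of the configurations that can arise are actually checked. As it stands, the proposal is a reasonable heuristic sketch in the right spirit, but both the rigidity step and the endgame contain unproved assertions that cannot be waved away as symmetry bookkeeping.
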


The second was first proved in the much more general context of binary relational structures; a more elementary proof specialized to the permutation context is given in Brignall and Vatter~\cite{brignall:a-simple-proof-:}.

\begin{theorem}[Schmerl and Trotter~\cite{schmerl:critically-inde:}]\label{thm-schmerl-trotter}
Every simple permutation of length $n\ge 5$ which is not a parallel alternation contains a simple permutation of every length $5\le m\le n$.
\end{theorem}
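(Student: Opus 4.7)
The plan is to proceed by downward induction on the length, reducing the theorem to the following single-step claim: if $\pi$ is a simple permutation of length $n\ge 6$ which is not a parallel alternation, then $\pi$ contains a simple permutation of length $n-1$ which itself is either of length $5$ or again not a parallel alternation. Iterating this claim from $\pi$ of length $n$ down to length $5$ produces simple permutations of every intermediate length, proving the theorem.

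To establish the claim, I would analyze the \emph{critical} entries of $\pi$, meaning those entries $e$ such that $\pi$ with $e$ removed fails to be simple. For each such $e$, the reduced permutation $\pi\setminus e$ must contain a non-trivial interval $I$. Because $\pi$ itself is simple, adding $e$ back must break $I$, so $e$ cannot fall entirely outside the rectangle spanned by $I$; instead, $e$ must lie strictly inside either the positional span or the value span of $I$ (but not the other), and must sit on the correct side to destroy $I$ when reinserted. This pins the local structure around any critical entry into a very rigid configuration, and one can further verify that an entry $e$ can be critical for at most a bounded number of distinct intervals $I$.

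The technical heart of the argument is the structural claim that a simple permutation $\pi$ of length $n\ge 6$ with three or more critical entries must be a parallel alternation. I would prove this by taking three critical entries $e_1,e_2,e_3$, writing down the intervals $I_1,I_2,I_3$ they respectively break, and performing a case analysis on how the $I_j$'s can overlap: two intervals in a simple permutation are either disjoint, nested, or one contains a critical entry of the other, and in each sub-case the positional/value constraints on the $e_j$'s force the entries of $\pi$ to split into two interleaved monotone subsequences with the separation property defining a parallel alternation. Applying the symmetries of the square lets me reduce the case analysis substantially; Proposition~\ref{prop-simple-four} and Theorem~\ref{thm-alt-or-osc} may also be invoked to rule out ``exotic'' configurations. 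Once the structural claim is in hand, a non-parallel-alternation $\pi$ of length $n\ge 6$ has at most two critical entries, hence at least $n-2\ge 4$ non-critical ones, so we can delete a non-critical entry to obtain a simple permutation of length $n-1$; a brief check shows that among the $\ge 4$ choices, at least one deletion does not produce a parallel alternation (since a parallel alternation of length $n-1$ can be obtained from $\pi$ by at most one single-entry deletion, as parallel alternations are determined up to symmetry by a few entries).

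The main obstacle will be the case analysis in the critical-entries claim, in particular the subtle interaction between the separating roles of multiple critical entries and the need to show that the forced local monotone structures glue together globally into a parallel alternation rather than a wedge alternation or oscillation; handling the base cases $n=6,7$ directly will likely be necessary, since the structural dichotomy is cleanest once there is enough room to find three genuinely distinct critical configurations. A secondary nuisance is verifying in each symmetry class that the non-trivial interval $I$ breaking $\pi\setminus e$ really is forced to be monotone, rather than merely simple — this follows from the fact that a non-monotone interval would itself contain substructure violating the simplicity of $\pi$ once $e$ is reinserted, but the verification requires care.
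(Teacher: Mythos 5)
The paper does not prove this theorem: it is cited directly from Schmerl and Trotter~\cite{schmerl:critically-inde:}, and the text points out that a more elementary permutation-specific proof appears in Brignall and Vatter~\cite{brignall:a-simple-proof-:}. So there is no in-paper argument to compare against; I will instead evaluate your sketch on its own terms.

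Your proposal has a genuine gap: the ``technical heart,'' namely the claim that a simple permutation of length $n\ge 6$ with three or more critical entries must be a parallel alternation, is false. Consider $\pi = 4\,1\,5\,7\,2\,6\,3$. One checks directly that $\pi$ is simple (no interval of positions carries a contiguous set of values) and that it is not a parallel alternation (no horizontal or vertical split yields two monotone parts). Yet four of its seven entries are critical: deleting the entry of value $4$ gives $1\,4\,6\,2\,5\,3$, which has the interval on positions $2$--$6$; deleting $1$ gives $3\,4\,6\,1\,5\,2$ with the interval $\{3,4\}$ at positions $1$--$2$; deleting $2$ gives $3\,1\,4\,6\,5\,2$ with the interval $\{5,6\}$ at positions $4$--$5$; and deleting $6$ gives $4\,1\,5\,6\,2\,3$ with the interval $\{5,6\}$ at positions $3$--$4$. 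Only the three entries $5$, $7$, and $3$ are non-critical. So a non-parallel-alternation simple permutation can have far more than two critical entries, and your counting argument (``at most two critical entries, hence at least $n-2$ non-critical ones'') collapses. The secondary assertion, that at most one single-entry deletion can produce a parallel alternation, is likewise unsubstantiated and is not obviously true.

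The known route is weaker in exactly the right way: one classifies the \emph{critically} simple permutations, those in which \emph{every} entry is critical, and shows these are precisely the (simple) parallel alternations. That lemma is enough, because it guarantees a non-parallel-alternation simple $\pi$ has at least one non-critical entry; the remaining work is to control which deletion to take so that the resulting simple permutation of length $n-1$ can continue the chain, and this is where the real content of Brignall and Vatter's argument lies. Your sketch, by contrast, tries to prove a much stronger quantitative bound on the number of critical entries, and that bound simply does not hold.
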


Another concept which plays a central role in this work is that of well-quasi-order. Recall that a quasi-order (a reflexive and transitive binary relation) is \emph{well-quasi-ordered} (or forms a \emph{well-quasi-order}) if it has neither infinite antichains (sets of pairwise incomparable members) nor infinite strictly decreasing chains. Note that the containment order on permutations is a partial order and hence a quasi-order. Furthermore, as the containment order does not allow for infinite strictly decreasing chains, in this context well-quasi-order is synonymous with the lack of infinite antichains.

A fundamental result used to establish well-quasi-order is Higman's Lemma~\cite{higman:ordering-by-div:}. Suppose that $(P,\le)$ is any poset. Then the \emph{generalized subword order} on $P^\ast$ is defined by $v\le w$ if there are indices $1\le i_1<i_2<\cdots<i_{|v|}\le|w|$ such that $v(j)\le w(i_j)$ for all $j$. (The regular subword order on words over the alphabet $\Sigma$ is obtained from this general version by taking the poset $(\Sigma,\le)$ to be an antichain.) We then have the following.

\begin{higmans-lemma*}
If $(P,\le)$ is well-quasi-ordered then $P^*$, ordered by the subword order, is also well-quasi-ordered.
\end{higmans-lemma*}

The following is one of the first results on well-quasi-ordering of permutations. We use a generalization of it (Theorem~\ref{thm-subst-geom-inflate-wqo}) in Section~\ref{sec-wqo}.

\begin{theorem}[Albert and Atkinson~\cite{albert:simple-permutat:}]
\label{thm-fin-simples-wqo}
Every permutation class with only finitely many simple permutations is well-quasi-ordered.	
\end{theorem}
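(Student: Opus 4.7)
The plan is to use the substitution decomposition together with a minimal bad sequence argument. By Proposition~\ref{simple-decomp-1}, every $\pi \in \C$ of length at least $2$ has a unique top-level decomposition of exactly one of the following forms: $\pi = \alpha_1 \oplus \cdots \oplus \alpha_k$ with each $\alpha_i$ sum indecomposable; $\pi = \alpha_1 \ominus \cdots \ominus \alpha_k$ with each $\alpha_i$ skew indecomposable; or $\pi = \sigma[\alpha_1, \ldots, \alpha_m]$ with $\sigma \in \C$ simple of length $m \ge 4$ and the $\alpha_i$ uniquely determined. Crucially, because $\C$ has only finitely many simple permutations, the possible root labels appearing across all $\pi \in \C$ form a finite set (the finitely many simples in $\C$, together with $\oplus$ and $\ominus$).

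The plan is now to assume, for a contradiction, that $\C$ contains an infinite antichain, and then to extract a minimal bad sequence $\pi_1, \pi_2, \ldots$ in the sense of Nash--Williams: each $\pi_i$ is chosen of minimum length subject to being extendable to a bad (i.e., antichain-containing) sequence after $\pi_1, \ldots, \pi_{i-1}$. The standard consequence is that the set $\C^\ast$ of all permutations which occur as proper subpermutations of some $\pi_i$ is itself well quasi-ordered, since any bad sequence in $\C^\ast$ could be grafted onto an appropriate prefix of the minimal bad sequence to contradict minimality. By pigeonhole on the finite set of possible root labels, I pass to a further subsequence in which every $\pi_i$ has the same top-level label.

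In each of the three cases I then derive a contradiction using the well quasi-ordering of $\C^\ast$. If the common root label is $\oplus$, write $\pi_i = \alpha_1^{(i)} \oplus \cdots \oplus \alpha_{k_i}^{(i)}$ with the $\alpha_j^{(i)}$ sum indecomposable elements of $\C^\ast$; Higman's Lemma applied to the wqo of sum indecomposables in $\C^\ast$ produces $i < j$ and indices $1 \le \ell_1 < \cdots < \ell_{k_i} \le k_j$ with $\alpha_s^{(i)} \le \alpha_{\ell_s}^{(j)}$ for every $s$, whence $\pi_i \le \pi_j$. The skew case is identical. If the common root label is a fixed simple $\sigma$ of length $m$, then $\pi_i = \sigma[\alpha_1^{(i)}, \ldots, \alpha_m^{(i)}]$ with each $\alpha_s^{(i)} \in \C^\ast$; iterated extraction (or equivalently Dickson's Lemma) inside the wqo $\C^\ast$ yields $i < j$ with $\alpha_s^{(i)} \le \alpha_s^{(j)}$ for every coordinate $s$, and hence $\pi_i \le \pi_j$. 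Either conclusion contradicts the assumption that the $\pi_i$ form an antichain.

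The main obstacle, as always with minimal bad sequence arguments, is step two: verifying carefully that $\C^\ast$ is well quasi-ordered, since this requires the usual diagonal/choice manoeuvre to paste a hypothetical bad sequence in $\C^\ast$ onto an initial segment of the minimal bad sequence. The only permutation-specific input is the observation that enlarging the intervals componentwise in a substitution $\sigma[\alpha_1, \ldots, \alpha_m]$ preserves the containment order on the resulting permutations, which is immediate from the definition of inflation.
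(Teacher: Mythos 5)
Your proof is correct, and the argument is essentially the standard one. A few remarks.

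First, the paper itself states this result and cites Albert and Atkinson~\cite{albert:simple-permutat:} without reproducing a proof, so there is no in-paper argument to compare against directly. Your proof is the classical Nash--Williams minimal-bad-sequence argument adapted to the substitution decomposition, which is also the underlying engine of the original proof. The two key observations you use---that the finite set of simple permutations (together with $\oplus$ and $\ominus$) gives a finite alphabet of root labels, and that containment of componentwise-smaller inflations follows from the definition of $\sigma[\alpha_1,\dots,\alpha_m]$---are exactly the right permutation-specific facts, and the rest is bookkeeping.

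The one step you flag as delicate (that $\C^\ast$ is wqo) is handled correctly by the standard ``graft the bad sequence onto a prefix'' argument: if $\sigma_1,\sigma_2,\dots$ were a bad sequence in $\C^\ast$ with $\sigma_j$ a proper subpermutation of $\pi_{i_j}$, choose $j_0$ with $i_{j_0}=\min_j i_j=:k$ and note that $\pi_1,\dots,\pi_{k-1},\sigma_{j_0},\sigma_{j_0+1},\dots$ is bad (a $\pi_a\le\sigma_b$ with $a<k$ would give $\pi_a<\pi_{i_b}$ with $a<i_b$, contradicting badness of the original sequence), yet $|\sigma_{j_0}|<|\pi_k|$, contradicting minimality.

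Two small things worth making explicit in a polished writeup: (i) no term of a bad sequence can be the permutation $1$, since $1$ embeds in everything, so every $\pi_i$ has length at least $2$ and the top-level decomposition applies; and (ii) when the root is a simple $\sigma$ of length $m\ge 4$, each $\alpha_s^{(i)}$ genuinely lands in $\C^\ast$ because $m\ge 2$ forces $|\alpha_s^{(i)}|<|\pi_i|$. Both are immediate but should be said. Albert and Atkinson's published argument proceeds via encoding members of the class as terms over a finite signature and invoking the tree version of Higman's theorem; your direct minimal-bad-sequence version is more self-contained and equally valid.
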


We conclude this section with two consequences of well-quasi-order. Both are essentially folklore.

\begin{proposition}
\label{prop-wqo-subclasses-countable}
A well-quasi-ordered permutation class contains only countably many subclasses.
\end{proposition}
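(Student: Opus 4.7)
The plan is to associate to each subclass $\D\subseteq\C$ a finite ``relative basis'' inside $\C$, and then bound the number of such finite antichains. Given a subclass $\D\subseteq\C$, define
\[
	B_\D=\{\pi\in\C\setminus\D\st\pi\text{ is minimal in }\C\setminus\D\text{ under containment}\}.
\]
Since any two elements of $B_\D$ are incomparable and $\C$ is well quasi-ordered, $B_\D$ is a finite antichain.

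Next I would verify that $\D\mapsto B_\D$ is injective by showing $\D=\{\pi\in\C\st\pi\text{ avoids every }\beta\in B_\D\}$. The inclusion from left to right follows because $\D$ is a downset and $B_\D\subseteq\C\setminus\D$. For the reverse inclusion, take any $\pi\in\C\setminus\D$; because $\C$ contains no infinite strictly descending chain (a consequence of being well quasi-ordered together with the fact that $\le$ is antisymmetric on permutations), the set of elements of $\C\setminus\D$ contained in $\pi$ has a minimal element, which by definition lies in $B_\D$. Thus $\pi$ contains a member of $B_\D$, and the characterization of $\D$ in terms of $B_\D$ follows.

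Finally, since $\C$ consists of finite permutations it is itself countable, and the collection of finite subsets of a countable set is countable; restricting to antichains only makes this collection smaller. Hence there are only countably many possibilities for $B_\D$, and by injectivity only countably many subclasses $\D$.

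The argument is essentially a bookkeeping exercise and I do not anticipate a genuine obstacle; the only subtle point is making explicit that well quasi-order rules out the pathological situation where an element of $\C\setminus\D$ fails to lie above any minimal element of $\C\setminus\D$, which is handled by the absence of infinite descending chains.
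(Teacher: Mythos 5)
Your proof is correct and is essentially the same as the paper's: the paper states tersely that every subclass of $\C$ has the form $\C\cap\Av(B)$ for a finite antichain $B\subseteq\C$, and you have simply spelled out the construction of that antichain $B_\D$ as the minimal elements of $\C\setminus\D$ and verified the bijective correspondence explicitly. One small remark: the lack of infinite strictly descending chains in permutation containment needs no appeal to well quasi-order; it follows immediately from the fact that a proper subpermutation has strictly smaller length.
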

\begin{proof}
Let $\C$ be a well-quasi-ordered permutation class. Every subclass of $\C$ can be expressed as $\C\cap\Av(B)$ for an antichain $B\subseteq\C$. Because $\C$ is well-quasi-ordered, every such set $B$ must be finite, so there are only countably many subclasses.	
\end{proof}

\begin{proposition}
\label{prop-wqo-subclasses-dcc}
The subclasses of any well-quasi-ordered permutation class satisfy the \emph{descending chain condition}, i.e., if $\C$ is a well-quasi-ordered class, there does not exist an infinite sequence $\C=\C^{(0)}\supsetneq \C^{(1)}\supsetneq \C^{(2)}\supsetneq\cdots$ of permutation classes.
\end{proposition}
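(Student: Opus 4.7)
The plan is a short proof by contradiction using the standard characterization of well quasi-order: a quasi-order is a well quasi-order if and only if every infinite sequence contains a pair of indices $i < j$ with the $i$th term below the $j$th term (otherwise one could extract either an infinite antichain or an infinite strictly descending chain).

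Suppose, toward a contradiction, that $\C = \C^0 \supsetneq \C^1 \supsetneq \C^2 \supsetneq \cdots$ is an infinite strictly descending chain of permutation classes contained in $\C$. For each $i \ge 0$, strictness of the containment $\C^i \supsetneq \C^{i+1}$ lets me choose a permutation
\[
    \pi_i \in \C^i \setminus \C^{i+1}.
\]
Since the $\C^j$ are nested, $\pi_i \notin \C^j$ for every $j \ge i+1$.

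Now I apply well quasi-order to the infinite sequence $\pi_0, \pi_1, \pi_2, \ldots$ inside $\C$: there must exist indices $i < j$ with $\pi_i \le \pi_j$ in the containment order. But $\pi_j \in \C^j$ and permutation classes are downward closed, so $\pi_i \in \C^j$. Because $j \ge i+1$, this forces $\pi_i \in \C^{i+1}$, contradicting the choice of $\pi_i$.

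There is essentially no obstacle here; the only subtlety is invoking the correct form of well quasi-order (the "no bad sequence" formulation rather than just the absence of infinite antichains), but this is automatic since the containment order on permutations has no infinite strictly descending chains (finite permutations have only finitely many subpermutations), so the two formulations coincide for this poset.
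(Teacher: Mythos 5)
Your proof is correct and follows essentially the same approach as the paper: choose a witness in each difference $\C^i\setminus\C^{i+1}$ and use well quasi-order to obtain a comparable pair in the wrong direction, contradicting downward closure. The only cosmetic difference is that you invoke the ``good pair'' (no bad sequence) characterization of well quasi-order directly, whereas the paper unwinds that characterization on the spot by observing that the set of minimal elements among the chosen witnesses is a finite antichain.
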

\begin{proof}
Suppose to the contrary that the well-quasi-ordered class $\C$ were to contain an infinite strictly decreasing sequence of subclasses $\C=\C^{(0)}\supsetneq \C^{(1)}\supsetneq \C^{(2)}\supsetneq\cdots$. For each $i\ge 1$, choose $\beta_i\in\C^{(i-1)}\setminus\C^{(i)}$. The set of minimal elements of $\{\beta_1,\beta_2\ldots\}$ is an antichain and therefore finite, so there is an integer $m$ such that $\{\beta_1,\beta_2\ldots,\beta_m\}$ contains these minimal elements. In particular, $\beta_{m+1}\ge\beta_i$ for some $1\le i\le m$. However, we chose $\beta_{m+1}\in\C^{(m)}\setminus\C^{(m+1)}$, and because $\beta_{m+1}$ contains $\beta_i$, it does not lie in $\C^{(i)}$ and thus cannot lie in $\C^{(m)}$, a contradiction.
\end{proof}

\section{Gridding Intermediate Classes}
\label{sec-gridding}

The process of gridding a permutation class consists of partitioning the entries of each of its members into a bounded number of rectangles, each containing a subsequence order isomorphic to a member of a specified class. In this section we show that all classes with growth rates less than $\xi$ can be gridded into relatively well-behaved classes. In Section~\ref{sec-slicing} we further refine these griddings using the results established in the intervening sections.

Before describing this concept further we must establish some notation. First, in order for indices of matrices to align with plots of permutations, we index matrices in Cartesian coordinates. Thus the entry $\M_{3,2}$ of the matrix $\M$ is the entry in the $3$rd column from the left and the $2$nd row from the bottom. Next, given a permutation $\pi$ of length $n$ and sets $I,J\subseteq [n]$, we write $\pi(I\times J)$ for the permutation that is order isomorphic to the subsequence of $\pi$ with indices from $I$ and values in $J$. For example, to compute $391867452([3,7]\times[2,6])$ we consider the subsequence of entries in indices $3$ through $7$, $18674$, which have values between $2$ and $6$; in this case the subsequence is $64$ so $391867452([3,7]\times[2,6])=21$.

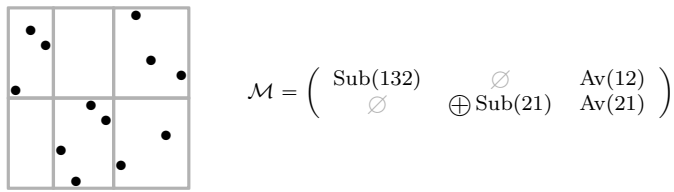
\begin{figure}
\begin{footnotesize}
\begin{center}
	\begin{tikzpicture}[scale=0.2,baseline=(current bounding box.center)]
		\plotperm{7,11,10,3,1,6,5,2,12,9,4,8};
		\plotpermbox{1}{1}{3}{6};
		\plotpermbox{1}{7}{3}{12};
		\plotpermbox{4}{1}{7}{6};
		\plotpermbox{4}{7}{7}{12};
		\plotpermbox{8}{1}{12}{6};
		\plotpermbox{8}{7}{12}{12};
	\end{tikzpicture}
\quad\quad
	$\M=\left(\begin{array}{ccc}
		\Sub(132)&\emptygray&\Av(12)\\
		\emptygray&\bigoplus\Sub(21)&\Av(21)\end{array}
	\right)$
\end{center}
\end{footnotesize}
\caption{The picture on the left shows an $\M$-gridded permutation for the matrix $\M$ on the right.}
\label{fig-M-gridding-example}
\end{figure}

Let $\mathcal{M}$ be a $t\times u$ matrix of permutation classes. An \emph{$\mathcal{M}$-gridding} of the permutation $\pi$ of length $n$ is a choice of \emph{column divisions} $1=c_1\le\cdots\le c_{t+1}=n+1$ and \emph{row divisions} $1=r_1\le\cdots\le r_{u+1}=n+1$ such that for all $i$ and $j$, $\pi([c_i,c_{i+1})\times [r_j,r_{j+1}))$ lies in the class $\mathcal{M}_{i,j}$. Figure~\ref{fig-M-gridding-example} shows an example. A more geometric way to describe this gridding would be to say that the \emph{grid lines} $x=c_i+\nicefrac{1}{2}$ and $y=r_i+\nicefrac{1}{2}$ divide the plot of the permutation into cells. The cells themselves are \emph{axis-parallel rectangles} because each of their sides is parallel to either the $x$- or $y$-axis.

The class of all permutations which possess $\mathcal{M}$-griddings is the (generalized) \emph{grid class} of $\mathcal{M}$, denoted $\Grid(\mathcal{M})$. The class $\C$ is said to be \emph{${\G}$-griddable} if $\C\subseteq\Grid(\mathcal{M})$ for some finite matrix $\mathcal{M}$ whose entries are all equal to $\G$. In this context we call $\G$ the \emph{cell class} of the gridding.

The concept of grid classes was first studied in this generality in order to characterize the growth rates of small classes in \cite{vatter:small-permutati:}, though monotone grid classes---in which the cells are restricted to monotone classes---had been studied previously by Murphy and Vatter~\cite{murphy:profile-classes:}, Huczynska and Vatter~\cite{huczynska:grid-classes-an:}, Vatter and Waton~\cite{vatter:on-partial-well:}, and Brignall~\cite{brignall:grid-classes-an:}. In this context it is convenient to abbreviate $\Av(21)$, the class of increasing permutations, by $1$, $\Av(12)$ by $-1$, and the empty class by $0$. In doing so, we specify monotone grid classes as $\zpm$ matrices. In this manner, the two matrices
\[
	\fnmatrix{ccc}{
		\Av(21)&\emptyset&\Av(12)\\
		\Av(12)&\Av(12)&\Av(21)
	}
	\quad\text{and}\quad
	\fnmatrix{ccc}{
		1&0&-1\\
		-1&-1&1
	}
\]
represent the same monotone grid class. Bevan~\cite{bevan:growth-rates-of:geom,bevan:growth-rates-of:} has since found beautiful connections between growth rates of monotone grid classes and algebraic graph theory (see also Albert and Vatter~\cite{albert:an-elementary-p:} where an elementary proof of Bevan's theorem is given). We use another type of grid classes, geometric grid classes, in the arguments of Section~\ref{sec-wqo}.

Here we present the results on generalized grid classes required later, with sketches of proofs for completeness. We begin with the criterion for a class to be $\G$-griddable.

\begin{theorem}[Vatter~{\cite[Theorem~3.1]{vatter:small-permutati:}}]
\label{thm-gridding-characterization}
The permutation class $\C$ is $\G$-griddable if and only if it does not contain arbitrarily long sums or skew sums of basis elements of $\G$, that is, if and only if there is a constant $m$ so that $\C$ does not contain $\beta_1\oplus\cdots\oplus\beta_m$ or $\beta_1\ominus\cdots\ominus\beta_m$ for any sequence $\beta_1,\dots,\beta_m$ of basis elements of $\G$.
\end{theorem}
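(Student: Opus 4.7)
For the forward direction I would argue by pigeonhole. Suppose $\C\subseteq\Grid(\M)$ for a $t\times u$ matrix $\M$ all of whose entries equal $\G$, and fix a sum $\pi=\beta_1\oplus\cdots\oplus\beta_m$ of basis elements of $\G$ lying in $\C$, together with any $\M$-gridding of it. Each diagonal block $\beta_i$ occupies a rectangle whose $x$- and $y$-ranges are pairwise disjoint and strictly increasing in $i$. Since $\beta_i\notin\G$, no block fits inside a single cell, so each is strictly crossed by at least one of the $(t-1)+(u-1)=t+u-2$ internal dividers; and because a single horizontal or vertical divider line can have its coordinate strictly inside at most one of the diagonally stacked blocks, each divider crosses at most one $\beta_i$. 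Thus $m\le t+u-2$, and skew sums give the same bound after a top-to-bottom reflection.

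For the reverse direction, suppose $\C$ avoids every sum and every skew sum of length $m$ of basis elements of $\G$. Given $\pi\in\C$ of length $n$, I would run the greedy left-to-right slicing: set $i_0=0$ and take $i_j$ to be the largest index with $\pi((i_{j-1},i_j]\times\mathbb{N})\in\G$, producing vertical strips $V_1,\dots,V_T$ each already lying in $\G$. This by itself exhibits $\pi$ as gridded by a $T\times 1$ matrix of copies of $\G$, so the entire task reduces to bounding $T$ uniformly in $\pi\in\C$. For each $j<T$, maximality of $i_j$ forces $\pi((i_{j-1},i_j+1]\times\mathbb{N})$ to contain a basis element $\beta_j$ of $\G$ that uses the entry at index $i_j+1$. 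Hence a large $T$ produces many basis-element copies $\beta_1,\dots,\beta_{T-1}$ whose $x$-ranges are essentially disjoint and left-to-right ordered, and the task becomes to squeeze a forbidden sum or skew sum out of them.

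To do so I would examine the $y$-ranges $J_1,\dots,J_{T-1}$ of the $\beta_j$ under two partial orders: $i\prec_+ j$ if $i<j$ and $J_i$ lies strictly below $J_j$, and $i\prec_- j$ if $i<j$ and $J_i$ lies strictly above $J_j$. A $\prec_+$-chain of length $m$ would yield the forbidden sum, a $\prec_-$-chain of length $m$ the forbidden skew sum. If neither exists then Mirsky's theorem applied in each order in turn produces a subfamily of at least $(T-1)/(m-1)^2$ of the $\beta_j$ whose $y$-ranges are pairwise overlapping; by the one-dimensional Helly property they share a common horizontal level $y=y_0$. Slicing $\pi$ horizontally at $y_0$ splits each of these $\beta_j$ into strictly shorter subpatterns (which must then lie in $\G$) and cuts $\pi$ into two permutations $\pi^+,\pi^-\in\C$ on which the same scheme can recurse. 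The main obstacle I expect is choosing the inductive measure that truly drops at this horizontal-slicing step---a candidate is the maximum length of a basis element of $\G$ witnessed along the greedy procedure, paired with $m$---and carrying the bookkeeping through the nested vertical/horizontal recursion so that $T$ is bounded by a function of $m$ and of $\G$ alone. Once that bound is in hand, the combined slicings exhibit $\pi$ in $\Grid(\M)$ for a $\G$-entried $\M$ of dimensions uniform across $\C$, proving $\G$-griddability.
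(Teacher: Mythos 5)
Your forward direction is correct and matches the spirit of the paper's: the paper phrases it as ``an independent family of rectangles whose patterns are not in $\G$ must each be sliced by a distinct grid line,'' while you phrase it as each diagonal block $\beta_i$ being crossed by a divider with each divider crossing at most one block. Both are the same pigeonhole argument.

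The reverse direction is where you diverge, and where there is a genuine gap. The paper's proof reduces everything to a single cited black box. It defines $\mathfrak{R}_\pi$ to be the set of axis-parallel rectangles $R$ with $\pi(R)\notin\G$, observes that a $\G$-gridding of $\pi$ is exactly a bounded family of lines slicing every $R\in\mathfrak{R}_\pi$, shows via the Erd\H{o}s--Szekeres theorem that the hypothesis (no long sums or skew sums of basis elements) forces $\mathfrak{R}_\pi$ to have independence number at most $(m-1)^2$, and then invokes the Gy\'arf\'as--Lehel theorem: a family of axis-parallel rectangles with independence number at most $m$ can be pierced by $f(m)$ horizontal and vertical lines. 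That last implication is exactly the non-trivial combinatorial engine. Your greedy-slicing/Mirsky/Helly recursion is, in effect, an attempt to reprove that engine from scratch, and you yourself flag that you have not found a measure that makes the recursion terminate.

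The specific reason your candidate measure does not work: ``maximum length of a basis element of $\G$ witnessed along the greedy procedure'' is not bounded a priori, since $\G$ need not be finitely based; and even when it is finite, after the horizontal cut at $y_0$ you re-run the greedy vertical slicing on $\pi^+$ and $\pi^-$, and there is nothing preventing entirely new basis-element copies of the same (or greater) length from being witnessed there. The horizontal cut shortens the particular $\beta_j$ in the overlapping subfamily, but the recursion is not on those $\beta_j$; it is on $\pi^{\pm}$, so no quantity visibly drops. A secondary but real issue in the same step: consecutive $\beta_j$ and $\beta_{j+1}$ produced by the greedy slicing share the column $i_j+1$, so their $x$-ranges are not disjoint; you must first pass to, say, every other index before a $\prec_+$-chain or $\prec_-$-chain yields a literal $\oplus$- or $\ominus$-sum. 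That is easily patched by a factor of two, but the termination gap is not a bookkeeping matter, and without it the bound ``$T$ is a function of $m$ and $\G$ alone'' is unproven. The fix the paper uses is simply to quote Gy\'arf\'as--Lehel rather than trying to reconstruct it.
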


To sketch the proof of Theorem~\ref{thm-gridding-characterization}, we begin by defining the set
\[
	\mathfrak{R}_\pi=\{\mbox{axis-parallel rectangles $R$}\st \pi(R)\notin\G\}.
\]
For every rectangle $R\in\mathfrak{R}_\pi$, $\pi(R)$ contains a basis element of $\G$, so in any $\G$-gridding of $\pi$ every rectangle in $\mathfrak{R}_\pi$ must be sliced by a grid line. The converse also holds---if every rectangle in $\mathfrak{R}_\pi$ is sliced by a grid line these grid lines define a $\G$-gridding of $\pi$---so $\C$ is $\G$-griddable if and only if there is a constant $\ell$ such that, for every $\pi\in\C$, the set $\mathfrak{R}_\pi$ can be sliced by $\ell$ horizontal and vertical lines.

We say that two rectangles are \emph{independent} if both their $x$- and $y$-axis projections are disjoint, and a set of rectangles is said to be independent if they are pairwise independent. An \emph{increasing sequence} of rectangles is a sequence $R_1,\dots,R_m$ of independent rectangles such that $R_{i+1}$ lies above and to the right of $R_i$ for all $i$. \emph{Decreasing sequences} of rectangles are defined analogously.

If $\C$ contains arbitrarily long sums or skew sums of basis elements of $\G$ then we see that $\mathfrak{R}$ contains arbitrarily large independent sets, and thus gridding $\C$ by $\G$ would require arbitrarily many grid lines. Now suppose that $\C$ does not contain sum or skew sums consisting of more than $m$ basis elements of $\G$.

Every independent set of rectangles corresponds to a permutation. Thus we see by the Erd\H{o}s--Szekeres Theorem that $\mathfrak{R}_\pi$ cannot contain an independent set of size greater than $(m-1)^2$. The proof of Theorem~\ref{thm-gridding-characterization} is therefore completed by the following corollary of a result of Gy{\'a}rf{\'a}s and Lehel, which was later strengthened by K{\'a}rolyi and Tardos~\cite{karolyi:on-point-covers:} and by Tardos~\cite{tardos:transversals-of:}. For the precise connection between Theorem~\ref{rectangles-lemma} and the results of Gy{\'a}rf{\'a}s and Lehel~\cite{gyarfas:a-helly-type-pr:} we refer to Vatter~\cite{vatter:an-erdos--hajna:}.

\begin{theorem}[Gy{\'a}rf{\'a}s and Lehel~\cite{gyarfas:a-helly-type-pr:}]
\label{rectangles-lemma}
There is a function $f(m)$ such that for any collection $\mathfrak{R}$ of axis-parallel rectangles in the plane that has no independent set of size greater than $m$, there exists a set of $f(m)$ horizontal and vertical lines that slice every rectangle in $\mathfrak{R}$.
\end{theorem}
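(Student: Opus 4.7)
\emph{Proof plan.} I would proceed by induction on $m$, with the vacuous base case $m = 0$. For the inductive step, given a family $\mathfrak{R}$ with no independent set of size exceeding $m$, the strategy is to peel off extremal rectangles using a bounded number of lines and reduce to subfamilies whose maximum independent set is strictly smaller, so that the inductive hypothesis applies.

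My first move is to select the rectangle $R_0 \in \mathfrak{R}$ with the leftmost right edge, say at $x = a$, with $y$-projection $[y_1, y_2]$, and draw the three lines $x = a$, $y = y_1$, $y = y_2$. These pierce every rectangle whose $x$-projection contains $a$ and every rectangle whose $y$-projection properly straddles $y_1$ or $y_2$. Each surviving rectangle lies strictly to the right of $a$ in $x$, and its $y$-projection is either disjoint from $[y_1, y_2]$ or strictly contained in $(y_1, y_2)$. The ``outer'' surviving rectangles (those with $y$-projection disjoint from $[y_1, y_2]$) are all independent from $R_0$ in both coordinates, so that subfamily has no independent set of size $m$ and is pierceable by $f(m-1)$ lines by induction.

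The hard case is the ``inner'' residual family $\mathfrak{R}_{\text{in}}$ of rectangles contained in the strip $(a, \infty) \times (y_1, y_2)$: here $R_0$ is not independent of any member, so the maximum independent set can still be as large as $m$ and a direct induction fails. The plan is to iterate the peeling inside the strip, producing a sequence $R_0, R_1, R_2, \ldots$ of successive leftmost-right-edged rectangles with strictly ordered $x$-projections and strictly nested $y$-projections. By the nested intervals property, the entire sequence shares a common $y$-coordinate, so the whole ``spine'' can be pierced by a single additional horizontal line. Each iterated step contributes a new outer subfamily (with max independent set $\le m-1$) handled by induction and costing $f(m-1)$ lines.

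The main obstacle is bounding the number of iterations in terms of $m$. The naive sequence $R_0, R_1, \ldots$ is itself \emph{not} an independent set, since its nested $y$-projections pairwise intersect, so the independence hypothesis does not kill the iteration directly. This is the genuinely nontrivial geometric content of Gyárfás and Lehel's theorem: one must argue that if the iteration is too long then a careful selection of one representative from each outer subfamily---together with suitable $R_i$'s---assembles an independent set of size exceeding $m$, contradicting the hypothesis. Putting these ingredients together yields a polynomial recursion $f(m) = \mathrm{poly}(m) \cdot f(m-1)$, and hence a bound of the form $f(m) = O(m^c)$, subsequently sharpened by Károlyi--Tardos and Tardos. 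I expect this termination step to be the main hurdle in making the argument rigorous.
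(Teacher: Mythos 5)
The paper does not prove this theorem; it is quoted as an external result of Gy{\'a}rf{\'a}s and Lehel, with a pointer to \cite{vatter:an-erdos--hajna:} for the precise translation into this form and to K{\'a}rolyi--Tardos and Tardos for sharpened bounds. There is thus no internal argument to compare your sketch against, so I evaluate it on its own terms.

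Your opening moves are sensible (pick the rectangle with leftmost right edge, slice along its right side and its two horizontal edges, observe that the outer residual family has independence number at most $m-1$), but the termination step you flag as the main hurdle is a genuine gap, and the procedure as written in fact gives no bound at all. Take $R_i = [2i,\,2i+1]\times[-1/i,\,1/i]$ for $i=1,\dots,N$: the independence number is $1$, every outer family is empty, yet the peeling chain visits all $N$ rectangles and draws three fresh lines per step, each of which pierces only $R_i$. (The spine line $y=0$ handles the chain here, but in general it does not pierce the straddling rectangles that the per-step horizontal lines are meant to dispose of, so one cannot simply drop those lines.) The repair you gesture at---that representatives of the outer families plus suitable $R_i$'s assemble a large independent set---does not go through as stated: for $i<j$, a representative $Q_i\in O_i$ and $Q_j\in O_j$ do have disjoint $y$-projections, but both lie strictly to the right of $x=a_i$ with unconstrained $x$-projections, and $Q_i$'s $y$-projection sits inside $R_{i-1}$'s, so neither the $Q$'s alone nor the $Q$'s interleaved with the $R$'s are automatically pairwise independent. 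Some further idea is needed to bound the chain length, and it is exactly that idea which carries the theorem. One minor point: the recursion $f(m)=\mathrm{poly}(m)\cdot f(m-1)$ solves to $m^{\Theta(m)}$, not to $O(m^c)$; a polynomial bound would require an additive recursion.
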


Next we consider the task of gridding all classes of upper growth rate less than $\gamma$ for an arbitrary real number $\gamma$. Define
\[
	\G_\gamma=\{\pi\st \mbox{either $\gr\left(\bigoplus\Sub(\pi)\right)<\gamma$ or $\gr\left(\bigominus\Sub(\pi)\right)<\gamma$}\}.
\]
We aim to show that $\G_\gamma$ can serve as the cell class to grid all classes of upper growth rate less than $\gamma$. Clearly every member of $\G_\gamma$ is necessary for this goal; if $\pi\in\G_\gamma$ then either $\bigoplus\Sub(\pi)$ or $\bigominus\Sub(\pi)$ has growth rate less than $\gamma$ and in order to grid either of these classes we must have $\pi$ in the cell class. Our next result shows that this is sufficient.

\begin{proposition}
\label{prop-G-gamma-grids}
For every real number $\gamma$, if the permutation class $\C$ satisfies $\ugr(\C)<\gamma$ then $\C$ is $\G_\gamma$-griddable.
\end{proposition}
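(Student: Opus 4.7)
The plan is to prove the contrapositive: if $\C$ is not $\G_\gamma$-griddable then $\ugr(\C)\ge\gamma$. By Theorem~\ref{thm-gridding-characterization}, the failure of $\G_\gamma$-griddability supplies, for every $m$, either a sum or a skew sum of $m$ basis elements of $\G_\gamma$ lying inside $\C$. Since $\G_\gamma$ is invariant under the reverse symmetry and this symmetry preserves upper growth rates while swapping $\oplus$ with $\ominus$, I may replace $\C$ by its reverse if necessary and assume the sum case: for every $m$ there are basis elements $\beta_1,\ldots,\beta_m$ of $\G_\gamma$ (depending on $m$) with $\pi_m:=\beta_1\oplus\cdots\oplus\beta_m\in\C$.

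The first ingredient is a uniform pointwise lower bound at $y=1/\gamma$ for the generating function $S_i(y)$ enumerating the nonempty sum-indecomposable subpermutations of $\beta_i$. Because $\beta_i$ is a basis element of $\G_\gamma$, it lies outside $\G_\gamma$, so in particular $\gr(\bigoplus\Sub(\beta_i))\ge\gamma$. By Proposition~\ref{prop-enum-oplus-closure} this growth rate equals the reciprocal of the least positive root of $1-S_i(y)$, and since $S_i$ is a polynomial with nonnegative coefficients and $S_i(0)=0$, monotonicity gives $S_i(1/\gamma)\ge 1$ for every $i$.

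The main step is to count subpermutations of $\pi_m$ lying in $\C$. Every tuple consisting of a subset $T=\{k_1<\cdots<k_j\}\subseteq[m]$ together with a nonempty sum-indecomposable $\alpha_l\le\beta_{k_l}$ for $l=1,\ldots,j$ produces a permutation $\sigma:=\alpha_1\oplus\cdots\oplus\alpha_j\in\Sub(\pi_m)\subseteq\C$; the weighted generating function for these tuples is $\prod_{i=1}^{m}\bigl(1+S_i(y)\bigr)=\sum_{j=0}^{m}e_j\bigl(S_1(y),\ldots,S_m(y)\bigr)$, where $e_j$ is the $j$th elementary symmetric polynomial. By the uniqueness of the sum-indecomposable decomposition, every $\sigma$ with $j$ sum-indecomposable components arises from at most $\binom{m}{j}$ such tuples, since once $T$ is chosen the $\alpha_l$ are forced to be the components of $\sigma$. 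Stratifying by $j$, dividing by the overcount, and evaluating at $y=1/\gamma$ with $s_i:=S_i(1/\gamma)\ge 1$, the identity $e_j(s_1,\ldots,s_m)\ge e_j(1,\ldots,1)=\binom{m}{j}$ cancels the binomial denominator term by term:
\[
	\sum_{n\ge 0}|\C_n|\gamma^{-n}
	\;\ge\;\sum_{j=0}^{m}\binom{m}{j}^{-1}e_j(s_1,\ldots,s_m)
	\;\ge\;m+1.
\]
Letting $m\to\infty$ forces this series to diverge, so the generating function of $\C$ has radius of convergence at most $1/\gamma$ and $\ugr(\C)\ge\gamma$, contradicting the hypothesis. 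The delicate point---the step where the argument could easily degenerate to a trivial $\ge 1$ bound---is this cancellation between the overcount $\binom{m}{j}^{-1}$ and the minimum $\binom{m}{j}$ of the elementary symmetric polynomials on $\{s_i\ge 1\}$, which alone converts the purely pointwise inequality $S_i(1/\gamma)\ge 1$ into a bound growing linearly in $m$.
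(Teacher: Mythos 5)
Your proof is correct, and the contrapositive framing, the reduction to the sum case by reverse symmetry, and the pointwise bound $S_i(1/\gamma)\ge 1$ via Proposition~\ref{prop-enum-oplus-closure} and the Exponential Growth Formula all match the paper's reasoning. However, the central counting step is genuinely different. The paper sidesteps the overcounting problem entirely by restricting attention to sums $\alpha_1\oplus\cdots\oplus\alpha_k$ in which the $i$th sum-indecomposable component is drawn from $\Sub(\beta_i)$ for an \emph{initial segment} $i=1,\dots,k$; uniqueness of the sum decomposition then makes the tuple-to-permutation map a bijection, so the set's generating function is literally $g_1+g_1g_2+\cdots+g_1\cdots g_m$, and each summand is $\ge 1$ at $1/\gamma$, giving the bound $\ge m$ with no correction needed. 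You instead allow an arbitrary $j$-subset $T\subseteq[m]$, which forces you to divide out the $\binom{m}{j}$ overcount and then win it back with $e_j(s_1,\dots,s_m)\ge\binom{m}{j}$. Both arrive at a bound that grows linearly in $m$, but the paper's prefix alignment is a cleaner bijective choice that makes the elementary-symmetric-polynomial machinery unnecessary; your version is slightly heavier but does illuminate why the $s_i\ge 1$ bound is exactly what is needed. One small terminological nit: $e_j(s_1,\dots,s_m)\ge\binom{m}{j}$ is an inequality, not an identity.
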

\begin{proof}
Suppose to the contrary that $\C$ satisfies $\ugr(\C)<\gamma$ but is not $\G_\gamma$-griddable, and let $f$ denote the generating function of $\C$. Now fix an arbitrary integer $m$. We seek to derive a contradiction by showing that $f(\nicefrac{1}{\gamma})>m$, which, because $m$ is arbitrary, will imply that $\ugr(\C)\ge\gamma$. By Theorem~\ref{thm-gridding-characterization} and symmetry, we know that there is a permutation of the form $\beta_1\oplus\cdots\oplus\beta_m$ contained in $\C$ where each $\beta_i$ is a basis element of $\G_\gamma$. Furthermore, because $\beta_i\notin\G_\gamma$, both $\gr(\bigoplus\Sub(\beta_i))$ and $\gr(\bigominus\Sub(\beta_i))$ are at least $\gamma$ (although we only use the first of these inequalities).

Let $g_i$ denote the generating function for the nonempty sum indecomposable permutations of $\Sub(\beta_i)$ for every index $i$, so that the generating function for $\bigoplus\Sub(\beta_i)$ is given by $1/(1-g_i)$. By the Exponential Growth Formula, each generating function $1/(1-g_i)$ has a positive singularity in the interval $(0,\nicefrac{1}{\gamma}]$ and because each $g_i$ is a polynomial, this singularity must be a pole. Moreover, each $g_i$ has positive coefficients, so this implies that the unique positive solution to $g_i(x)=1$ occurs for $x\le\nicefrac{1}{\gamma}$. In particular, $g_i(\nicefrac{1}{\gamma})\ge 1$ for all indices $i$.

Now consider the set of permutations of the form $\alpha_1\oplus\cdots\oplus\alpha_k$ for some $k\le m$, where $\alpha_i$ is a sum indecomposable permutation contained in $\beta_i$ for each index $i$. Clearly this is a subset (but likely not a subclass) of $\C$. Moreover, the generating function for this set of permutations is
\[
	g_1+g_1g_2+\cdots+g_1\cdots g_m,
\]
which is at least $m$ when evaluated at $\nicefrac{1}{\gamma}$, completing the contradiction.
\end{proof}

Two facts about the cell classes $\G_\gamma$ arise immediately from their definition. First, every class of the form $\G_\gamma$ is closed under all eight symmetries of the square. Second, because $\bigoplus\Sub(\pi)=\bigoplus\Sub(\pi\oplus\pi)$ and $\bigominus\Sub(\pi)=\bigominus\Sub(\pi\ominus\pi)$, for every $\pi\in\G_\gamma$ either $\bigoplus\Sub(\pi)$ or $\bigominus\Sub(\pi)$ is contained in $\G_\gamma$. Thus every class of the form $\G_\gamma$ is the union of some number of sum closed classes together with some number of skew closed classes.


In this language, one of the first steps in the classification of small permutation classes was verifying that $\G_\kappa$ is contained in the substitution closure of the oscillations (the class $\O$ defined in Section~\ref{sec-background}).

\begin{proposition}[Vatter~{\cite[Proposition~A.3]{vatter:small-permutati:}}]
\label{Gkappa-first}
The cell class $\G_\kappa$ is contained in $\langle\O\rangle$.
\end{proposition}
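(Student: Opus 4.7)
The plan is to prove the contrapositive: if $\pi \notin \langle \O \rangle$ then $\pi \notin \G_\kappa$. Since $\langle \O \rangle$ is the largest class whose simple permutations all lie in $\O$, the hypothesis gives us a simple $\sigma \leq \pi$ with $\sigma \notin \O$. Because $\bigoplus \Sub(\sigma) \subseteq \bigoplus \Sub(\pi)$ and likewise for $\bigominus$, it suffices to show that for every simple non-oscillation $\sigma$, both $\gr(\bigoplus \Sub(\sigma)) \geq \kappa$ and $\gr(\bigominus \Sub(\sigma)) \geq \kappa$. The identity $(\alpha \oplus \beta)^c = \alpha^c \ominus \beta^c$, together with the complement-invariance of $\O$, gives $\gr(\bigominus\Sub(\sigma)) = \gr(\bigoplus\Sub(\sigma^c))$, so only one of the two inequalities needs to be verified, quantified over all simple $\sigma\notin\O$.

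By Proposition~\ref{prop-enum-oplus-closure} and the Exponential Growth Formula, the inequality $\gr(\bigoplus \Sub(\sigma)) \geq \kappa$ is equivalent to the analytic condition $g_\sigma(1/\kappa) \geq 1$, where $g_\sigma(x)$ enumerates the sum-indecomposable members of $\Sub(\sigma)$. Since $g_\tau \geq g_\sigma$ coefficient-wise whenever $\sigma \leq \tau$, I would proceed by induction on $|\sigma|$: if $\sigma$ properly contains a simple non-oscillation, the inductive hypothesis applies immediately. The real content lies in the ``simplicity-minimal'' case, where every proper simple subpermutation of $\sigma$ lies in $\O$.

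For the base case, the only length-$5$ simple non-oscillations are $25314$ and $41352$; a direct check shows that the inversion graphs of the four other length-$5$ simples ($24153$, $31524$, $35142$, $42513$) either are themselves paths or become paths upon reversal. A direct enumeration gives $g_{25314}(x) = x + x^2 + 3x^3 + 3x^4 + x^5$, and numerical evaluation at $1/\kappa \approx 0.4534$ confirms $g_{25314}(1/\kappa) > 1$. The simplicity-minimal non-oscillations of length $\geq 6$ split into two sub-cases. If $\sigma$ is a parallel alternation, a direct computation on a smallest representative (e.g., $g_{415263}(x) = x + x^2 + 2x^3 + 4x^4 + 4x^5 + x^6$ with $g(1/\kappa) > 1$) together with coefficient-monotonicity in length handles all parallel alternations. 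If $\sigma$ is not a parallel alternation, Theorem~\ref{thm-schmerl-trotter} places strong structural restrictions on it; a direct inspection of the simple permutations of length $6$ reveals the remaining simplicity-minimal cases (such as $351624$, for which $g_{351624}(x) = x + x^2 + 2x^3 + 5x^4 + 6x^5 + x^6$ again satisfies $g(1/\kappa) > 1$).

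The main obstacle is enumerating and verifying the simplicity-minimal non-oscillation simples of length $6$, since Theorem~\ref{thm-schmerl-trotter} only guarantees the existence of \emph{some} smaller simple subpermutation, not that this subpermutation is itself a non-oscillation. Once these finitely many cases are handled, any larger simple non-oscillation either contains one of them (bottoming out via inclusion) or is itself a parallel alternation dominating a length-$6$ representative. The inequality $g_\sigma(1/\kappa) \geq 1$ is tight at the length-$5$ base, which is precisely what pins the critical threshold at $\kappa$ rather than a smaller value.
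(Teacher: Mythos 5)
Your overall strategy is sound and mirrors the natural approach: it suffices to show $g_\sigma(1/\kappa)\ge 1$ for every simple non-oscillation $\sigma$, your symmetry reduction via complements is valid (the increasing oscillating sequence is reverse-complement symmetric, so $\O$ and $\langle\O\rangle$ are closed under complement), the analytic translation via Proposition~\ref{prop-enum-oplus-closure} is correct, and the length-$5$ base case ($25314$, $41352$) checks out. The difficulty is in the inductive step. You reduce to the ``simplicity-minimal'' case, where every proper simple subpermutation of $\sigma$ is an oscillation, and then claim that the only such $\sigma$ of length $\ge 7$ are parallel alternations, citing Schmerl--Trotter. But Schmerl--Trotter only says a non-parallel-alternation simple of length $n$ contains \emph{some} simple permutation of each length $5\le m<n$; it does not say any of these is a non-oscillation. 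So a simplicity-minimal non-oscillation of length $\ge 7$ is not \emph{a priori} ruled out, and your final paragraph (``any larger simple non-oscillation either contains one of them\ldots or is itself a parallel alternation'') asserts exactly the structural fact that needs proving. This is the genuine gap: the argument does not terminate.

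A quantitative check shows the gap is not cosmetic. If $\sigma$ is simple of length $n$ with its proper simple subpermutations all oscillations, and the length-$(n-1)$ simple inside it is an increasing oscillation $\tau$, the ``free'' lower bound $g_\sigma\ge g_\tau+x^n$ gives $g_\sigma(1/\kappa)\ge 1-2.66\,(1/\kappa)^{n-1}+(1/\kappa)^n<1$, so you cannot simply point to $\tau$ plus $\sigma$ itself; you must locate extra sum-indecomposables. One way to close the gap is to split on whether $\sigma$ contains $321$. If it does, then $\Sub(\sigma)$ contains $231$, $312$, $321$ at length $3$, two increasing oscillations at each length $4,\dots,n-1$, and $\sigma$ itself, so $g_\sigma(1/\kappa)\ge (x+x^2+3x^3+2x^4+\cdots+2x^{n-1}+x^n)|_{x=1/\kappa}>1$ for all $n\ge 6$. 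If $\sigma$ avoids $321$, then $\sigma$ is a simple $321$-avoiding one-point extension of $\tau$, and this is where a careful structural analysis (not just inspection of length~$6$) is required to show $\sigma$ is either an oscillation, a parallel alternation, or carries extra sum-indecomposables. Until that case is settled, the proof is incomplete.
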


Because the sequence of sum indecomposable permutations contained in an increasing oscillation of length $i\ge 4$ is $1,1,2^{i-3},1$, we have $\O_I\subseteq\G_\kappa$. By symmetry, we also have $\O_D\subseteq\G_\kappa$. However, the growth rates of sum (resp., skew) closures of increasing (resp., decreasing) oscillations tend to $\kappa$, so the following fact is a consequence of Proposition~\ref{Gkappa-first}.

\begin{proposition}
\label{prop-Ggamma-simples}
The cell class $\G_\gamma$ contains finitely many simple permutations if and only if $\gamma<\kappa$.
\end{proposition}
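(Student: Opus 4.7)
The plan is to apply Proposition~\ref{Gkappa-first}, which gives $\G_\kappa\subseteq\langle\O\rangle$, together with the fact (from Section~\ref{sec-background}) that a class and its substitution closure share the same simple permutations. Any simple member $\pi\in\O$ of length at least $4$ must in fact be an oscillation: if $\pi\in\O_I$, then as a sum-indecomposable permutation $\pi$ has connected inversion graph, which sits as an induced subgraph of the path inversion graph of some increasing oscillation and must therefore itself be a path; the case $\pi\in\O_D$ follows by reverse symmetry, as $(\O_I)^{\mathrm r}=\O_D$. Thus for $\gamma\leq\kappa$, the simple members of $\G_\gamma$ consist of $\{1,12,21\}$ together with those oscillations of length at least $4$ that lie in $\G_\gamma$, and the proposition reduces to counting oscillations in $\G_\gamma$.

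For the forward direction (``if''), fix $\gamma<\kappa$. By the dihedral symmetry of $\G_\gamma$ it suffices to bound the number of increasing oscillations in $\G_\gamma$. An increasing oscillation $o_k^I$ of length $k$ lies in $\G_\gamma$ iff $\gr(\bigoplus\Sub(o_k^I))<\gamma$ or $\gr(\bigominus\Sub(o_k^I))<\gamma$. The first inequality holds for only finitely many $k$ because $\gr(\bigoplus\Sub(o_k^I))\nearrow\kappa$, as noted in the paragraph preceding this proposition. For the second, I would apply the complement identity $(\pi\oplus\sigma)^{\mathrm c}=\pi^{\mathrm c}\ominus\sigma^{\mathrm c}$ to rewrite $\gr(\bigominus\Sub(o_k^I))=\gr(\bigoplus\Sub((o_k^I)^{\mathrm c}))$, where $(o_k^I)^{\mathrm c}$ is a decreasing oscillation, and then verify by direct enumeration at length $5$ that this growth rate already strictly exceeds $\kappa$. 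Monotonicity of sum closures under pattern containment (using a fixed family of oscillations built from initial segments of the oscillating sequence) extends the bound to all $k\geq 5$, and reverse symmetry handles decreasing oscillations; this shows only finitely many oscillations lie in $\G_\gamma$.

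For the reverse direction (``only if''), fix $\gamma\geq\kappa$. The paragraph just before the proposition establishes $\O_I\subseteq\G_\kappa\subseteq\G_\gamma$, since $\gr(\bigoplus\Sub(o_k^I))<\kappa$ for every increasing oscillation $o_k^I$, and this class contains all increasing oscillations of length at least $4$, all of which are simple. The main obstacle lies in the forward direction: the paper's asymptotic applies only to the ``principal'' closure of each oscillation type (sum for increasing, skew for decreasing), so the bound for the ``other'' closure must be obtained either via the numerical check at length~$5$ sketched above or via a more conceptual identification---such as showing directly that the skew closure of a sufficiently long increasing oscillation already contains a subclass of growth rate exceeding $\kappa$---to avoid ad hoc computation.
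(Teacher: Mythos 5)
Your argument is correct and follows the same route the paper intends: apply Proposition~\ref{Gkappa-first} to confine the simple permutations of $\G_\gamma$ for $\gamma\le\kappa$ to the oscillations, use the fact that $\gr\bigl(\bigoplus\Sub(\cdot)\bigr)$ over increasing oscillations increases to $\kappa$ (and its skew/decreasing mirror) to dispose of the ``natural'' closure, and invoke the dihedral symmetry of $\G_\gamma$ for the reverse direction. You are right to single out and close the case the paper's one-line justification leaves implicit — an increasing oscillation lying in $\G_\gamma$ by virtue of a \emph{small skew-closure} growth rate — and your plan of passing to complements (so $\gr(\bigominus\Sub(o))=\gr(\bigoplus\Sub(o^{\mathrm c}))$ with $o^{\mathrm c}$ a decreasing oscillation) and checking length~$5$ (where the sum-indecomposable profile is $1,1,3,5,1$, giving growth rate $\approx 2.367>\kappa$) combined with monotonicity in the containment order does settle it.
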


Needless to say, Proposition~\ref{Gkappa-first} is not the end of the story regarding $\G_\kappa$; later results from \cite{vatter:small-permutati:} add considerably more restrictions. Indeed, while these restrictions proved sufficient to establish the desired goal, we do not have a concrete description of $\G_\kappa$ (and such a concrete description would seem to have questionable value).

As Proposition~\ref{Gkappa-first} provided for $\G_\kappa$, our next result is our first, but far from our last, result about $\G_\xi$. Note that by Bevan's Theorem~\ref{thm-lambda-B} this result actually holds for all cell classes up to the point where all real numbers are growth rates of permutation classes, at or before $\lambda_B\approx 2.35698$.



\begin{proposition}
\label{prop-Gxi-first}
The cell class $\G_{2.36}$ is contained in $\langle\Av(321)\cup\Av(123)\rangle\cup\{25314, 41352\}$.
\end{proposition}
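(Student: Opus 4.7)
My plan is to prove the contrapositive: if $\pi \notin \langle\Av(321) \cup \Av(123)\rangle$ and $\pi \notin \{25314, 41352\}$, then both $\gr(\bigoplus\Sub(\pi)) \geq 2.36$ and $\gr(\bigominus\Sub(\pi)) \geq 2.36$, so that $\pi \notin \G_{2.36}$. Since $\langle\C\rangle$ is characterized as the largest class sharing the simples of $\C$, the first hypothesis forces $\pi$ to contain some simple permutation $\sigma$ that itself contains both $123$ and $321$. A direct inspection of small simples rules out $|\sigma| \leq 4$ (the only length-$4$ simples being $2413$ and $3142$) and identifies $25314$ and $41352$ as the only length-$5$ simples in which both patterns occur, so our extra assumption yields either $|\sigma| \geq 6$ or $\sigma \lneq \pi$.

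The heart of the argument is the uniform bound $\gr(\bigoplus\Sub(\sigma)) > 2.36$ for every simple $\sigma$ of length at least $5$ containing both $123$ and $321$. I would prove it by lower-bounding the generating function $g$ for the sum indecomposable permutations in $\Sub(\sigma)$ by $x + x^2 + 5x^3 + x^4$. For the coefficient $5$ of $x^3$: each of $2413, 3142, 25314, 41352$ contains $132$, so Proposition~\ref{prop-simple-four} forces every simple of length at least $4$ to contain $132$, and the dihedral symmetries extend this to $213$, $231$, and $312$; together with the hypothesized copies of $123$ and $321$, this places all five length-$3$ sum indecomposables in $\Sub(\sigma)$. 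For the $x^4$ term, Proposition~\ref{prop-simple-four} together with the observation that $25314$ and $41352$ themselves contain $2413$ or $3142$ guarantees that $\sigma$ contains at least one length-$4$ sum indecomposable. A direct check verifies $x + x^2 + 5x^3 + x^4 > 1$ at $x = 1/2.36$, so by Proposition~\ref{prop-enum-oplus-closure} and the Exponential Growth Formula, $\gr(\bigoplus\Sub(\sigma)) > 2.36$.

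Applying the identical argument to $\sigma^{\mathrm c}$ -- which is again a simple permutation of length at least $5$ containing both $123$ and $321$, since complementation swaps these patterns -- yields $\gr(\bigoplus\Sub(\sigma^{\mathrm c})) > 2.36$, and then the complement symmetry identifies this growth rate with $\gr(\bigominus\Sub(\sigma))$. Because $\sigma \leq \pi$ gives $\bigoplus\Sub(\sigma) \subseteq \bigoplus\Sub(\pi)$ and the analogous inclusion for the skew closure, both bounds transfer to $\pi$, and the contrapositive is complete.

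The main combinatorial content is the lemma that every simple permutation of length at least $4$ contains $132$ (and hence by symmetry all four non-monotone length-$3$ patterns), which is immediate from Proposition~\ref{prop-simple-four} once one notes that each of the four witness patterns $2413, 3142, 25314, 41352$ already contains $132$; the rest is arithmetic. The choice of the constant $2.36$ is tuned to what this crude bound delivers, and the retained exception $\{25314, 41352\}$ serves as a safety margin since the numerical inequality above passes only with a small cushion.
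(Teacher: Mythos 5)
Your argument has two errors, and the second one is fatal to the entire approach.

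First, a miscount: there are only three sum indecomposable permutations of length $3$, namely $231$, $312$, and $321$. The permutations $132$ and $213$ decompose as $1\oplus 21$ and $21\oplus 1$ respectively, so they cannot contribute to the generating function $g$ of Proposition~\ref{prop-enum-oplus-closure}. Your coefficient of $5$ on $x^3$ should be $3$. With the corrected bound $g\geq x+x^2+3x^3+x^4$, one computes $g(1/2.36)\approx 0.864<1$, so the arithmetic no longer delivers $\gr(\bigoplus\Sub(\sigma))>2.36$; it delivers nothing.

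Second, and more fundamentally, the ``uniform bound $\gr(\bigoplus\Sub(\sigma))>2.36$ for every simple $\sigma$ of length at least $5$ containing both $123$ and $321$'' is simply false. The permutation $25314$ is simple, has length $5$, and contains both $123$ (e.g.\ $2,3,4$) and $321$ (e.g.\ $5,3,1$), yet its sum indecomposable subpermutations have the count sequence $1,1,3,3,1$, so $\gr(\bigoplus\Sub(25314))\approx 2.29237<2.36$. This is why $\{25314,41352\}$ appears as an exception in the statement: it is not a ``safety margin'' but a genuine necessity, since those two permutations actually lie in $\G_{2.36}$ despite being simple and containing both monotone patterns. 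Because your contrapositive argument transfers the bound from $\sigma$ to $\pi$ via $\bigoplus\Sub(\sigma)\subseteq\bigoplus\Sub(\pi)$, it collapses completely when the simple witness is $25314$ or $41352$ and $\pi$ properly contains it — one gets only $\gr(\bigoplus\Sub(\pi))\geq 2.29$, which is not enough. The paper's proof handles exactly this case by a careful analysis showing that any \emph{proper} one-point extension of $25314$ or $41352$ inside a simple permutation forces new sum indecomposable subpermutations (using simplicity to rule out inflations and locate a separating entry), and by invoking the known finite basis of $\langle\Av(321)\rangle$ due to Atkinson, Ru\v{s}kuc, and Smith so that the basis elements of length $5$ and $6$ can be checked directly. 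Your approach makes no use of that basis and cannot substitute for the case analysis near $25314$ and $41352$, which is where the real content of this proposition lives.
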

\begin{proof}
We prove two facts. First, we show no proper inflations of $25314$ or $41352$ lie in $\G_{2.36}$. Then we show that the simple permutations of $\G_{2.36}$ all lie in $\Av(321)\cup\Av(123)\cup\{25314, 41352\}$. The first task is straight-forward. If $\G_{2.36}$ were to contain a proper inflation of $25314$ or $41352$ then it would contain a permutation in which one of the entries of these two permutations was inflated by $12$ or $21$. By symmetry, we may assume that $25314$ is the inflated permutation and that the entry is either the greatest entry or the `$3$'. The following chart eliminates all four possibilities.
\[
	\begin{array}{lllll}
	\hline
	\mbox{inflation}&\mbox{sequence of sum}&\mbox{sum closure}&\mbox{sequence of skew}&\mbox{skew closure}
	\\
	\mbox{of $25314$}&\mbox{indecomposables}&\mbox{growth rate}&\mbox{indecomposables}&\mbox{growth rate}
	\\\hline
	256314&1,1,3,6,4,1&\approx 2.44874&1,1,3,4,3,1&\approx 2.36772
	\\
	265314&1,1,3,5,4,1&\approx 2.41421&1,1,3,4,3,1&\approx 2.36772
	\\
	263415&1,1,3,6,3,1&\approx 2.43245&1,1,3,4,3,1&\approx 2.36772
	\\
	264315&1,1,3,4,3,1&\approx 2.36772&1,1,3,6,3,1&\approx 2.43245
	\\\hline
	\end{array}
\]

Next we show that the simple permutations of $\G_{2.36}$ are contained in $\Av(321)\cup\Av(123)\cup\{25314, 41352\}$. For this we use a result of Atkinson, Ru\v{s}kuc, and Smith~\cite{atkinson:substitution-cl:}. They showed that the substitution closure of a principally based class is typically infinitely based. Fortunately, $\langle\Av(321)\rangle$ is an exception\footnote{The graph theoretical analogue of this result, establishing the minimal forbidden induced subgraphs for the substitution closure of triangle-free graphs, was given by Olariu~\cite{olariu:on-the-closure-:}.}; by bounding the length of potential basis elements of this class and then conducting an exhaustive computer search, they established that
\[
	\langle\Av(321)\rangle
	=
	\Av(25314, 35142, 41352, 42513, 362514, 531642).
\]
The following chart presents the relevant data for the basis elements of $\langle\Av(321)\rangle$.
\[
	\begin{array}{lll}
	\hline
	\langle\Av(321)\rangle\mbox{ basis}&\mbox{sequence of sum}&\mbox{sum closure}
	\\
	\mbox{element}&\mbox{indecomposables}&\mbox{growth rate}
	\\\hline
	25314&1,1,3,3,1&\approx 2.29237
	\\
	41352=25314^{\textrm{r}}&1,1,3,3,1&\approx 2.29237
	\\
	35142&1,1,3,5,1&\approx 2.66917>2.36
	\\
	42513=35142^{\textrm{rc}}&1,1,3,5,1&\approx 2.66917>2.36
	\\
	362514&1,1,3,8,6,1&\approx 2.52326>2.36
	\\
	531642=362514^{-1}&1,1,3,8,6,1&\approx 2.52326>2.36
	\\\hline
	\end{array}
\]

Consider a simple permutation $\pi$ which does not lie in
\[
	\langle\Av(321)\cup\Av(123)\rangle\cup\{25314, 41352\}
	=
	\langle\langle\Av(321)\rangle\cup\langle\Av(123)\rangle\rangle\cup\{25314, 41352\}.
\]
Such a permutation must contain both a basis element of $\langle\Av(321)\rangle$ and a basis element of $\langle\Av(123)\rangle$. First suppose that $\pi$ contains neither $25314$ nor $41352$. In this case, the basis element of $\langle\Av(321)\rangle$ contained in $\pi$ shows that $\gr(\bigoplus\Sub(\pi))>2.36$, while the basis element of $\langle\Av(123)\rangle$ contained in $\pi$ shows that $\gr(\bigominus\Sub(\pi))>2.36$. Therefore $\pi\notin\G_\gamma$, as desired.

Now suppose that $\pi$ contains $25314$ or its reverse $41352$, and thus $\pi$ properly contains one of these two permutations. Because $\pi$ is simple, any occurrence of one of these permutations must either be \emph{separated}---meaning that there is an entry outside the rectangular hull of its entries which lies either vertically or horizontally amongst these entries---or there must be at least one more entry inside this rectangular hull. As we have already shown that $\pi$ cannot contain a proper inflation of either of these permutations there are, up to symmetry, three cases, shown in Figure~\ref{fig-25314-extensions}. The growth rates of the sum/skew closures for these four cases are shown below. Note that all are greater than $2.36$.

\begin{figure}
\begin{footnotesize}
\begin{center}
	\begin{tikzpicture}[scale=0.5]
		\draw [lightgray, fill=lightgray] (0,1) rectangle (2,3);
		\draw [lightgray, fill=lightgray] (1,4) rectangle (3,6);
		\draw [lightgray, fill=lightgray] (2,2) rectangle (4,4);
		\draw [lightgray, fill=lightgray] (3,0) rectangle (5,2);
		\draw [lightgray, fill=lightgray] (4,3) rectangle (6,5);
		\foreach \i in {1,...,5} {
			\draw [darkgray, ultra thick, line cap=round] (0,\i)--(6,\i);
			\draw [darkgray, ultra thick, line cap=round] (\i,0)--(\i,6);
		}
		\absdot{(1,2)}{};
		\absdot{(2,5)}{};
		\absdot{(3,3)}{};
		\absdot{(4,1)}{};
		\absdot{(5,4)}{};
		\node at (0.5,4.5) {\mbox{A}};
		\node at (1.5,0.5) {\mbox{A}};
		\node at (5.5,1.5) {\mbox{A}};
		\node at (4.5,5.5) {\mbox{A}};
		\node at (0.5,3.5) {\mbox{B}};
		\node at (2.5,0.5) {\mbox{B}};
		\node at (5.5,2.5) {\mbox{B}};
		\node at (3.5,5.5) {\mbox{B}};
		\node at (1.5,3.5) {\mbox{C}};
		\node at (2.5,1.5) {\mbox{C}};
		\node at (3.5,4.5) {\mbox{C}};
		\node at (4.5,2.5) {\mbox{C}};
	\end{tikzpicture}
\end{center}
\end{footnotesize}
\caption{The three additional types of permutations formed by inserting an entry into $25314$, identified up to symmetry. Here the light gray shading indicates regions which would give rise to a proper an inflation of $25314$, a case we have already eliminated.}
\label{fig-25314-extensions}
\end{figure}
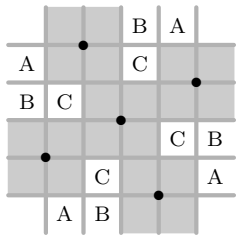

\[
	\begin{array}{clllll}
	\hline
	\mbox{type}&\mbox{extension}&\mbox{sequence of sum}&\mbox{sum closure}&\mbox{sequence of skew}&\mbox{skew closure}
	\\
	&\mbox{of $25314$}&\mbox{indecomposables}&\mbox{growth rate}&\mbox{indecomposables}&\mbox{growth rate}
	\\\hline
	\mbox{A}&526314&1, 1, 3, 9, 6, 1&\approx 2.54894&1, 1, 3, 4, 3, 1&\approx 2.36772
	\\
	\mbox{B}&426315&1, 1, 3, 7, 5, 1&\approx 2.48545&1, 1, 3, 7, 5, 1&\approx 2.48545
	\\
	\mbox{C}&246315&1, 1, 3, 5, 4, 1&\approx 2.41421&1, 1, 3, 7, 5, 1&\approx 2.48545
	\\\hline
	\end{array}
\]
From this chart we see that no simple permutation in $\G_{2.36}$ may properly contain either $25314$ or $41352$, completing the proof.
\end{proof}


%
%
%
%
%

\section{Bounded Alternations and Substitution Depth}

In this section we bound both the length of alternations in and the substitution depth (defined herein) of our cell classes. In fact, the results of this section apply to all cell classes $\G_\gamma$ for $\gamma<1+\varphi$ where $\varphi$ denotes the golden ratio, and thus these results hold up to growth rate approximately $2.61803$. In the result below we say that a class has \emph{bounded alternations} if it does not contain arbitrarily long alternations.

\begin{proposition}
\label{prop-bdd-alts}
The cell class $\G_{\gamma}$ has bounded alternations if and only if $\gamma<1+\varphi$. Conversely, the cell class $\G_{1+\varphi}$ contains all wedge and parallel alternations (of all orientations).
\end{proposition}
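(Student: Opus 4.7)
The plan is to split the argument into the ``Conversely'' statement (that $\G_{1+\varphi}$ contains all wedge and parallel alternations) and the forward direction of the iff (that $\gamma<1+\varphi$ forces bounded alternations); the backward direction of the iff then follows for free from the monotonicity of $\G_\gamma$ in $\gamma$, because ``Conversely'' places arbitrarily long alternations inside $\G_{1+\varphi}\subseteq\G_\gamma$ for every $\gamma\ge 1+\varphi$. The central computation underpinning both halves reduces to the polynomial $1-3x+x^2=0$, whose smallest positive root $(3-\sqrt{5})/2$ is the reciprocal of $1+\varphi=(3+\sqrt{5})/2$.

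For the ``Conversely'' direction I would fix a long vertical parallel alternation $\pi_k$ with both pieces increasing and analyze $\bigoplus\Sub(\pi_k)$. The class $\Sub(\pi_k)$ is a proper finite subclass of the infinite grid class $\Grid\left(\begin{smallmatrix}1\\1\end{smallmatrix}\right)$. To each permutation of the grid class I attach a gridding word in $\{L,H\}^n$ recording which positions lie in the bottom (low) versus top (high) cell; aside from a few ``mostly monotone'' permutations (the identity has $n+1$ words, for instance), the word is unique. A direct case analysis then shows that sum-indecomposability is equivalent to the word beginning with $H$ and ending with $L$, giving $2^{n-2}$ sum-indecomposables of length $n\ge 2$ and hence sum-indecomposable generating function $g(x)=x+x^2/(1-2x)$. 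By Proposition~\ref{prop-enum-oplus-closure} the sum-closure growth rate equals the reciprocal of the smallest positive root of $1-g(x)=(1-3x+x^2)/(1-2x)$, which is $1+\varphi$. Because each finite $\Sub(\pi_k)$ has an indecomposable generating function $g_k$ strictly dominated by $g$, we obtain $\gr(\bigoplus\Sub(\pi_k))<1+\varphi$ for every $k$, so $\pi_k\in\G_{1+\varphi}$. The remaining three orientations of parallel alternations follow from the reverse and complement symmetries (which swap $\bigoplus$ and $\bigominus$). For wedge alternations the analogous limit class is $\Grid\left(\begin{smallmatrix}-1\\1\end{smallmatrix}\right)=\Av(213,231)$, in which sum-indecomposables are the $2^{n-2}$ permutations beginning with $n$; the same generating-function calculation applies, so wedge alternations also lie in $\G_{1+\varphi}$.

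For the forward direction of the iff, suppose $\G_\gamma$ contains arbitrarily long alternations for some $\gamma<1+\varphi$. By Proposition~\ref{prop-alt-par-wedge} there exist arbitrarily long parallel or wedge alternations $\pi_k\in\G_\gamma$, each satisfying $\gr(\bigoplus\Sub(\pi_k))<\gamma$ or $\gr(\bigominus\Sub(\pi_k))<\gamma$. For wedge alternations, both of these growth rates tend to $1+\varphi$ as $k\to\infty$ (by the ``Conversely'' calculation and its symmetric skew-closure counterpart). For parallel alternations with both pieces increasing, the sum-closure growth rate tends to $1+\varphi$ while the skew-closure growth rate tends to $3$; the latter comes from identifying the skew-decomposable permutations in $\Grid\left(\begin{smallmatrix}1\\1\end{smallmatrix}\right)$ as precisely those whose gridding word has the form $H^aL^b$, and then reducing the equation $1-h(x)=0$ (with $h$ the skew-indecomposable generating function) to $(x-1/3)(6x^2-6x+3)=0$. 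In every case the minimum of the two closure growth rates eventually exceeds $\gamma$, contradicting $\pi_k\in\G_\gamma$.

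The main obstacle is the enumerative bookkeeping inside each grid class: characterizing sum- and skew-indecomposable members in terms of their gridding words, handling the non-uniqueness of the gridding map at a handful of monotone-type permutations (a finite discrepancy irrelevant to growth rates), and invoking the dihedral symmetries uniformly to cover all orientations of parallel and wedge alternations. Once these counts are settled, the analytic core collapses to the quadratic $1-3x+x^2$ and the identity $1/(1+\varphi)=(3-\sqrt{5})/2$.
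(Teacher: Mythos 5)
Your proof is correct and takes essentially the same approach as the paper: reduce by symmetry to one orientation of wedge and one of parallel alternation, count sum- and skew-indecomposables (obtaining $2^{n-2}$ of length $n$ in each case), and extract the growth rate $1+\varphi$ from the smallest positive root $(3-\sqrt{5})/2$ of $1-3x+x^2$. Your gridding-word characterizations check out, and the only place you do more than necessary is the skew closure in the parallel $//$ case, where you pin down the exact growth rate $3$ via the factorization $6x^3-8x^2+5x-1=(3x-1)(2x^2-2x+1)$ (correct, since the skew-decomposable members of $\Grid\fnmatrix{c}{1\\1}$ are precisely the $n-1$ cyclic shifts), whereas the paper settles for the lower bound $\ge 1+\varphi$ that follows immediately from the observation that the $\ge 2^{n-2}$ parallel alternations ending with their greatest entry or beginning with their least are automatically skew indecomposable.
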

\begin{proof}
If a cell class $\G_\gamma$ were to have arbitrarily long alternations then Proposition~\ref{prop-alt-par-wedge} (essentially, the Erd\H{o}s--Szekeres Theorem) would imply that it contains arbitrarily long parallel or wedge alternations. Because classes of the form $\G_{\gamma}$ are closed under all eight symmetries of permutation containment, it suffices to consider parallel alternations oriented as $//$ and wedge alternations oriented as $/\setminus$.

First, a wedge alternation oriented as $/\setminus$ is sum indecomposable if and only if it ends with its least entry. It follows that for $n\ge 2$ there are $2^{n-2}$ sum indecomposable wedge alternations of this orientation, so the generating function for the sum closure of this set is
\[
	\frac{1}{1-\left(x+\frac{x^2}{1-2x}\right)}
	=
	\frac{1-2x}{1+3x-x^2},
\]
and the growth rate of this class is precisely $1+\varphi$. Because the reverses of these permutations are skew indecomposable we see that the skew closure of wedge alternations of this orientation also has growth rate $1+\varphi$. Thus $\G_\gamma$ cannot contain arbitrarily long wedge alternations for $\gamma<1+\varphi$. In the other direction, $\G_{1+\varphi}$ contains all wedge alternations.

Next, a parallel alternation oriented as $//$ is sum indecomposable if and only if it does not begin with its least entry or end with its greatest entry. It follows easily that for $n\ge 2$ there are again precisely $2^{n-2}$ such sum indecomposable parallel alternations of this orientation, so the sum closure of these parallel alternations also has growth rate $1+\varphi$. This implies that these parallel alternations belong to $\G_{1+\varphi}$.

Finally, a parallel alternation of this orientation is skew indecomposable if it either ends with its greatest entry or begins with its smallest entry, from which it follows that there are at least $2^{n-2}$ such skew indecomposable parallel alternations of each length $n\ge 2$. Therefore the growth rate of the skew closure of these parallel alternations is at least $1+\varphi$, implying that $\G_\gamma$ does not contain arbitrarily long parallel alternations for $\gamma\ge 1+\varphi$ and completing the proof.
\end{proof}

Albert, Linton, and Ru\v{s}kuc~\cite{albert:the-insertion-e:} (see also Vatter~\cite{vatter:finding-regular:}) have introduced a length-preserving bijection between permutation classes and formal languages known as the \emph{insertion encoding}. They proved that a permutation class corresponds to a regular language under the insertion encoding if and only if it does not contain arbitrarily long vertical alternations. Thus by Proposition~\ref{prop-bdd-alts}, the classes $\G_{\gamma}$ for $\gamma<1+\varphi$ are in bijection with regular languages and thus have rational generating functions. However, we do not make use of this consequence here.

In \cite{vatter:small-permutati:}, once a $\G_\gamma$-gridding (for $\gamma<\kappa$) of a class was obtained, it was sliced into a $\G_\gamma$-gridding with very restricted structure. Crucial to this procedure was the observation that for $\gamma<\kappa$, $\G_\gamma$ has bounded substitution depth and contains only finitely many simple permutations. For the remainder of this section we review the notion of substitution depth and show that the bounded substitution depth property extends to $\G_\gamma$ for all $\gamma<1+\varphi$. On the other hand, Proposition~\ref{prop-Ggamma-simples} shows that $\G_\xi$ does not have a bounded number of simple permutations and thus the next section is devoted to the development of an alternate approach. Section~\ref{sec-slicing} then details the slicing procedure.

Proposition~\ref{simple-decomp-1} shows that every permutation except $1$ can be expressed as the inflation of a unique simple permutation $\sigma$ of length at least $2$, and that if $\sigma$ is nonmonotone then the intervals of the inflation are also unique. In the case where $\sigma$ is monotone (in which case $\sigma$ is $12$ or $21$), we can instead express the permutation as the sum (resp., skew sum) of a unique sequence of sum (resp., skew) indecomposable permutations (which are themselves intervals). We can represent the process of recursively decomposing the permutation $\pi$ in this manner by means of a rooted tree called its \emph{substitution decomposition tree}. For example, Figure~\ref{fig-subst-tree} shows the substitution decomposition tree associated to our example from Figure~\ref{fig-479832156}.

\begin{figure}
\begin{footnotesize}
\begin{center}
	\begin{tikzpicture}[scale=0.2, baseline=(current bounding box.center)]
		\draw [lightgray, fill] (1,4) circle (20pt);
		\draw[lightgray, fill, rotate around={-45:(2.9,7.9)}] (2.9,7.9) ellipse (45pt and 55pt);
		\draw[lightgray, fill, rotate around={45:(6,2)}] (6,2) ellipse (25pt and 60pt);
		\draw[lightgray, fill, rotate around={-45:(8.5,5.5)}] (8.5,5.5) ellipse (20pt and 40pt);
		\plotpermbox{0.5}{0.5}{9.5}{9.5};
		\plotperm{4,7,9,8,3,2,1,5,6};
	\end{tikzpicture}
\quad\quad
	\begin{tikzpicture}[baseline=(current bounding box.center)]
	\tikzset{level distance=18pt}
	\Tree	[.$2413$	$1$
				[.$\bigoplus$ $1$ [.$\bigominus$ $1$ $1$ ] ]
				[.$\bigominus$ $1$ $1$ $1$ ]
				[.$\bigoplus$ $1$ $1$ ]
			]
	\end{tikzpicture}
\quad\quad
	\begin{tikzpicture}[baseline=(current bounding box.center)]
	\tikzset{level distance=18pt}
	\Tree	[.$\bigoplus$	$1$
				[.$\bigominus$	$1$	$\triangle$ ]
			]
	\end{tikzpicture}
\end{center}
\end{footnotesize}
\caption{The plot of the permutation $479832156$ (left), its substitution decomposition tree (center), and the substitution decomposition tree of a wedge alternation oriented as $>$ (right); here the $\triangle$ symbol indicates that the tree repeats in the same pattern.}
\label{fig-subst-tree}
\end{figure}

The \emph{substitution depth} of $\pi$ is the height of its substitution decomposition tree, so for example, the substitution depth of the permutation shown on the left of Figure~\ref{fig-subst-tree} is $3$, while the substitution depth of every simple or monotone permutation is $1$. The substitution tree of a wedge alternation is shown on the right of Figure~\ref{fig-subst-tree}. As substitution depth is closed under symmetries of the square, this figure indicates that the substitution depth of wedge alternations is unbounded. Indeed, we show below that they are the only obstructions to bounded substitution depth. A proof of this result was given in \cite{vatter:small-permutati:}, though we present a simpler proof below.

\begin{proposition}[Vatter~{\cite[Proposition~4.2]{vatter:small-permutati:}}]
\label{prop-long-path-wedge}
If the permutation $\pi$ has substitution depth at least $8n$ then $\pi$ contains a wedge alternation of length at least $n$.
\end{proposition}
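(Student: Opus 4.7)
The plan is to analyse a deep root-to-leaf path in the substitution decomposition tree of $\pi$ and to extract a wedge alternation by choosing, at each internal node, a sibling of the next interval on the path. Concretely, since $\pi$ has substitution depth at least $8n$, fix a path $v_0 \supset v_1 \supset \cdots \supset v_N$ of nested intervals with $N \geq 8n$ internal nodes labelled $\sigma_0, \sigma_1, \ldots$ (each $\sigma_i$ is either $\oplus$, $\ominus$, or a simple permutation of length at least $4$); write $I_j$ for the interval of $\pi$ corresponding to $v_j$, and observe that any sibling $e_i \in I_i \setminus I_{i+1}$ lies in one of the four quadrants $\{\mathrm{AL}, \mathrm{AR}, \mathrm{BL}, \mathrm{BR}\}$ relative to $I_{i+1}$. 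The key geometric fact is that since $I_{i+1} \supseteq I_j$ for every $j > i$, this quadrant also describes the position of $e_i$ with respect to every later chosen $e_j$, so the sub-permutation formed by the picked siblings is completely determined by the sequence of quadrants.

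The first ingredient is a direct computation: if the labels along a stretch of the path alternate $\oplus, \ominus, \oplus, \ominus, \ldots$ and at each node we pick a sibling using one of the four ``pure'' strategies (for instance, BL at every $\oplus$ and AL at every $\ominus$, or one of the three symmetric variants AR/BR, AR/AL, BL/BR), then the chosen siblings form a wedge alternation of length equal to the stretch, recovering patterns such as $1,4,2,3$ for length $4$ or $1,6,2,5,3,4$ for length $6$. Moreover, because children of $\oplus$-nodes are sum-indecomposable and children of $\ominus$-nodes are skew-indecomposable, two adjacent internal monotone nodes on the path automatically carry opposite labels, so any maximal monotone stretch on the path is already alternating.

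Simple nodes on the path are accommodated using Proposition~\ref{prop-simple-four}: every entry of a simple permutation of length at least $4$ participates in a copy of $2413$ or $3142$ (or as the ``$3$'' in $25314$ or $41352$), and a short case check through each of the four positions in a $2413$ or $3142$ shows that among the other three pattern entries there are always siblings on both the increasing diagonal $\{\mathrm{AR}, \mathrm{BL}\}$ and the decreasing diagonal $\{\mathrm{AL}, \mathrm{BR}\}$. Consequently any simple internal node on the path may be made to ``act as'' either $\oplus$ or $\ominus$ by an appropriate choice of sibling, and so such nodes can be merged with the monotone stretches of the first step.

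The argument is then closed by a standard pigeonhole. Each of the $N \geq 8n$ internal nodes is classified by its label together with its child position on the path (first, middle, or last component at a monotone node; the pattern-position of $v_{i+1}$ within a $2413$ or $3142$ at a simple node), and these classes are organised so that at each class at least one of the four pure strategies of the first step remains available. With the constant $8$ bounding the number of classes, at least $n$ of the $N$ nodes share a common pure strategy; restricting to those $n$ nodes and choosing siblings by that strategy yields, via the first step, a wedge alternation of length $n$ inside $\pi$. The main obstacle I expect is the bookkeeping in this last step, in particular verifying that the constraints arising from extreme monotone child positions and from simple nodes at each of the eight relevant pattern-positions do combine so that the constant $8$ (rather than something larger) suffices; otherwise the argument is a routine case check.
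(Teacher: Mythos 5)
Your approach differs from the paper's: the paper runs a four-parameter induction on $k_1+k_2+k_3+k_4$, one parameter per wedge orientation, decrementing whichever parameter's orientation just got extended by an ``outer'' entry at each step, while you try to extract a wedge alternation directly from a deep root-to-leaf path via a pigeonhole over node classes. Your first two ingredients are sound (the geometry of a pure strategy applied to an \emph{alternating} label stretch, and the use of Proposition~\ref{prop-simple-four} so that simple nodes can act as $\oplus$ or $\ominus$). But the final pigeonhole step has a genuine gap.

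The pigeonhole selects $n$ nodes in a single class, defined by label and child position, and then applies a pure strategy to them. The problem is that the $n$ selected nodes need not carry alternating labels, yet your first step \emph{requires} alternation: the wedge orientation $<$, say, is produced only when the picked quadrants alternate $\mathrm{BL},\mathrm{AL},\mathrm{BL},\mathrm{AL},\dots$, and the strategy translates labels into quadrants. Concretely, one of your eight classes is ``$\oplus$-node, middle child position''; if all $n$ selected nodes happen to fall in this class, then the strategy picks a sibling below-and-left of $I_{i+1}$ at each, and since $I_{i+1}$ contains every later $I_j$, the $n$ chosen siblings form an \emph{increasing} permutation, not a wedge alternation. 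The flexibility at simple nodes does not rescue this, because the full label sequence along the path cannot always be made to alternate: a stretch of the form $\oplus,\,s,\,\ominus$ with $s$ simple forces a repeat under any assignment to $s$, as does $\oplus,\,s,\,s,\,\oplus$. So the pigeonhole cells would have to encode alternation between adjacent selected nodes, not merely per-node data, and it is not clear that the constant $8$ can then be made to work. The paper's induction never faces this issue, because it builds the wedge one entry at a time, recording after each step which orientation was extended and recursing on the inner interval.
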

\begin{proof}
There are four possible orientations of wedge alternations which we denote by $\wedge$, $>$, $\vee$, and $<$. Label these orientations $1$ to $4$ respectively. We claim that if the permutation $\pi$ has substitution depth greater than $2\sum k_i$ then for some $i\in\{1,2,3,4\}$, $\pi$ contains a wedge alternation of orientation $i$ of length at least $k_i$. The proof is by induction on $\sum k_i$. For the base case(s), the claim is trivial if any $k_i$ equals $1$, so we may assume that each $k_i\ge 2$ for all $i$.

Let $\pi$ be a permutation of substitution depth at least $2\sum k_i$. Therefore $\pi$ contains an interval $\alpha$ of substitution depth at least $2\sum k_1-1$. We see similarly that $\alpha$ contains an interval $\delta$ of substitution depth at least $2\left(\sum k_1-1\right)$. By symmetry we may assume that $\pi$ is either a sum or the inflation of a simple permutation of length at least four. This implies that $\alpha$ is either a skew sum, the inflation of a simple permutation of length at least four, or, if $\pi$ is the inflation of a simple permutation of length at least four, $\alpha$ could be a sum. In the last case we instead consider the reverse of $\pi$ and thus we may always assume that $\alpha$ is either a skew sum or the inflation of a simple permutation of length at least four.

\begin{figure}
\[
	\begin{array}{ccccccc}
		\begin{tikzpicture}[scale=0.2]
			\plotpermbox{1}{1}{7}{7};
			\plotpermbox{2}{2}{6}{6};
			\plotpermbox{3}{3}{5}{5};
			\plotpartialperm{1/1,6/2};
			\node at (4,4) {$\delta$};
		\end{tikzpicture}
	&\quad&
		\begin{tikzpicture}[scale=0.2]
			\plotpermbox{1}{1}{7}{7};
			\plotpermbox{2}{2}{6}{6};
			\plotpermbox{3}{3}{5}{5};
			\plotpartialperm{1/1,2/6};
			\node at (4,4) {$\delta$};
		\end{tikzpicture}
	&\quad&
		\begin{tikzpicture}[scale=0.2]
			\plotpermbox{1}{1}{7}{7};
			\plotpermbox{2}{2}{6}{6};
			\plotpermbox{3}{3}{5}{5};
			\plotpartialperm{7/7,6/2};
			\node at (4,4) {$\delta$};
		\end{tikzpicture}
	&\quad&
		\begin{tikzpicture}[scale=0.2]
			\plotpermbox{1}{1}{7}{7};
			\plotpermbox{2}{2}{6}{6};
			\plotpermbox{3}{3}{5}{5};
			\plotpartialperm{7/7,2/6};
			\node at (4,4) {$\delta$};
		\end{tikzpicture}
	\end{array}
\]
\caption{The four cases in the proof of Proposition~\ref{prop-long-path-wedge}.}
\label{fig-long-path-wedge}
\end{figure}
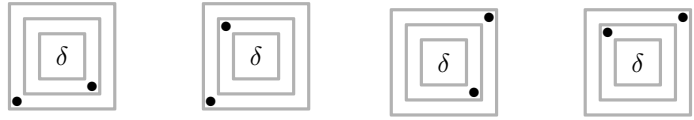

This implies that $\pi$ contains an entry lying either to the southwest or northeast of $\alpha$ and that $\alpha$ contains an entry lying either to the southeast or northwest of $\delta$. These four cases are depicted in Figure~\ref{fig-long-path-wedge}. As can be seen by inspecting each of these cases, at least one of the two entries outside $\delta$ can be used to extend the longest wedge alternation of at least one of the four orientations contained in $\delta$. For example, consider the first case shown in Figure~\ref{fig-long-path-wedge}. Because the substitution depth of $\delta$ is at least $2\left(\sum k_i-1\right)$, we see by induction that $\delta$ either contains a wedge alternation of length at least $k_1-1$ oriented as $\wedge$, in which case $\pi$ contains a wedge alternation of length at least $k_1$ of this orientation, or $\delta$ contains a wedge alternation of length at least $k_i$ of one of the orientations $>$, $\vee$, or $<$, in which case we are immediately done.
\end{proof}

We define the \emph{substitution depth} of a class $\C$ as the maximum substitution depth of any of its members; note that his quantity may be infinite. In fact Proposition~\ref{prop-long-path-wedge} and our previous comments show that the substitution depth of $\C$ is infinite if and only if $\C$ contains arbitrarily long wedge alternations. Therefore from the previous two results of this section we obtain the following.

\begin{proposition}
\label{prop-bdd-subst-depth}
The class $\G_\gamma$ has bounded substitution depth if and only if $\gamma<1+\varphi$.
\end{proposition}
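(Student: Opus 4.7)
The statement is essentially a direct corollary of the two preceding propositions in the section, so my plan is to simply combine them carefully in both directions.

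For the forward direction, assume $\gamma < 1+\varphi$. By Proposition~\ref{prop-bdd-alts}, $\G_\gamma$ has bounded alternations, so in particular there is an integer $N$ such that every wedge alternation in $\G_\gamma$ has length at most $N$. The contrapositive of Proposition~\ref{prop-long-path-wedge} then states that any permutation containing no wedge alternation of length at least $N+1$ has substitution depth strictly less than $8(N+1)$. Applying this to every $\pi \in \G_\gamma$ gives the desired uniform bound on substitution depth.

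For the reverse direction, assume $\gamma \ge 1+\varphi$. Directly from the definition of $\G_\gamma$, increasing $\gamma$ only relaxes the defining condition, so $\G_\gamma \supseteq \G_{1+\varphi}$. By Proposition~\ref{prop-bdd-alts}, $\G_{1+\varphi}$ already contains all wedge alternations of every length and every orientation. But as illustrated on the right of Figure~\ref{fig-subst-tree}, an increasing sequence of wedge alternations has unbounded substitution depth (the substitution decomposition tree of a length-$n$ wedge alternation has linear height in $n$). Therefore $\G_\gamma$ contains permutations of arbitrarily large substitution depth.

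There is no real obstacle here: the work was done in Propositions~\ref{prop-bdd-alts} and~\ref{prop-long-path-wedge}, and this proposition just records the synthesis. The only minor point to be careful about is the reverse direction, where one should explicitly note the monotonicity $\gamma \le \gamma' \Rightarrow \G_\gamma \subseteq \G_{\gamma'}$ so that the result for $\gamma = 1+\varphi$ propagates to all larger $\gamma$.
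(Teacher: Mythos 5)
Your proof is correct and takes essentially the same approach as the paper, which simply combines Propositions~\ref{prop-bdd-alts} and \ref{prop-long-path-wedge} together with the observation (from Figure~\ref{fig-subst-tree}) that wedge alternations have unbounded substitution depth. Your explicit invocation of the monotonicity $\gamma \le \gamma' \Rightarrow \G_\gamma \subseteq \G_{\gamma'}$ is a nice touch that the paper leaves implicit.
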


\section{Concentration}
\label{sec-concentration}

In this section we introduce and study a new property of permutation classes, that of concentration. This property allows us to apply the techniques of \cite{vatter:small-permutati:} even when the cell class $\G_\gamma$ contains infinitely many simple permutations (as it does for $\gamma\ge\kappa$ by Proposition~\ref{prop-Ggamma-simples}) and represents one of the most significant new ideas in this work.

For natural numbers $q$ and $r$, we say that the permutation $\pi$ is \emph{$(q,r)$-concentrated} if given any horizontal or vertical line $L$ through the plot of $\pi$ (we assume throughout that our lines never pass through entries of the permutation), all but at most $r$ of the entries of $\pi$ can be covered by $q$ independent axis-parallel rectangles which do not intersect $L$ (here $q$ and $r$ are meant to invoke the terms ``quotient'' and ``remainder''). We further say that the set $X$ of permutations is $(q,r)$-concentrated if every $\pi\in X$ is $(q,r)$-concentrated and that $X$ is \emph{concentrated} if it is $(q,r)$-concentrated for some values of $q$ and $r$. Note that if $X$ and $Y$ are both concentrated sets of permutations then $X\cup Y$ is also concentrated.

For example, the class $\Av(21)\cup\Av(12)$ of monotone permutations is clearly $(2,0)$-concentrated (in fact, it is the largest class which is $(2,0)$-concentrated), while the class $\O$ of oscillations is $(2,2)$-concentrated as indicated by Figure~\ref{fig-incosc-concentration}. The following result, together with Propositions~\ref{prop-Ggamma-simples} and \ref{prop-bdd-subst-depth}, implies that the classes $\G_\gamma$ for $\gamma<\kappa$ are concentrated.

\begin{proposition}
\label{prop-fin-simples-concentration}
If the longest simple permutation in the class $\C$ has length $s$ and the substitution depth of $\C$ is $d$ then $\C$ is $(sd,0)$-concentrated.
\end{proposition}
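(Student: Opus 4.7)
The plan is to induct on the substitution depth $d$: I will show that every $\pi\in\C$ of substitution depth at most $d$ is $(sd,0)$-concentrated, with the base case $d=1$ following because an inflation $\sigma[1,\dots,1]$ of a single simple permutation $\sigma$ of length at most $s$ is covered by its at most $s$ independent one-entry boxes (for a line $L$ passing between entries). By Proposition~\ref{simple-decomp-1} I write $\pi=\sigma[\alpha_1,\dots,\alpha_k]$, with $\sigma$ simple of length at least $2$ and, in the monotone cases $\sigma\in\{12,21\}$, the $\alpha_i$ taken sum- or skew-indecomposable. Since substitution depth, membership in $\C$, and $(q,r)$-concentration are all preserved by the eight symmetries of the square, I may assume $L$ is vertical.

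If $\sigma$ is nonmonotone, then $k=|\sigma|\leq s$, and the bounding rectangles of the $\alpha_i$ are pairwise independent because intervals in a simple inflation have disjoint horizontal and vertical projections. When $L$ avoids all these bounding rectangles, the $k\leq s$ rectangles cover $\pi$; when $L$ crosses the (necessarily unique) bounding rectangle of some $\alpha_j$, I retain the bounding rectangles of the other $k-1\leq s-1$ blocks and recursively cover $\alpha_j$ (of substitution depth at most $d-1$) with $s(d-1)$ further rectangles, for a total of at most $s(d-1)+(s-1)<sd$.

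If $\sigma=12$, so $\pi=\alpha_1\oplus\cdots\oplus\alpha_k$, the key geometric fact is that any consecutive block $\alpha_i\oplus\cdots\oplus\alpha_j$ fits inside a single axis-parallel rectangle, since in a direct sum the summands are stacked along an increasing diagonal. When $L$ separates some prefix $\alpha_1,\dots,\alpha_j$ from the complementary suffix without crossing any individual $\alpha_i$, two independent rectangles suffice. When $L$ passes through some $\alpha_j$, I enclose the prefix $\alpha_1\oplus\cdots\oplus\alpha_{j-1}$ and suffix $\alpha_{j+1}\oplus\cdots\oplus\alpha_k$ in one rectangle each (these are independent from each other and from anything inside $\alpha_j$'s bounding box) and apply induction inside $\alpha_j$, using $s(d-1)+2\leq sd$ rectangles, since $s\geq 2$ as soon as $\pi$ has length at least $2$. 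The case $\sigma=21$ is symmetric.

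The main obstacle is that sum and skew-sum nodes can have unbounded arity, which at first sight threatens any bound depending on $s$ and $d$ alone; the resolution is precisely the observation that any prefix or suffix of a direct or skew sum lives in a single axis-parallel rectangle, so these high-arity nodes cost only two rectangles irrespective of arity. The remaining work is the routine verification that all the rectangles produced are pairwise independent, which amounts to checking disjointness of horizontal and vertical projections in each case.
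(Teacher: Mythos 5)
Your proof takes essentially the same route as the paper: induct on substitution depth $d$, decompose $\pi=\sigma[\alpha_1,\dots,\alpha_k]$, observe that a line $L$ crosses at most one interval, cover that interval by induction with $s(d-1)$ rectangles, and pay at most $s-1$ extra rectangles when $\sigma$ is a simple non-monotone permutation or $2$ extra rectangles when $\sigma$ is monotone (using $s\ge 2$). The independence considerations and the numerics $s(d-1)+(s-1)<sd$ and $s(d-1)+2\le sd$ are identical. The only small wrinkle is in your stated base case: depth~$1$ permutations are not only simple permutations of length at most $s$ inflated by singletons but also monotone permutations of arbitrary length; those are not of the form $\sigma[1,\dots,1]$ with $\sigma$ simple of length at most $s$. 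They are handled trivially --- $L$ splits a monotone permutation into a prefix and suffix, two independent rectangles, and $2\le s$ --- and indeed your later discussion of the $\sigma=12$ case covers them, but the base case as literally written omits them. The paper handles this by explicitly noting that depth~$1$ members are either simple of length at most $s$ or monotone, covering both at once.
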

\begin{proof}
The case $s=1$ is trivial (as then $\C=\{1\}$, which is $(1,0)$-concentrated), so we may assume that $s\ge 2$. We prove the claim by induction on $d$. If $d=1$ then every member of $\C$ is either a simple permutation of length at most $s$ (in fact, because $\C$ is a class, this possibility can be ruled out, but we do not need to) or a monotone permutation. In either case, because $s\ge 2$, $\C$ is $(s,0)$-concentrated.

Suppose now that $d\ge 2$, take $\pi\in\C$, and let $L$ be an arbitrary horizontal or vertical line through the plot of $\pi$. Write $\pi=\sigma[\alpha_1,\dots,\alpha_m]$ where $\sigma$ is either simple or monotone and each $\alpha_i$ has substitution depth at most $d-1$. The line $L$ can pass through at most one $\alpha_i$, and by moving $L$ slightly (without affecting its position relative to the entries of $\pi$) we may assume that it passes through precisely one interval $\alpha_i$. By induction, the entries of $\alpha_i$ can be partitioned into $s(d-1)$ independent axis-parallel rectangles which do not intersect $L$. If $\sigma$ is simple we take the other intervals, $\alpha_j$ for $j\neq i$, to be our other axis-parallel rectangles, showing that $\pi$ is $(s(d-1)+s-1,0)$-concentrated. If $\sigma$ is monotone, we use two additional rectangles, one containing $\alpha_1$, $\dots$, $\alpha_{i-1}$ and the other containing $\alpha_{i+1}$, $\dots$, $\alpha_m$, showing that $\pi$ is $(s(d-1)+2,0)$-concentrated. Thus in either case we see that $\pi$ is $(sd,0)$-concentrated, completing the proof.
\end{proof}

\begin{figure}
\begin{footnotesize}
\begin{center}
	\begin{tikzpicture}[scale=0.4]
		\draw [thick, darkgray, fill=lightgray, line cap=round] (0.75,0.75) rectangle (5.25,6.25);
		\draw [thick, darkgray, fill=lightgray, line cap=round] (6.75,6.75) rectangle (10.25,10.25);
		\absdot{(1,2)}{};
		\absdot{(3,4)}{};
		\absdot{(4,1)}{};
		\absdot{(5,6)}{};
		\absdot{(6,3)}{};
		\absdot{(7,8)}{};
		\absdot{(8,5)}{};
		\absdot{(9,10)}{};
		\absdot{(10,7)}{};
		\draw [dashed, thick, line cap=round] (5.5,0)--(5.5,11);
		\draw [dotted, thick, line cap=round] (0,6.5)--(11,6.5);
		\draw [lightgray, ultra thick, line cap=round] (0,0) rectangle (11,11);
	\end{tikzpicture}
\quad\quad
	\begin{tikzpicture}[scale=0.4]
		\draw[thick] (0.75,1.75)--(1.25,2.25);
		\draw[thick] (2.75,3.75)--(3.25,4.25);
		\draw[thick] (3.75,0.75)--(4.25,1.25);
		\draw[thick] (4.75,5.75)--(5.25,6.25);
		\draw[thick] (5.75,2.75)--(6.25,3.25);
		\draw[thick] (6.75,7.75)--(7.25,8.25);
		\draw[thick] (7.75,4.75)--(8.25,5.25);
		\draw[thick] (8.75,9.75)--(9.25,10.25);
		\draw[thick] (9.75,6.75)--(10.25,7.25);
		\draw [dashed, thick, line cap=round] (5.5,0)--(5.5,11);
		\draw [dotted, thick, line cap=round] (0,6.5)--(11,6.5);
		\draw [lightgray, ultra thick, line cap=round] (0,0) rectangle (11,11);
	\end{tikzpicture}
\end{center}
\end{footnotesize}
\caption{On the left, dividing an increasing oscillation into two independent rectangles and two extra entries; here the dotted line is $L$. The figure on the right shows that when the increasing oscillation sequence is inflated by $\Av(21)$, it is not concentrated.}
\label{fig-incosc-concentration}
\end{figure}
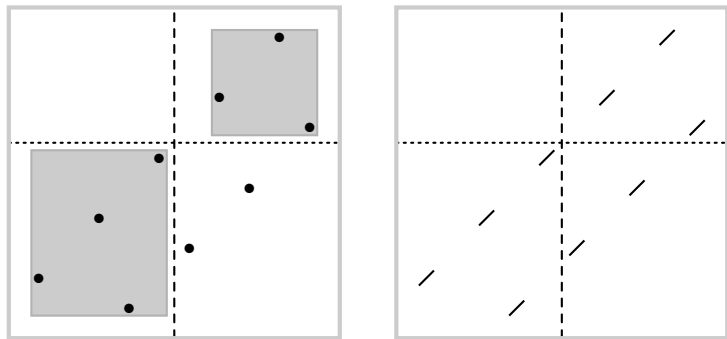

Ideally, we would characterize the concentrated permutation classes. As a foray in this direction, note that (as suggested by Figure~\ref{fig-alternation-oscillation}) a concentrated class cannot contain arbitrary long alternations. (This in turn implies via Proposition~\ref{prop-long-path-wedge} that concentrated classes must have bounded substitution depth.) Thus one might hope that a class is concentrated if and only if it has bounded alternations. However, the inflation of increasing oscillations by the increasing class, $\O_I[\Av(21)]$, has bounded alternations but fails to be concentrated, as indicated on the right of Figure~\ref{fig-incosc-concentration}. This example, which is generalized below, shows that it may be difficult to demarcate the precise boundary between concentrated and non-concentrated classes.

\begin{proposition}
Suppose that the class $\C$ contains infinitely many simple permutations and that the class $\D$ is infinite. Then the class $\C[\D]$ is not concentrated.
\end{proposition}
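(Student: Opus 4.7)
Since $\C$ contains infinitely many simple permutations, Theorem~\ref{thm-alt-or-osc} together with Proposition~\ref{prop-alt-par-wedge} implies that $\C$ contains arbitrarily long parallel alternations, wedge alternations, or oscillations. The plan is to show, for any given $q$ and $r$, that some $\pi\in\C[\D]$ fails to be $(q,r)$-concentrated. I describe the cleanest case (parallel alternations) in detail, and note that the other cases follow by analogous arguments.

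Take $\sigma\in\C$ to be a parallel alternation of length $2n$ (oriented so that the $n$ high values occupy odd positions and the $n$ low values occupy even positions) and $\alpha\in\D$ any permutation of length $s\ge 1$ (available because $\D$ is infinite). Form the inflation $\pi=\sigma[\alpha,\ldots,\alpha]\in\C[\D]$, and let $L$ be the horizontal line separating the lowest $n$ value ranks of $\pi$'s blocks from the highest $n$. The parallel alternation structure then guarantees that the $n$ ``low blocks'' below $L$ and the $n$ ``high blocks'' above $L$ alternate in position throughout $\pi$.

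The combinatorial heart is a \emph{trapping} observation: given any $q$ independent rectangles avoiding $L$ (say $q'$ below and $q-q'$ above), if a low-rectangle $R$ covers $k$ low blocks, those blocks must lie at a contiguous run of even $\sigma$-positions (otherwise $R$'s $y$-projection would force it to cover intermediate low blocks too, since consecutive even $\sigma$-positions have consecutive value ranks), and then $R$'s $x$-projection spans the $k-1$ intervening odd $\sigma$-positions. The high blocks at those odd positions are trapped: no high-rectangle can have an $x$-projection containing them without overlapping $R$'s, violating independence. Applying this observation symmetrically to low and high, and letting $c_L$ and $c_H$ denote the total numbers of low and high blocks covered, the trapping constraints $c_L+c_H\le n+(q-q')$ and $c_L+c_H\le n+q'$ combine to give $c_L+c_H\le n+q/2$. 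Consequently at least $n-q/2$ blocks remain uncovered, each contributing $s$ missed entries, so the total number of missed entries is at least $(n-q/2)s$. Choosing $n>q/2+r$ (with any $s\ge 1$) yields more than $r$ missed entries, contradicting $(q,r)$-concentration.

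The main obstacle is the oscillation case, where low and high positions do not alternate throughout the whole permutation (there are monotone runs of low-only positions near one end and high-only positions near the other, after a suitable horizontal choice of $L$). However, in the middle section---of length $\Theta(n)$---low and high positions do alternate in oscillations, and restricting the trapping argument to this middle section still yields an $\Omega((n-q)s)$ lower bound on missed entries. An alternative is to use a vertical $L$ cutting the oscillation in half, exploiting the fact that the left and right halves of any long simple permutation have heavily interleaved value ranks, which gives a symmetric trapping argument applied to $y$-projections.
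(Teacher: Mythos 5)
Your handling of the parallel and wedge alternation cases is essentially right, and in spirit matches the paper's observation that a concentrated class cannot contain arbitrarily long alternations. (Indeed, those cases do not even need $\D$ to be infinite: the trapping count with $s=1$ already shows $\C$ itself fails to be concentrated.) Where the argument breaks down is exactly where you flag difficulty --- the oscillation case --- and the proposed fix is incorrect.

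The claim that in an increasing oscillation, after choosing a horizontal $L$, ``in the middle section --- of length $\Theta(n)$ --- low and high positions do alternate'' is false. In the increasing oscillating sequence $2,4,1,6,3,8,5,10,7,\dots$, a horizontal line at height $v+\nicefrac12$ puts the entries of value $\le v$ at (approximately) positions $1,\dots,v$, and the entries of value $>v$ at the remaining positions: the two sides are essentially segregated left/right, with only an $O(1)$-length window in which they interleave. This is precisely why the class $\O$ of (uninflated) oscillations \emph{is} $(2,2)$-concentrated, as the paper notes alongside Figure~\ref{fig-incosc-concentration} --- two rectangles (one southwest, one northeast of $L$) cover all but two entries. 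If your trapping bound of $\Omega((n-q)s)$ missed entries held for oscillations, it would already fail with $s=1$, contradicting this fact. The proposed vertical-$L$ alternative has the same problem: the left and right halves of an increasing oscillation have nearly disjoint (not interleaved) value ranks.

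The paper handles the oscillation case by using the hypothesis that $\D$ is infinite in an essential way: if $\C$ contains arbitrarily long oscillations then $\O_I\subseteq\C$ or $\O_D\subseteq\C$, and since $\D$ is infinite, the Erd\H{o}s--Szekeres Theorem gives $\Av(21)\subseteq\D$ or $\Av(12)\subseteq\D$; hence $\C[\D]$ contains one of $\O_I[\Av(21)]$, $\O_I[\Av(12)]$, $\O_D[\Av(21)]$, $\O_D[\Av(12)]$, and none of these is concentrated. (That last fact is what the right-hand picture in Figure~\ref{fig-incosc-concentration} is illustrating, and proving it requires a separate argument specific to the inflated oscillation --- one that exploits the fact that making $s$ large thickens the ``diagonal overlap'' region so that a bounded number of rectangles must miss $\Omega(s)$ entries. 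It is not a consequence of the parallel-alternation trapping argument.) Your proof needs to be repaired along these lines; as written, the oscillation branch is the one place where $\D$ being infinite should enter, and it does not.
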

\begin{proof}
By Theorem~\ref{thm-alt-or-osc} we see that $\C$ contains arbitrarily long alternations or oscillations. If $\C$ contains arbitrarily long alternations then it is not concentrated by our previous remarks, so $\C[\D]$ is not concentrated either. If $\C$ contains arbitrarily long oscillations then it either contains all increasing oscillations (and thus the class $\O_I$) or all decreasing oscillations (and thus the class $\O_D$). Moreover, because $\D$ is infinite, the Erd\H{o}s--Szekeres Theorem shows that it must contain the class of increasing permutations or the class of decreasing permutations, i.e., $\Av(21)$ or $\Av(12)$. Therefore $\C[\D]$ must contain one of the classes $\O_I[\Av(21)]$, $\O_I[\Av(12)]$, $\O_D[\Av(21)]$, or $\O_D[\Av(12)]$, and none of these classes is concentrated.
\end{proof}

Therefore we leave the following problem open.

\begin{problem}
\label{prob-characterize-concentration}
Characterize the concentrated permutation classes.	
\end{problem}

It may be easier to characterize the concentrated \emph{cell classes}. In this context we conjecture that the threshold for concentration is the growth rate $1+\varphi$. Note that this would be best possible because $\G_{1+\varphi}$ contains arbitrarily long alternations (Proposition~\ref{prop-bdd-alts}) and thus is not concentrated.

\begin{conjecture}
\label{conj-concentrated-phi}
The cell class $\G_\gamma$ is concentrated if and only if $\gamma<1+\varphi\approx 2.62$.
\end{conjecture}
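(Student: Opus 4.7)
The ``only if'' direction is essentially already observed: the remark preceding Problem~\ref{prob-characterize-concentration} shows that any concentrated class has bounded alternations, so by Proposition~\ref{prop-bdd-alts} we must have $\gamma<1+\varphi$. For the ``if'' direction, fix $\gamma<1+\varphi$ and let $d$ denote the finite substitution depth of $\G_\gamma$ provided by Proposition~\ref{prop-bdd-subst-depth}. The plan is to decouple the two difficulties by proving concentration separately at the level of simple permutations and then bootstrapping through the bounded substitution depth.

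I would first establish the following generalization of Proposition~\ref{prop-fin-simples-concentration}: if $\C$ is a class of substitution depth at most $d$ in which every simple permutation is $(q_0,r_0)$-concentrated, then $\C$ is $(Q,R)$-concentrated for some $Q,R$ depending only on $d$, $q_0$, and $r_0$. The proof mirrors Proposition~\ref{prop-fin-simples-concentration}: induct on substitution depth, write $\pi=\sigma[\alpha_1,\dots,\alpha_m]$, and note that the dividing line $L$ meets a single interval $\alpha_i$. When $\sigma$ is monotone the argument proceeds exactly as before. When $\sigma$ is simple, apply $\sigma$'s $(q_0,r_0)$-concentration with an auxiliary line placed just next to entry $i$ of $\sigma$, refining the cover near entry $i$ if necessary so that no chosen rectangle of $\sigma$ contains that entry; in the inflation this yields at most $q_0+r_0$ independent rectangles of $\pi$ (independence is automatic since distinct $\alpha_j$'s occupy disjoint rows and columns) covering every interval except $\alpha_i$, which is then handled by the inductive hypothesis.

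The heart of the argument is then the structural claim that the simple permutations in $\G_\gamma$ are uniformly concentrated. Since $\G_\gamma$ has bounded alternations (Proposition~\ref{prop-bdd-alts}), Theorem~\ref{thm-alt-or-osc} forces any arbitrarily long simple permutation of $\G_\gamma$ to contain arbitrarily long oscillations, while Theorem~\ref{thm-schmerl-trotter} further constrains the shape of such simples. The hoped-for conclusion, extending Proposition~\ref{prop-Gxi-first} up to the threshold $1+\varphi$, is that every sufficiently long simple in $\G_\gamma$ is, up to the eight symmetries of the square, an oscillation with at most a bounded number of additional entries. Increasing and decreasing oscillations are themselves $(2,2)$-concentrated as in Figure~\ref{fig-incosc-concentration}: after removing the one or two entries closest to the separating line, the rest splits into two smaller oscillations lying in independent bounding rectangles. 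A bounded number of extra entries perturbs the concentration parameters only by an additive constant, which, together with the finitely many ``short'' simples, gives the desired uniform bounds.

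The principal obstacle is this structural classification. Proposition~\ref{prop-Gxi-first} covers only $\gamma\leq 2.36$, and extending it to $\gamma<1+\varphi\approx 2.618$ requires enumerating every way to extend an oscillation (or any other simple permitted by the sum/skew growth rate bound) with additional entries and computing the resulting sum and skew closure growth rates in each configuration. A more delicate point is uniformizing the concentration parameters across this infinite family, since the bound on the number of ``extra'' entries permitted on top of an oscillation must itself be absolute. The value $1+\varphi$ is natural here because at and above it $\G_\gamma$ admits parallel alternations, whose sum closure achieves growth rate exactly $1+\varphi$ and which provide a genuinely non-concentrated family; below it, the growth rate restriction should force sub-oscillation structure to prevail.
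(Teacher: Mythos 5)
This statement is a conjecture that the paper explicitly leaves open; it does not prove it, so there is no ``paper's proof'' to compare against. The paper establishes only that $\G_\xi$ (with $\xi\approx 2.305$) is concentrated (Theorem~\ref{thm-Gxi-concentration}), far short of the conjectured threshold $1+\varphi\approx 2.618$. Your ``only if'' direction is fine and is exactly the observation the paper makes just before Conjecture~\ref{conj-concentrated-phi} (concentrated $\Rightarrow$ bounded alternations, then apply Proposition~\ref{prop-bdd-alts}).

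The ``if'' direction, however, has a fatal flaw even before reaching the structural classification you flag as the ``principal obstacle.'' Your proposed reduction lemma---that bounded substitution depth together with uniformly concentrated simple permutations forces concentration of the whole class---is false, and the paper itself furnishes the counterexample right after Proposition~\ref{prop-fin-simples-concentration}: the class $\O_I[\Av(21)]$. Its substitution depth is bounded, and its simple permutations are precisely increasing oscillations (a simple pattern of an increasing oscillation has a connected inversion graph that is an induced subgraph of a path, hence a path, hence is itself an increasing oscillation), which are $(2,2)$-concentrated; yet $\O_I[\Av(21)]$ is not concentrated, as Figure~\ref{fig-incosc-concentration} indicates. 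The step that breaks is exactly your inflation argument: when a rectangle $R$ in $\sigma$'s plot straddles entry $i$ in the direction of $L$, the inflated rectangle in $\pi$ necessarily meets $L$, and placing an auxiliary line on one side of entry $i$ does not prevent this; you would need to confine all rectangles away from column (or row) $i$ on \emph{both} sides, which requires applying concentration to sub-patterns of $\sigma$ rather than just to $\sigma$, and those sub-patterns are not covered by your hypothesis. Thus the argument genuinely requires concentration of $\Sub(\sigma)$ for each simple $\sigma$, not merely of the simples themselves, and that in turn needs to control the lengths of the intervals $\alpha_j$ adjacent to $L$---exactly what the paper's Theorem~\ref{thm-Gxi-concentration} does via the delicate growth-rate computation showing that in $\G_\xi$ all proper intervals of long simples are copies of $12$. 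Beyond this, the structural claim that every long simple in $\G_\gamma$ for $\gamma<1+\varphi$ is an oscillation with boundedly many extra entries is itself unestablished and, given that Proposition~\ref{prop-Gxi-first} already requires an exhaustive case analysis just to reach $\gamma\le 2.36$, appears to be genuinely difficult.
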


We settle for showing that the cell class $\G_\xi$ is concentrated, and in order to do so we must consider the geometric structure of $321$-avoiding permutations. Recall from the introduction that given any figure in the plane and permutation $\pi$, we say that $\pi$ can be drawn on the figure if we can choose $n$ points in the figure, no two on a common horizontal or vertical line, label them $1$ to $n$ from bottom to top and then read them from left to right to obtain $\pi$. The following result was first proved by Waton in his thesis~\cite{waton:on-permutation-:} while another proof was given by Albert, Linton, Ru\v{s}kuc, Vatter, and Waton~\cite{albert:on-convex-permu:}.

\begin{proposition}
\label{prop-321-lines}
The class of permutations that can be drawn on any two parallel lines of positive slope is $\Av(321)$.
\end{proposition}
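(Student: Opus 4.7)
The plan is to prove the two containments separately.

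For the containment $\Sub(\text{two parallel lines}) \subseteq \Av(321)$, a pigeonhole argument suffices: among any three entries of a permutation drawn on the figure, at least two points lie on the same line, and since the line has positive slope, those two points are ordered identically by $x$-coordinate and by $y$-coordinate, so they form an ascent. Hence no three entries can form a decreasing subsequence, and the permutation avoids $321$.

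For the reverse containment, given $\pi \in \Av(321)$, I propose to build a drawing explicitly. First, partition the entries of $\pi$ into two increasing subsequences: let $A$ consist of the left-to-right maxima of $\pi$ and let $B = [n] \setminus A$ be the rest. Then $B$ is increasing, because a descent $\pi(j_1) > \pi(j_2)$ inside $B$, together with some $k < j_1$ satisfying $\pi(k) > \pi(j_1)$ (which exists since $j_1$ is not an LTR-max), would form a $321$ pattern. Now fix two parallel lines $L_A$ (upper) and $L_B$ (lower) of positive slope $m$, with gap $c = (b-a)/m > 0$ between their intercepts $b > a$. Assign each $\alpha \in A$ an $x$-coordinate $u_\alpha$ on $L_A$ and each $\beta \in B$ an $x$-coordinate $v_\beta$ on $L_B$. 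The configuration realizes $\pi$ precisely when the merged multiset of $u$'s and $v$'s, sorted increasingly, reads in position order, and for each $(\alpha,\beta) \in A \times B$ the sign of $v_\beta - u_\alpha - c$ matches the sign of $\pi(\beta) - \pi(\alpha)$.

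For $\alpha > \beta$ the position constraint forces $u_\alpha > v_\beta$, so the required $\pi(\alpha) > \pi(\beta)$ follows automatically from $\alpha$ being an LTR-max. The real work is in the pairs with $\alpha < \beta$, which split into \emph{above} pairs (where $\pi(\alpha) > \pi(\beta)$, requiring $v_\beta - u_\alpha < c$) and \emph{below} pairs (where $\pi(\alpha) < \pi(\beta)$, requiring $v_\beta - u_\alpha > c$). The key structural observation is that, in $\Av(321)$, one cannot simultaneously have an above pair $(\alpha_1, \beta_1)$ and a below pair $(\alpha_2, \beta_2)$ with $\alpha_1 < \alpha_2$ and $\beta_1 > \beta_2$: otherwise monotonicity of $A$ and $B$ would force the impossible chain $\pi(\beta_2) < \pi(\beta_1) < \pi(\alpha_1) < \pi(\alpha_2) < \pi(\beta_2)$.

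The main obstacle I expect is explicitly producing coordinates $u_\alpha$, $v_\beta$ and an offset $c$ realizing this separation for all pairs at once. The strategy is to place the interleaved coordinates on the real line and introduce large gaps at positions corresponding to LTR-maxima, which, by the structural observation, always lie in the portion of a below interval that is ``extra'' with respect to any above interval. Having thereby made every below difference $v_\beta - u_\alpha$ strictly exceed every above difference, any $c$ in the resulting separating interval yields a valid drawing of $\pi$ on $L_A \cup L_B$.
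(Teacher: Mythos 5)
Your forward containment is fine: any three points include two on the same line, which form a non-inversion, so no $321$ can appear. The decomposition for the reverse containment is also the right one, and the structural observation that no ``above'' pair $(\alpha_1,\beta_1)$ can strictly nest a ``below'' pair $(\alpha_2,\beta_2)$ (i.e.\ $\alpha_1<\alpha_2$ and $\beta_2<\beta_1$) is correct and follows exactly as you say. However, the step you flag as the main obstacle is where the proposal actually breaks. The claim that the LTR-maxima ``always lie in the portion of a below interval that is extra with respect to any above interval'' is false, and consequently so is the plan to insert the large gaps only at LTR-maximum positions.

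Take $\pi = 236145 \in \Av(321)$, with $A=\{1,2,3\}$ (LTR-maxima) and $B=\{4,5,6\}$. Here $(1,4)$ is an above pair and $(2,5)$ is a below pair, and the ``extra'' part of the below interval relative to the above one is the single position $5\in B$, not an LTR-maximum. Worse, the above interval $[1,4]$ contains three LTR-maxima while the below interval $[2,5]$ contains only two, so gaps placed at (or just before, or just after) LTR-maximum positions make the above length \emph{exceed} the below length, which is the opposite of what you need. A valid drawing does exist: for instance placing the six points at abscissae $0,1,3,4,6,7$ and taking $c\in(4,5)$ works, but the crucial large gaps there are in slots $2$ and $4$, and slot $4$ sits between two members of $B$. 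So the structural observation is necessary but, as stated, not sufficient to drive your construction; one needs a different recipe for choosing the gaps (or a different coordinate scheme entirely) and an argument that it succeeds for every $\pi\in\Av(321)$. Note that the paper itself does not prove this proposition --- it is quoted from Waton's thesis and from Albert--Linton--Ru\v{s}kuc--Vatter--Waton --- so the missing construction is genuinely the content of the result, not a routine verification.
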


We now establish that the simple permutations in $\G_{2.36}$ are concentrated (we consider their inflations separately).

\begin{proposition}
\label{prop-SiGxi-concentration}
The set of simple permutations in the cell class $\G_{2.36}$ is concentrated.
\end{proposition}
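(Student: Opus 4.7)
The plan is to combine the structural classification of simple permutations in $\G_{2.36}$ from Proposition~\ref{prop-Gxi-first} with Waton's two-line representation of $\Av(321)$ from Proposition~\ref{prop-321-lines}. First, a simple permutation of length at least $2$ lying in a substitution closure $\langle\C\rangle$ must itself be a simple member of $\C$, since a proper inflation $\sigma[\alpha_1,\dots,\alpha_m]$ in which some $\alpha_i$ has length at least $2$ contains the nontrivial interval $\alpha_i$. Combined with Proposition~\ref{prop-Gxi-first}, this shows the simple permutations in $\G_{2.36}$ of length at least $5$ all lie in $\Av(321)\cup\Av(123)\cup\{25314,41352\}$. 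The two exceptional simples, together with the finitely many simples of length at most $4$, form a finite set that is trivially concentrated. By the complement symmetry, which preserves both $\G_{2.36}$ and the concentration parameters, it therefore suffices to exhibit constants $q,r$ so that every simple $\pi\in\Av(321)\cap\G_{2.36}$ is $(q,r)$-concentrated.

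Fix such a $\pi$. By Proposition~\ref{prop-321-lines}, $\pi$ can be drawn on two parallel lines of positive slope $\ell_1$ (below) and $\ell_2$ (above); this partitions the entries of $\pi$ into two increasing subsequences $A$ on $\ell_1$ and $B$ on $\ell_2$. By the inverse symmetry, which interchanges horizontal and vertical lines while preserving all our hypotheses, it suffices to treat a vertical line $L$ at some position $a$. Since $A$ is increasing, choose a threshold $T$ with $\max\{v:(p,v)\in A,\ p\le a\}\le T<\min\{v:(p,v)\in A,\ p>a\}$, and set
\[
R_{\mathrm{BL}}=\{(p,v):p\le a,\ v\le T\}\quad\text{and}\quad R_{\mathrm{AR}}=\{(p,v):p>a,\ v>T\}.
\]
These two rectangles are independent, neither crosses $L$, and together they contain every $A$-entry. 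The uncovered entries therefore form a subset of $B$, and because $B$ is increasing in both position and value, this subset is a single contiguous segment of $B$ lying entirely on one side of $L$.

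The technical crux is to bound the size of this uncovered $B$-segment by a universal constant depending only on the alternation bound $M$ furnished by Proposition~\ref{prop-bdd-alts} (which applies because $2.36<1+\varphi$). My plan is to argue contrapositively: if the uncovered segment $b_j,b_{j+1},\dots,b_k$ is too long, the simplicity of $\pi$ will force enough interleaving of $A$-entries with the $b_i$ to produce a parallel alternation of length exceeding $M$, contradicting $\pi\in\G_{2.36}$. Indeed, since simplicity forbids any subset of $\{b_j,\dots,b_k\}$ from being an interval of $\pi$, for every subrange $[q_i,q_{i'}]$ one finds either a position-interleaving $A$-entry (at a position inside $[q_i,q_{i'}]$, necessarily on the same side of $L$ and with value at most $T$) or a value-interleaving $A$-entry (with value strictly between $u_i$ and $u_{i'}$, necessarily on the opposite side of $L$). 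Iterating this and exploiting the monotonicity of $A$ and $B$ should yield, from a segment of length $m$, a parallel alternation in $\pi$ of length growing linearly in $m$. The quantitative heart of the argument is making this extraction precise and uniform across all simple $\pi\in\Av(321)\cap\G_{2.36}$, and the main obstacle is handling the second (value-interleaving) case, where one must combine the $A$-right entries with values in $(u_j,u_k)$ together with a well-chosen subset of the $b_i$ to produce a genuine parallel alternation rather than merely a skew-sum shape. Once this bound $r=f(M)$ is established, the proof concludes with $(q,r)$-concentration for $q=2$.
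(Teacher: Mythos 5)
Your setup matches the paper's: reduce via Proposition~\ref{prop-Gxi-first} and symmetry to simple $\pi\in\Av(321)$, draw $\pi$ on two parallel increasing lines, use the line $L$ to cut the drawing into four quadrants so that one quadrant is empty and the uncovered entries sit in the opposite quadrant as a contiguous segment of one of the two monotone subsequences. Up to this point everything is sound (your lemma that simple permutations of length at least $2$ in $\langle\C\rangle$ already belong to $\C$ is correct and used implicitly by the paper as well, and your observation that the uncovered entries lie on one side of $L$ only is right).

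The gap is that you stop short of the bounding step and, worse, describe a difficulty that does not actually arise. You worry about the ``value-interleaving case'' requiring you to combine entries from both sides into a single parallel alternation. The paper sidesteps this entirely with a pigeonhole argument applied to the \emph{consecutive pairs} of entries in the bad quadrant: because $\pi$ is simple, each consecutive pair in this quadrant must be separated either by a point in the quadrant above or by a point in the quadrant to the side (you correctly noted these are the only two possibilities, since the fourth quadrant is empty). Now if there are $n$ entries, there are $n-1$ consecutive pairs, so at least $\lceil(n-1)/2\rceil$ of them are separated in the \emph{same} direction. Take just those pairs and just their separating points: the bad-quadrant entries lie on one of the two drawing lines (hence form an increasing sequence), and the separating points all lie on the other drawing line in a single quadrant (hence also form an increasing sequence), and the two sequences interleave by position or by value by construction. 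This is already a parallel alternation of length about $n-1$ inside $\pi$, with no need to merge the two cases. Combined with the bound from Proposition~\ref{prop-bdd-alts} on parallel alternations, this gives $n\le 2a+1$ and hence $(2,2a+1)$-concentration. In short, your framework is correct, but the claim you leave as ``the quantitative heart'' must actually be proved, and it has an easy proof that avoids the obstacle you describe; as written the argument is incomplete.
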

\begin{proof}
By Proposition~\ref{prop-bdd-alts} we may assume that $\G_{2.36}$ contains no parallel alternations of length $2a$ or greater. By symmetry and Proposition~\ref{prop-Gxi-first}, it suffices to consider $25314$ and the simple permutations of $\Av(321)$. As $25314$ is trivially $(5,0)$-concentrated, we move on to considering an arbitrary simple permutation $\sigma\in\Av(321)$. Draw $\sigma$ on two parallel lines of positive slope as shown in Figure~\ref{fig-Gxi-diag-simple} and consider an arbitrary horizontal or vertical line $L$.

By symmetry we may assume that $L$ is horizontal, as represented by the dashed line in Figure~\ref{fig-Gxi-diag-simple}. Now consider the perpendicular line (a dotted line in the figure) which intersects $L$ precisely where $L$ intersects the higher of the two parallel lines (Figure~\ref{fig-Gxi-diag-simple} shows that we have another choice for this line, though we only need one). We seek to show that the northeastern and southwestern rectangles in this figure together contain all but a bounded number of entries of $\sigma$.

\begin{figure}
\begin{footnotesize}
\begin{center}
	\begin{tikzpicture}[scale=1]
		\draw [thick, line cap=round] (0,0.5)--(2.5,3);
		\draw [thick, line cap=round] (0.5,0)--(3,2.5);
		\draw [dashed, thick, line cap=round] (0,1.4)--(3,1.4);
		\node [right] at (3,1.4) {$L$};
		\draw [dotted, thick, line cap=round] (0.9,0)--(0.9,3);
		\draw [lightgray, ultra thick, line cap=round] (0,0) rectangle (3,3);
		\node at (1.95,0.5) {\mbox{bounded $\#$}};
		\node at (1.95,0.25) {\mbox{of entries}};
	\end{tikzpicture}
\quad\quad
	\begin{tikzpicture}[scale=1]
		\draw [thick, line cap=round] (0,0.5)--(2.5,3);
		\draw [thick, line cap=round] (0.5,0)--(3,2.5);
		\draw [dashed, thick, line cap=round] (0,1.4)--(3,1.4);
		\node [right] at (3,1.4) {$L$};
		\draw [dotted, thick, line cap=round] (1.9,0)--(1.9,3);
		\draw [lightgray, ultra thick, line cap=round] (0,0) rectangle (3,3);
		\node at (0.95,2.75) {\mbox{bounded $\#$}};
		\node at (0.95,2.5) {\mbox{of entries}};
	\end{tikzpicture}
\end{center}
\end{footnotesize}
\caption{Two ways to establish concentration for simple permutations in $\Av(321)$. In these figures the dashed line is $L$ while the dotted line shows how to divide the figure into two independent rectangles and a small dependent region.}
\label{fig-Gxi-diag-simple}
\end{figure}
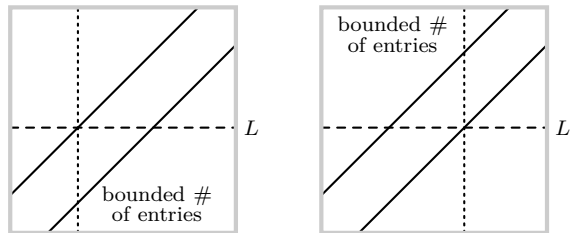

There are no points in the northwestern region by construction, so it suffices to bound the number of points in the southeastern region. Because $\sigma$ is simple, every consecutive pair of points in this corner of the gridding must be separated by a point in the region above it or in the region to its left. If $a+1$ or more consecutive pairs were separated in the same direction then $\sigma$ would contain a parallel alternation of length $2a$, a contradiction. Therefore there can be at most $2a+1$ entries in the southeastern region, proving that the simple permutations in $\G_{2.36}$ are $(2,2a+1)$-concentrated.
\end{proof}

We now establish the main result of this section. The proof of the following result shows that it holds for $\G_{2.309}$ (which contains $\G_\xi$) and we believe that a more detailed argument could establish the result for $\G_{2.36}$. As in the proof of the previous proposition, we make no effort to optimize the concentration parameters.

\begin{theorem}
\label{thm-Gxi-concentration}
The cell class $\G_\xi$ is concentrated.
\end{theorem}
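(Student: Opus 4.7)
The plan is to proceed by induction on substitution depth, which is uniformly bounded across $\G_\xi$ by Proposition~\ref{prop-bdd-subst-depth} since $\xi<1+\varphi$. The base case is handled by the results just established: every simple permutation in $\G_\xi\subseteq\G_{2.36}$ is uniformly $(q_\sigma,r_\sigma)$-concentrated by Proposition~\ref{prop-SiGxi-concentration}, and every monotone permutation is trivially $(2,0)$-concentrated. The inductive step will show that if $\pi=\sigma[\alpha_1,\ldots,\alpha_m]$ with $\sigma$ simple or monotone and each $\alpha_j$ of strictly smaller substitution depth than $\pi$, then $\pi$ inherits concentration parameters from those of $\sigma$ and the $\alpha_j$, with a controlled blowup per level.

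Given an arbitrary horizontal or vertical line $L$ through the plot of $\pi$, observe that $L$ meets at most one interval $\alpha_i$. If $L$ meets none of them, then $L$ naturally descends to a line $L_\sigma$ through $\sigma$'s plot (falling between adjacent entries of $\sigma$), and applying concentration of $\sigma$ at $L_\sigma$ yields $q_\sigma$ independent rectangles whose natural inflation to $\pi$ gives $q_\sigma$ independent rectangles avoiding $L$, while each of the at most $r_\sigma$ uncovered intervals is enclosed by its own bounding rectangle. If $L$ meets $\alpha_i$, use a double-perturbation device: take $L_+$ to be a line just above $\alpha_i$ and $L_-$ just below, apply concentration to $\sigma$ at each, and retain from the first cover only those rectangles lying strictly above $L_+$ and from the second only those lying strictly below $L_-$. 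These $2q_\sigma$ rectangles avoid $L$, and together with individual covers of at most $2r_\sigma$ stragglers they leave uncovered only $\alpha_i$ itself, which we handle by invoking the inductive hypothesis. Independence of all rectangles is automatic because distinct intervals in an inflation have disjoint horizontal and vertical projections.

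Tracking parameters across the recursion gives a relation of the form $Q_k=2q_\sigma+2r_\sigma+Q_{k-1}$ and $R_k=R_{k-1}$, which, since the recursion halts after at most $d$ steps where $d$ is the uniform bound on substitution depth for $\G_\xi$, yields uniform concentration constants for all of $\G_\xi$. The main obstacle is precisely the inductive case when $L$ passes through an interval: a direct inflation of the rectangles from $\sigma$'s concentration may produce a rectangle swallowing $\alpha_i$ and hence intersecting $L$, so the naive recursion does not close. The double-perturbation trick circumvents this by attacking the portions strictly above and strictly below the sliced interval separately, paying a modest cost in rectangle count that can be absorbed into the uniform bounds precisely because bounded substitution depth caps the number of recursive levels.
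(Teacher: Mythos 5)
There is a genuine gap in the inductive step. When $L$ slices the interval $\alpha_i$, you apply the inductive hypothesis to partition $\alpha_i$ into a set of independent rectangles and then adjoin the (inflated) rectangles obtained from the double‐perturbation of $\sigma$. You claim that ``independence of all rectangles is automatic because distinct intervals in an inflation have disjoint horizontal and vertical projections.'' That is true for single intervals, but the rectangles produced by $\sigma$'s concentration typically span \emph{several} entries of $\sigma$. After inflation such a rectangle can straddle $\alpha_i$ in the direction perpendicular to $L$. Concretely, say $L$ is horizontal so $L_+^\sigma$ and $L_-^\sigma$ are horizontal lines flanking $\sigma(i)$; a rectangle $R$ kept from the $L_-^\sigma$ application can contain entries $\sigma(j)$ and $\sigma(k)$ with $j<i<k$ and $\sigma(j),\sigma(k)<\sigma(i)$ (take $\sigma=2413$ with $i=2$ and $R$ covering the entries $2$ and $1$). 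Then the inflated $R'$ has $y$-projection below $\alpha_i$'s box, but its $x$-projection necessarily contains $\alpha_i$'s $x$-range and hence overlaps the $x$-projections of every rectangle inside $\alpha_i$'s bounding box. By the paper's definition, independence requires \emph{both} projections to be disjoint, so $R'$ is not independent of the rectangles supplied by the inductive hypothesis, and the recursion does not close. Trying to repair this by splitting $R'$ at $\alpha_i$'s $x$-range produces two pieces with overlapping $y$-projections, which fails in the other coordinate.

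The paper circumvents this obstacle not by recursion on substitution depth but by a structural reduction: it first disposes of inflations of short simple permutations via Proposition~\ref{prop-fin-simples-concentration}, and then shows that if $\pi=\sigma[\alpha_1,\dots,\alpha_m]\in\G_\xi$ with $\sigma$ a long simple permutation, every proper interval $\alpha_j$ must be a copy of $12$. The proof of this claim uses growth-rate estimates on the sequence of sum indecomposable subpermutations (an oscillation of length at least $7$ must appear, a $21$ or $123$ interval would force $\gr(\bigoplus\Sub(\pi))>\xi$, etc.). With that claim in hand, $\pi$ is essentially $\sigma$ with some entries doubled, so when $L$ slices an interval one simply declares its two entries to be additional remainder, shrinks any rectangle that would have swallowed $\alpha_i$, and concludes that $\pi$ is $(q,2r+2)$-concentrated whenever the simple permutations are $(q,r)$-concentrated. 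Because the sliced interval is never covered by rectangles, no independence between $\sigma$-rectangles and $\alpha_i$-rectangles is ever required, which is exactly the point your argument is missing. Any fix of your induction would, in effect, have to reprove a bound on the size of the intervals $\alpha_j$, and at that point you would be reproducing the paper's key structural claim.
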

\begin{proof}
By Proposition~\ref{prop-bdd-alts}, $\G_\xi$ has bounded alternations and thus Theorem~\ref{thm-alt-or-osc} implies that there is an integer $s$ such that every simple permutation in $\G_\xi$ of length greater than $s$ contains an oscillation of length at least $7$. Let $d$ denote the substitution depth of $\G_\xi$, which is bounded by Proposition~\ref{prop-bdd-subst-depth}. As the set of inflations of simple permutations of length at most $s$ with substitution depth at most $d$ is concentrated by Proposition~\ref{prop-fin-simples-concentration}, it suffices to consider simple permutations of length greater than $s$ and their inflations. By Proposition~\ref{prop-SiGxi-concentration} we may assume that the simple permutations themselves are $(q,r)$-concentrated for some integers $q$ and $r$.

Take $\pi\in\G_\xi$ to be an inflation of a simple permutation of length greater than $s$. By symmetry we may assume that $\pi$ lies in $\G_\xi$ because $\gr(\bigoplus\Sub(\pi))<\xi$. By our choice of $s$ we know that $\pi$ contains an oscillation (either increasing or decreasing) of length at least $7$. The two decreasing oscillations of length $5$ (at least one of which is contained in all longer decreasing oscillations) are $35142$ and $42531$, and the chart below shows that these cannot be contained in $\pi$, so we may assume that $\pi$ contains an increasing oscillation of length at least $7$.
\[
	\begin{array}{lll}
	\hline
	\mbox{decreasing}&\mbox{sequence of sum}&\mbox{sum closure}
	\\
	\mbox{oscillation}&\mbox{indecomposables}&\mbox{growth rate}
	\\\hline
	35142&1,1,3,5,1&\approx 2.36691>\xi
	\\
	42531&1,1,3,5,1&\approx 2.36691>\xi
	\\\hline
	\end{array}
\]
Because $\pi$ contains an increasing oscillation of length at least $7$, $\Sub(\pi)$ also contains all increasing oscillations of length at most $6$. This implies that the sequence of sum indecomposable members of $\Sub(\pi)$ is term-wise at least $1,1,2,2,2,2,1$.

We claim that every proper interval in $\pi$ is a copy of $12$. Before proving this claim we demonstrate how it finishes the proof. Take $\pi\in\G_\xi$ and suppose that $\pi$ is the inflation of the simple permutation $\sigma$. Now consider a vertical or horizontal line $L$. If $L$ does not slice through an interval of $\pi$ then $\pi$ may be partitioned into axis-parallel rectangles together with remainder entries just as $\sigma$ would be, though the remainder entries may be inflated by copies of $12$, so all but $2r$ entries of $\pi$ can be covered by $q$ independent axis-parallel rectangles which do not intersect $L$. On the other hand if $L$ slices through an interval of $\pi$ then we can partition the entries of $\pi$ which do lie in this interval as before and consider the two entries of this interval as additional remainder entries. In either case, $\pi$ is $(q,2r+2)$-concentrated.

We now prove the claim. Suppose first that $\pi$ contains a $21$ interval. Because $\G_\xi$ does not contain inflations of $25314$ or $41352$, the inflated entry must be part of a copy of $2413$ or $3142$. In either case it is easy to see that $\Sub(\pi)$ contains sum indecomposable permutations of lengths three and four which are not increasing oscillations (one is $321$ while the other depends on which entry is inflated). Therefore the sequence of sum indecomposable members of $\Sub(\pi)$ is term-wise at least $1,1,3,3,2,2,1$. This implies that $\gr(\bigoplus\Sub(\pi))>2.32638>\xi$, a contradiction. Next suppose that $\pi$ contains a $123$ interval. By the same reasoning as the previous case the inflated entry must be part of a copy of $2413$ or $3142$. In this case, it is easy to see that $\Sub(\pi)$ contains two sum indecomposable permutations of lengths four and five which are not increasing oscillations (corresponding to taking either two or three entries of the interval) and thus the sequence of sum indecomposable members of $\Sub(\pi)$ is at least $1,1,2,4,4,2,1$. This shows that $\gr(\bigoplus\Sub(\pi))>2.30985>\xi$, another contradiction, completing the proof of the claim and thus of the theorem as well.
\end{proof}

We conclude with a result which allows us to slice concentrated classes by multiple lines at once.

\begin{proposition}
\label{prop-many-lines}
Suppose that the class $\C$ is $(q,r)$-concentrated and $\pi\in\C$. Given a collection $\mathfrak{L}$ of $\ell$ horizontal or vertical lines through the plot of $\pi$, all but at most $r\ell$ entries of $\pi$ can be covered by $(q-1)\ell+1$ independent axis-parallel rectangles which do not intersect any line in $\mathfrak{L}$.
\end{proposition}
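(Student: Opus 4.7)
The plan is to prove this by induction on the number of lines $\ell$. The base case $\ell = 1$ is exactly the hypothesis that $\C$ is $(q,r)$-concentrated: we obtain $q = (q-1)\cdot 1 + 1$ independent rectangles covering all but $r = r\cdot 1$ entries of $\pi$, none intersecting the single line.

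For the inductive step, suppose the statement holds for $\ell-1$ lines. Given $\mathfrak{L} = \{L_1,\dots,L_\ell\}$, apply the inductive hypothesis to $\mathfrak{L}\setminus\{L_\ell\}$ to obtain at most $(q-1)(\ell-1)+1$ independent axis-parallel rectangles $R_1,\dots,R_k$ covering all but at most $r(\ell-1)$ entries of $\pi$, with none of them intersecting any of $L_1,\dots,L_{\ell-1}$. Now consider the interaction of $L_\ell$ with this cover. Since the rectangles $R_i$ are pairwise independent (their $x$-projections are disjoint and their $y$-projections are disjoint), any single horizontal or vertical line meets at most one of them; in particular, $L_\ell$ intersects at most one rectangle, say $R_j$ (if $L_\ell$ meets none, we are already done, since the existing cover satisfies the stronger bound).

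The key observation is that the subsequence of $\pi$ inside $R_j$ is order-isomorphic to a permutation $\pi'\in\C$, because permutation classes are downward closed under containment. Apply the $(q,r)$-concentration of $\C$ to $\pi'$ with the line $L_\ell$ (restricted to $R_j$): this yields at most $q$ independent axis-parallel sub-rectangles of $R_j$ covering all but at most $r$ entries of $\pi'$, none intersecting $L_\ell$. Because these new rectangles lie inside $R_j$, and $R_j$ itself was disjoint from $L_1,\dots,L_{\ell-1}$, the new rectangles avoid every line in $\mathfrak{L}$. Replacing $R_j$ in our cover by these sub-rectangles produces an independent family (independence with the other $R_i$ is inherited from $R_j$) of size at most $(q-1)(\ell-1) + 1 - 1 + q = (q-1)\ell + 1$, covering all but at most $r(\ell-1) + r = r\ell$ entries of $\pi$, as required.

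There is no genuine obstacle here beyond correctly exploiting independence: the one fact that must be invoked carefully is that a horizontal (respectively vertical) line can meet at most one rectangle in an independent family, so replacing a single rectangle at each inductive step suffices and the counts telescope cleanly.
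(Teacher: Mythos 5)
Your proof is correct and follows essentially the same inductive argument as the paper's: both treat $\ell=1$ as the definition, use independence to see that the new line meets at most one rectangle $R$, replace $R$ by applying $(q,r)$-concentration to the subpermutation it encloses (valid because $\C$ is downward closed), and observe that this trades one rectangle for $q$ while adding at most $r$ uncovered entries. The only cosmetic difference is that you spell out explicitly why the new sub-rectangles avoid the earlier lines (they sit inside $R$), a point the paper leaves implicit.
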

\begin{proof}
We prove the proposition by induction on $\ell$. The case of $\ell=1$ is the definition of $(q,r)$-concentration, so assume that $\ell\ge 2$ and that the claim holds for $\ell-1$ lines.

Let $\mathfrak{L}$ be a collection of $\ell$ horizontal and vertical lines and choose an arbitrary line $L\in\mathfrak{L}$. By induction, all but at most $r(\ell-1)$ entries of $\pi$ can be covered by $(q-1)(\ell-1)+1$ independent axis-parallel rectangles which do not intersect any line in $\mathfrak{L}\setminus\{L\}$. We may assume that $L$ intersects precisely one of these rectangles, say $R$, because it cannot intersect more than one and if it does not intersect any of the rectangles then we are done. Because $\C$ is a class, the entries covered by $R$ are order isomorphic to a member of $\C$ and thus all but $r$ of them can be covered by at most $q$ axis-parallel rectangles not intersecting $L$. Combining these rectangles with our previous decomposition increases the number of entries not covered by $r$ and the number of rectangles by $q-1$ (we no longer need $R$ itself), establishing the bound.
\end{proof}

\section{Slicing}
\label{sec-slicing}

Given a class $\C$ with $\ugr(\C)<\xi$, we have established that $\C\subseteq\Grid(\M)$ for a matrix $\M$ whose entries are all equal to some class $\D\subseteq\G_\xi$. Moreover, we have significantly restricted the structure of $\D$. In particular, we have shown that $\D$ has bounded alternations and substitution depth (Propositions~\ref{prop-bdd-alts} and \ref{prop-bdd-subst-depth}) and also that $\D$ is concentrated (Theorem~\ref{thm-Gxi-concentration}).

In this section we restrict the griddings of members of $\C$, and to do this we must be a bit more precise. For any matrix $\M$ of permutation classes an \emph{$\M$-gridded permutation} is a permutation $\pi\in\Grid(\M)$ equipped with an $\M$-gridding. We denote gridded permutations by $\pi^\gridded$ (the matrix will always be clear from context) and we denote the collection of all $\M$-gridded permutations by $\Grid^\gridded(\M)$.

Every permutation in $\Grid(\M)$ corresponds to at least one gridded permutation in $\Grid^\gridded(\M)$. Moreover, if $\M$ is a $t\times u$ matrix then every permutation of length $n$ in $\Grid(\M)$ corresponds to at most ${n+t-2\choose t-1}{n+u-2\choose u-1}$ gridded permutations in $\Grid^\gridded(\M)$, because there are only so many places one can insert grid lines. Thus we obtain the following.

\begin{observation}
\label{obs-gridded-gr}
For a matrix $\M$ of permutation classes and a class $\C\subseteq\Grid(\M)$, the upper (resp., lower) growth rate of $\C$ is equal to the upper (resp., lower) growth rate of the sequence enumerating $\M$-griddings of members of $\C$.
\end{observation}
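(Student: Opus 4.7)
The plan is to sandwich the gridded counting sequence between the ungridded one and a polynomial multiple of it, then observe that polynomial factors wash out when one takes $n$-th roots.

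Let $c_n = |\C_n|$ and let $g_n$ denote the number of $\M$-gridded permutations of length $n$ whose underlying permutation lies in $\C$. Since every $\pi \in \C$ admits at least one $\M$-gridding (as $\C \subseteq \Grid(\M)$), the obvious forgetful map from $\M$-griddings to permutations is surjective onto $\C_n$, giving $c_n \le g_n$. For the upper bound, note that the preimage of any $\pi \in \C_n$ under this map is determined by a choice of $t-1$ column divisors in $\{1,\dots,n+1\}$ and $u-1$ row divisors in $\{1,\dots,n+1\}$ (with repetition allowed, since some cells may be empty). As the excerpt already remarks, this gives at most $\binom{n+t-2}{t-1}\binom{n+u-2}{u-1}$ griddings per permutation, so
\[
    c_n \;\le\; g_n \;\le\; \binom{n+t-2}{t-1}\binom{n+u-2}{u-1}\, c_n.
\]

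Both binomial coefficients are polynomials in $n$ of fixed degree (depending only on the dimensions of $\M$, not on $n$), so the product bounding $g_n/c_n$ is bounded by a polynomial $p(n)$. Taking $n$-th roots yields
\[
    \sqrt[n]{c_n} \;\le\; \sqrt[n]{g_n} \;\le\; \sqrt[n]{p(n)}\cdot\sqrt[n]{c_n},
\]
and since $\sqrt[n]{p(n)} \to 1$ as $n \to \infty$, the sequences $\sqrt[n]{c_n}$ and $\sqrt[n]{g_n}$ have identical $\limsup$ and $\liminf$. This proves the claim for both upper and lower growth rates simultaneously.

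There is no substantive obstacle; the only thing to be careful about is explicitly justifying that the number of griddings of a given permutation is polynomially bounded in $n$ (which is immediate from counting choices of grid lines) and then invoking the elementary fact that multiplying by a sub-exponential factor leaves $n$-th roots asymptotically unchanged. The statement is essentially a bookkeeping observation, and this is why the author labels it an \emph{observation} rather than a proposition.
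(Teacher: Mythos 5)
Your proof is correct and uses exactly the argument the paper has in mind: the sandwich $c_n \le g_n \le p(n)\,c_n$ with $p(n) = \binom{n+t-2}{t-1}\binom{n+u-2}{u-1}$ coming from choices of grid lines, followed by the observation that $\sqrt[n]{p(n)} \to 1$. The paper states precisely this bound on the number of griddings in the sentence before the observation and leaves the rest as routine, so there is no substantive difference.
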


Our next two results establish that gridded permutations $\pi^\gridded$ in classes with growth rates less than $\xi$ cannot contain large independent sets of certain types of rectangles. Here the notion of independence is the same as in Section~\ref{sec-gridding}---two rectangles are independent if both their $x$- and $y$-axis projections are disjoint. In fact, the main result of this section, Theorem~\ref{thm-slicing}, follows from Gy{\'a}rf{\'a}s and Lehel's Theorem~\ref{rectangles-lemma} just as Theorem~\ref{thm-gridding-characterization} did.

Let $\pi^\gridded$ denote any gridded permutation. A \emph{separated nonmonotone rectangle} is an axis-parallel rectangle $R$ contained in the plot of $\pi$ such that
\begin{itemize}
\item $R$ is fully contained in one cell of $\pi^\gridded$,
\item $\pi(R)$ is nonmonotone (and thus contains both $12$ and $21$), and
\item $\pi^\gridded$ has an entry in another cell which separates $R$ either vertically or horizontally.
\end{itemize}
An example is shown on the left of Figure~\ref{fig-SN-rectangle}. Note that we do not insist that the separating entry actually separate the entries of $\pi(R)$; it need only separate the rectangle $R$ itself. Our next result shows that we can bound independent sets of separated nonmonotone rectangles in classes with growth rates under $1+\sqrt{2}$. Importantly, there are no hypotheses on the classes which occur in the matrix $\M$.

\begin{proposition}
\label{prop-sep-nonmono-rects}
Let $\C\subseteq\Grid(\M)$. If there is no bound on the size of independent sets of separated nonmonotone rectangles in $\M$-gridded members of $\C$ then $\lgr(\C)\ge 1+\sqrt{2}\approx 2.41421$.	
\end{proposition}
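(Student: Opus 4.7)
The plan is to derive the growth rate bound by constructing many distinct sub-permutations of a gridded member of $\C$ with arbitrarily many independent separated nonmonotone rectangles, since all such sub-permutations automatically lie in $\C$.

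Given $\pi^\gridded \in \C$ with $n$ independent separated nonmonotone rectangles, I would first preprocess the configuration. Applying the Erd\H{o}s--Szekeres Theorem to the sequence of rectangles (which, being independent, is order isomorphic to a permutation) and pigeonholing on the four possible directions in which a separator may lie, we may pass to a subfamily of $\Omega(\sqrt{n}/4)$ rectangles arranged in a monotone staircase with all separators in the same direction. By invoking a symmetry of the square, we may then assume the staircase is increasing (so $R_{i+1}$ lies northeast of $R_i$) and each separator $e_i$ lies above $R_i$. The rectangle--separator pairs $(R_i, e_i)$ are then disjoint in both coordinates and the sub-permutation on $\bigcup(R_i \cup \{e_i\})$ is the sum of the individual contributions.

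Second, for each block $B_i = R_i \cup \{e_i\}$, I would enumerate the sub-patterns it yields. Since $R_i$ is nonmonotone, it contributes both $12$ and $21$ at length 2, and combining these with the separator yields several distinct sub-patterns of length 3. Because the blocks sit in a staircase arrangement, choosing a (possibly empty) sub-pattern independently from each block produces distinct sub-permutations of $\pi$. Letting $h(x)$ denote the per-block generating function (with $h_0 = 0$ for nonempty choices), the number of distinct length-$N$ sub-permutations arising from this construction is therefore at least $[x^N](h(x))^k$ summed over $k$, and hence the generating function for these sub-permutations is majorized below by $1/(1 - h(x))$.

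The growth rate extracted this way equals the reciprocal of the smallest positive root of $h(x) = 1$, so to conclude $\lgr(\C) \geq 1 + \sqrt{2}$ it suffices to verify that $h(\sqrt{2} - 1) \geq 1$, which (as a direct computation shows) is satisfied whenever $h(x)$ dominates $x + 2x^2/(1-x) = x + 2x^2 + 2x^3 + \cdots$ term-by-term. The main obstacle is this quantitative verification: while $h_2 \geq 2$ is immediate from nonmonotonicity of $R_i$, forcing $h_k \geq 2$ for all $k \geq 2$ up to a sufficiently large threshold is delicate. I expect this will require amalgamating several consecutive rectangle--separator pairs into a single ``super-block'' so that the super-block generating function has enough higher-length terms, exploiting the fact that we have arbitrarily many rectangles at our disposal. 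The presumed sharpness of the constant $1 + \sqrt{2}$ suggests the argument must be tight, with the extremal case matching the geometric series above exactly.
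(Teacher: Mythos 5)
Your central structural claim fails: after restricting to a staircase of rectangles in a single cell (a step you implicitly need but omit) with all separators in one direction, the blocks $B_i = R_i \cup \{e_i\}$ are \emph{not} pairwise independent, and the sub-permutation induced by $\bigcup_i B_i$ is \emph{not} the sum $\bigoplus_i \pi(B_i)$. By definition each separator lives in a different cell of $\pi^\gridded$ from the rectangle it separates; so if the separators all lie above, each $e_i$ has $y$-coordinate above the entire cell containing the $R_i$'s, hence $e_1$ already lies above $R_m$, and the blocks' $y$-projections all overlap. (The same happens for any choice of direction.) With the sum structure gone, the generating-function bound $1/(1-h)$ has no justification. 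Independently, even if the $B_i$'s did sum, choosing a sub-pattern ``independently from each block'' does not produce distinct sub-permutations—e.g., picking $1$ from each of two adjacent blocks and picking $12$ from a single block yield the same pattern—so $\sum_k [x^N]\,h(x)^k$ is an overcount, not a lower bound.

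The paper handles both problems by embracing the two-cell geometry rather than forcing a sum. After pigeonholing to one cell and one separator direction and applying Erd\H{o}s--Szekeres to the rectangles, it applies Erd\H{o}s--Szekeres \emph{a second time} to the separating entries, extracting $2m$ rectangles whose separators also form a monotone sequence. It then chooses a copy of $21$ from each odd-indexed rectangle and the separating entry of each even-indexed rectangle, producing a $2\times 1$-gridded permutation of length $3m$ of a fixed shape. The growth rate of the downward closure is then obtained via Observation~\ref{obs-gridded-gr} by a \emph{bijective} encoding of its gridded sub-permutations as words over a three-letter alphabet (single entry or $21$ in the rectangle column, single entry in the separator column), giving generating function $1/(1-2x-x^2)$ and hence $1+\sqrt{2}$. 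The injectivity of that encoding—and thus the validity of the lower bound—hinges precisely on retaining the column structure that your sum-decomposition claim tried to erase.
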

\begin{proof}
Suppose that $\M$ is a $t\times u$ matrix of permutation classes and that $\C\subseteq\Grid(\M)$ contains gridded permutations with at least $64tum^4$ independent separated nonmonotone rectangles for every value of $m$. Fix a value of $m$ and take $\pi^\gridded$ to be such a permutation. At least $64m^4$ of the separated nonmonotone rectangles must lie in the same cell of $\pi^\gridded$. Furthermore, at least $16m^4$ of these separated nonmonotone rectangles are separated in the same direction (left, right, up, or down). Because we can treat independent rectangles as permutations themselves, the Erd\H{o}s--Szekeres Theorem therefore implies that some cell of $\pi^\gridded$ contains an increasing or decreasing sequence containing at least $4m^2$ independent separated nonmonotone rectangles, each separated in the same direction. We may then apply the Erd\H{o}s--Szekeres Theorem to the separating entries themselves to see that some cell of $\pi^\gridded$ contains a monotone sequence of at least $2m$ independent separated nonmonotone rectangles each separated in the same direction by a monotone sequence of separating entries.

\begin{figure}
\begin{center}
	\begin{tikzpicture}[scale=0.2, baseline=(current bounding box.center)]
		\draw [darkgray, fill=lightgray, ultra thick, line cap=round] (2,1.5) rectangle (7,8);
		\plotpartialperm{3/4,5/6,6/2,10/7};
		\plotpermbox{1}{1}{8}{11};
		\plotpermbox{9}{1}{11}{11};
		\draw (10,7)--(4,7);
	\end{tikzpicture}
\quad\quad
	\begin{tikzpicture}[scale=0.2, baseline=(current bounding box.center)]
		\plotperm{2,1,5,4,8,7,11,10,3,6,9};
		\plotpermbox{1}{1}{8}{11};
		\plotpermbox{9}{1}{11}{11};
	\end{tikzpicture}
\quad\quad
	\begin{tikzpicture}[scale=0.2, baseline=(current bounding box.center)]
		\plotperm{2,1,5,4,8,7,11,10,9,6,3};
		\plotpermbox{1}{1}{8}{11};
		\plotpermbox{9}{1}{11}{11};
	\end{tikzpicture}
\end{center}
\caption{From left to right, a separated nonmonotone rectangle and two examples of unavoidable structures arising from arbitrarily large independent sets of separated nonmonotone rectangles.}
\label{fig-SN-rectangle}
\end{figure}
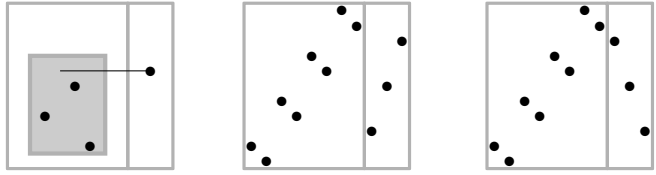

By symmetry, we may assume that this monotone sequence of separated nonmonotone rectangles is increasing and that the separating entries lie to the right of each of the rectangles. Label the rectangles $R_1$, $R_2$, $\dots$ in increasing order. Because every separated nonmonotone rectangle contains both $12$ and $21$, we may choose a copy of $21$ from each odd-indexed rectangle and the separating entry of every even-indexed rectangle. By doing so we obtain a permutation of length $3m$ of the form shown in the center or right of Figure~\ref{fig-SN-rectangle}.

As we can do this for arbitrary $m$, it follows that $\C$ contains the downward closure of all permutations of one of these two forms (up to symmetry). It remains only to show that the growth rates of both downward closures are at least $1+\sqrt{2}$; this is done in \cite[Proposition~A.14]{vatter:small-permutati:} but we provide the brief details for completeness. By Proposition~\ref{obs-gridded-gr}, we may instead enumerate the gridded permutations in these downward closures (where we view the downward closures as being contained in $2\times 1$ grid classes). To encode such permutations we simply read from bottom to top, encoding a single entry on the left by $\l_1$, a copy of $21$ on the left by $\l_2$, and an entry on the right by $\r$. It follows that the set of gridded permutations in either of these classes is bijection with the set of words over the alphabet $\{\l_1,\l_2,\r\}$. Moreover, the generating function for these words (where $\l_1$ and $\r$ contribute $x$ and $\l_2$ contributes $x^2$) is $1/(1-2x-x^2)$, which has growth rate $1+\sqrt{2}$, completing the proof.
\end{proof}

We now have the tools to prove our major slicing result. It may be viewed as a generalization of \cite[Theorem~5.4]{vatter:small-permutati:}, though both the conclusion and the proof technique are different. Note that by Proposition~\ref{prop-sep-nonmono-rects}, the hypotheses of this theorem hold for all classes $\C$ which are $\G$-griddable for a concentrated class $\G$ and satisfy $\lgr(\C)<1+\sqrt{2}$.

\begin{theorem}
\label{thm-slicing}
Suppose that the permutation class $\C\subseteq\Grid(\M)$ for a matrix $\M$ whose entries are all equal to a concentrated class $\G$. If there is a bound on the size of independent sets of separated nonmonotone rectangles in $\M$-gridded members of $\C$ then there is an integer $p$ such that $\C\subseteq\Grid(\M')^{+p}$ for a matrix $\M'$ whose nonempty entries are equal to either $\G$ or a monotone class and in which every nonempty entry that shares a row or column with another nonempty entry is monotone.
\end{theorem}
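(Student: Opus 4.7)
The plan is to refine each $\M$-gridding by a uniformly bounded collection of extra horizontal and vertical lines, then invoke the concentration of $\G$ to decompose each original cell into independent sub-rectangles together with a uniformly bounded number of ``remainder'' entries which we absorb into a $p$-point extension. Fix the bound $m$ on the size of independent sets of separated nonmonotone rectangles in $\M$-gridded members of $\C$, and let $f(m)$ be the constant from Theorem~\ref{rectangles-lemma}. For each $\pi\in\C$ and each $\M$-gridding $\pi^\gridded$ we then obtain a set $\mathcal{L}$ of at most $f(m)$ horizontal and vertical lines slicing every separated nonmonotone rectangle of $\pi^\gridded$. Say $\G$ is $(q,r)$-concentrated. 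For each cell $C$ of $\M$, let $\mathcal{L}_C\subseteq\mathcal{L}$ be the lines meeting $C$; Proposition~\ref{prop-many-lines} applied inside $C$ partitions its entries into at most $T:=(q-1)f(m)+1$ pairwise independent axis-parallel rectangles avoiding $\mathcal{L}$, together with at most $rf(m)$ remainder entries. Summed over the $tu$ cells of $\M$, the total remainder is at most $p:=rf(m)tu$, uniformly bounded.

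Next, package the independent rectangles into a single ambient matrix $\M'$ of dimensions at most $tT\times uT$, obtained by subdividing each original cell of $\M$ into a $T\times T$ block and assigning the pairwise independent rectangles in each $C$ to appropriate positions of this block (pairwise independence inside $C$ guarantees that the chosen rectangles occupy distinct rows and distinct columns of the block). Each nonempty position of $\M'$ will be labeled either $\G$ or an appropriate monotone class, with empty positions labeled $0$. Deleting the remainder entries yields a permutation lying in $\Grid(\M')$, so $\pi\in\Grid(\M')^{+p}$. The crux is the structural claim that no nonempty cell of $\M'$ containing a nonmonotone arrangement of entries can share a row or column with any other nonempty cell.

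For this, suppose two nonempty cells $R_1,R_2$ of $\M'$ share a row and that $R_1$ is nonmonotone. If $R_1$ and $R_2$ subdivide the same original cell $C$, then pairwise independence inside $C$ forces their $y$-projections to be disjoint, contradicting their inhabiting a common row of $\M'$. Otherwise $R_1\subseteq C_1$ and $R_2\subseteq C_2$ for distinct original cells lying in the same row of $\M$; then every entry of $R_2$ has value inside the $y$-range of $R_1$ but position outside $R_1$, so it separates $R_1$ vertically. Hence $R_1$ is a separated nonmonotone rectangle of $\pi^\gridded$, and so must be sliced by a line of $\mathcal{L}$---contradicting the fact that $R_1$ was constructed by concentration to avoid $\mathcal{L}$. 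By symmetry the same argument handles columns, so any nonempty cell sharing a row or column with another nonempty cell must in fact be monotone. We label such cells by the monotone class ($\Av(12)$ or $\Av(21)$) in which their entries actually lie, and label the remaining (isolated) nonempty cells by $\G$.

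The main obstacle is bookkeeping, not the combinatorial content: one must package the per-permutation choices of $\mathcal{L}$ and the ensuing independent rectangles into one fixed matrix $\M'$ that serves every $\pi\in\C$. Because $t$, $u$, $f(m)$, $q$, and $r$ are all bounded independently of $\pi$, the dimensions of $\M'$ can be fixed in advance; if necessary one duplicates rows and columns of each $T\times T$ block so that both monotone orientations can be accommodated in each position. The core structural step---that no two nonmonotone cells of $\M'$ can share a row or column---is the simple Gy\'arf\'as--Lehel contradiction described above.
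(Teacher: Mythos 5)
Your proposal follows the same overall strategy as the paper's proof: slice the separated nonmonotone rectangles via Gy\'arf\'as--Lehel (Theorem~\ref{rectangles-lemma}), use concentration (Proposition~\ref{prop-many-lines}) to decompose each original cell into boundedly many independent rectangles plus boundedly many remainder entries, and argue that any surviving nonmonotone rectangle sharing a row or column with another nonempty cell would itself have been a separated nonmonotone rectangle. That structural claim is argued exactly as in the paper, and your bound $p = r f(m) t u$ on the total remainder matches the paper's.

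There are, however, two bookkeeping gaps worth flagging, both fixable but not quite handled as written.

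First, the claim that the refined gridding is a $tT\times uT$ matrix obtained by ``subdividing each original cell of $\M$ into a $T\times T$ block'' does not account for the fact that the horizontal (resp.\ vertical) grid lines must be shared across all cells in the same row (resp.\ column) of $\M$. Distinct cells in a common row will typically want to be sliced at different heights, so a row of $\M$ may need up to $t\bigl((q-1)f(m)+1\bigr)$ additional horizontal lines, not just $(q-1)f(m)+1$. The paper sidesteps this by extending all four sides of every covering rectangle to full-plot lines, yielding a single consistent refinement of bounded (though larger) size. Crucially, your structural argument that a nonmonotone $R_1$ and a nonempty $R_2$ sharing a row implies vertical separation of $R_1$ presupposes that $R_1$ and $R_2$ genuinely occupy the same $y$-interval of a common gridding; this is true once the lines are coordinated as in the paper, but is not automatic if each cell's $T\times T$ block is built independently.

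Second, the step where you pass from a per-permutation matrix $\M_{\pi^\gridded}$ to a single uniform $\M'$ is handled in the paper by taking the direct sum of the finitely many matrices $\M_{\pi^\gridded}$ that can arise (finitely many because their sizes are uniformly bounded); the direct sum inherits the isolation property since distinct blocks never share a row or column. Your suggestion to ``duplicate rows and columns of each $T\times T$ block so that both monotone orientations can be accommodated'' addresses only the orientation of monotone cells, not variation in which positions are empty or nonmonotone across different permutations, and it is not obvious that duplication preserves the required isolation property. The direct-sum device is the cleaner and more robust way to finish.
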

\begin{proof}
Suppose that $\M$ is a $t\times u$ matrix. Let $\pi\in\C$ be arbitrary and fix an $\M$-gridding $\pi^\gridded$ of it. Let $\mathfrak{R}_{\pi^\gridded}$ denote the set of all separated nonmonotone rectangles of $\pi^\gridded$. Our hypotheses ensure that that there is a constant $m$ depending only on $\C$ so that $\mathfrak{R}_{\pi^\gridded}$ has no independent set of size greater than $m$. Therefore by Theorem~\ref{rectangles-lemma} there is a set $\mathfrak{L}_{\pi^\gridded}$ of at most $f(m)$ vertical and horizontal lines which slice every separated nonmonotone rectangle in $\mathfrak{R}_{\pi^\gridded}$.

Choose integers $q$ and $r$ so that $\G$ is $(q,r)$-concentrated ($\G$ is concentrated by our hypotheses). Each cell of $\pi^\gridded$ is sliced by at most $f(m)$ lines from $\mathfrak{L}_{\pi^\gridded}$, so by Proposition~\ref{prop-many-lines}, all but at most $r\cdot f(m)$ entries of each cell of $\pi^\gridded$ may be covered by $(q-1)f(m)+1$ independent axis-parallel rectangles which do not intersect any line in $\mathfrak{L}_{\pi^\gridded}$. Note that this independence holds only for rectangles arising from the same cell; two rectangles from different cells of the original gridding $\pi^\gridded$ may still be dependent. Also note that in total, at most $tur\cdot f(m)$ entries of $\pi$ are uncovered by these axis-parallel rectangles.

We now convert these rectangles into a new gridding of $\pi$. Given any axis-parallel rectangle produced by the previous slicing operation we extend its four sides to vertical and horizontal lines slicing the entire plot of $\pi$. Doing so for every axis-parallel rectangle produced in every cell of $\pi^\gridded$, we obtain a set $\mathfrak{L}_{\pi^{\gridded}}'$ of at most $4tu\left((q-1)f(m)+1\right)$ lines. These lines slice the plot of $\pi$ into a gridding of bounded size\footnote{The explicit bound is $4tu\left((q-1)f(m)+1\right)+1\times 4tu\left((q-1)f(m)+1\right)+1$, though no effort has been made at optimization.}. Moreover, ignoring the (bounded number of) uncovered entries, in the refined gridding induced by the lines of $\mathfrak{L}_{\pi^\gridded}'$, no cell which contains a nonmonotone subpermutation may lie in the same row or column as another nonempty cell. This is because if those two cells arose from different cells in the original gridding $\pi^{\gridded}$ then they would form a nonmonotone rectangle, which should have been sliced by some line in $\mathfrak{L}_{\pi^{\gridded}}$, while if they arose from the same cell in $\pi^{\gridded}$, they would be independent.

We now create a matrix $\M_{\pi^{\gridded}}$ so that the covered entries of $\pi$ lie in $\Grid(\M_{\pi^\gridded})$. To do so we simply record the contents of each cell of the refined gridding of $\pi$ as either $\G$, if the cell has nonmonotone content, $\Av(21)$ or $\Av(12)$, if the cell has monotone context, or $\emptyset$, if the cell contains no entries. Setting $p=tur\cdot f(m)$, the maximum number of uncovered entries, we see that $\pi\in\Grid(\M_{\pi^{\gridded}})^{+p}$. Moreover, there the size of $\M_{\pi^{\gridded}}$ is bounded (the bounds are determined by $t$, $u$, $m$, and the function $f(m)$, and thus in particular they are determined by $\C$), and $\M_{\pi^\gridded}$ itself is of the form specified in the theorem.

To complete the proof we must find a single matrix $\M'$ satisfying the desired conditions such that every permutation $\pi\in\C$ lies in $\Grid(\M')^{+p}$. To do so we set $\M'$ equal to the direct sum of every matrix $\M_{\pi^\gridded}$ which arises in this manner; as these matrices have bounded size there can be only finitely many of them, and thus $\M'$ is itself finite.
\end{proof}

\section{Restricting to a Component}
\label{sec-component}

Theorem~\ref{thm-Gxi-concentration} shows that every class of growth rate less than $\xi$ can be gridded by a concentrated cell class, and thus Theorem~\ref{thm-slicing} implies that all such classes are contained in $\Grid(\M)^{+p}$ for a matrix $\M$ in which the nonmonotone cells are---in a sense to made more formal briefly---isolated. Recall that by Proposition~\ref{prop-extension-gr}, upper and lower growth rates are unaffected by taking $p$-point extensions. Therefore we may ignore those entries and concern ourselves with classes which are $\M$-griddable for a matrix $\M$ satisfying the conclusion of Theorem~\ref{thm-slicing}.

Moreover, Observation~\ref{obs-gridded-gr} shows that the (upper and lower) growth rates of subclasses of such a grid class $\Grid(\M)$ are equal to the equivalent growth rates of the sequences counting $\M$-griddings of their members. Therefore, given a class $\C\subseteq\Grid(\M)$, we denote by $\C^{\gridded}$ the set of all $\M$-griddings of members of $\C$ (the matrix $\M$ will always be clear from context).

Our goal in this section is to show that these growth rates are achieved either by the restriction of the gridded permutations $\C^\gridded$ to a single cell of $\M$ (and thus to a subclass of $\G_\xi$) or are integral. To establish this we must first introduce a graph related to the matrix $\M$.

\begin{figure}
\begin{footnotesize}
\begin{center}
	$\M=\left(\begin{array}{ccccccc}
		\emptygray&\G&\emptygray&\emptygray&\emptygray&\G&\emptygray\\
		\emptygray&\emptygray&\emptygray&\G&\emptygray&\emptygray&\G\\
		\emptygray&\emptygray&\emptygray&\emptygray&\emptygray&\{1\}&\emptygray\\
		\G&\emptygray&\Av(12)&\emptygray&\emptygray&\emptygray&\emptygray\\
		\emptygray&\G&\emptygray&\emptygray&\emptygray&\G&\emptygray\\
		\emptygray&\emptygray&\Av(21)&\emptygray&\G&\emptygray&\emptygray
	\end{array}\right)$
	\quad\quad
	\begin{tikzpicture}[xscale=0.9, yscale=0.41, baseline=(current bounding box.center)]
		\node (31) at (3,1) {$\Av(21)$};
		\node (51) at (5,1) {$\G$};
		\node (22) at (2,2) {$\G$};
		\node (62) at (6,2) {$\G$};
		\node (13) at (1,3) {$\G$};
		\node (33) at (3,3) {$\Av(12)$};
		\node (64) at (6,4) {$\{1\}$};
		\node (45) at (4,5) {$\G$};
		\node (75) at (7,5) {$\G$};
		\node (26) at (2,6) {$\G$};
		\node (66) at (6,6) {$\G$};
		\draw [ultra thick] (13)--(33)--(31)--(51);
		\draw [ultra thick] (22)--(62)--(64)--(66)--(26)--(22);
		\draw [ultra thick] (45)--(75);
	\end{tikzpicture}
\end{center}
\caption{A matrix of permutation classes $\M$ on the left and its cell graph on the right.}
\label{fig-cell-graph}
\end{footnotesize}
\end{figure}
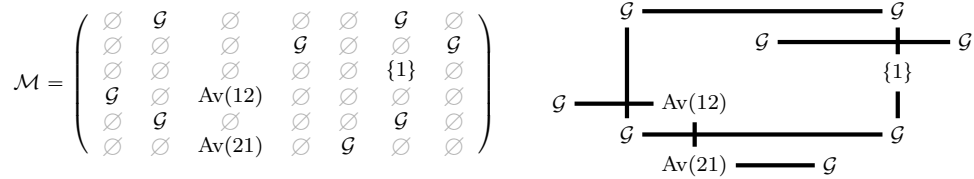

Given a matrix $\M$ of permutation classes, its \emph{cell graph} is the graph on the vertices
\[
	\{(i,j)\st \M_{i,j}\neq\emptyset\}
\]
in which $(i,j)$ and $(k,\ell)$ are adjacent if the corresponding cells share either a row or a column and there are no nonempty cells between them in that row or column. We further label the vertex $(i,j)$ in this graph (if such a vertex exists, as empty cells do not correspond to vertices) by the class $\M_{i,j}$. Figure~\ref{fig-cell-graph} shows an example.

We say that a \emph{connected component} of the matrix $\M$ is a submatrix of $\M$ whose cells correspond to a connected component of the cell graph of $\M$. Given a connected component $\K$ of $\M$ and an $\M$-gridded permutation $\pi^\gridded$, we define the \emph{restriction} of $\pi^\gridded$ to $\K$ as the gridded permutation formed by the entries of $\pi^\gridded$ lying in the cells of $\K$. If $\C\subseteq\Grid(\M)$ is a class, $\C^\gridded$ is the set of all $\M$-griddings of members of $\C$, and $\K$ is a connected component of $\M$, then we say that the \emph{restriction} of $\C$ to $\K$ is the set of all restrictions of the gridded permutations $\pi^\gridded\in\C^\gridded$ to the cells of $\K$.

A given $\M$-gridded permutation $\pi^\gridded\in\C^\gridded$ is uniquely determined by its restrictions to the connected components of $\M$. Thus if $\C\subseteq\Grid(\M)$ then $\C^\gridded$ can be expressed as the Cartesian product of restrictions of $\C$ to connected components of $\M$. The result below follows readily from this observation.

\begin{proposition}
[Vatter~{\cite[Proposition~2.10]{vatter:small-permutati:}}]
\label{prop-grid-component}
Suppose $\C\subseteq\Grid(\M)$. The upper growth rate of $\C$ is the maximum of the upper growth rates of its restrictions to connected components of $\M$.
\end{proposition}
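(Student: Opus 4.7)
The plan is to reduce the claim to an identity about growth rates of the sequences counting gridded permutations, using Observation~\ref{obs-gridded-gr} throughout. Let $\K_1,\dots,\K_c$ denote the connected components of $\M$ and write $\C^{(i)}$ for the restriction of $\C$ to $\K_i$. The preceding observation recorded in the text states that every $\pi^\gridded\in\C^\gridded$ is uniquely reconstructible from its $c$-tuple of restrictions to the $\K_i$, so the map $\pi^\gridded\mapsto(\pi^\gridded_1,\dots,\pi^\gridded_c)$ furnishes an injection
\[
	\C^\gridded_n \hookrightarrow \bigcup_{n_1+\cdots+n_c=n} \C^{(1)}_{n_1}\times\cdots\times \C^{(c)}_{n_c}.
\]
Both directions of the claimed equality will be drawn from this decomposition, the nontrivial content being that a connected component's contents can actually be realized as the full ``support'' of some gridded permutation in $\C^\gridded$.

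For the upper bound, set $\gamma=\max_i\ugr(\C^{(i)})$ and fix $\epsilon>0$. By definition of $\ugr$, there is a constant $K$ such that $|\C^{(i)}_{n_i}|\le K(\gamma+\epsilon)^{n_i}$ for every $i$ and every $n_i\ge 0$. Then the injection above gives
\[
	|\C^\gridded_n|
	\le
	\sum_{n_1+\cdots+n_c=n}\prod_{i=1}^{c}|\C^{(i)}_{n_i}|
	\le
	K^c\binom{n+c-1}{c-1}(\gamma+\epsilon)^n.
\]
Taking $n$th roots and letting $\epsilon\to 0$ yields $\ugr(\C)=\ugr(\C^\gridded)\le\gamma$.

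For the lower bound, fix an index $i$ and take $\pi^\gridded_i\in \C^{(i)}_n$. By the definition of restriction, there is some $\tau^\gridded\in\C^\gridded$ whose restriction to $\K_i$ is $\pi^\gridded_i$; in particular, the underlying permutation $\pi_i$ is a subpermutation of $\tau\in\C$, so $\pi_i\in\C$ by downward closure. Now consider the gridded permutation whose contents on $\K_i$ equal those of $\pi^\gridded_i$ and whose cells outside $\K_i$ are empty. Because connected components of $\M$ share no row or column with one another (any two nonempty cells in a common row or column are either adjacent in the cell graph or joined by a chain of such adjacencies through other nonempty cells), this is a valid $\M$-gridding of $\pi_i$, hence a member of $\C^\gridded_n$. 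Distinct $\pi^\gridded_i$ produce distinct elements of $\C^\gridded_n$, so $|\C^{(i)}_n|\le|\C^\gridded_n|$ and therefore $\ugr(\C^{(i)})\le\ugr(\C)$. Combining with the previous paragraph completes the proof.

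There is no real obstacle here: the proposition is essentially a bookkeeping statement, and the only subtle point is verifying that one can always promote a gridded permutation on a single component to a gridded permutation on the whole of $\M$, which uses nothing beyond the fact that cells in distinct components are pairwise row- and column-disjoint.
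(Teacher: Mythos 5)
Your proof is correct and follows the same decomposition-by-component approach that the paper's sketch invokes (the paper itself defers to \cite[Proposition 2.10]{vatter:small-permutati:} for details). You are in fact slightly more careful than the paper's preamble, which asserts that $\C^\gridded$ ``can be expressed as the Cartesian product of restrictions''---an overstatement when $\C\subsetneq\Grid(\M)$---whereas your injection into the product plus the separate realization of each restriction is exactly the precise form of the argument that is needed.
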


If $\M$ satisfies the conclusion of Theorem~\ref{thm-slicing} then we know that each of its connected components is either a single cell or contains only monotone cells. Therefore by Proposition~\ref{prop-grid-component} and our previous results, the upper growth rates under $\xi$ are all achieved either by subclasses of $\G_\xi$ or by subclasses of monotone grid classes. We consider the growth rates achieved by subclasses of $\G_\xi$ in the next two sections. For the remainder of this section, we show that the only growth rates of subclasses of monotone grid classes under $1+\varphi\approx 2.61803$ are $0$, $1$, and $2$. Note that while Bevan~\cite{bevan:growth-rates-of:} has characterized the growth rates of monotone grid classes themselves (in terms of the spectral radii of graphs associated with their gridding matrices), we cannot appeal to his results here because we must consider \emph{subclasses} of monotone grid classes.

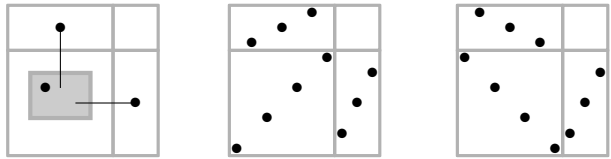
\begin{figure}
\begin{center}
	\begin{tikzpicture}[scale=0.2, baseline=(current bounding box.center)]
		\draw [darkgray, fill=lightgray, ultra thick, line cap=round] (2,3) rectangle (6,6);
		\plotpermbox{1}{1}{7}{7};
		\plotpermbox{1}{8}{7}{10};
		\plotpermbox{8}{1}{10}{7};
		\plotpermbox{8}{8}{10}{10};
		\plotpartialperm{3/5,4/9,9/4};
		\draw (4,9)--(4,5);
		\draw (9,4)--(5,4);
	\end{tikzpicture}
\quad\quad
	\begin{tikzpicture}[scale=0.2, baseline=(current bounding box.center)]
		\plotpermbox{1}{1}{7}{7};
		\plotpermbox{1}{8}{7}{10};
		\plotpermbox{8}{1}{10}{7};
		\plotpermbox{8}{8}{10}{10};
		\plotperm{1,8,3,9,5,10,7,2,4,6};
	\end{tikzpicture}
\quad\quad
	\begin{tikzpicture}[scale=0.2, baseline=(current bounding box.center)]
		\plotperm{7,10,5,9,3,8,1,2,4,6};
		\plotpermbox{1}{1}{7}{7};
		\plotpermbox{1}{8}{7}{10};
		\plotpermbox{8}{1}{10}{7};
		\plotpermbox{8}{8}{10}{10};
	\end{tikzpicture}
\end{center}
\caption{From left to right, a hook-separated rectangle and two examples of unavoidable structures arising from arbitrarily large independent sets of hook-separated rectangles.}
\label{fig-hooksep-rectangle}
\end{figure}

Our technique begins similarly to that used to prove Theorem~\ref{thm-slicing}. As in that proof, our first step is to define a family of rectangles and prove that they have bounded independence number. Let $\pi^\gridded$ denote an arbitrary gridded permutation. A \emph{hook-separated rectangle} is an axis-parallel rectangle $R$ such that
\begin{itemize}
\item $R$ is fully contained in one cell of $\pi^\gridded$,
\item $\pi(R)$ is nonempty,
\item $\pi^\gridded$ has an entry in another cell which separates $R$ horizontally, and
\item $\pi^\gridded$ has an entry in another cell which separates $R$ vertically.
\end{itemize}
An example is shown in Figure~\ref{fig-hooksep-rectangle}. Note that, as in our definition of separated nonmonotone rectangles, we do not insist that the two separating entries actually separate the entries of $\pi(R)$; indeed, we do not insist that $R$ enclose two entries. The following is the analogue of Proposition~\ref{prop-sep-nonmono-rects} in this context.

\begin{proposition}[Essentially Vatter~{\cite[Propositions~A.11 and A.12]{vatter:small-permutati:}}]
\label{prop-hook-sep-rects}
Let $\C\subseteq\Grid(\M)$. If there is no bound on the size of independent sets of hook-separated rectangles in $\M$-gridded members of $\C$ then $\lgr(\C)\ge 1+\varphi\approx 2.61803$.
\end{proposition}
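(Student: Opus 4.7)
The plan is to follow the template of the proof of Proposition~\ref{prop-sep-nonmono-rects}, now exploiting that hook-separation provides a separator in both axis directions simultaneously. Suppose $\M$ is a $t \times u$ matrix and that $\C \subseteq \Grid(\M)$ contains, for every $m$, a gridded permutation $\pi^\gridded$ with at least $f(m)$ independent hook-separated rectangles, where $f$ is a polynomial chosen large enough to survive the pigeon-holing to follow. First I would pigeon-hole on the cells of $\M$ to locate many rectangles lying in one common cell, and then two further pigeon-hole steps to fix the side from which the horizontal separator comes (above or below) and the side from which the vertical separator comes (left or right). Three successive applications of the Erd\H{o}s--Szekeres Theorem then allow me to assume that the rectangles themselves, the sequence of horizontal separators, and the sequence of vertical separators are each monotonically arranged.

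Choosing a single representative entry from each rectangle, together with its two associated separators, produces the L-shaped configurations pictured in the center and right of Figure~\ref{fig-hooksep-rectangle}: three nonempty cells of a $2 \times 2$ grid, each with monotone content, and the fourth cell empty. Because $m$ is arbitrary and only finitely many orientations arise up to the eight symmetries of the square, some L-shaped monotone grid class $\Grid(\M')$ must be contained in $\C$.

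It then remains to verify that every such L-shape has lower growth rate at least $1 + \varphi$. By Observation~\ref{obs-gridded-gr} this coincides with the growth rate of the sequence enumerating $\M'$-griddings. The bound can be obtained either by appealing to the Bevan formula cited just before Theorem~\ref{thm-gridding-characterization}---the cell graph of an L-shape is a three-vertex path, whose spectral radius is $\varphi$, and the monotone grid class growth rate is the square of this, namely $\varphi^{2}=1+\varphi$---or, more self-containedly, by exhibiting a sum-closed or skew-closed subclass of $\Grid(\M')$ whose indecomposables obey a Fibonacci-like recurrence and invoking Propositions~\ref{prop-enum-oplus-closure} and~\ref{prop-arratia-gr}. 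Since the latter also guarantees that upper and lower growth rates coincide for the subclass in question, the bound transfers to $\lgr(\C)$.

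The main obstacle will be this final growth-rate computation: although the algebra is elementary, it must be carried out uniformly across the finitely many L-shape orientations, and one must ensure that the enumerated configurations genuinely form a subclass of $\C$ rather than simply matching the shape of $\M'$. The eight symmetries of the square reduce this to a canonical case, and the resulting count is essentially the same Fibonacci-style calculation that appeared in the proof of Proposition~\ref{prop-bdd-alts}, whose characteristic polynomial $x^{2}-x-1$ has the golden ratio $\varphi$ as its dominant root.
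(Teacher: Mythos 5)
Your proposal follows the paper's proof essentially step for step: pigeonhole on cells and on separation directions, then apply Erd\H{o}s--Szekeres three times (to the rectangles, then to each family of separators) to extract a monotone configuration that witnesses containment of an L-shaped monotone grid class, and finally invoke the known growth rate $1+\varphi$ for such classes. One small inaccuracy in the wrap-up: what has spectral radius $\varphi$ in Bevan's formula is the \emph{row-column graph} of the L-shaped matrix, which is the path $P_4$ (two row-vertices, two column-vertices, three edges for the three nonempty cells); the cell graph as defined in this paper would be $P_3$, whose spectral radius is $\sqrt{2}$, not $\varphi$. This does not affect the argument since the correct invocation gives $\varphi^2=1+\varphi$, matching the paper's cited value, but the graphs should not be conflated.
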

\begin{proof}
Suppose that $\M$ is a $t\times u$ matrix of permutation classes and that $\C\subseteq\Grid(\M)$ contains gridded permutations with at least $4tum^8$ independent hook-separated rectangles for every value of $m$. Fix a value of $m$ and take $\pi^\gridded$ to be such a permutation. At least $4m^8$ of these hook-separated rectangles must lie in the same cell of $\pi^\gridded$. Furthermore, at least $m^8$ of these rectangles are separated in the same pair of directions (above and right, above and left, below and right, or below and left); without loss assume they are separated both above and to the right as in Figure~\ref{fig-hooksep-rectangle}. By the Erd\H{o}s--Szekeres Theorem, we see that some cell of $\pi^\gridded$ contains an increasing or decreasing sequence of at least $m^4$ independent hook-separated rectangles, each separated in the same pair of directions. Applying the Erd\H{o}s--Szekeres Theorem two more times, once for each set of separating entries, we see that some cell of $\pi^\gridded$ contains a monotone sequence of $m$ entries such that each pair of these entries is separated both horizontally and vertically, and that these separating entries themselves form monotone subpermutations. As this holds for arbitrary values of $m$, we may conclude that $\C$ contains the entire monotone grid class of one of the eight $\zpm$ matrices of the form
\[
	\fnmatrix{rr}{\ast&0\\\ast&\ast},
\]
where each $\ast$ denotes either $1$ or $-1$. The proof is completed by noting that each of these monotone grid classes has growth rate $1+\varphi$. This fact follows from either of two works by Bevan~\cite{bevan:growth-rates-of:geom,bevan:growth-rates-of:}, or the simplification of Bevan's work by Albert and Vatter~\cite{albert:an-elementary-p:}, or alternatively, a proof from first principles is given in \cite[Proposition~A.11]{vatter:small-permutati:}.
\end{proof}

We now specialize to subclasses of monotone grid classes, which are called \emph{monotone griddable classes}. The following is the analogue of Theorem~\ref{thm-slicing} in this context.

\begin{proposition}
\label{prop-mono-griddable-vector}
Suppose that $\C\subseteq\Grid(M)$ for a $\zpm$ matrix $M$. If there is a bound on the size of independent sets of hook-separated rectangles in $M$-gridded members of $\C$ then $\C\subseteq\Grid(M')$ for a $\zpm$ matrix $M'$ in which every is either the only component in its row or the only component in its column.
\end{proposition}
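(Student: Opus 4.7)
The plan is to follow the structure of the proof of Theorem~\ref{thm-slicing}: use Theorem~\ref{rectangles-lemma} to obtain, for each $\pi^\gridded\in\C^\gridded$, a bounded set $\mathfrak{L}_{\pi^\gridded}$ of horizontal and vertical lines slicing every hook-separated rectangle of $\pi^\gridded$, and then exploit the monotonicity of each cell of $M$ to add further lines that eliminate any L-shaped configuration in the cell graph of the induced refinement. Concretely, the hypothesis yields a uniform bound $m$ (depending only on $\C$) on the independence number of the set of hook-separated rectangles, so $|\mathfrak{L}_{\pi^\gridded}|\le f(m)$ by Theorem~\ref{rectangles-lemma}, and the refinement of the $M$-gridding of $\pi$ by $\mathfrak{L}_{\pi^\gridded}$ has only boundedly many sub-cells per $M$-cell.

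The second phase extends $\mathfrak{L}_{\pi^\gridded}$ cell by cell: within an increasing monotone cell $c$, two nonempty sub-cells of $c$ that share a row of the refinement must have their entries' $y$-coordinates strictly ordered (the left sub-cell contains entries with strictly smaller $y$-values than the right sub-cell, by monotonicity), so a horizontal line placed in this gap separates them into distinct rows of the refined gridding; an analogous argument handles column-sharing via vertical lines, and the case of a decreasing cell is symmetric. Applying this operation exhaustively produces a bounded enlarged set $\mathfrak{L}_{\pi^\gridded}'$ such that within every $M$-cell of $M$, no two nonempty sub-cells share a row or column of the induced refined gridding $M_{\pi^\gridded}'$. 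It then follows that no L-shape survives in the cell graph of $M_{\pi^\gridded}'$: if nonempty sub-cells $c'$, $c''$, $c'''$ formed an L-shape with $c'$ at the corner, then neither $c''$ nor $c'''$ could lie in the same $M$-cell as $c'$, so both must lie in distinct $M$-cells, whence $c'$ is a nonempty axis-parallel rectangle separated horizontally by entries of $c'''$ and vertically by entries of $c''$, each residing in an $M$-cell different from that of $c'$. This makes $c'$ a hook-separated rectangle not sliced by $\mathfrak{L}_{\pi^\gridded}$, a contradiction. One then assembles $M'$ as the direct sum of the finitely many matrices $M_{\pi^\gridded}'$ over $\pi^\gridded\in\C^\gridded$, exactly as in the conclusion of the proof of Theorem~\ref{thm-slicing}.

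The main obstacle lies in executing the second phase cleanly: one must verify that the cell-by-cell line insertions remain uniformly bounded and do not reintroduce problematic sharing configurations in other $M$-cells, since a horizontal line inserted within one $M$-cell extends through its entire $M$-row, potentially subdividing sub-cells of other cells in that row. The saving observation is that horizontal insertions only refine the row-structure of other cells while leaving column-structure untouched (and symmetrically for vertical insertions), so the process can be carried out in a bounded number of passes, adding a bounded total number of lines and producing matrices $M_{\pi^\gridded}'$ of uniformly bounded size as required for the direct-sum construction of $M'$.
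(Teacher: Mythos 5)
Your first phase follows the paper's proof exactly: apply Theorem~\ref{rectangles-lemma} to obtain at most $f(m)$ lines $\mathfrak{L}_{\pi^\gridded}$ slicing every hook-separated rectangle, and record the resulting refinement as a $\zpm$ matrix. But the paper stops there. It asserts directly that any refined cell adjacent in the cell graph to nonzero cells in both its row and its column ``would define a hook-separated rectangle,'' and there is no second phase. Your concern --- that this one-step argument does not obviously cover the case where the row-neighbor or column-neighbor lies in the \emph{same} $M$-cell as the corner, so that its entries cannot supply the ``entry in another cell'' that the definition of hook-separated rectangle demands --- is a legitimate subtlety, and is precisely the analogue of the case that the proof of Theorem~\ref{thm-slicing} treats explicitly with the remark ``if they arose from the same cell in $\pi^\gridded$, they would be independent.'' So your proposal is not the paper's proof; you have added a repair step the paper does not perform.

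The difficulty is that you have not established the uniform boundedness on which the final direct-sum step depends. A horizontal line inserted to separate two row-sharing sub-cells of one monotone $M$-cell extends across its entire $M$-row; in another cell of that row it may split a nonempty sub-cell into two nonempty pieces that now share a column, which then requires a new vertical line, which sweeps across its $M$-column and may create fresh row-sharing somewhere else, and so on. Your ``saving observation'' --- that horizontal insertions only refine row-structure and vertical insertions only refine column-structure --- does not close this loop, because each pass can enlarge the set of gaps that still need a partner line, and you have supplied no potential-function or closure argument capping the total number of lines independently of $|\pi|$. Until that cascade is controlled, you cannot conclude that the matrices $M_{\pi^\gridded}'$ have uniformly bounded size, which is exactly what assembling $M'$ as their direct sum requires. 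One clean way to avoid the iteration altogether is to imitate the proof of Theorem~\ref{thm-slicing} directly: monotone classes are $(2,0)$-concentrated, so Proposition~\ref{prop-many-lines} covers each $M$-cell by $f(m)+1$ pairwise independent rectangles avoiding $\mathfrak{L}_{\pi^\gridded}$ in a single step, and extending their sides to gridlines gives the bounded refinement without any cascading.
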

\begin{proof}
Suppose $\C\subseteq\Grid(M)$ for a $\zpm$ matrix of size $t\times u$. Let $\pi\in\C$ be arbitrary and fix an $M$-gridding $\pi^\gridded$ of it. Let $\mathfrak{R}_{\pi^\gridded}$ denote the set of all hook-separated rectangles of $\pi^\gridded$. By our hypotheses, there is a constant $m$ not depending on $\pi^\gridded$ such that $\mathfrak{R}_{\pi^\gridded}$ has no independent set of size greater than $m$. Therefore by Theorem~\ref{rectangles-lemma} there is a set $\mathfrak{L}_{\pi^\gridded}$ of at most $f(m)$ vertical and horizontal lines which slice every separated nonmonotone rectangle in $\mathfrak{R}_{\pi^\gridded}$.

The $f(m)$ lines in $\mathfrak{L}_{\pi^\gridded}$ created a refinement (of bounded size) of the gridding denoted by $\pi^\gridded$. We record this refined gridding in a matrix $M_{\pi^\gridded}$. This matrix records a $1$ if the entries of the corresponding cell in the refined gridding of $\pi^\gridded$ are increasing, $-1$ if they are decreasing, and $0$ if the cell is empty.

The matrix $M_{\pi^\gridded}$ clearly satisfies the conclusion of the proposition: otherwise it would have a connected component containing a (nonzero) entry adjacent to other nonzero entries in both its row and its column, but that cell would define a hook-separated rectangle and thus would have been sliced by the lines $\mathfrak{L}_{\pi^\gridded}$.

We complete the proof in the standard manner. There is a bound on the sizes of the matrices $M_{\pi^\gridded}$ which can arise in the procedure outlined above. By taking $M'$ to be the direct sum of every matrix $M_{\pi^\gridded}$ which arises in this manner, we see that $M'$ also satisfies the conclusion of the proposition and every permutation $\pi\in\C$ is $M'$-griddable.
\end{proof}

At the end of the previous section we established that if the upper growth rate of $\C$ is less than $\xi$ then its upper/lower growth rate(s) are given either by a subclass of $\G_\xi$ or by a subclass of a monotone grid class. With Proposition~\ref{prop-mono-griddable-vector}, we see that in the latter case we can assume that the growth rates are given by a subclass of the monotone grid class of a (row or column) vector. In the remainder of this section we establish that subclasses of monotone grid classes of vectors necessarily have integral growth rates.

The consideration of monotone grid classes of vectors dates back to the work of Atkinson, Murphy, and Ru\v{s}kuc~\cite{atkinson:partially-well-:} and Albert, Atkinson, and Ru\v{s}kuc~\cite{albert:regular-closed-:}, who called them ``W-classes'' because the plot of a typical member of the class $\Grid({-1}\ 1\ {-1}\ 1)$ resembles the letter W. There is a natural encoding of members of monotone grid classes of vectors (later generalized to members of arbitrary geometric grid classes in \cite{albert:geometric-grid-:}).

Suppose that $M$ is a $\zpm$ matrix of size $t\times 1$ (meaning in our notation that it is a row vector of length $t$) and that $\C\subseteq\Grid(M)$. As we are interested only in growth rates, we consider the set of all $M$-griddings of members of $\C$, denoted $\C^\gridded$, though it is possible to use this encoding to determine the exact enumeration of such classes (see \cite{albert:regular-closed-:} for the vector version and \cite{albert:geometric-grid-:} for the more general geometric version). First, label the cells of $M$ from $1$ to $t$ from left-to-right (for concreteness only). Given a gridded permutation $\pi^\gridded\in\C^\gridded$ of length $n$, its encoding is a word of length $n$ over the alphabet $\{1,2,\dots,t\}$. The $i$th letter of this encoding denotes the cell in which the entry of value $i$ in $\pi^\gridded$ lies. Conversely, each word over the alphabet $\{1,2,\dots,t\}$ describes precisely one permutation of $\Grid(M)$ in this manner.

If $\pi\in\C$ has an $M$-gridding $\pi^\gridded$ encoded by the word $w$, then every subword of $w$ encodes an $M$-gridding of a subpermutation of $\pi$. Therefore the set of all encodings of the gridded permutations $\C^\gridded$ forms a \emph{subword-closed language} (other authors, notably Simon~\cite{simon:piecewise-testa:}, who was one of the first to study these languages, use the term ``piecewise testable'').

Recall that the collection of regular languages over a finite alphabet $\Sigma$ is defined recursively as follows. First, the empty set, the singleton $\{\emptyword\}$ containing only the empty word, and the singletons $\{a\}$ for each $a\in\Sigma$ are all regular languages. Then, given two regular languages $K,L\subseteq\Sigma^\ast$, their union $K\cup L$, their concatenation $KL=\{vw\st v\in K\mbox{ and }w\in L\}$, and the star $K^\ast=\{v^{(1)}\cdots v^{(m)}\st v^{(1)},\dots,v^{(m)}\in K\}$ are also regular. Another basic fact we need is that the complement of a regular language is itself regular; we refer the unfamiliar reader to Bousquet-M\'elou~\cite[Section 2]{bousquet-melou:rational-and-al:} for a combinatorial introduction to regular languages.

It follows quickly from Higman's Lemma that every subword-closed language is regular. If $\LL$ is subword-closed then there are only finitely many minimal (in the subword ordering) words not in $\LL$, say $B=\{b_1,\dots,b_\ell\}$. Therefore the complement of $\LL$ is the union of the languages of words containing $b_i$ for all $i$. Those languages are easily shown to be regular, so the complement of $\LL$, and hence $\LL$ itself are also regular.

In the following result we use this fact to show that growth rates of subclasses of monotone grid classes of $\zpm$ vectors are necessarily integral. In this result we have extended our notation for permutation classes to languages in the obvious manner; thus given a language $\LL$, $\LL_n$ denotes its members of length $n$ and we call the limit of $\sqrt[n]{|\LL_n|}$ as $n\to\infty$, if it exists, the growth rate of $\LL$. The proof below is due to Michael Albert [private communication].

\begin{proposition}
\label{prop-subword-integer}
The growth rate of every subword-closed language exists and is integral.
\end{proposition}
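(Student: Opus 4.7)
The plan is to exploit the regularity of $\LL$ (just established via Higman's Lemma) by analyzing the minimal deterministic finite automaton $D$ of $\LL$. The aim is to show that
\[
\gr(\LL) \;=\; k(\LL) \;:=\; \max\{|T| : T \subseteq \Sigma \text{ and } T^* \subseteq \LL\},
\]
which is visibly a nonnegative integer at most $|\Sigma|$.

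The first step is a structural observation about $D$. Writing $[u]$ for the state corresponding to the residual $u^{-1}\LL = \{w : uw \in \LL\}$, subword-closure implies that whenever $u$ is a subword of $v$ we have $v^{-1}\LL \subseteq u^{-1}\LL$; in particular, reading a letter in $D$ either fixes the current state (giving a self-loop) or sends it to one whose residual is \emph{strictly} smaller. Because $D$ is finite, this forces every strongly connected component of $D$ to consist of a single state. For each state $q$ let $T_q = \{a \in \Sigma : \delta(q,a) = q\}$ denote its set of self-loop letters.

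Next I would prove the identity
\[
\max_{q} |T_q| \;=\; k(\LL),
\]
where the maximum is taken over states $q$ reachable from the start state and distinct from the dead state. For one direction, given $T$ with $T^* \subseteq \LL$ and $|T| = k(\LL)$, feeding $D$ a long word from $T^*$ yields a weakly decreasing sequence of states; finiteness forces stabilization at some $q^*$ at which every letter of $T$ acts as a self-loop, so $T \subseteq T_{q^*}$. Conversely, if $q$ is reached from the start state by a prefix $u$, then $uw \in \LL$ for every $w \in T_q^*$ (the run never leaves $q$), and subword-closure gives $w \in \LL$, whence $T_q^* \subseteq \LL$.

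It remains to estimate $|\LL_n|$. Since every cycle of $D$ is a self-loop, any length-$n$ run ending at a non-dead state traverses a strictly decreasing sequence of states $p_0, p_1, \ldots, p_L$ with $L$ bounded by the state count of $D$, spending $n_j$ self-loop steps at each $p_j$ with $n_0 + \cdots + n_L = n - L$. The number of length-$n$ runs compatible with a fixed state sequence is at most a constant times
\[
\sum_{n_0 + \cdots + n_L = n - L} \prod_{j=0}^{L} |T_{p_j}|^{n_j} \;\le\; (n+1)^{L}\, k(\LL)^{\,n - L}.
\]
Summing over the finitely many state sequences produces $|\LL_n| \le p(n)\,k(\LL)^{n}$ for some polynomial $p$, so $\limsup \sqrt[n]{|\LL_n|} \le k(\LL)$. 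Combined with $|\LL_n| \ge k(\LL)^n$ (from any witnessing $T$), the limit exists and equals the integer $k(\LL)$. The one subtle point is the structural lemma that transitions only weakly decrease the state in the residual-containment order; everything downstream of it is routine bookkeeping.
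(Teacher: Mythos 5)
Your argument is correct, and it takes a genuinely different route from the paper's. The paper argues via the syntax of regular expressions: it claims (by induction on a regular expression defining $\LL$) that every subword-closed language is a finite union of expressions of the form $\ell_1\Sigma_1^\ast\cdots\ell_k\Sigma_k^\ast\ell_{k+1}$, from which the growth rate is read off as the largest $|\Sigma_i|$ appearing. You instead work semantically with the minimal DFA: the key structural lemma that transitions can only shrink residuals (so every strongly connected component is a single state with self-loops) plays the role the regular-expression decomposition plays in the paper, and the counting step is then a clean weighted-composition estimate over the finitely many strictly decreasing state paths. Your route has a couple of advantages: it sidesteps the somewhat delicate induction on regular expressions (whose subexpressions need not themselves be subword-closed, so the paper's sketch really requires arguing about subword \emph{closures} of arbitrary regular subexpressions), and it yields the explicit identity $\gr(\LL)=\max\{|T|: T\subseteq\Sigma,\ T^\ast\subseteq\LL\}$ rather than an answer phrased in terms of a particular syntactic decomposition. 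One small point worth recording in the writeup is the observation you implicitly use in the ``conversely'' direction, that in a subword-closed language every reachable non-dead state is accepting (if $uv\in\LL$ then $u\in\LL$), so $uw\in\LL$ for all $w\in T_q^\ast$ as claimed; otherwise the inclusion $T_q^\ast\subseteq\LL$ would not follow. With that noted, the bookkeeping is exactly as you describe and the limit exists and equals the integer $k(\LL)$.
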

\begin{proof}
We claim that every subword-closed language $\LL\subseteq\Sigma^\ast$ can be expressed as a finite union of regular expressions of the form $\ell_1\Sigma_1^\ast\cdots\ell_k\Sigma_k^\ast\ell_{k+1}$ for letters $\ell_i\in\Sigma$ and subsets $\Sigma_i\subseteq\Sigma$. This follows by induction on the regular expression defining $\LL$. The base cases where $\LL$ is empty, $\{\emptyword\}$, or a single letter are trivial. If the regular expression defining $\LL$ is a union or a concatenation then the claim follows inductively. The only other case is when this regular expression is a star, $\LL=E^\ast$. In this case, because $\LL$ is subword-closed, we see that $\LL=\Lambda^\ast$ where $\Lambda\subseteq\Sigma$ is the set of letters occurring in $E$.

With this claim established, it follows that $\lim\sqrt[n]{|\LL_n|}$ is equal to the size of the largest set $\Sigma_i$ occurring in such an expression for $\LL$.
\end{proof}

\begin{corollary}
\label{cor-mono-grid-grs}
The only growth rates of monotone griddable classes less than $1+\varphi\approx 2.61803$ are $0$, $1$, and $2$.
\end{corollary}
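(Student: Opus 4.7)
The plan is to reduce $\C$ to a disjoint union (in the sense of connected components of the gridding matrix) of subclasses of monotone grid classes of single $\zpm$ row or column vectors, and then convert each such subclass into a subword-closed language in order to apply Proposition~\ref{prop-subword-integer}.

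Suppose that $\C$ is a monotone griddable class with $\gr(\C) < 1+\varphi$. Since then $\lgr(\C) < 1+\varphi$, the contrapositive of Proposition~\ref{prop-hook-sep-rects} guarantees that $M$-gridded members of $\C$ admit only bounded-size independent sets of hook-separated rectangles. Proposition~\ref{prop-mono-griddable-vector} then upgrades this to a containment $\C \subseteq \Grid(M')$ for some $\zpm$ matrix $M'$ whose every connected component lies entirely within a single row or column. Proposition~\ref{prop-grid-component} reduces the computation of $\ugr(\C)$ to computing the upper growth rates of the restrictions of $\C$ to the individual components of $M'$, so it suffices to bound each of these separately.

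Each component $\K$ of $M'$ is a $\zpm$ row or column vector of some length $t$, and the restriction $\D$ of $\C$ to $\K$ is a set of $\K$-gridded permutations contained in $\Grid(\K)$. Using the encoding described just before Proposition~\ref{prop-subword-integer}, these gridded permutations are in length-preserving bijection with words over the $t$-letter alphabet of cells of $\K$. The key point is that $\D$ is downward-closed under the sub-gridded-permutation relation: if $\pi^\gridded \in \C^\gridded$ has $\K$-restriction $\rho^\gridded \in \D$, then any sub-gridded-permutation $\sigma^\gridded$ of $\rho^\gridded$ is itself the $\K$-restriction of the sub-gridded-permutation of $\pi^\gridded$ obtained by removing from the cells of $\K$ exactly those entries absent from $\sigma^\gridded$, and this sub-gridded-permutation lies in $\C^\gridded$ because $\C$ is a class. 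Consequently the set of words encoding $\D$ is subword-closed, and by Proposition~\ref{prop-subword-integer} together with Observation~\ref{obs-gridded-gr} the growth rate of $\D$ exists and equals a nonnegative integer.

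Since that integer is at most $\gr(\C) < 1+\varphi < 3$, it lies in $\{0,1,2\}$, and taking the maximum across components of $M'$ completes the proof. No step here presents a genuine obstacle; the only point worth dwelling on is the downward-closure check that justifies the passage to a subword-closed language, since everything else is a direct invocation of results assembled earlier in the paper.
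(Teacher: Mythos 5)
Your proof is correct and follows the same route the paper intends: apply the contrapositive of Proposition~\ref{prop-hook-sep-rects} and then Proposition~\ref{prop-mono-griddable-vector} to reduce to a $\zpm$ matrix whose components lie in single rows or columns, invoke Proposition~\ref{prop-grid-component} to pass to individual components, encode the restricted gridded permutations as a subword-closed language, and finish with Proposition~\ref{prop-subword-integer}. The only addition beyond the paper's implicit argument is that you spell out explicitly why the restriction to a component yields a subword-closed language, which the paper leaves to the reader after noting that subwords of an encoding encode gridded subpermutations.
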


We collect what we have proved in this section below. The following is the only result of the past two sections used in what follows.

\begin{corollary}
\label{cor-slicing-gr}
Suppose that the permutation class $\C$ is $\G$-griddable for a concentrated class $\G$ and that $\lgr(\C)<1+\sqrt{2}\approx 2.41421$. Then either $\gr(\C)$ exists and is integral or $\ugr(\C)$ is equal to the upper growth rate of a subclass of $\G$.
\end{corollary}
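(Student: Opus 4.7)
The plan is to combine the structural results of Sections~\ref{sec-gridding}--\ref{sec-component} in sequence. Since $\C$ is $\G$-griddable and $\G$ is concentrated, fix a matrix $\M$ whose entries all equal $\G$ with $\C \subseteq \Grid(\M)$. Because $\lgr(\C) < 1+\sqrt{2}$, the contrapositive of Proposition~\ref{prop-sep-nonmono-rects} gives a uniform bound on the size of independent sets of separated nonmonotone rectangles in $\M$-gridded members of $\C$. Theorem~\ref{thm-slicing} then yields a constant $p$ and a matrix $\M'$---whose nonempty entries equal either $\G$ or a monotone class, and in which every nonempty entry sharing a row or column with another nonempty entry is monotone---such that $\C \subseteq \Grid(\M')^{+p}$.

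Next I would reduce to a class that is genuinely $\M'$-griddable. Let $\C' = \C \cap \Grid(\M')$. Because $\C$ is a downset, every $\tau \in \C$ is a $p$-point extension of some $\pi \in \C \cap \Grid(\M') = \C'$, so $\C' \subseteq \C \subseteq (\C')^{+p}$; Proposition~\ref{prop-extension-gr} then gives $\ugr(\C) = \ugr(\C')$ and $\lgr(\C) = \lgr(\C')$. Applying Proposition~\ref{prop-grid-component}, let $\K_1,\dots,\K_s$ be the connected components of $\M'$ and let $\C'_i$ denote the restriction of $\C'$ to $\K_i$, so that $\ugr(\C') = \max_i \ugr(\C'_i)$. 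Moreover, padding a single-component gridding with empty cells elsewhere embeds $(\C'_i)^\gridded$ into $(\C')^\gridded$, which yields $\lgr(\C') \geq \lgr(\C'_i)$ for every $i$.

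By the structure of $\M'$, each component $\K_i$ is either a single $\G$-cell (in which case $\C'_i$ is itself a subclass of $\G$) or consists entirely of monotone cells. For a monotone component, I would invoke Proposition~\ref{prop-hook-sep-rects} (using $\lgr(\C) < 1+\sqrt{2} < 1+\varphi$) to bound independent sets of hook-separated rectangles, then apply Proposition~\ref{prop-mono-griddable-vector} to realize $\C'_i$ inside a monotone grid class of a $\zpm$ row or column vector, and finally Corollary~\ref{cor-mono-grid-grs} to conclude that $\gr(\C'_i)$ exists and lies in $\{0,1,2\}$.

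The corollary then follows from a case split on which type of component attains $\ugr(\C')$. If some $\G$-cell component $\K_{i^\ast}$ attains the maximum, then $\ugr(\C) = \ugr(\C'_{i^\ast})$ is the upper growth rate of a subclass of $\G$, which is the second alternative. Otherwise the maximum is attained only by monotone components; fixing such an index $i^\ast$ gives $\ugr(\C) = \gr(\C'_{i^\ast}) \in \{0,1,2\}$, and the chain $\lgr(\C) \geq \lgr(\C'_{i^\ast}) = \gr(\C'_{i^\ast}) = \ugr(\C)$ forces $\gr(\C)$ to exist and be integral. The step requiring the most care is this final chain---pinning the lower growth rate of $\C$ down by the same monotone component that determines the upper growth rate---but once existence of the growth rate on monotone components is in hand, this is a short embedding argument, and everything else is an orchestration of previously established results.
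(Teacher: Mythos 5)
Your proof is correct and follows the paper's implicit assembly of Theorem~\ref{thm-slicing}, Proposition~\ref{prop-grid-component}, and the monotone-griddable analysis culminating in Corollary~\ref{cor-mono-grid-grs}, together with Proposition~\ref{prop-extension-gr} to dispense with the $p$-point extension. Your explicit reduction to $\C' = \C\cap\Grid(\M')$ and the closing chain $\lgr(\C) \geq \lgr(\C'_{i^\ast}) = \gr(\C'_{i^\ast}) = \ugr(\C)$ helpfully make precise why $\gr(\C)$ exists in the monotone case, a point the paper leaves implicit in Proposition~\ref{prop-subword-integer}.
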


\section{Well-Quasi-Order}
\label{sec-wqo}

One of the final ingredients needed for the proof of our main result is to show that the grid classes $\G_\gamma$ for $\gamma<\xi$ are well-quasi-ordered. In the characterization of growth rates below $\kappa$, well-quasi-order is an immediate consequence of the fact that $\G_\gamma$ contains only finitely many simple permutations for $\gamma<\kappa$ (Proposition~\ref{prop-Ggamma-simples}) and Theorem~\ref{thm-fin-simples-wqo}, which shows that classes with only finitely many simple permutations are necessarily well-quasi-ordered. For $\kappa\le\gamma<\xi$, however, a different argument is required. Because the infinite antichain $U^o$ is contained in $\G_\xi$ (a consequence of Proposition~\ref{prop-U-sum-indecomp-kids}), $\G_\xi$ is not well-quasi-ordered and so the result obtained here is best possible in this sense.

\begin{figure}
\begin{footnotesize}
\begin{center}
	\begin{tikzpicture}[scale={(15*0.25)/3}]
		\draw [thick, line cap=round] (0,2)--(1,1);
		\draw [thick, line cap=round] (0,0)--(2,2);
		\draw [thick, line cap=round] (2,1)--(3,0);
		\draw [darkgray, ultra thick, line cap=round] (0,0)--(3,0);
		\draw [darkgray, ultra thick, line cap=round] (0,1)--(3,1);
		\draw [darkgray, ultra thick, line cap=round] (0,2)--(3,2);
		\draw [darkgray, ultra thick, line cap=round] (0,0)--(0,2);
		\draw [darkgray, ultra thick, line cap=round] (1,0)--(1,2);
		\draw [darkgray, ultra thick, line cap=round] (2,0)--(2,2);
		\draw [darkgray, ultra thick, line cap=round] (3,0)--(3,2);
		\plotpartialperm{0.2/1.8, 0.4/0.4, 0.5/0.5, 0.8/1.2, 1.1/1.1, 1.9/1.9, 2.3/0.7};
	\end{tikzpicture} 
\end{center}
\end{footnotesize}
\caption[The permutation $6125473$ lies in a geometric grid class.]{The permutation $6125473$ lies in the geometric grid class of $\fnmatrix{rrr}{-1&1&0\\1&0&-1}$.}
\label{fig-ggc-example}
\end{figure}

First we briefly discuss geometric grid classes. The \emph{standard figure}, denoted $\Lambda_M$, of a $\zpm$ matrix $M$ is the figure in $\mathbb{R}^2$ consisting of two types of line segments for every pair of indices $i,j$ such that $M(i,j)\neq 0$:
\begin{itemize}
\item the increasing open line segment from $(i-1,j-1)$ to $(i,j)$ if $M(i,j)=1$ or
\item the decreasing open line segment from $(i-1,j)$ to $(i,j-1)$ if $M(i,j)=-1$.
\end{itemize}
The \emph{geometric grid class} of $M$, denoted $\Geom(M)$, is the set of all permutations that can be drawn on $\Lambda_M$ (in the sense defined in the introduction and the same as in Proposition~\ref{prop-321-lines}---an example is shown in Figure~\ref{fig-ggc-example}).

Geometric grid classes were first introduced by Albert, Atkinson, Bouvel, Ru\v{s}kuc, and Vatter~\cite{albert:geometric-grid-:}, who showed that for every $\zpm$ matrix $M$, there is a finite alphabet $\Sigma$ (consisting of one letter per nonzero cell of $M$, called the \emph{cell alphabet}) and a mapping $\bij\st\Sigma^\ast\to\Geom(M)$ such that whenever $v$ is contained in $w$ as a subword then the corresponding permutations satisfy $\bij(v)\le\bij(w)$. Indeed, this encoding is a generalization of the one used for grid classes of row vectors (which are always geometric grid classes) in the previous section. Given this encoding, it follows immediately from Higman's Lemma that all geometric grid classes are well-quasi-ordered.

We say that the class $\C$ is \emph{geometrically griddable} if it is contained in some geometric grid class, i.e., if $\C\subseteq\Geom(M)$ for any finite $\zpm$ matrix $M$.  In a later paper, Albert, Ru\v{s}kuc, and Vatter~\cite{albert:inflations-of-g:} studied inflations and substitution closures of geometrically griddable classes, and it is these results we appeal to here. The two well-quasi-order results proved in \cite{albert:inflations-of-g:} are the following.
\begin{itemize}
\item If the class $\C$ is a geometrically griddable class and the class $\D$ is well-quasi-ordered then $\C[\D]$ is also well-quasi-ordered~\cite[Proposition 4.1]{albert:inflations-of-g:}.
\item If the class $\C$ is geometrically griddable then $\langle\C\rangle$ is also well-quasi-ordered~\cite[Theorem 4.4]{albert:inflations-of-g:}.
\end{itemize}
(Recall from Section~\ref{sec-background} that $\C[\D]$ is the inflation of $\C$ by $\D$ while $\langle\C\rangle$ denotes the substitution closure of $\C$.) The first of the results above is also an immediate consequence of Higman's Lemma, but the second requires slightly more sophisticated tools. In fact, while not noted in \cite{albert:inflations-of-g:}, a minor adaptation of the proof of this second result yields the following common generalization of both.

\begin{theorem}[An adaptation of the proof of {\cite[Theorem 4.4]{albert:inflations-of-g:}}]
\label{thm-subst-geom-inflate-wqo}
If the class $\C$ is geometrically griddable and the set $X$ of permutations is well-quasi-ordered then the set $\langle\C\rangle[X]$ is also well-quasi-ordered.
\end{theorem}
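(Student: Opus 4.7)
The plan is to adapt the tree-based proof of \cite[Theorem 4.4]{albert:inflations-of-g:}, which establishes that $\langle\C\rangle$ is well-quasi-ordered for geometrically griddable $\C$, by allowing the leaves of the substitution decomposition trees to be labeled by elements of $\D$ rather than by the trivial singleton~$1$. In the original proof each $\sigma\in\langle\C\rangle$ is encoded by a rooted tree whose internal nodes carry labels drawn from the simples of $\C$ together with the monotone markers $\oplus$ and $\ominus$; this collection of labels is well-quasi-ordered because $\C$ itself is (any geometric grid class is well-quasi-ordered, via Higman's Lemma applied to the cell alphabet encoding), and then Kruskal's tree theorem for rooted trees labeled by a well-quasi-order yields the desired conclusion.

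Concretely, I would first write an arbitrary $\pi\in\langle\C\rangle[\D]$ as $\sigma[\alpha_1,\dots,\alpha_m]$ with $\sigma\in\langle\C\rangle$ and each $\alpha_i\in\D$. Applying Proposition~\ref{simple-decomp-1} recursively to $\sigma$ and inserting the $\alpha_i$ in place of the singleton leaves produces a rooted tree $T_\pi$ whose internal nodes are labeled from the set $L$ consisting of the simples of $\C$ together with $\oplus$ and $\ominus$ (recall that $\langle\C\rangle$ has the same simples as $\C$), and whose leaves are labeled by elements of $\D$. Both $L$ and $\D$ are well-quasi-ordered, so Kruskal's theorem for rooted trees with labels in a well-quasi-order yields that the collection $\{T_\pi : \pi\in\langle\C\rangle[\D]\}$ is well-quasi-ordered under the Kruskal embedding order.

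The main obstacle, and the step that requires the most care, is verifying that a Kruskal embedding $T_{\pi'}\hookrightarrow T_\pi$ forces $\pi'\le\pi$ as permutations. At each internal node this goes through exactly as in \cite[Theorem 4.4]{albert:inflations-of-g:}: a containment of simple labels supplies a compatible correspondence of children (using that the smaller simple can be realized as a pattern of the larger one), while $\oplus$ and $\ominus$ nodes match only with themselves, with arity reduction corresponding to choosing a subset of children. The only new ingredient occurs at the leaves: an embedding maps a leaf of $T_{\pi'}$ labeled $\alpha'\in\D$ to a leaf of $T_\pi$ labeled $\alpha\in\D$ with $\alpha'\le\alpha$ in $\D$, and the corresponding interval of $\pi'$ then embeds directly into the corresponding interval of $\pi$. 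Composing these local embeddings node-by-node yields $\pi'\le\pi$, so any infinite antichain in $\langle\C\rangle[\D]$ would produce an infinite antichain of labeled trees, contradicting the wqo established in the previous step.
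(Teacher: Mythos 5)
There is a genuine gap in the step ``a containment of simple labels supplies a compatible correspondence of children.'' Kruskal's theorem hands you a homeomorphic embedding $\phi\colon T_{\pi'}\to T_\pi$ that comes with its \emph{own} assignment of children of $v'$ to (subtrees of children of) $\phi(v')$, together with a label comparison $\sigma'\le\sigma$. These two pieces of data are independent: the embedding of $\sigma'$ into $\sigma$ as a pattern will in general use a different set of positions than the ones $\phi$ chose for the children. So from $T_{\pi'}\le T_\pi$ in the Kruskal order you cannot conclude $\pi'\le\pi$. This is not a vacuous worry, because geometrically griddable classes can contain infinitely many simple permutations forming a nontrivial well-quasi-order (for instance $\Geom(1\ 1)$ contains the chain $2413 < 246135 < 24681357 < \cdots$ of parallel alternations), so you cannot escape by arguing that the internal labels form a finite antichain and hence must be matched exactly.

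The adaptation the paper has in mind avoids this by not labelling nodes with simple permutations at all. Because $\C\subseteq\Geom(M)$, each $\sigma\in\C$ carries a cell-alphabet encoding $w(\sigma)\in\Sigma^\ast$ over a \emph{finite} alphabet $\Sigma$, with the crucial property that the restriction of $w(\sigma)$ to a set of positions $J$ encodes exactly the pattern of $\sigma$ on the corresponding entries. One therefore represents a node of the tree by the sequence of pairs $(w(\sigma)_1,\alpha_1),\dots,(w(\sigma)_m,\alpha_m)\in(\Sigma\times X)^\ast$, where $\alpha_i$ is the $i$-th child and $X$ is the set of subtrees. Higman's Lemma applied to the product $\Sigma\times X$ then produces a subword embedding in which the letter-matching and the child-matching are the \emph{same} injection of positions, so the compatibility you need comes for free from the encoding; the recursive structure is handled by a Nash--Williams minimal bad sequence argument rather than by citing Kruskal directly. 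Your outline correctly identifies the tree shape, the role of wqo labels, and the need to push the embedding down to the permutations, but the geometric encoding (rather than the substitution decomposition with simple-permutation labels) is exactly what makes the children correspondence and the label embedding coincide, and that is the ingredient your proposal is missing.
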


Note that every finite class is geometrically griddable, so Theorem~\ref{thm-subst-geom-inflate-wqo} includes as a special case Theorem~\ref{thm-fin-simples-wqo}: any class with only finitely many simple permutations is contained in $\langle\C\rangle$ for a finite, and hence geometrically griddable, class $\C$. The simplest nontrivial class of this form is the class of \emph{separable permutations}, $\langle\{1,12,21\}\rangle$, which can also be described as the class of all permutations that can be obtained from the permutation $1$ by repeated sums and skew sums. A result of Stankova~\cite{stankova:forbidden-subse:} which we use below shows that the separable permutations may also be described as $\Av(2413,3142)$\footnote{The graph-theoretic analogues of separable permutations are the $P_4$-free graphs, also known as the co-graphs, which is why the basis of the separable permutations consists of the two permutations whose inversion graphs are isomorphic to $P_4$.}.

Theorem~\ref{thm-subst-geom-inflate-wqo} says much more than this, though. Let $\C$ be any permutation class and define
\[
	I_\C=\{\alpha\in\C\st \text{$\alpha$ is both sum and skew indecomposable}\}
\]
to be the set of indecomposable members of $\C$. It follows inductively that the class $\C$ is a subset of the set $\langle\{1,12,21\}\rangle[I_\C]$: every member of $\C$ either lies in $I_\C$ or is a sum or skew sum of shorter members of $\C$. Thus if the set $I_\C$ is well-quasi-ordered then $\langle\{1,12,21\}\rangle[I_\C]$ is well-quasi-ordered, and because $\C\subseteq\langle\{1,12,21\}\rangle[I_\C]$, we may conclude that $\C$ is itself well-quasi-ordered. We record this implication below.

\begin{corollary}[Cf.~Atkinson, Murphy, and Ru\v{s}kuc~{\cite[Theorem 2.5]{atkinson:partially-well-:}}]
\label{cor-sum-skew-wqo}
The class $\C$ is well-quasi-ordered if and only if its members which are both sum and skew indecomposable are well-quasi-ordered.	
\end{corollary}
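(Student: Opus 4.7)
The ``only if'' direction is immediate: $X$ is a subset of $\C$, and any subset of a well quasi-ordered set is itself well quasi-ordered.

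For the ``if'' direction, assume $X$ is well quasi-ordered. Following the outline already sketched in the paragraph preceding the corollary, I would first establish the structural containment
\[
	\C\subseteq\langle\{1,12,21\}\rangle[\Sub(X)],
\]
i.e., $\C$ is contained in the separable closure of $\Sub(X)$. This follows from iterated substitution decomposition (Proposition~\ref{simple-decomp-1}): any $\pi\in\C$ either has length $1$, is both sum and skew indecomposable and therefore lies in $X$ (since $\C$ is downward closed), or decomposes as a proper sum or skew sum, in which case we recurse on the strictly shorter summands. The resulting decomposition tree has leaves labelled by members of $X\cup\{1\}\subseteq\Sub(X)$ and internal structure built purely from $\oplus$ and $\ominus$.

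Given this containment, Corollary~\ref{cor-strong-completion} finishes the proof provided $\Sub(X)$ is well quasi-ordered. I would establish this by contradiction: suppose $A\subseteq\Sub(X)$ is an infinite antichain, and for each $\alpha\in A$ choose a witness $f(\alpha)\in X$ with $\alpha\le f(\alpha)$. The claim, as asserted in the pre-corollary discussion, is then that (after suitable selection) $\{f(\alpha):\alpha\in A\}$ forms an infinite antichain of both sum and skew indecomposable permutations in $X$, contradicting the well quasi-order hypothesis on $X$.

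The main obstacle is carrying out this lifting step rigorously, since the naive set $\{f(\alpha)\}$ need not be an antichain in $X$. Since each $\tau\in X$ contains only finitely many subpermutations, pigeonhole allows passing to an infinite subset of $A$ on which $f$ is injective; a minimum-length condition on the witness $f(\alpha)$, combined with the antichain property of $A$, is then used to rule out comparabilities among the $f(\alpha)$'s. Once this lifting is verified, the remainder of the argument is a direct application of the structural containment together with Corollary~\ref{cor-strong-completion}.
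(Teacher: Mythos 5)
You follow the same outline as the paper: the ``only if'' direction is immediate, $\C$ is contained in the separable closure of $\Sub(X)$, Corollary~\ref{cor-strong-completion} reduces the ``if'' direction to showing that $\Sub(X)$ is well quasi-ordered, and this is attempted by lifting an infinite antichain $A\subseteq\Sub(X)$ to one in $X$. You rightly single out the lifting as the delicate point---the paper's own paragraph also simply asserts that $\{f(\alpha)\}$ is an antichain---but the specific repair you sketch does not close the gap.

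Choosing $f(\alpha)$ of minimum length in $X$ containing $\alpha$, and passing by pigeonhole to a subset of $A$ on which $f$ is injective, does not rule out $f(\alpha)<f(\alpha')$: in that event both $\alpha$ and $\alpha'$ embed in $f(\alpha')$, and the incomparability of $\alpha$ and $\alpha'$ yields no contradiction with minimality of $f(\alpha')$. Indeed the bare implication ``$X$ well quasi-ordered $\Rightarrow\Sub(X)$ well quasi-ordered'' is \emph{false} for an arbitrary set $X$: take $X=\{\beta_1,\,\beta_1\oplus\beta_2,\,\beta_1\oplus\beta_2\oplus\beta_3,\dots\}$ for any infinite antichain $\{\beta_i\}$; this $X$ is a chain yet $\Sub(X)\supseteq\{\beta_i\}$. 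So the argument must genuinely exploit that $X$ is the set of \emph{all} sum and skew indecomposable members of $\C$, i.e.\ that every sum and skew indecomposable permutation in $\Sub(X)$ already lies in $X$. A rigorous route is a Nash--Williams minimal bad sequence argument: if infinitely many terms of a minimal bad sequence in $\Sub(X)$ are both sum and skew indecomposable, they lie in $X$ and contradict its well quasi-order; otherwise infinitely many are (say) sum decomposable, the set of proper subpermutations of the terms is well quasi-ordered by minimality, and Higman's Lemma applied to the sequences of sum summands produces a good pair, again a contradiction. Your instinct to flag the step is sound; the min-length device on its own, however, does not complete the proof.
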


For the rest of this section consider $\gamma<\xi$ to be fixed and suppose to the contrary that there is an infinite antichain $A\subseteq\G_\gamma$. By the result above, we may assume that every member of $A$ is both sum and skew indecomposable. Each member $\alpha$ of $A$ lies in $\G_\gamma$ because the growth rate of either $\bigoplus\Sub(\alpha)$ or $\bigominus\Sub(\alpha)$ is less than $\gamma$. Thus we may assume by symmetry and without loss of generality that $\gr(\bigoplus\Sub(\alpha))<\gamma$ for every $\alpha\in A$. It is this fact that we seek to contradict, by showing that $\alpha$ must contain enough sum indecomposable subpermutations that the growth rate of $\bigoplus\Sub(\alpha)$ is greater than $\xi$. Specifically, we make use of the following computation.

\begin{proposition}
\label{prop-wqo-computation-xi}
If the sum indecomposable permutation $\alpha$ contains $3$ sum indecomposable permutations of each of the lengths $3$ and $4$ and itself has length at least $10$ then $\gr(\bigoplus\Sub(\alpha))>\xi$.
\end{proposition}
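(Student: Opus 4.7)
The plan is to apply the Exponential Growth Formula to the generating function $1/(1-g(x))$ of $\bigoplus\Sub(\alpha)$, where by Proposition~\ref{prop-enum-oplus-closure} $g(x)=\sum g_i x^i$ enumerates the nonempty sum indecomposable members of $\Sub(\alpha)$. Since $\bigoplus\Sub(\alpha)$ is sum closed, Proposition~\ref{prop-arratia-gr} guarantees that its growth rate exists. Because $g$ is a polynomial with nonnegative coefficients (hence weakly increasing on $[0,\infty)$ with $g(0)=0$), the least positive singularity of $1/(1-g)$ is the least positive root of $g(x)=1$, so the desired bound $\gr(\bigoplus\Sub(\alpha))>\xi$ reduces to the single inequality $g(1/\xi)>1$.

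The next step is to lower-bound the coefficients $g_i$. The hypothesis gives $g_3,g_4\geq 3$, and $g_1\geq 1$ trivially. Since $\alpha$ is sum indecomposable of length at least $2$, it is not the monotone increasing permutation, so it contains $21$ (the unique sum indecomposable of length $2$); thus $g_2\geq 1$. For $5\leq i\leq 10$ the plan is to invoke the background fact that a permutation is sum indecomposable iff its inversion graph is connected, together with the standard graph-theoretic observation that every connected graph on $n\geq 2$ vertices admits a non-cut vertex (for instance, a leaf of any spanning tree) whose removal leaves a connected graph of order $n-1$. Translating back, every sum indecomposable permutation of length $n\geq 2$ contains a sum indecomposable subpermutation of length $n-1$, so iterating downward from $\alpha$ yields $g_i\geq 1$ for all $1\leq i\leq |\alpha|$, and in particular for $5\leq i\leq 10$ thanks to the hypothesis $|\alpha|\geq 10$.

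With these bounds in place,
\[
    g(1/\xi)\;\geq\;\frac{1}{\xi}+\frac{1}{\xi^2}+\frac{3}{\xi^3}+\frac{3}{\xi^4}+\sum_{i=5}^{10}\frac{1}{\xi^i},
\]
so it remains to verify the polynomial inequality
\[
    \xi^{10}\;<\;\xi^9+\xi^8+3\xi^7+3\xi^6+\xi^5+\xi^4+\xi^3+\xi^2+\xi+1.
\]
Reducing both sides modulo the minimal polynomial $\xi^5-2\xi^4-\xi^2-\xi-1$ of $\xi$, this collapses to $\xi^4<\xi^3+2\xi^2+2\xi+1$, which holds at $\xi\approx 2.30522$: numerically the two sides are approximately $28.24$ and $28.49$. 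Equivalently, one can observe that $x^3-x^2-2x-1$ has its positive root near $2.148$, so already $\xi>2.3$ forces the required inequality.

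The main obstacle is that the inequality is very tight: when each $g_i$ is pinned at its minimum, $g(1/\xi)$ exceeds $1$ by only about $10^{-5}$. In particular, removing the single term $1/\xi^{10}$ would already drop the sum below $1$, so the hypothesis $|\alpha|\geq 10$ is essentially tight for this argument, and the non-cut vertex reduction must be applied all the way up to length $|\alpha|$. Thus the work is not in any deep structural claim but in marshalling the bounds on $g_i$ to the precision needed for this delicate numerical threshold.
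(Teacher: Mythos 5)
Your proof takes essentially the same approach as the paper: both hinge on the fact that every sum indecomposable permutation contains a sum indecomposable subpermutation one shorter (via connected graphs having non-cut vertices), which yields the termwise lower bound $1,1,3,3,1,1,1,1,1,1$ on the counting sequence of sum indecomposable subpermutations of $\alpha$, and then both verify that the resulting generating function $1/(1-g)$ has growth rate exceeding $\xi$. The only flaw is your last sentence: the relevant quartic is $x^4-x^3-2x^2-2x-1$, whose positive root is $\approx 2.317$ (and the inequality holds because $\xi\approx 2.30522<2.317$), not the cubic $x^3-x^2-2x-1$ with root $\approx 2.148$; arguing from $\xi>2.148$ gives $\xi^3>\xi^2+2\xi+1$, hence $\xi^4>\xi^3+2\xi^2+\xi$, which points in the wrong direction and cannot force $\xi^4<\xi^3+2\xi^2+2\xi+1$---but this remark is superfluous since your preceding reduction modulo the minimal polynomial and numerical check already settle the matter.
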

\begin{proof}
Just as every connected graph contains a connected induced subgraph on one fewer vertex, every sum indecomposable permutation contains a sum indecomposable subpermutation with one fewer entry. Therefore by the hypotheses and Proposition~\ref{prop-enum-oplus-closure} it follows immediately that $\gr(\bigoplus\Sub(\alpha))$ is at least the growth rate of
\[
	\frac{1}{1-\left(x+x^2+3x^3+3x^4+x^5+x^6+x^7+x^8+x^9+x^{10}\right)},
\]
which is greater than $2.30528$, more than $0.00006$ greater than $\xi$.
\end{proof}

The first step of our argument is to show that infinitely many members of $A$ must contain $321$. To do so we must first introduce a few facts about $321$-avoiding antichains. Following Huczynska and Ru\v{s}kuc~\cite{huczynska:well-quasi-orde:}, we say that a \emph{double-ended fork} is the graph formed from a path by adding four vertices of degree one, two adjacent to one end of the path and two adjacent to the other, as shown below.
\begin{center}
	\begin{tikzpicture}[scale=0.4, yscale=0.5]
		\draw (1,0)--(7,0);
		\draw (0,1)--(1,0)--(0,-1);
		\draw (8,1)--(7,0)--(8,-1);
		\plotpartialperm{0/-1,0/1,1/0,2/0,3/0,4/0,5/0,6/0,7/0,8/-1,8/1};
	\end{tikzpicture}
\end{center}
It is clear that the set of double-ended forks forms an antichain of graphs in the induced subgraph ordering.

Let $U$ denote the set of all permutations $\pi$ for which $G_\pi$ is isomorphic to a double-ended fork on at least $6$ vertices. By direct construction it can be seen that the members of $U$ fall into four slightly different types, as indicated in Figure~\ref{fig-antichains}, and it follows that $U$ is an infinite antichain of $321$-avoiding permutations. We label the sets consisting of the four types of members of $U$ by $U^o$ (for odd length), $U^e$ (for even length), $(U_o)^{-1}$, and $(U^e)^{-1}$. The result we quote below shows that $U$ is in some sense the minimal antichain in $\Av(321)$. Its proof uses a generalization of the well-quasi-order properties of geometric grid classes.

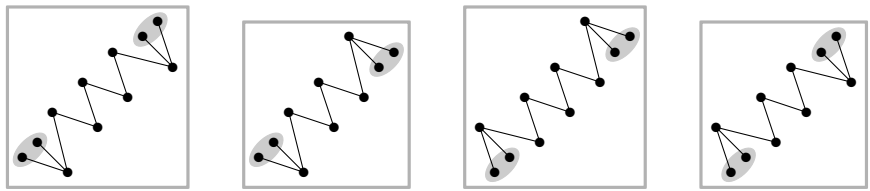
\begin{figure}
\begin{footnotesize}
\begin{center}
	\begin{tikzpicture}[scale=0.2]
		\draw[lightgray, fill, rotate around={-45:(1.5,2.5)}] (1.5,2.5) ellipse (20pt and 40pt);
		\draw[lightgray, fill, rotate around={-45:(9.5,10.5)}] (9.5,10.5) ellipse (20pt and 40pt);
		\plotpermbox{0.5}{0.5}{11.5}{11.5};
		\plotpermgraph{2,3,5,1,7,4,9,6,10,11,8};
	\end{tikzpicture}
\quad\quad
	\begin{tikzpicture}[scale=0.2]
		\draw[lightgray, fill, rotate around={-45:(1.5,2.5)}] (1.5,2.5) ellipse (20pt and 40pt);
		\draw[lightgray, fill, rotate around={-45:(9.5,8.5)}] (9.5,8.5) ellipse (20pt and 40pt);
		\plotpermbox{0.5}{0.5}{10.5}{10.5};
		\plotpermgraph{2,3,5,1,7,4,10,6,8,9};
	\end{tikzpicture}
\quad\quad
	\begin{tikzpicture}[scale=0.2]
		\draw[lightgray, fill, rotate around={-45:(2.5,1.5)}] (2.5,1.5) ellipse (20pt and 40pt);
		\draw[lightgray, fill, rotate around={-45:(10.5,9.5)}] (10.5,9.5) ellipse (20pt and 40pt);
		\plotpermbox{0.5}{0.5}{11.5}{11.5};
		\plotpermgraph{4,1,2,6,3,8,5,11,7,9,10};
	\end{tikzpicture}
\quad\quad
	\begin{tikzpicture}[scale=0.2]
		\draw[lightgray, fill, rotate around={-45:(2.5,1.5)}] (2.5,1.5) ellipse (20pt and 40pt);
		\draw[lightgray, fill, rotate around={-45:(8.5,9.5)}] (8.5,9.5) ellipse (20pt and 40pt);
		\plotpermbox{0.5}{0.5}{10.5}{10.5};
		\plotpermgraph{4,1,2,6,3,8,5,9,10,7};
	\end{tikzpicture}
\end{center}
\end{footnotesize}
\caption{The four different types of members of $U$. From left to right, we denote these antichains by $U^o$, $U^e$, $(U^o)^{-1}$, and $(U^e)^{-1}$.}
\label{fig-antichains}
\end{figure}

\begin{theorem}[Albert, Brignall, Ru\v{s}kuc, and Vatter~{\cite[Theorem 9.3]{albert:rationality-for:}}]
\label{thm-wqo-fin-int}
A subclass $\C \subseteq \Av(321)$ is well-quasi-ordered if and only if $\C \cap U$ is finite.
\end{theorem}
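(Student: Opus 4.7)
The (only if) direction is immediate: $U$ is an infinite antichain, since the inversion graphs of its members are split-end paths of varying lengths, which form an antichain in the induced subgraph order, and non-containment at the graph level transfers to non-containment of permutations (the inversion graph map being order-preserving). Hence any $\C$ with $\C \cap U$ infinite contains the infinite antichain $\C \cap U$ and fails to be well quasi-ordered.

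For the converse, the plan is to establish the stronger statement that if $\C \cap U$ is finite, then $\C \subseteq \Geom(M)$ for some finite $\zpm$ matrix $M$. Well quasi-order of $\C$ then follows because every geometric grid class is well quasi-ordered, via Higman's Lemma applied to the cell-alphabet encoding (as noted in the discussion preceding Theorem~\ref{thm-subst-geom-inflate-wqo}).

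By Proposition~\ref{prop-321-lines}, every $\pi \in \Av(321)$ can be drawn on two parallel lines of positive slope, giving a canonical partition of the entries into two increasing sequences. The first step is to associate to each $\pi \in \C$ a combinatorial abstraction of such a drawing: a word over a small alphabet that records, as we scan $\pi$ from left to right, which of the two lines each maximal run of entries lies on, together with a description of the local configuration at each transition between the two lines. The strategy is then to argue that the hypothesis $|\C \cap U| < \infty$ forces this abstraction to take values in a finite set across all $\pi \in \C$, which in turn places every member of $\C$ inside a single geometric figure built from finitely many increasing and decreasing segments, i.e. inside $\Geom(M)$ for one fixed $M$.

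The core of the argument is an extraction: if the abstraction were unboundedly complex on $\C$, then by repeated applications of the Erd\H{o}s--Szekeres Theorem one could extract arbitrarily long monotone subsequences of transitions, and the hook-shaped configuration forced at each end of such a long transition sequence would produce arbitrarily long members of one of $U^o$, $U^e$, $(U^o)^{-1}$, or $(U^e)^{-1}$, contradicting finiteness of $\C \cap U$. I expect the main obstacle to be the case analysis required to handle all four orientations of members of $U$ simultaneously, and in particular to verify that the hook endpoints (and not merely the path in the middle of a $U$-element) are genuinely produced by the extraction; this structural analysis is the technical heart of \cite{albert:rationality-for:}.
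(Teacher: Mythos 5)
Your (only if) direction is fine: the inversion graph map is order-preserving in the direction you need (containment of permutations forces ordered induced subgraph containment of inversion graphs), split-end paths form an antichain under the induced subgraph order, and members of $U$ of equal length are trivially incomparable, so $U$ is indeed an infinite antichain and the conclusion follows.

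The (if) direction, however, rests on a claim that is false. You propose to show that $\C \cap U$ finite forces $\C \subseteq \Geom(M)$ for a single finite $\zpm$ matrix $M$. But geometrically griddable classes contain only finitely many simple permutations (this is one of the main structural results of \cite{albert:geometric-grid-:}), whereas there are subclasses of $\Av(321)$ with finite intersection with $U$ that contain infinitely many simple permutations. The cleanest example is $\O_I$, the downward closure of the increasing oscillations: the inversion graph of every increasing oscillation is a path, so no increasing oscillation contains any member of $U$ (whose inversion graphs have degree-$3$ vertices), hence $\O_I \cap U = \emptyset$; yet every increasing oscillation of length at least $4$ is simple, so $\O_I$ is not geometrically griddable. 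More generally, $\Av(321,\mu)$ for a single $\mu \in U$ is not geometrically griddable for the same reason. Thus the extraction argument you sketch cannot terminate in a single geometric grid class, and the Erd\H{o}s--Szekeres extraction, as you describe it, would simply yield long increasing oscillations rather than long members of $U$. The actual proof in \cite{albert:rationality-for:} has to work with a strictly larger framework than $\Geom(M)$ --- a generalization (hinted at after the theorem statement in this paper) in which the cells may contain oscillation-type content rather than only monotone content --- and establish a Higman-type well quasi-order result at that level of generality. Without replacing ``$\C \subseteq \Geom(M)$'' by such a weaker conclusion and supplying the corresponding wqo theorem, the argument does not go through.
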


Next we argue that $U\not\subseteq\G_\gamma$ for $\gamma<\xi$. Label the members of $U^o$ by length as $\mu_7$, $\mu_9$, $\dots$ and the members of $U^e$ by length as $\mu_6$, $\mu_8$, $\dots$. It is routine to verify the following computation.

\begin{proposition}
\label{prop-U-sum-indecomp-kids}
The sequence of sum indecomposable permutations contained in $\mu_{2i+7}$ for $i\ge 0$ is $1,1,2,3,4^{2i},3,2,1$ while the sequence of sum indecomposable permutations contained in $\mu_{2i+6}$ for $i\ge 0$ is $1,1,2,4^{2i},3,2,1$.
\end{proposition}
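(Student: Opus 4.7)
The plan is to prove the proposition by direct combinatorial enumeration of connected subgraphs of the inversion graph. The key observation is that, because $\mu_n$ is $321$-avoiding, the inversion graph $G_{\mu_n}$ is a tree (indeed, the split-end path on $n$ vertices), and the inversion graph of any subpermutation of $\mu_n$ corresponding to an entry subset $S$ is simply the induced subgraph $G_{\mu_n}[S]$. Combined with the fact (recalled in the proof of Theorem~\ref{thm-Gxi-concentration}) that a permutation is sum indecomposable precisely when its inversion graph is connected, counting sum indecomposable permutations of length $\ell$ contained in $\mu_n$ amounts to counting the distinct patterns that $\mu_n$ induces on subsets of $\ell$ entries that form a subtree of $G_{\mu_n}$.

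First I would parameterize the subtrees of $G_{\mu_n}$. Writing the spine as $v_1 - v_2 - \cdots - v_{n-4}$ with leaves $\{a_1, a_2\}$ attached to $v_1$ and $\{e_1, e_2\}$ attached to $v_{n-4}$, every connected subtree is either a single vertex or consists of a contiguous spine segment $\{v_i, \ldots, v_j\}$ together with some subset of $\{a_1, a_2\}$ (permitted only when $i = 1$) and some subset of $\{e_1, e_2\}$ (permitted only when $j = n-4$). Explicit construction of $\mu_n$ generalizing $\mu_{10} = 2,3,5,1,7,4,10,6,8,9$ and $\mu_{11} = 2,3,5,1,7,4,9,6,10,11,8$ shows that the spine values and positions each form a zigzag; in particular, $\ell$ consecutive spine vertices induce one of only two patterns, depending on the parity of the starting index.

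Then I would tabulate the distinct patterns that arise at each length $\ell$. The cases $\ell \in \{1, 2\}$ are immediate, and $\ell = 3$ always yields exactly the two patterns $231$ and $312$. For $\ell = 4$, the subtrees containing both left leaves with two spine vertices, one left leaf with three spine vertices, and four consecutive spine vertices produce three distinct patterns ($2341$, $2413$, and $3142$); the subtree containing both right leaves with two spine vertices contributes $2341$ in the $U^o$ case, where the final spine vertex $v_{n-4}$ sits after both right leaves in the plot, but contributes the new pattern $4123$ in the $U^e$ case, where $v_{n-4}$ precedes them. This gives totals of $3$ and $4$ respectively. For interior lengths $5 \le \ell \le n - 3$, the boundary and interior contributions together always yield $4$ distinct patterns. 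Lengths $\ell \in \{n-2, n-1, n\}$ produce $3$, $2$, and $1$ patterns as leaves are progressively added back in from each end.

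The main obstacle is the right-end asymmetry between $U^o$ and $U^e$: it causes the length-$4$ count to differ between the two families, which is exactly what produces the mismatched initial prefixes $1, 1, 2, 3$ for $\mu_{2k+7}$ versus $1, 1, 2$ for $\mu_{2k+6}$ in the claimed sequences. Once this case split is handled, and the small cases $\mu_6$ and $\mu_7$ are checked separately (their spines are too short to host interior subtrees of certain sizes), the remaining enumeration is mechanical.
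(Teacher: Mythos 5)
The paper offers no argument for this proposition beyond the remark that it is ``routine to verify,'' so there is nothing to compare against: your proposal simply supplies the omitted computation, and the strategy you adopt (identifying sum indecomposable subpermutations with connected subtrees of $G_{\mu_n}$, parameterizing the subtrees of a split-end path, and tabulating the distinct patterns by length) is the natural one and is sound. Two small points deserve correction. First, the clause ``because $\mu_n$ is $321$-avoiding, the inversion graph $G_{\mu_n}$ is a tree'' reads as a deduction but is a non sequitur---$321$-avoidance only yields triangle-freeness---whereas your parenthetical already gives the real reason: $G_{\mu_n}$ is a split-end path by the very definition of $U$. Second, the stated $\ell=4$ totals of $3$ for $U^o$ and $4$ for $U^e$ are correct for $n\ge 8$ but fail for $\mu_6$: there the count is $3$ even though $\mu_6\in U^e$, since with only two spine vertices no four-vertex spine path exists, so $3142$ never arises, and all four leaf-to-leaf paths collapse to the single pattern $2413$, leaving exactly $\{2341, 2413, 4123\}$. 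This is subsumed by your caveat that $\mu_6$ and $\mu_7$ need separate checking, but it is the one place your general $\ell=4$ case split actually breaks and it is worth making the collision explicit, because $k=0$ for $U^e$ is the case where the ``$4^{2k}$'' block is empty and the statement's two sequences look the most alike. Finally, the claim that interior lengths $5\le\ell\le n-3$ always yield exactly four distinct patterns is asserted rather than argued; it does hold (two increasing-oscillation patterns from spine segments by parity, plus two from the end configurations), and filling in that bookkeeping is precisely the part the paper labels routine.
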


There are several consequences of Proposition~\ref{prop-U-sum-indecomp-kids}. First, it immediately implies that the cell classes $\G_\gamma$ for $\gamma<\xi$ do not contain any members of $U^e$ or $(U^e)^{-1}$ because the growth rate of $\bigoplus\Sub(\mu_{2i+6})$ is greater than $\xi$. It further shows that the growth rate of $\bigoplus\Sub(\mu_{2i+7})$ becomes arbitrarily close to $\xi$ as $i\to\infty$ because $\xi$ is the growth rate of a sum closed class whose sum indecomposable members have the enumeration $1,1,2,3,4^\infty$. Therefore $\G_\gamma\cap U^o$ is finite if and only if $\gamma<\xi$. By the first observation of this paragraph, this implies that $\G_\gamma\cap U$ is finite for $\gamma<\xi$, and thus by Theorem~\ref{thm-wqo-fin-int},  $\G_\gamma\cap\Av(321)$ is well-quasi-ordered. Returning to the infinite antichain $A$ under consideration, we see that it can contain only finitely many $321$-avoiding members and thus without loss of generality we may assume that it contains none, i.e., that every member of $A$ contains $321$.

Next, clearly $A$ cannot contain infinitely many separable permutations, as the class of separable permutations is well-quasi-ordered. Therefore we may assume that $A$ contains no separable permutations. Under this assumption, every member of $A$ contains either $2413$ or $3142$. From this and our previous observation about $321$ we may conclude that every member of $A$ contains the sum indecomposable subpermutations $1$, $21$, $231$, $312$, $321$, and either $2413$ or $3142$. To appeal to Proposition~\ref{prop-wqo-computation-xi} we need to find two more sum indecomposable permutations of length $4$ contained in $\alpha$.

First we consider permutations $\alpha\in A$ which contain both $2413$ and $3142$. The inversion graphs of $2413$ and $3142$ are both paths, but we know that the inversion graph of $\alpha$ is not a path because $\alpha$ contains $321$ (so its inversion graph contains a triangle). Therefore the inversion graph of $\alpha$ must contain a connected induced subgraph on $4$ vertices which is not a path, and thus the corresponding entries of $\alpha$ form a third sum indecomposable subpermutation of length $4$. Proposition~\ref{prop-wqo-computation-xi} then shows that the growth rate of $\bigoplus\Sub(\alpha)$ is greater than $\xi$, a contradiction. Therefore we may assume that only finitely many, and hence without loss of generality, none, of the members of $A$ contain both $2413$ and $3142$.

We are reduced to the case where infinitely many members of $A$ contain precisely one of $2413$ or $3142$. Without loss of generality, and by a final appeal to symmetry, let us suppose that all members of $A$ contain $2413$ and avoid $3142$. We need to find two more sum indecomposable permutations of length $4$ that these permutations contain.

We claim that infinitely many members of $A$ must contain $4213$. The result below, which implies this claim, has been observed before, but we include its short proof for completeness.

\begin{proposition}[Albert, Atkinson, and Vatter~{\cite[Section 2]{albert:inflations-of-g:2x4:}}]
\label{prop-structure-av-3142-4213}
The class $\Av(3142, 4213)$ is contained in $\langle\Geom(1\ 1)\rangle$ and hence is well-quasi-ordered.	
\end{proposition}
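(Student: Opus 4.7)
The plan is to establish the containment $\Av(3142,4213) \subseteq \langle\Geom(1\ 1)\rangle$ at the level of simple permutations, and then to conclude well quasi-order from the substitution-closure result Theorem~\ref{thm-subst-geom-inflate-wqo}.

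First, I would observe that $\Geom(1\ 1)$ is by definition the class of permutations drawable on two parallel increasing open line segments, so Proposition~\ref{prop-321-lines} identifies $\Geom(1\ 1)=\Av(321)$. Because $\langle\C\rangle$ is characterized as the largest class sharing its simple permutations with $\C$, the desired containment reduces to the claim that every simple permutation $\sigma\in\Av(3142,4213)$ avoids $321$. The well quasi-order assertion then follows by applying Theorem~\ref{thm-subst-geom-inflate-wqo} with $\C=\Geom(1\ 1)$ and $\D=\{1\}$, using $\langle\C\rangle[\{1\}]=\langle\C\rangle$; then $\Av(3142,4213)$ is a subclass of a well quasi-ordered class.

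For the simple-permutation claim, I would split on the length $n$ of $\sigma$. For $n\le 4$ the only simple permutations in $\Av(3142,4213)$ are $1,12,21,2413$, each of which avoids $321$. For $n\ge 5$, I would invoke the Schmerl--Trotter Theorem~\ref{thm-schmerl-trotter}: either $\sigma$ contains a simple permutation of length $5$, or $\sigma$ is a parallel alternation. A direct check of the six simple permutations of length $5$, namely $24153$, $25314$, $31524$, $35142$, $41352$, and $42513$, shows that each one contains $3142$ or $4213$, so in our situation $\sigma$ must be a simple parallel alternation.

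The final step is to dispatch the simple parallel alternations. A parallel alternation oriented as $//$ decomposes as the union of two monotone increasing subsequences, so by Erd\H{o}s--Szekeres its longest decreasing subsequence has length at most $2$; in particular it avoids $321$ and thus lies in $\Av(321)$, as required. A simple parallel alternation oriented as $\setminus\setminus$ of length $2k\ge 6$ has an explicit closed form; in the vertical case, selecting the four entries at positions $2,3,5,6$ yields values order-isomorphic to $4213$, and the horizontal case is symmetric and yields $3142$. Either way $\sigma\notin\Av(3142,4213)$, a contradiction. The main obstacle in the plan is this last alternation case analysis; it is finite and mechanical, but requires carefully keeping track of the four standard families of $\setminus\setminus$ alternations (vertical or horizontal, with the odd-indexed entries lying either above or below the even-indexed ones) in order to exhibit the forbidden patterns.
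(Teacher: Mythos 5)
Your high-level plan mirrors the paper's (reduce to simple permutations, apply Schmerl--Trotter to fall back on parallel alternations, inspect those), but there is a genuine gap at the very first step. The identification $\Geom(1\ 1)=\Av(321)$ is false. The standard figure $\Lambda_{(1\ 1)}$ is two slope-$1$ \emph{open segments with disjoint $x$-projections} (one over $(0,1)$, the other over $(1,2)$), so $\Geom(1\ 1)$ is exactly the class of permutations that split \emph{by position} into an increasing prefix followed by an increasing suffix. This is a proper subclass of $\Av(321)$: the simple permutation $3142$ avoids $321$ but does not lie in $\Geom(1\ 1)$, and neither does the simple vertical $//$ alternation $415263$. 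Proposition~\ref{prop-321-lines} concerns two full parallel \emph{lines}, not these bounded, $x$-disjoint segments. Because $\langle\Geom(1\ 1)\rangle$ consists precisely of the permutations all of whose simple subpermutations lie in $\Geom(1\ 1)$, your reduction to ``every simple $\sigma\in\Av(3142,4213)$ avoids $321$'' is strictly weaker than what is required.

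This gap propagates into your case analysis. After Schmerl--Trotter and the length-$5$ check, you correctly conclude the simple permutations of length $\ge 5$ are parallel alternations, and you correctly rule out the $\setminus\setminus$ ones. But for the $//$ alternations you only note that they avoid $321$ and stop, which does not place them in $\Geom(1\ 1)$. The vertical $//$ alternations (such as $415263$) avoid $321$ yet are not in $\Geom(1\ 1)$; what you must instead show is that every $//$ alternation \emph{of length $\ge 6$} in $\Av(3142,4213)$ has the $\Geom(1\ 1)$ orientation. This is exactly the paper's final sentence (``it can also easily be checked that those of length $6$ or longer must be oriented as $//$''), and it is true because, for instance, $415263$ contains the pattern $3142$ (at positions $1,2,3,6$) and is thus excluded. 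Replacing the ``$//$ alternations avoid $321$'' observation with this orientation check would close the gap; the rest of the argument, including the appeal to Theorem~\ref{thm-subst-geom-inflate-wqo} for the well-quasi-order conclusion, is fine.
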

\begin{proof}
It suffices to show that the simple permutations of $\Av(3142, 4213)$ are contained in the geometric grid class of $(1\ 1)$, i.e., that the simple permutations in this class are all parallel alternations oriented as $//$. First we verify that all the simple permutations in $\Av(3142, 4213)$ are parallel alternations of some orientation. By Schmerl and Trotter's Theorem~\ref{thm-schmerl-trotter}, were $\Av(3142, 4213)$ to contain a simple permutation which is not a parallel alternation, it would contain a simple permutation of length $5$. However, there are precisely $6$ simple permutations of length $5$---$24153$, $25314$, $31524$, $35142$, $41352$, and $42513$---and it is easily checked that each of these contains either $3142$ or $4213$. Now that we know the simple permutations in $\Av(3142, 4213)$ are all parallel alternations, it can also easily be checked that those of length $6$ or longer must be oriented as $//$, completing the proof.
\end{proof}

Immediately from Proposition~\ref{prop-structure-av-3142-4213} we see that infinitely many members of $A$ must contain $4213$. Moreover, the class $\Av(3142,2431)$ is a symmetry of $\Av(3142, 4213)$, so infinitely many members of $A$ must also contain $2431$. This shows that infinitely many members of $A$ contain $3$ sum indecomposable subpermutations of length $4$, allowing us to appeal to Proposition~\ref{prop-wqo-computation-xi} a final time to obtain a contradiction. We have therefore established the goal of this section.

\begin{theorem}
\label{thm-G-gamma-wqo}
The cell class $\G_\gamma$ is well-quasi-ordered if and only if $\gamma<\xi$.
\end{theorem}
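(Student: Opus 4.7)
The theorem has two directions. For the ``only if'' half, I would observe that when $\gamma\ge\xi$ the antichain $U^o$ already sits inside $\G_\gamma$: by Proposition~\ref{prop-U-sum-indecomp-kids} the element $\mu_{2k+7}\in U^o$ has sum-indecomposable count $1,1,2,3,4^{2k},3,2,1$, and a short generating-function computation via Proposition~\ref{prop-enum-oplus-closure} shows that $\gr(\bigoplus\Sub(\mu_{2k+7}))<\xi$ with limit $\xi$ as $k\to\infty$. Hence $U^o\subseteq\G_\xi\subseteq\G_\gamma$, so $\G_\gamma$ is not well quasi-ordered.

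The substantive ``if'' direction proceeds by contradiction. Fix $\gamma<\xi$ and assume $\G_\gamma$ contains an infinite antichain $A$. By Corollary~\ref{cor-sum-skew-wqo} I may refine $A$ so that each $\alpha$ is both sum- and skew-indecomposable; using that $\G_\gamma$ is closed under all eight symmetries of the square, I refine once more so that $\gr(\bigoplus\Sub(\alpha))<\gamma$ for every $\alpha\in A$. The target is Proposition~\ref{prop-wqo-computation-xi}: it suffices to show that all but finitely many $\alpha\in A$ contain three distinct sum-indecomposable permutations of each of the lengths three and four (the hypothesis that $|\alpha|\ge 10$ is free, since $A$ is infinite).

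For length three, I handle $321$-avoiders first. Proposition~\ref{prop-U-sum-indecomp-kids} gives $\gr(\bigoplus\Sub(\mu))>\xi$ for every nontrivial $\mu\in U^e\cup(U^e)^{-1}$, while for $\mu_{2k+7}\in U^o\cup(U^o)^{-1}$ the growth rates approach $\xi$ strictly from below, so $\G_\gamma\cap U$ is finite whenever $\gamma<\xi$. Theorem~\ref{thm-wqo-fin-int} then gives that $\Av(321)\cap\G_\gamma$ is well quasi-ordered, so only finitely many $\alpha\in A$ avoid $321$. After discarding these, every $\alpha$ contains the three sum-indecomposable permutations $231$, $312$, and $321$.

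For length four, note that the separable class $\langle\{1,12,21\}\rangle$ is well quasi-ordered by Theorem~\ref{thm-subst-geom-inflate-wqo} (any finite class is geometrically griddable), so I may assume every $\alpha\in A$ contains $2413$ or $3142$. Two cases arise. If $\alpha$ contains both, then the inversion graph $G_\alpha$ is connected (by sum-indecomposability) but not a path (the triangle from $321$ sits in it), so some connected induced subgraph on four vertices is not a path, providing a third sum-indecomposable subpermutation of length four beyond $2413$ and $3142$. If $\alpha$ contains exactly one, say $2413$ (the other case is symmetric), then $\alpha\in\Av(3142)$; by Proposition~\ref{prop-structure-av-3142-4213} the class $\Av(3142,4213)$ is well quasi-ordered, so after refinement every $\alpha$ contains $4213$, and applying the reverse-complement symmetry (which fixes $3142$ and sends $2431$ to $4213$) a further refinement ensures every $\alpha$ also contains $2431$. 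In both cases $\alpha$ contains three distinct sum-indecomposable permutations of length four, so Proposition~\ref{prop-wqo-computation-xi} delivers $\gr(\bigoplus\Sub(\alpha))>\xi$, contradicting $\gr(\bigoplus\Sub(\alpha))<\gamma<\xi$.

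The main obstacle is the length-four step, where both the graph-theoretic observation in the ``contains both'' subcase and the delicate appeal to the well quasi-order of $\Av(3142,4213)$ in the ``contains exactly one'' subcase are needed; the rest of the argument is an assembly of the explicit growth-rate calculations of Proposition~\ref{prop-U-sum-indecomp-kids} with the well quasi-order lemmas already at hand.
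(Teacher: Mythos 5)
Your proposal is correct and follows essentially the same route as the paper: the ``only if'' direction is disposed of by noting $U^o \subseteq \G_\xi$ via Proposition~\ref{prop-U-sum-indecomp-kids}, and the ``if'' direction reduces an infinite antichain to sum-and-skew-indecomposable members, eliminates $321$-avoiders by Theorem~\ref{thm-wqo-fin-int}, eliminates separable members, and then splits on whether $\alpha$ contains one or both of $2413$ and $3142$, using the inversion-graph observation and Proposition~\ref{prop-structure-av-3142-4213} respectively to feed Proposition~\ref{prop-wqo-computation-xi}. The only minor quibble is that your sentence ``After discarding these, every $\alpha$ contains the three sum-indecomposable permutations $231$, $312$, and $321$'' places the justification of $231,312 \le \alpha$ a paragraph too early---it really rests on the separability step you invoke next (a sum-and-skew-indecomposable $\alpha$ of length at least two must contain $2413$ or $3142$, hence both $231$ and $312$)---but this is an ordering slip, not a gap.
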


\section{The Phase Transition to Uncountably Many Growth Rates}
\label{sec-phase-transition}

We can now establish the first part of Theorem~\ref{thm-xi-main}, namely that there are only countably many upper growth rates of permutation classes below $\xi$ but uncountably many upper growth rates in every open neighborhood of it. Recall that the uncountability part of this result was established in \cite{vatter:permutation-cla:lambda:}, so we need only show the countability result.

Consider a class $\C$ with $\ugr(\C)<\xi$. By Proposition~\ref{prop-G-gamma-grids}, there is some positive integer $k$ such that $\C$ is $\G_{(\xi-1/k)}$-griddable. By Theorem~\ref{thm-Gxi-concentration}, $\G_{(\xi-1/k)}$ is concentrated, because it is a subclass of $\G_\xi$. Therefore, Corollary~\ref{cor-slicing-gr} shows that either $\gr(\C)$ is an integer or it equals the upper growth rate of a subclass of $\G_{(\xi-1/k)}$. Furthermore, each class $\G_{(\xi-1/k)}$ is well-quasi-ordered (Theorem~\ref{thm-G-gamma-wqo}) and therefore contains only countably many subclasses (Proposition~\ref{prop-wqo-subclasses-countable}).

Thus despite the fact that there are uncountably many permutation classes of growth rate $\kappa\approx 2.20557$, the \emph{growth rates} of classes under $\xi\approx 2.30522$ can be expressed as the countable union of countable sets. Thus there are only countably many such growth rates, establishing the first part of Theorem~\ref{thm-xi-main}.

It remains to prove the second part of Theorem~\ref{thm-xi-main}, that all upper growth rates under $\xi$ are proper growth rates and are achieved by sum closed classes. In this proof we make extensive use of two types of permutation classes. First, we call the class $\C$ \emph{grid irreducible} if it is not $\G$-griddable for any proper subclass $\G\subsetneq\C$. The characterization of grid irreducible classes is essentially trivial, given the results established so far.

\begin{proposition}[Vatter~{\cite[Proposition~3.3]{vatter:small-permutati:}}]
\label{prop-grid-irreduce}
The class $\C$ is grid irreducible if and only if $\C=\{1\}$ or if for every $\pi\in\C$ either $\bigoplus\Sub(\pi)\subseteq\C$ or $\bigominus\Sub(\pi)\subseteq\C$.
\end{proposition}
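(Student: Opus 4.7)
The plan is to verify both directions using Theorem~\ref{thm-gridding-characterization} (griddability is controlled by long sums/skew sums of basis elements of the cell class), so the whole argument reduces to producing (or forbidding) such long sums.

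For the $(\Leftarrow)$ direction, the case $\C=\{1\}$ is immediate: its only proper subclass is $\{\emptyword\}$ and $\{1\}$ is not $\{\emptyword\}$-griddable. Now suppose instead that every $\pi\in\C$ satisfies $\bigoplus\Sub(\pi)\subseteq\C$ or $\bigominus\Sub(\pi)\subseteq\C$, yet $\C$ is $\G$-griddable for some proper subclass $\G\subsetneq\C$. Pick a permutation $\pi\in\C\setminus\G$ of minimum length; by minimality $\pi$ is a basis element of $\G$, and by hypothesis one of its closures (say the sum closure) is contained in $\C$. Then $\bigoplus^m\pi\in\C$ for every $m$, which means $\C$ contains arbitrarily long sums of a basis element of $\G$, contradicting Theorem~\ref{thm-gridding-characterization}.

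For the $(\Rightarrow)$ direction I will prove the contrapositive: assume $\C\neq\{1\}$ and that some $\pi\in\C$ has $\bigoplus\Sub(\pi)\not\subseteq\C$ and $\bigominus\Sub(\pi)\not\subseteq\C$; I produce a proper $\G\subsetneq\C$ with $\C\subseteq\Grid(\M)$ for some matrix $\M$ over $\G$. The natural candidate is
\[
\G=\C\cap\Av(\pi),
\]
which works whenever $|\pi|\ge 2$: then $\pi\in\C\setminus\G$, so $\G\subsetneq\C$; the basis of $\G$ is the basis of $\C$ together with $\pi$. Any long sum $\beta_1\oplus\cdots\oplus\beta_N\in\C$ of basis elements of $\G$ forces each $\beta_i=\pi$, because basis elements of $\C$ themselves lie outside $\C$ and therefore cannot appear as subpermutations of something in $\C$. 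By choice of $\pi$ there is an $m$ with $\bigoplus^m\pi\notin\C$ and $\bigominus^m\pi\notin\C$, so $N<m$ in either orientation; Theorem~\ref{thm-gridding-characterization} then gives $\G$-griddability.

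The one remaining case is $\pi=1$, where $\Av(\pi)=\{\emptyword\}$ and the construction above collapses. Here the hypothesis says $\C$ omits some long increasing permutation and some long decreasing permutation, so by the Erd\H{o}s--Szekeres Theorem $\C$ is \emph{finite}. Since $\C\neq\{1\}$, $\C$ contains a permutation of length at least $2$; take any maximal $\tau\in\C$, which must satisfy $|\tau|\ge 2$ (else $\C\subseteq\{\emptyword,1\}=\{1\}$). Because $\C$ is finite, neither $\bigoplus\Sub(\tau)$ nor $\bigominus\Sub(\tau)$ is contained in $\C$, so we may repeat the previous paragraph with $\tau$ in place of $\pi$ to obtain the desired proper gridding subclass. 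This completes the contrapositive, and hence the proposition.

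I expect the only subtlety to be the degenerate case $\pi=1$, since there the first candidate $\G=\C\cap\Av(\pi)$ degenerates; apart from that the argument is a straightforward application of Theorem~\ref{thm-gridding-characterization} in both directions.
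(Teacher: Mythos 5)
Your proof is correct and takes essentially the same route as the paper's: both directions hinge on Theorem~\ref{thm-gridding-characterization}, using the candidate cell class $\G=\C\cap\Av(\pi)$ for one direction and, for the other, the observation that a minimal element of $\C\setminus\G$ is a basis element of $\G$ whose sum or skew closure forces arbitrarily long sums of basis elements of $\G$ into $\C$. The only organizational difference is that the paper disposes of all finite classes at the outset (so that the degeneracy at $\pi=1$ never arises, because an infinite class must contain $\Av(21)$ or $\Av(12)$ by Erd\H{o}s--Szekeres), whereas you handle the $\pi=1$ case inside the contrapositive by the same Erd\H{o}s--Szekeres finiteness argument and then re-run the construction with a longer $\tau$; this is harmless. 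One tiny imprecision: the basis of $\C\cap\Av(\pi)$ is $\{\pi\}$ together with those basis elements of $\C$ that avoid $\pi$ (not the whole basis of $\C$), but your argument only uses that the basis elements of $\G$ lying in $\C$ all equal $\pi$, which is true.
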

\begin{proof}
It follows immediately that the only finite grid irreducible class is $\{1\}$.

Now suppose that $\C$ is an infinite permutation class. If there is some $\pi\in\C$ such that neither $\bigoplus\Sub(\pi)$ nor $\bigominus\Sub(\pi)$ is contained in $\C$ then $\C$ is $\left(\C\cap\Av(\pi)\right)$-griddable by Theorem~\ref{thm-gridding-characterization}, and thus $\C$ is not grid irreducible. In the other direction, if $\C$ contains arbitrarily long sums or skew sums of each of its elements then Theorem~\ref{thm-gridding-characterization} shows that it cannot be gridded by a proper subclass.
\end{proof}

We present an example in the conclusion showing that there are classes (containing infinite antichains) which are not $\G$-griddable for any grid irreducible class $\G$. However, in the case of well-quasi ordered classes we can be much more explicit than Proposition~\ref{prop-grid-irreduce}, as we show below.

\begin{proposition}
\label{prop-grid-irreduce-wqo}
If the class $\C$ is well-quasi-ordered then it is $\G$-griddable for the grid irreducible class $\G=\{\pi\st \mbox{either } \bigoplus\Sub(\pi)\subseteq\C \mbox{ or } \bigominus\Sub(\pi)\subseteq\C\}$.
\end{proposition}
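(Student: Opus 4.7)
The plan is to verify three things in order: first, that $\G$ is in fact a class (downward closed); second, that $\G$ is grid irreducible in the sense of Proposition~\ref{prop-grid-irreduce}; and third, the substantive point that $\C$ is $\G$-griddable. The first two are essentially formal. If $\sigma\le\pi\in\G$ and (say) $\bigoplus\Sub(\pi)\subseteq\C$, then $\Sub(\sigma)\subseteq\Sub(\pi)$ gives $\bigoplus\Sub(\sigma)\subseteq\bigoplus\Sub(\pi)\subseteq\C$, so $\sigma\in\G$. For grid irreducibility, given $\pi\in\G$ with $\bigoplus\Sub(\pi)\subseteq\C$, I will check that $\bigoplus\Sub(\pi)\subseteq\G$: any $\tau\in\bigoplus\Sub(\pi)$ has $\bigoplus\Sub(\tau)\subseteq\bigoplus\Sub(\pi)\subseteq\C$, because a sum of subpermutations of a sum of subpermutations of $\pi$ is itself a sum of subpermutations of $\pi$. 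The skew case is symmetric, so the hypothesis of Proposition~\ref{prop-grid-irreduce} holds.

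The real content is showing that $\C$ is $\G$-griddable. By Theorem~\ref{thm-gridding-characterization} and the symmetry between sums and skew sums, it suffices to derive a contradiction from the assumption that for every $m$ there exist basis elements $\beta_1,\dots,\beta_m$ of $\G$ with $\beta_1\oplus\cdots\oplus\beta_m\in\C$. The key observation is that each such $\beta_i$ occurs as a subpermutation of the sum and therefore lies in $\C$, so all of the basis elements appearing in these long sums belong to $\C$. Now the basis of $\G$ is an antichain (basis elements of any class are minimal forbidden permutations, hence pairwise incomparable), and the intersection of an antichain with the well quasi-ordered class $\C$ must be finite.

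Let $B_0=\{\gamma_1,\dots,\gamma_N\}$ denote this finite set of basis elements of $\G$ lying in $\C$. Then in every long sum witnessing non-griddability, the summands are drawn from $B_0$, so pigeonhole produces, in each such sum of length $m$, some $\gamma_i$ appearing at least $m/N$ times, and a second pigeonhole extracts a fixed $\gamma\in B_0$ such that $\gamma^{\oplus\lceil m/N\rceil}$ is a subpermutation of a member of $\C$ for infinitely many $m$. Hence $\gamma^{\oplus k}\in\C$ for every $k$. But every element of $\bigoplus\Sub(\gamma)$ has the form $\delta_1\oplus\cdots\oplus\delta_\ell$ with each $\delta_j\le\gamma$, which is contained in $\gamma^{\oplus\ell}\in\C$, so $\bigoplus\Sub(\gamma)\subseteq\C$ and therefore $\gamma\in\G$. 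This contradicts $\gamma$ being a basis element of $\G$.

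The main obstacle is correctly exploiting well quasi-order. The crucial insight is that bases are antichains, so intersecting the (possibly infinite) basis of $\G$ with the well quasi-ordered $\C$ cuts it down to a \emph{finite} collection of candidate summands; once this finiteness is in hand, the pigeonhole and the fact that $\bigoplus\Sub(\gamma)$ is generated by the single permutation $\gamma$ combine to force $\gamma$ itself into $\G$. Everything else, including the dual argument for skew sums, is routine.
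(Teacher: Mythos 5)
Your proof is correct, and it takes a genuinely different route from the paper's. The paper constructs a descending chain of subclasses $\C=\G^{(0)}\supsetneq\G^{(1)}\supsetneq\G^{(2)}\supsetneq\cdots$ by repeatedly applying the contrapositive of Theorem~\ref{thm-gridding-characterization}: whenever the current class $\G^{(i)}$ contains some $\pi_{i+1}$ with neither $\bigoplus\Sub(\pi_{i+1})$ nor $\bigominus\Sub(\pi_{i+1})$ inside $\G^{(i)}$, one passes to $\G^{(i+1)}=\G^{(i)}\cap\Av(\pi_{i+1})$, which still grids $\C$. Well quasi-order enters through the descending chain condition (Proposition~\ref{prop-wqo-subclasses-dcc}): the chain must terminate, and the terminal class is grid irreducible and contained in $\G$. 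Your argument instead applies Theorem~\ref{thm-gridding-characterization} once and uses well quasi-order through the finite-antichain property: the basis of $\G$ is an antichain, so only finitely many basis elements can lie in $\C$, and a double pigeonhole on long sums of these then forces some basis element $\gamma$ to satisfy $\gamma^{\oplus k}\in\C$ for all $k$, hence $\bigoplus\Sub(\gamma)\subseteq\C$ and $\gamma\in\G$, a contradiction. What your version buys is directness (no iterated gridding and no appeal to the descending chain condition), and you also explicitly verify that $\G$ is downward closed and grid irreducible via Proposition~\ref{prop-grid-irreduce}, steps the paper leaves implicit; the paper's version buys a slightly more structural picture of how the gridding class shrinks to a grid irreducible core, which dovetails with how Proposition~\ref{prop-grid-irreduce-wqo} is used later in the proof of Theorem~\ref{thm-xi-main}.
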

\begin{proof}
As shown in the proof of Proposition~\ref{prop-grid-irreduce}, if neither $\bigoplus\Sub(\pi_1)$ nor $\bigominus\Sub(\pi_1)$ is contained in $\C$ then it is $\left(\C\cap\Av(\pi_1)\right)$-griddable. We may then apply this to $\C\cap\Av(\pi_1)$ to see that if neither $\bigoplus\Sub(\pi_2)$ nor $\bigominus\Sub(\pi_2)$ is contained in $\C\cap\Av(\pi_1)$ for some $\pi_2\in\C\cap\Av(\pi_1)$ then $\C\cap\Av(\pi_1)$ is $\left(\C\cap\Av(\pi_1,\pi_2)\right)$-griddable. Continuing in this manner, we can construct a descending chain of classes
\[
	\begin{array}{ccccccc}
	\C
	&\supsetneq&
	\C\cap\Av(\pi_1)
	&\supsetneq&
	\C\cap\Av(\pi_1,\pi_2)
	&\supsetneq&
	\cdots
	\\
	\verteq&&\verteq&&\verteq
	\\
	\G^{(0)}
	&\supsetneq&
	\G^{(1)}
	&\supsetneq&
	\G^{(2)}
	&\supsetneq&
	\cdots
	\end{array}
\]
such that $\C$ is $\G^{(i)}$-griddable for all $i$. Because $\C$ is well-quasi-ordered, it satisfies the descending chain condition (Proposition~\ref{prop-wqo-subclasses-dcc}) and thus this process must stop at some $\G^{(k)}$. The class $\G^{(k)}$ therefore has the property that either $\bigoplus\Sub(\pi)$ or $\bigominus\Sub(\pi)$ is contained in $\G^{(k)}$ for every $\pi\in\G^{(k)}$, as desired.
\end{proof}

The second type of classes we use are atomic classes. We call the permutation class $\C$ \emph{atomic} if it cannot be expressed as the union of two of its proper subclasses. Below we include a short proof that atomicity is equivalent to the \emph{joint embedding property}: for all $\sigma,\tau\in\C$, there is a $\pi\in\C$ containing both $\sigma$ and $\tau$.

\begin{proposition}
The class $\C$ is atomic if and only if it satisfies the joint embedding property.\footnote{Fra{\"{\i}}ss{\'e}~\cite{fraisse:sur-lextension-:} (see also Hodges~{\cite[Section 7.1]{hodges:model-theory:}}), working in the more general context of relational structures, established another necessary and sufficient condition for atomicity: a class is atomic if and only if it is an age. In the permutation case, an \emph{age} is a class of the form $\Sub(f\st A\to B)$ for a bijection $f$ between linearly ordered sets $A$ and $B$.}	
\end{proposition}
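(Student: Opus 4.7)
The plan is to prove this as a pair of straightforward contrapositive arguments, showing that the failure of either condition yields the failure of the other. The proof requires no machinery beyond the definitions and the fact that the containment order gives rise to downward closed classes.

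For the forward direction, I would assume $\C$ is atomic and prove the joint embedding property. Arguing by contraposition, suppose there exist $\sigma,\tau\in\C$ which have no common upper bound in $\C$. Consider the two subclasses $\C\cap\Av(\sigma)$ and $\C\cap\Av(\tau)$. Each is a permutation class, and each is a proper subclass of $\C$ since $\sigma\in\C\setminus\Av(\sigma)$ and similarly for $\tau$. Their union exhausts $\C$: any $\pi\in\C$ fails to contain at least one of $\sigma,\tau$ by our assumption, so it lies in $\Av(\sigma)$ or $\Av(\tau)$. This contradicts atomicity.

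For the reverse direction, I would assume $\C$ satisfies the joint embedding property and suppose, for contradiction, that $\C=\A\cup\B$ for proper subclasses $\A,\B\subsetneq\C$. Pick $\sigma\in\C\setminus\A$ and $\tau\in\C\setminus\B$; by the joint embedding property there is some $\pi\in\C$ containing both $\sigma$ and $\tau$. Since $\pi\in\A\cup\B$, we have $\pi\in\A$ or $\pi\in\B$. But $\A$ and $\B$ are downward closed, so $\pi\in\A$ would force $\sigma\in\A$ and $\pi\in\B$ would force $\tau\in\B$, both contradictions.

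There is no real obstacle here; the only subtlety is noticing that ``proper subclass'' is precisely what one needs to make the contradictions work, and that the sets $\C\cap\Av(\sigma)$ used in the forward direction are genuinely permutation classes (which is immediate since both $\C$ and $\Av(\sigma)$ are downward closed under containment).
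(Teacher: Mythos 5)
Your proof is correct and takes essentially the same approach as the paper: both directions are handled by the same two observations, namely that the failure of joint embedding for $\sigma,\tau$ lets you write $\C=(\C\cap\Av(\sigma))\cup(\C\cap\Av(\tau))$, and that a witness decomposition $\C=\A\cup\B$ yields $\sigma\in\C\setminus\A$, $\tau\in\C\setminus\B$ with no common upper bound because $\A$ and $\B$ are downward closed.
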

\begin{proof}
Suppose that $\C=\D\cup\E$ for proper subclasses $\D,\E\subsetneq\C$. We must have $\D\neq\E$, so there are permutations $\sigma\in\D\setminus\E$ and $\tau\in\E\setminus\D$. Clearly no permutation in $\C$ can contain both $\sigma$ and $\tau$, as then it would lie in neither $\D$ nor $\E$. In the other direction, if there are permutations $\sigma,\tau\in\C$ such that no permutation in $\C$ contains both then $\C$ is the union of its proper subclasses $\C\cap\Av(\sigma)$ and $\C\cap\Av(\tau)$.
\end{proof}

Our interest in atomicity lies in the following (folklore) result, and even more so in the result we derive from it.

\begin{proposition}
\label{prop-wqo-atomic-union}
Every well-quasi-ordered permutation class can be expressed as a finite union of atomic classes.
\end{proposition}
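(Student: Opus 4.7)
The plan is to argue by contradiction, exploiting the descending chain condition for subclasses of well quasi-ordered classes (Proposition~\ref{prop-wqo-subclasses-dcc}). Call a permutation class \emph{bad} if it is well quasi-ordered but cannot be expressed as a finite union of atomic classes. The goal is to show that no bad class exists; so suppose to the contrary that $\C$ is bad.

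First, observe that a bad class is not itself atomic, since an atomic class is trivially a finite union of one atomic class. By the definition of non-atomicity (equivalently, by the failure of the joint embedding property characterization established above), any non-atomic class $\C$ may be written as $\C = \D \cup \E$ for proper subclasses $\D,\E \subsetneq \C$. Both $\D$ and $\E$ are automatically well quasi-ordered, since any antichain in a subclass is an antichain in the ambient class. Now, if both $\D$ and $\E$ admitted expressions as finite unions of atomic classes, then their union $\C$ would inherit such an expression, contradicting the badness of $\C$. Consequently at least one of $\D, \E$ is again bad.

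Iterating this step, starting from $\C^{(0)} = \C$ and at each stage splitting the current bad class and selecting a bad proper subclass, we construct an infinite strictly descending chain of bad classes
\[
	\C = \C^{(0)} \supsetneq \C^{(1)} \supsetneq \C^{(2)} \supsetneq \cdots.
\]
This directly contradicts Proposition~\ref{prop-wqo-subclasses-dcc}, completing the proof. The argument is essentially a routine application of the descending chain condition, and I do not foresee any genuine obstacle; the only point worth verifying is that each $\C^{(i)}$ remains well quasi-ordered so that the recursion can continue, and this is immediate since subclasses of well quasi-ordered classes are well quasi-ordered.
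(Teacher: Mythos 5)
Your proof is correct and takes essentially the same approach as the paper's: both decompose a non-atomic class into two proper subclasses and appeal to the descending chain condition (Proposition~\ref{prop-wqo-subclasses-dcc}) to terminate. The paper phrases the argument as building a binary decomposition tree and invoking K\"onig's Lemma to conclude it is finite, whereas you directly follow a ``bad'' branch down to produce an infinite strictly descending chain; these are two packagings of the same idea.
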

\begin{proof}
Consider the binary tree whose root is the well-quasi-ordered class $\C$ and in which the children of a non-atomic class $\D$ are two proper subclasses $\D^{(1)},\D^{(2)}\subsetneq\D$ such that $\D=\D^{(1)}\cup\D^{(2)}$. Because $\C$ is well-quasi-ordered its subclasses satisfy the descending chain condition (Proposition~\ref{prop-wqo-subclasses-dcc}), so this tree contains no infinite paths and thus is finite by K\"onig's Lemma; its leaves are the desired atomic classes.
\end{proof}

The following result follows easily from Proposition~\ref{prop-wqo-atomic-union}; for details we refer to \cite[Proposition~1.20]{vatter:small-permutati:}.

\begin{proposition}
\label{prop-wqo-atomic-gr}
The upper growth rate of a well-quasi-ordered permutation class is equal to the greatest upper growth rate of its atomic subclasses.
\end{proposition}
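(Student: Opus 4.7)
The plan is to derive the result directly from Proposition~\ref{prop-wqo-atomic-union}. Since $\C$ is well quasi-ordered, that proposition provides a decomposition $\C = \A_1 \cup \cdots \cup \A_k$ as a finite union of atomic subclasses. Every atomic subclass of $\C$ (including each $\A_i$) is contained in $\C$, so its upper growth rate is at most $\ugr(\C)$; hence the supremum of the upper growth rates of atomic subclasses of $\C$ is bounded above by $\ugr(\C)$.

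For the reverse inequality I would start from the crude bound
\[
	|\C_n| \;\le\; \sum_{i=1}^k |(\A_i)_n| \;\le\; k\cdot \max_i |(\A_i)_n|
\]
which follows immediately from the decomposition. Taking $n$th roots yields $\sqrt[n]{|\C_n|}\le\sqrt[n]{k}\cdot\max_i\sqrt[n]{|(\A_i)_n|}$, and since $\sqrt[n]{k}\to 1$, this gives $\ugr(\C)\le \limsup_{n\to\infty} \max_i \sqrt[n]{|(\A_i)_n|}$.

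The one technical point is exchanging the limit superior with the maximum, and this is where a small argument is required. For any finite family of real sequences $\{a_{i,n}\}_{i\in[k]}$ one has $\limsup_n \max_i a_{i,n} = \max_i \limsup_n a_{i,n}$; indeed, the inequality $\ge$ is trivial, while for the reverse inequality the sequence of maximizing indices $i(n)\in[k]$ takes some fixed value $i^*$ along an infinite subsequence, and on that subsequence $\max_i a_{i,n}=a_{i^*,n}\le \limsup_n a_{i^*,n}\le \max_i \limsup_n a_{i,n}$. Applying this with $a_{i,n}=\sqrt[n]{|(\A_i)_n|}$ produces $\ugr(\C)\le \max_i \ugr(\A_i)$, and combined with the opposite inequality, $\ugr(\C) = \max_i \ugr(\A_i)$, which is achieved by an atomic subclass of $\C$.

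There is no serious obstacle here: the genuine content has already been packaged into Proposition~\ref{prop-wqo-atomic-union} (which in turn rests on K\"onig's Lemma and the descending chain condition of Proposition~\ref{prop-wqo-subclasses-dcc}), and the remainder is a one-line counting estimate together with the standard commutation of $\limsup$ with a finite maximum. This is exactly the reason the paper flags the result as essentially folklore.
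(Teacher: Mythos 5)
Your proposal is correct and takes essentially the approach the paper intends: decompose $\C$ into a finite union of atomic subclasses via Proposition~\ref{prop-wqo-atomic-union}, then use the elementary commutation of $\limsup$ with a finite maximum. One small imprecision: in the pigeonhole step you should first pass to a subsequence along which $\max_i a_{i,n}$ converges to $\limsup_n\max_i a_{i,n}$ and \emph{then} extract a further subsequence on which the maximizing index is constant (or, equivalently, argue that if the claimed inequality failed then some fixed index $i^*$ would satisfy $a_{i^*,n}>M+\varepsilon$ for infinitely many $n$); as written, the bound you obtain is only on the $\limsup$ along the subsequence you chose, not the full $\limsup$, though the fix is immediate.
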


We need a final preparatory result.

\begin{proposition}[Vatter~{\cite[Proposition~3.5]{vatter:small-permutati:}}]
\label{prop-atomic-grid-irreduce}
An atomic class is grid irreducible if and only if it is sum or skew closed.
\end{proposition}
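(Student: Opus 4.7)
The plan is to prove both directions separately, using Proposition~\ref{prop-grid-irreduce} and the joint embedding property.

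For the easier direction, suppose $\C$ is sum closed. Fix $\pi\in\C$. Since $\C$ is a class, $\Sub(\pi)\subseteq\C$, and since $\C$ is sum closed, every finite direct sum of elements of $\Sub(\pi)$ lies in $\C$; that is, $\bigoplus\Sub(\pi)\subseteq\C$. By Proposition~\ref{prop-grid-irreduce}, $\C$ is grid irreducible. The skew closed case is symmetric, and the trivial class $\{1\}$ is both sum and skew closed.

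For the converse, assume $\C$ is atomic and grid irreducible. The idea is to split $\C$ into two subclasses according to which alternative of Proposition~\ref{prop-grid-irreduce} each element satisfies, and then use atomicity to conclude that only one alternative is needed. Define
\[
	\C^{\oplus}=\{\pi\in\C \st \bigoplus\Sub(\pi)\subseteq\C\}
	\quad\text{and}\quad
	\C^{\ominus}=\{\pi\in\C \st \bigominus\Sub(\pi)\subseteq\C\}.
\]
Each of these is closed downward under containment: if $\sigma\le\pi\in\C^{\oplus}$ then $\Sub(\sigma)\subseteq\Sub(\pi)$, so $\bigoplus\Sub(\sigma)\subseteq\bigoplus\Sub(\pi)\subseteq\C$, and hence $\sigma\in\C^{\oplus}$; the argument for $\C^{\ominus}$ is identical. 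By Proposition~\ref{prop-grid-irreduce} (applied elementwise), $\C=\C^{\oplus}\cup\C^{\ominus}$, and by atomicity we must have $\C=\C^{\oplus}$ or $\C=\C^{\ominus}$.

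In the first case I claim $\C$ is sum closed. Given $\sigma,\tau\in\C$, atomicity (via the joint embedding property) provides a $\pi\in\C$ with $\sigma,\tau\le\pi$, so $\sigma,\tau\in\Sub(\pi)$ and thus $\sigma\oplus\tau\in\bigoplus\Sub(\pi)\subseteq\C$. The case $\C=\C^{\ominus}$ analogously gives that $\C$ is skew closed, completing the proof. There is no serious obstacle; the only nontrivial observation is that joint embedding is what lets us upgrade the pointwise condition ``$\bigoplus\Sub(\pi)\subseteq\C$ for each $\pi$'' to genuine sum closure of the whole class.
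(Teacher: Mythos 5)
Your proof is correct and takes a genuinely different, more structural route than the paper's. The paper argues by contrapositive: assuming $\C$ is neither sum nor skew closed, it produces permutations $\sigma,\tau\in\C$ with $\bigoplus\Sub(\sigma)\not\subseteq\C$ and $\bigominus\Sub(\tau)\not\subseteq\C$, invokes joint embedding to find a single $\pi\in\C$ above both, and then applies Theorem~\ref{thm-gridding-characterization} to grid $\C$ by the proper subclass $\C\cap\Av(\pi)$, violating grid irreducibility. You instead argue directly: you decompose $\C$ into $\C^\oplus\cup\C^\ominus$ using Proposition~\ref{prop-grid-irreduce}, verify that each piece is a bona fide class, invoke the ``no nontrivial union'' form of atomicity to conclude one piece equals $\C$, and then use joint embedding to upgrade the pointwise condition to genuine sum (or skew) closure. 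Your version has the merit of making visible exactly which two facets of atomicity are used and where; the paper's version is leaner but compresses the first use of joint embedding into an unannounced ``Thus.''

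One small slip: you assert that $\{1\}$ is ``both sum and skew closed,'' but under the paper's definition ($\pi\oplus\sigma\in\C$ for \emph{all} $\pi,\sigma\in\C$) it is not, since $1\oplus 1 = 12\notin\{1\}$. This is harmless for the easy direction (grid irreducibility of $\{1\}$ is by the explicit clause in Proposition~\ref{prop-grid-irreduce}, not via sum closure) but it does mean the hard direction, as you've written it, silently assumes $\C\neq\{1\}$ when invoking $\C=\C^\oplus\cup\C^\ominus$. The paper's own proof has the same unaddressed edge case, so this is a defect in the statement rather than in your argument, but it is worth flagging that the degenerate class $\{1\}$ is atomic and grid irreducible yet, taken literally, neither sum nor skew closed.
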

\begin{proof}
Every sum or skew closed class is grid irreducible (by Theorem~\ref{thm-gridding-characterization} or Proposition~\ref{prop-grid-irreduce}) so it suffices to prove the reverse direction. Suppose that $\C$ is atomic but neither sum nor skew closed. Thus there are permutations $\sigma,\tau\in\C$ such that $\bigoplus\Sub(\sigma),\bigominus\Sub(\tau)\not\subseteq\C$. Because $\C$ is atomic, we can find a permutation $\pi\in\C$ containing both $\sigma$ and $\tau$. However, $\C$ cannot contain arbitrarily long sums or skew sums of $\pi$, so it is $\left(\C\cap\Av(\pi)\right)$-griddable (again by Theorem~\ref{thm-gridding-characterization}). This shows that $\C$ is not grid irreducible, completing the proof.
\end{proof}

We can now prove our main result.

\begin{theorem}
\label{thm-xi-main}
There are only countably many growth rates of permutation classes below $\xi$ but uncountably many growth rates in every open neighborhood of it. Moreover, every growth rate of permutation classes below $\xi$ is achieved by a sum closed permutation class.
\end{theorem}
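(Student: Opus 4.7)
The first two assertions are essentially already established in the paragraphs preceding this statement (with the uncountability part quoted from \cite{vatter:permutation-cla:lambda:}), so what remains is to show that every growth rate below $\xi$ is realized by a sum closed class. I begin by fixing a class $\C$ with $\ugr(\C) = \alpha < \xi$ and choosing $\eta \in (\alpha, \xi)$. By Proposition~\ref{prop-G-gamma-grids} the class $\C$ is $\G_\eta$-griddable, and $\G_\eta \subseteq \G_\xi$ is concentrated by Theorem~\ref{thm-Gxi-concentration}; Corollary~\ref{cor-slicing-gr} then forces either $\gr(\C)$ to be integral---in which case $\alpha \in \{0, 1, 2\}$, realized by the empty class, by $\Av(21)$, or by the class of layered permutations, each of which is sum closed---or $\alpha = \ugr(\C')$ for some subclass $\C' \subseteq \G_\eta$. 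Since $\G_\eta$ is well quasi-ordered by Theorem~\ref{thm-G-gamma-wqo}, the subclass $\C'$ is itself well quasi-ordered, reducing the problem to producing a sum or skew closed subclass of $\C'$ of upper growth rate $\alpha$; skew closed suffices because the identity $(\pi \ominus \sigma)^{\mathrm r} = \sigma^{\mathrm r} \oplus \pi^{\mathrm r}$ shows that the reverse of a skew closed class is sum closed with the same growth rate.

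The strategy is to extract from $\C'$ a descending chain
\[
	\C' \supseteq \A^{(1)} \supseteq \G^{(1)} \supseteq \A^{(2)} \supseteq \G^{(2)} \supseteq \cdots,
\]
of subclasses, each of upper growth rate $\alpha$, using two alternating moves. The \emph{atomic step}, applied to a wqo class $\mathcal{X}$ already in the chain, uses Proposition~\ref{prop-wqo-atomic-gr} to select an atomic subclass $\A \subseteq \mathcal{X}$ of upper growth rate $\alpha$. The \emph{grid-irreducible step}, applied to an atomic class $\A$ in the chain, invokes Proposition~\ref{prop-grid-irreduce-wqo} to produce a grid irreducible subclass $\G \subseteq \A$ such that $\A$ is $\G$-griddable; since $\G \subseteq \G_\eta$ is concentrated and $\lgr(\A) \le \alpha < 1+\sqrt{2}$, Corollary~\ref{cor-slicing-gr} reapplied with cell class $\G$ yields either an integral growth rate (reducing to the base case) or $\ugr(\A) = \ugr(\G'')$ for some $\G'' \subseteq \G$, which together with $\G \subseteq \A$ forces $\ugr(\G) = \alpha$.

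If at any step the chain ceases to descend strictly---that is, an atomic step returns its input unchanged or a grid-irreducible step returns its input unchanged---then the stabilized class is simultaneously atomic and grid irreducible, and Proposition~\ref{prop-atomic-grid-irreduce} declares it sum or skew closed, finishing the argument. Otherwise the chain descends strictly, which the descending chain condition on the wqo class $\C'$ (Proposition~\ref{prop-wqo-subclasses-dcc}) prohibits from continuing forever; hence stabilization must occur. The principal technical hurdle will be confirming that Corollary~\ref{cor-slicing-gr} applies at every grid-irreducible step, which requires the cell class $\G$ to inherit concentration from $\G_\eta$ (immediate from the definition) and the hypothesis $\lgr < 1+\sqrt{2}$ to hold throughout the chain (which follows uniformly since every term has upper growth rate $\alpha < \xi < 1+\sqrt{2}$).
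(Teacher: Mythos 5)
Your proposal is correct and follows essentially the same route as the paper: grid by Proposition~\ref{prop-G-gamma-grids}, invoke concentration (Theorem~\ref{thm-Gxi-concentration}) to apply Corollary~\ref{cor-slicing-gr}, then alternate the atomic step (Proposition~\ref{prop-wqo-atomic-gr}) with the grid-irreducible step (Proposition~\ref{prop-grid-irreduce-wqo}), terminate via the descending chain condition (Proposition~\ref{prop-wqo-subclasses-dcc}), and identify the stabilized class as sum or skew closed by Proposition~\ref{prop-atomic-grid-irreduce}. The only cosmetic difference is that your chain has a two-step cycle (atomic, then grid-irreducible, with the observation that $\G''\subseteq\G\subseteq\A$ forces $\ugr(\G)=\alpha$ directly) whereas the paper threads the intermediate subclass supplied by Corollary~\ref{cor-slicing-gr} as an explicit third term in each cycle; you also spell out the trivial reduction from skew closed to sum closed via reversal and the integer base cases, which the paper leaves implicit.
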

\begin{proof}
We have already established the first part of the theorem, so let $\C$ be a permutation class with $\ugr(\C)<\xi$. Choose some $\gamma$ between $\ugr(\C)$ and $\xi$ and let $\G^{(1)}=\C\cap\G_\gamma$. By Proposition~\ref{prop-G-gamma-grids}, $\C$ is $\G_{\gamma}$-griddable and thus also $\G^{(1)}$-griddable. Furthermore, $\G^{(1)}$ is concentrated by Theorem~\ref{thm-Gxi-concentration}. Therefore, by Corollary~\ref{cor-slicing-gr}, either the growth rate of $\C$ exists and is integral, in which case we are done, or $\ugr(\C)$ is equal to the upper growth rate of a subclass of $\G^{(1)}$. In the latter case, denote this subclass by $\C^{(1)}$. Note that by Theorem~\ref{thm-G-gamma-wqo}, $\G_\gamma$ is well-quasi-ordered, and hence so are $\G^{(1)}$ and $\C^{(1)}$.

We now continue this process. Suppose we have found that $\ugr(\C)=\ugr(C^{(i)})$ for a well-quasi-ordered subclass $\C^{(i)}\subseteq\C$. By Proposition~\ref{prop-wqo-atomic-gr}, $\ugr(\C^{(i)})$ is equal to the greatest upper growth rate of its atomic subclasses; let $\A^{(i)}$ denote this atomic subclass. Because $\A^{(i)}$ is also well-quasi-ordered, Proposition~\ref{prop-grid-irreduce-wqo} shows that it is $\G^{(i+1)}$-griddable for a grid irreducible subclass $\G^{(i+1)}\subseteq\A^{(i)}$. As $\G^{(i+1)}\subseteq\G_\gamma$, it is concentrated, and so by Corollary~\ref{cor-slicing-gr}, either $\gr(\C)$ exists and is integral or $\ugr(C)=\ugr(\A^{(i)})=\ugr(C^{(i+1)})$ for a subclasses $\C^{(i+1)}\subseteq\G^{(i+1)}$. Repeating this process indefinitely, we either find that the growth rate of $\C$ exists and is integral or we build an infinite descending chain of subclasses of $\C$,
\[
	\C
	\supseteq
	\G^{(1)}
	\supseteq
	\C^{(1)}
	\supseteq
	\A^{(1)}
	\supseteq
	\G^{(2)}
	\supseteq
	\C^{(2)}
	\supseteq
	\cdots
	\supseteq
	\C^{(i)}
	\supseteq
	\A^{(i)}
	\supseteq
	\G^{(i+1)}
	\supseteq
	\C^{(i+1)}
	\supseteq
	\cdots,
\]
all with identical upper growth rates. Although $\C$ needn't be well-quasi-ordered, $\G^{(1)}$ is, and so this chain must terminate by Proposition~\ref{prop-wqo-subclasses-dcc}. Thus at some point we must have $\A^{(i)}=\G^{(i+1)}$. This class, which has the same upper growth rate as $\C$, is both atomic and grid irreducible. Therefore, by Proposition~\ref{prop-atomic-grid-irreduce}, it is either sum or skew closed, and thus the upper growth rate of $\C$ is equal to the greatest upper growth rate of its sum or skew closed subclasses. As sum or skew closed classes have proper growth rates (Proposition~\ref{prop-arratia-gr}), it follows that $\C$ does as well, completing the proof.
\end{proof}

\section{Concluding Remarks}

After establishing several delicate results about sum indecomposable permutations in a subsequent paper with Pantone~\cite{pantone:growth-rates-of:}, we are able to provide a complete characterization of growth rates under $\xi$, leading to the number line shown in Figure~\ref{fig-set-of-growth-rates} of the introduction. Together with Bevan's Theorem~\ref{thm-lambda-B}, which shows that every real number greater than $\lambda_B\approx 2.36$ is the growth rate of a permutation class, this leaves us tantalizingly close to the complete characterization of growth rates of permutation classes---the gap between the two results is approximately $0.05176$.

In order to extend the approach used here and in \cite{pantone:growth-rates-of:} to complete this characterization, it seems that one would first want to establish the following.

\begin{conjecture}
\label{conj-gr-sum-closed}
Every upper growth rate of a permutation class is the growth rate of a sum closed class.
\end{conjecture}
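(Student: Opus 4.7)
The plan is to assemble the machinery of Sections 3--7 into a two-part argument: countability comes from well quasi-order, and sum closedness comes from an iterated slicing that must terminate by the descending chain condition. For the countability half, let $\C$ be a class with $\ugr(\C)<\xi$ and pick $\gamma$ with $\ugr(\C)<\gamma<\xi$. By Proposition~\ref{prop-G-gamma-grids}, $\C$ is $\G_{\gamma}$-griddable; since $\G_{\gamma}\subseteq\G_\xi$ is concentrated (Theorem~\ref{thm-Gxi-concentration}) and $\lgr(\C)\le\ugr(\C)<1+\sqrt{2}$, Corollary~\ref{cor-slicing-gr} forces $\gr(\C)$ to be integral or $\ugr(\C)$ to equal the upper growth rate of some subclass of $\G_\gamma$. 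Well quasi-order of $\G_\gamma$ (Theorem~\ref{thm-G-gamma-wqo}) together with Proposition~\ref{prop-wqo-subclasses-countable} bound the number of such subclasses by a countable set, and letting $\gamma$ range over any countable dense sequence approaching $\xi$ yields only countably many growth rates below $\xi$. The uncountability in every neighborhood of $\xi$ is quoted from~\cite{vatter:permutation-cla:lambda:}.

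For the sum-closed half, I would build a descending chain of subclasses of $\C$ which all share the upper growth rate $\ugr(\C)$ and which is forced to stabilize in a very rigid class. Setting $\G^{(1)}=\C\cap\G_\gamma$ (well quasi-ordered, since $\G_\gamma$ is), I iterate the following three reductions while never decreasing the upper growth rate: (i) replace the current class by an atomic subclass witnessing its upper growth rate, using Proposition~\ref{prop-wqo-atomic-gr}; (ii) grid that atomic class by a grid irreducible subclass using the wqo version Proposition~\ref{prop-grid-irreduce-wqo}; (iii) apply Corollary~\ref{cor-slicing-gr} to this griddable pair, exiting with an integral growth rate (done, since any integer under $\xi$ is trivially the growth rate of a sum closed monotone or trivially-built class) or else extracting a further subclass of the grid irreducible cell class that preserves the upper growth rate.

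Since every class produced sits inside the well quasi-ordered class $\G^{(1)}$, Proposition~\ref{prop-wqo-subclasses-dcc} says the descending chain must terminate. At termination the atomic class equals its grid irreducible cell class, so it is simultaneously atomic and grid irreducible; Proposition~\ref{prop-atomic-grid-irreduce} then forces it to be sum or skew closed, and symmetry lets us assume sum closed. Finally Proposition~\ref{prop-arratia-gr} supplies a genuine (not merely upper) growth rate, upgrading ``$\ugr(\C)$ equals \ldots'' to ``$\gr(\C)$ exists and equals \ldots''.

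The principal obstacle is making sure the reduction step (iii) is sharp enough that one pass yields the clean dichotomy ``integral, or strictly inside the cell class''. This is exactly what Corollary~\ref{cor-slicing-gr} packages, and it is the endpoint of a long chain of work: concentration of $\G_\xi$ (Theorem~\ref{thm-Gxi-concentration}), the rectangle-covering slicing (Theorem~\ref{thm-slicing}), and the rigidity of monotone griddable classes below $1+\varphi$ (Corollary~\ref{cor-mono-grid-grs}). A secondary subtlety is that termination of the descending chain depends on Theorem~\ref{thm-G-gamma-wqo}, whose proof required ruling out precisely the $U^o$-type antichain near $\xi$ in Section~\ref{sec-wqo}; without wqo of $\G_\gamma$ the iteration could run forever and the argument would collapse.
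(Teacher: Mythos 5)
Your argument does not prove the stated conjecture; it proves a strictly weaker result, namely the second assertion of Theorem~\ref{thm-xi-main}, which asserts the conclusion only for classes with $\ugr(\C)<\xi$. Conjecture~\ref{conj-gr-sum-closed} has no such hypothesis: it asserts that \emph{every} upper growth rate of \emph{every} permutation class is the growth rate of a sum closed class. Your very first step --- choosing $\gamma$ with $\ugr(\C)<\gamma<\xi$ and then gridding by $\G_\gamma$ --- is unavailable once $\ugr(\C)\ge\xi$, and the remainder of your argument leans on this choice in two essential places: concentration of $\G_\gamma$ via Theorem~\ref{thm-Gxi-concentration} (established only for $\G_\xi$, and conjectured only up to $1+\varphi$ in Conjecture~\ref{conj-concentrated-phi}), and well quasi-order of $\G_\gamma$ via Theorem~\ref{thm-G-gamma-wqo}, which is proved to be \emph{false} for $\gamma\ge\xi$ because $U^o\subseteq\G_\xi$ (Proposition~\ref{prop-U-sum-indecomp-kids}). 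Without wqo, the descending chain you build has no reason to terminate, as the paper illustrates concretely in the conclusion with the class $\C=\Sub(\mu_7\oplus\mu_9\oplus\cdots)$, which is not $\G$-griddable for any grid irreducible class at all. This is why the statement is left as a conjecture in the paper rather than presented as a theorem: the paper's own machinery is known to break precisely at $\xi$, and the concluding remarks are devoted to explaining these obstacles. A proof of the conjecture would require genuinely new ideas beyond Sections 3--8, not a repackaging of the slicing/wqo argument.

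Your exposition of the argument below $\xi$ is otherwise faithful to the paper's proof of Theorem~\ref{thm-xi-main} --- the grid/slice/atomic-reduce iteration and the appeal to Propositions~\ref{prop-wqo-subclasses-dcc}, \ref{prop-atomic-grid-irreduce}, and \ref{prop-arratia-gr} are all as in the paper --- so if the intended target had been thm-xi-main, this would have been essentially correct. But as an attempt on Conjecture~\ref{conj-gr-sum-closed} it proves a special case while implicitly assuming the hypothesis that the conjecture is supposed to dispense with.
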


Before moving on, a few notes regarding Conjecture~\ref{conj-gr-sum-closed} are in order. First, as we have observed in Corollary~\ref{cor-slicing-gr}, this conjecture can only be true because of coincidences. For example, the class $\Grid(1\ 1)$ has growth rate $2$, but this is not the growth rate of a sum closed subclass of this class. However, it happens that the class of \emph{layered permutations}, $\bigoplus\Av(12)$, also has growth rate $2$.

Second, Conjecture~\ref{conj-gr-sum-closed} is known, from \cite{bevan:intervals-of-pe:,vatter:permutation-cla:lambda:}, to hold for all growth rates between $\lambda_B\approx 2.35$ and approximately $3.79$ because all real numbers in this range are growth rates of sum closed classes (the growth rates above $3.79$ are achieved in \cite{vatter:permutation-cla:lambda:} via ``juxtapositions''). It seems highly unlikely that Conjecture~\ref{conj-gr-sum-closed} could fail above this range, though that has yet to be established, and has no bearing on the characterization of growth rates of permutation classes.

Were Conjecture~\ref{conj-gr-sum-closed} to be established, the classification of growth rates under $\lambda_B$ would still be far from a fait accompli---as discussed in \cite{pantone:growth-rates-of:}, the characterization of growth rates of even the limited collection of sum closed classes becomes increasingly difficult as the allowed growth rates increase (consider that the notorious class of $4231$-avoiding permutations is sum closed).

We save the discussion of those issues for \cite{pantone:growth-rates-of:} and for the rest of this conclusion discuss the challenges in establishing Conjecture~\ref{conj-gr-sum-closed}. The approach outlined in this paper can be roughly described as consisting of three steps: first, grid the class; second, refine this gridding by slicing in order to show that the growth rate of the class comes from a single cell (or is integral); third, repeat this process a finite number of times and use the structure of the cell classes to argue that the process terminates and leaves us with a sum (or skew) closed class.

While the first step of this approach can be extended as far as desired---Proposition~\ref{prop-G-gamma-grids} gives a process to determine $\G_\gamma$ for all growth rates $\gamma$---the second and third steps rely crucially on the properties of this cell class. We summarize these in the chart below.
\[
	\begin{tabular}{lll}
	\hline
	property of $\G_\gamma$&holds for $\gamma$&reference
	\\\hline
	finitely many simple permutations&$<\kappa\approx 2.21$& Proposition~\ref{prop-Ggamma-simples}\\
	$\subseteq\langle\O\rangle$&$\lessapprox 2.24$&Proposition~\ref{Gkappa-first}\\
	well-quasi-ordered&$<\xi\approx 2.31$&Theorem~\ref{thm-G-gamma-wqo}\\
	$\subseteq\langle\Av(321)\cup\Av(123)\cup\{25314, 41352\}\rangle$&$\lessapprox 2.36$ ($>\lambda_B$)&Proposition~\ref{prop-Gxi-first}\\
	concentrated&$<1+\varphi\approx 2.62$ (?)&Conjecture~\ref{conj-concentrated-phi}\\
	bounded substitution depth&$<1+\varphi\approx 2.62$&Proposition~\ref{prop-bdd-alts}\\
	\hline
	\end{tabular}
\]
As this chart shows, the first obstacle to establishing Conjecture~\ref{conj-gr-sum-closed}, at least via the slicing technique used here (Theorem~\ref{thm-slicing}), would be establishing that $\G_{\lambda_B}$ is concentrated. As suggested before the proof of Theorem~\ref{thm-Gxi-concentration}, it may be possible to use a similar technique to establish this (note that the simple permutations of $\G_{\lambda_B}$ are concentrated by Proposition~\ref{prop-SiGxi-concentration}). Conjecture~\ref{conj-concentrated-phi} presents a loftier goal, which is possibly of independent interest.

If we could establish that $\G_{\lambda_B}$ is concentrated, we could then appeal to Corollary~\ref{cor-slicing-gr} to see that every upper growth rate below $\lambda_B$ is either integral (i.e., $0$, $1$, or $2$) or equal to the upper growth rate of a subclass of $\G_{\lambda_B}$. However, we know that $\G_{\xi}$, let alone $\G_{\lambda_B}$, contains infinite antichains, and thus we could not  apply the techniques of Section~\ref{sec-phase-transition} to show that these upper growth rates are equal to the growth rates of sum closed subclasses of $\G_{\lambda_B}$. 

For example, if we label the members of the antichain $U^o$ by length as $\{\mu_5,\mu_7,\dots\}$, then one class that illustrates the difficulties above $\xi$ is
\[
	\C=\Sub(\mu_7\oplus\mu_9\oplus\mu_{11}\oplus\cdots).
\]
This class is $\G$-griddable for all classes $\G$ containing $\bigoplus\PropSub(U^o)$ and omitting only finitely many members of $U^{o}$, where
\[
	\PropSub(U^o)
	=
	\{\pi\st\mbox{$\pi<\mu$ for some $\mu\in U^o$}\}
\]
denotes the \emph{proper downward closure} of $U^o$. However, no such class is grid irreducible, and thus the approach used in the proof of Theorem~\ref{thm-xi-main} does not terminate when applied to $\C$. It follows that the class $\C$ is not $\G$-griddable for a grid irreducible subclass, showing that the well-quasi-order hypothesis of Proposition~\ref{prop-grid-irreduce-wqo} cannot be removed.

Nevertheless, this is not a counterexample to Conjecture~\ref{conj-gr-sum-closed}, which we briefly demonstrate. First, $\C$ contains $\D=\bigoplus\PropSub(U^o)$, so its lower growth rate is at least $\xi$. To obtain an upper bound, observe that
\[
	\C
	\subseteq
	\D \oplus \Sub(\mu_7) \oplus \D \oplus \Sub(\mu_9) \oplus \D \oplus \cdots.
\]
Letting $f$ denote the generating function for the class on the right and $g$ denote the generating function for $\D=\bigoplus\PropSub(U^o)$, both counting the empty permutation, we see that
\[
	f=g(1+x^7g)(1+x^9g)\cdots.
\]
By elementary analysis, $f$ converges if and only if both $g$ and
\[
	x^7g+x^9g+\cdots=\frac{x^7g}{1-x^2}
\]
converge. It follows that the upper growth rate of $\C$ is at most $\xi$, so $\gr(\C)$ exists and is equal to the growth rate of a sum closed class because $\xi$ is the growth rate of $\D=\bigoplus\PropSub(U^o)$.

\begin{figure}
\begin{center}
	\begin{tikzpicture}[scale=0.2]
		\draw[lightgray, fill, rotate around={-45:(2,3)}] (2,3) ellipse (20pt and 60pt);
		\draw[lightgray, fill, rotate around={-45:(10.5,11.5)}] (10.5,11.5) ellipse (20pt and 40pt);
		\plotpermgraph{2,3,4,6,1,8,5,10,7,11,12,9};
		\plotpermbox{0.5}{0.5}{12.5}{12.5};
	\end{tikzpicture}
\quad\quad
	\begin{tikzpicture}[scale=0.2]
		\draw[lightgray, fill, rotate around={-45:(2,3)}] (2,3) ellipse (20pt and 60pt);
		\draw[lightgray, fill, rotate around={-45:(11,12)}] (11,12) ellipse (20pt and 60pt);
		\plotpermgraph{2,3,4,6,1,8,5,10,7,11,12,13,9};
		\plotpermbox{0.5}{0.5}{13.5}{13.5};
	\end{tikzpicture}
\quad\quad
	\begin{tikzpicture}[scale=0.2]
		\draw[lightgray, fill, rotate around={-45:(2.5,3.5)}] (2.5,3.5) ellipse (20pt and 80pt);
		\draw[lightgray, fill, rotate around={-45:(11.5,12.5)}] (11.5,12.5) ellipse (20pt and 40pt);
		\plotpermgraph{2,3,4,5,7,1,9,6,11,8,12,13,10};
		\plotpermbox{0.5}{0.5}{13.5}{13.5};
	\end{tikzpicture}
\quad\quad
	\begin{tikzpicture}[scale=0.2]
		\draw[lightgray, fill, rotate around={-45:(2.5,3.5)}] (2.5,3.5) ellipse (20pt and 80pt);
		\draw[lightgray, fill, rotate around={-45:(12,13)}] (12,13) ellipse (20pt and 60pt);
		\plotpermgraph{2,3,4,5,7,1,9,6,11,8,12,13,14,10};
		\plotpermbox{0.5}{0.5}{14.5}{14.5};
	\end{tikzpicture}
\end{center}
\caption{Infinite antichains in $\G_{\lambda_B}\setminus\G_\xi$. From left to right, we refer to these as types $(3,2)$, $(3,3)$, $(4,2)$, and $(4,3)$.}
\label{fig-G-lambda-antichains}
\end{figure}
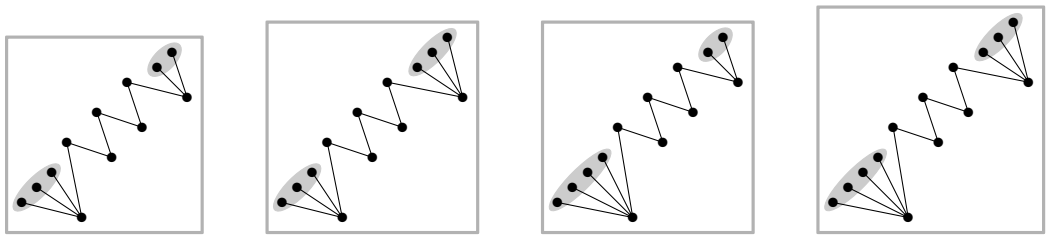

To summarize, a new technique must be developed to analyze subclasses of the cell class $\G_{\lambda_B}$. As an initial step in this direction, it should be possible to characterize all infinite antichains in $\G_{\lambda_B}$. In addition to $U$, four infinite antichains contained in this cell class are shown in Figure~\ref{fig-G-lambda-antichains} (all symmetries of these antichains are also contained in $\G_{\lambda_B}$). That these antichains are contained in $\G_{\lambda_B}$ is confirmed by the table below.
\[
	\begin{array}{lllll}
	\hline
	\mbox{antichain}&\mbox{sequence of sum indecomposables}&\mbox{lies in $\G_\gamma$ for}
	\\\hline
	\text{type $(2,2)$, i.e., $U^o$}&1,1,2,3,4^i,3,2,1&\gamma<\xi\approx 2.31 
	\\
	\text{type $(3,2)$}&1,1,2,3,5^i,4,3,2,1&\gamma\lessapprox 2.33 
	\\
	\text{type $(3,3)$}&1,1,2,3,5,6^i,5,4,3,2,1&\gamma\lessapprox 2.34 
	\\
	\text{type $(4,2)$}&1,1,2,3,5,6^i,5,4,3,2,1&\gamma\lessapprox 2.34 
	\\
	\text{type $(4,3)$}&1,1,2,3,5,7^i,6,5,4,3,2,1&\gamma\lessapprox 2.35 
	\\\hline
	\end{array}
\]

In fact, the antichains depicted in Figure~\ref{fig-G-lambda-antichains} are precisely those used by Bevan to establish Theorem~\ref{thm-lambda-B}. Note that all of the anchors of these antichains (the entries enclosed by ovals) are increasing. One might wonder if infinite antichains based on the increasing oscillating sequence but with decreasing anchors, as depicted in Figure~\ref{fig-G-lambda-antichains-NOT}, lie in cell classes below $\lambda_B$. The answer is no; the sequence of sum indecomposables contained a member of either antichain from Figure~\ref{fig-G-lambda-antichains-NOT} is of the form $1,1,3,4^i,3,2,1$. Therefore the sum closure of the proper downward closure of either antichain has the generating function
\[
	\frac{1}{1-\left(x+x^2+3x^3+\frac{4x^4}{1-x}\right)}
	=
	\frac{1-x}{1-2x-2x^3-x^4},
\]
which has a growth rate of precisely $1+\sqrt{2}$. Therefore the cell class $\G_{\lambda_B}$ has only finite intersection with such antichains.

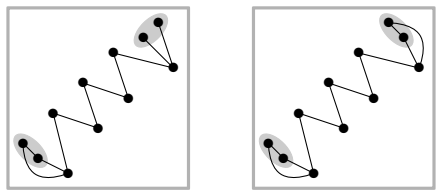
\begin{figure}
\begin{center}
	\begin{tikzpicture}[scale=0.2]
		\draw[lightgray, fill, rotate around={45:(1.5,2.5)}] (1.5,2.5) ellipse (20pt and 40pt);
		\draw[lightgray, fill, rotate around={-45:(9.5,10.5)}] (9.5,10.5) ellipse (20pt and 40pt);
		\plotperm{3,2,5,1,7,4,9,6,10,11,8};
		\plotpermbox{0.5}{0.5}{11.5}{11.5};
		\draw (1,3)--(2,2)--(4,1)--(3,5)--(6,4)--(5,7)--(8,6)--(7,9)--(11,8);
		\draw (9,10)--(11,8)--(10,11);
		\draw plot [smooth, tension=1] coordinates { (1,3) ({2.5-0.35*2}, {2-0.35*3}) (4,1) };
	\end{tikzpicture}
\quad\quad
	\begin{tikzpicture}[scale=0.2]
		\draw[lightgray, fill, rotate around={45:(1.5,2.5)}] (1.5,2.5) ellipse (20pt and 40pt);
		\draw[lightgray, fill, rotate around={45:(9.5,10.5)}] (9.5,10.5) ellipse (20pt and 40pt);
		\plotperm{3,2,5,1,7,4,9,6,11,10,8};
		\plotpermbox{0.5}{0.5}{11.5}{11.5};
		\draw (1,3)--(2,2)--(4,1)--(3,5)--(6,4)--(5,7)--(8,6)--(7,9)--(11,8)--(10,10)--(9,11);
		\draw plot [smooth, tension=1] coordinates { (1,3) ({2.5-0.35*2}, {2-0.35*3}) (4,1) };
		\draw plot [smooth, tension=1] coordinates { (9,11) ({10+0.35*3}, {9.5+0.35*2}) (11,8) };
	\end{tikzpicture}
\end{center}
\caption{Two infinite antichains which have only finite intersection with $\G_{1+\sqrt{2}}$.}
\label{fig-G-lambda-antichains-NOT}
\end{figure}

Even if the complete characterization of growth rates of permutation classes remains out of reach, it may also be possible by following this approach to establish that $\lambda_B$ is best possible, i.e., that in every open neighborhood of $\lambda_B$ there is a real number that is \emph{not} the growth rate of a permutation class.

\bigskip

\minisec{Acknowledgements}
I am grateful to Michael Albert, David Bevan, Robert Brignall, Michael Engen, Jay Pantone, and the anonymous referees for their many insightful comments on this work.


\end{document}